\newif\ifdebug
\newif \iffig
\newif \iftable
\newcolumntype{C}[1]{>{\centering\arraybackslash}p{#1}}
\renewcommand{\tau}{\uptau}
\begin{document}
%
\title[Uniformizing VHS, Anosov reps., Lyapunov exponents]{Uniformization of some weight $3$ variations of Hodge structure, Anosov representations, and Lyapunov exponents}
%
\thanks{Revised \textsc{\today} }

\date{September 2021}


\author{
	Simion Filip
}
\address{
	\parbox{0.5\textwidth}{
		Department of Mathematics\\
		University of Chicago\\
		5734 S University Ave\\
		Chicago, IL 60637\\}
}
\email{{sfilip@math.uchicago.edu}}
%
%
%
\begin{abstract}
We develop a class of uniformizations for certain weight $3$ variations of Hodge structure (VHS).
The analytic properties of the VHS are used to establish a conjecture of Eskin, Kontsevich, M\"oller, and Zorich on Lyapunov exponents.
Additionally, we prove that the monodromy representations are log-Anosov, a dynamical property that has a number of global consequences for the VHS.
We establish a strong Torelli theorem for the VHS and describe appropriate domains of discontinuity.

Additionally, we classify the hypergeometric differential equations that satisfy our assumptions.
We obtain several multi-parameter families of equations, which include the mirror quintic as well as the six other thin cases of Doran--Morgan and Brav--Thomas.
\end{abstract}

%
\maketitle
%
\noindent\hrulefill
\tableofcontents
\noindent \hrulefill
\ifdebug	
	\listoffixmes
\fi


\section{Introduction}
	\label{sec:introduction}

The global geometry of higher weight variations of Hodge structure remains mysterious.
In this paper we investigate one of the simplest nonclassical situations, with Hodge numbers $(1,1,1,1)$, subject to a nonvanishing condition on a component of the second fundamental form.
This covers for instance the case of the mirror quintic, or Dwork, family as well as six other families among the fourteen identified by Doran and Morgan \cite{Doran_Morgan} that were later proved to have thin monodromy by Brav--Thomas \cite{BravThomas2014_Thin-monodromy-in-Sp4}.
As we show below, even in the class of hypergeometric local systems, a much larger class consisting of several multi-parameter families is covered by our analysis (a one-parameter subfamily was analyzed by completely different methods in \cite{FilipFougeron2021_A-cyclotomic-family-of-thin-hypergeometric-monodromy-groups-in-Sp4R}).

The interest in the geometry of these variations of Hodge structure (VHS for short) comes from many directions, including mirror symmetry, see \cite{CandelasOssaGreen1991_A-pair-of-Calabi-Yau-manifolds-as-an-exactly-soluble-superconformal-theory} for the stunning application of the mirror quintic VHS to count rational curves on the generic quintic $3$-fold.
The original motivation of the present work was a conjecture of Eskin, Kontsevich, M\"oller, and Zorich \cite{EskinKontsevichMoller2018_Lower-bounds-for-Lyapunov-exponents-of-flat-bundles-on-curves} predicting a formula for the sum of Lyapunov exponents in terms of degrees of Hodge bundles.
This formula was discovered to hold numerically by Kontsevich in seven of the fourteen families from \cite{Doran_Morgan}.
Around the same time it was discovered by Brav and Thomas that the same seven families had thin monodromy, while Singh and Venkataramana \cite{SinghVenkataramana2014_Arithmeticity-of-certain-symplectic-hypergeometric-groups} showed the monodromy in the remaining seven surjects onto a lattice in $\Sp_4(\bZ)$.

In this paper we establish the conjectured formula for Lyapunov exponents stated in \cite{EskinKontsevichMoller2018_Lower-bounds-for-Lyapunov-exponents-of-flat-bundles-on-curves} and in fact several other properties of a VHS which satisfies ``assumption A'' as per \autoref{def:assumption_a} below.
In short, it requires that the second fundamental form $\sigma_{2,1}\colon \cV^{2,1}\to \cV^{1,2}\otimes \Omega^1_X$ is an isomorphism (and at the cusps it is an isomorphism if we use the logarithmic cotangent bundle).
Geometrically, this condition can be viewed as a generalization of the ``tiling'' condition for a Fuchsian triangle group, which requires the angles to be of the form $\pi/N$.

We show that the monodromy representation is in fact an Anosov representation, a concept introduced by Labourie \cite{LabourieAnosov}, except that the image contains unipotent elements and this leads to what we call a ``log-Anosov representation''.
Basic facts on this class of representations were also recently developed in the greater generality of relatively hyperbolic groups by Zhu \cite{Zhu2019_Relatively-dominated-representations} and Kapovich--Leeb \cite{KapovichLeeb2018_Relativizing-characterizations-of-Anosov-subgroups-I} under the name relatively dominated, or relatively Anosov, representations.
In the case of Fuchsian groups, an equivalent notion was studied by Canary--Zhang--Zimmer \cite{CanaryZhangZimmer2021_Cusped-Hitchin-representations-and-Anosov-representations-of-geometrically-finite} under the name of cusped Anosov representations.

Associated to Anosov, or log-Anosov, representations are domains of discontinuity in flag manifolds.
Our main results show that some of these domains of discontinuity have natural interpretations in terms of Hodge theory.

Two points are of interest.
First, the domains of discontinuity are ``nonclassical'' in the sense that they are not of the form $G/M$ where $M$ is a compact subgroup of the semisimple Lie group $G$.
In fact, lattices in $G$ will not have domains of discontinuity of the type constructed below.
Second, it is well-known that the Griffiths transversality condition imposes severe restrictions on the dimension of the image of the period map in the Griffiths period domain $\cD$ (which incidentally is of the form $G/M$ for $M$ compact).
In particular, most points in $\cD$ (a full measure set and with Baire-negligible complement) are not in the image of any period map.
It might therefore be of interest that linear-algebraic data in other kinds of domains, described below, can be put into bijective correspondence with data coming from the VHS.

As a consequence of our methods we obtain a strong Torelli theorem for the VHS under consideration, using a pseudo-hermitian period domain parametrizing the Griffiths intermediate Jacobians.
Further uniformization results are described below.

As for dynamical results on log-Anosov representations, we introduce the notion of ``stable point'' for a general closed subgroup of $\SL(V)$ acting on $\bP(V)$ and use this to deduce the standard proper discontinuity criteria.
We also obtain a minimality result for the action on a limit set which might be of interest because the analogous statement \emph{does not} hold for Hitchin representations.
Additionally, we can classify the points with rational coordinates that occur on the limit set.

\laternote{Include explicit nonvanishing power series}


\subsection*{Main results}
	\label{ssec:main_results}

Let $X$ be a complete finite volume Riemann surface and $\bV\to X$ a variation of Hodge structure with Hodge numbers $(1,1,1,1)$ satisfying assumption A from \autoref{def:assumption_a}.
Let also $\bW:=\Lambda^2_{\circ}\bV$ be the reduced second exterior power, with the line corresponding to the invariant symplectic form removed, so it is a VHS with Hodge numbers $(1,1,1,1,1)$.
Fix a basepoint $x_0\in X$ (omitted most of the time from the notation) and let $\rho\colon \pi_1(X)\to \Sp_4(V_\bR)$ be the monodromy representation, where $V_{\bR}$ is the fiber of $\bV$ over the basepoint, and similarly for $\SO_{2,3}(W_{\bR})$ and $\bW$.

Our first result settles the conjectures made by Eskin, Kontsevich, M\"oller, and Zorich in \cite{EskinKontsevichMoller2018_Lower-bounds-for-Lyapunov-exponents-of-flat-bundles-on-curves}.

\begin{theoremintro}[Formula for the sum of Lyapunov exponents]
	\label{thmintro:formula_for_the_sum_of_lyapunov_exponents}
	With the above assumptions let $\lambda_1\geq \lambda_2\geq 0$ be the nonnegative Lyapunov exponents of the geodesic flow on $\bV$.
	Then
	\[
		\lambda_1 + \lambda_2 = \frac{\deg \cV^{0,3}_{ext} + \deg \cV^{1,2}_{ext}}{\chi(X)}
	\]
	where $\chi(X)$ is the (orbifold) Euler characteristic of $X$, $\deg$ denotes the (orbifold) degree of a complex line bundle, and the subscript $ext$ denotes the Deligne extension of a bundle across punctures.

	Furthermore, for a vector $w\in W_{\bR}$, the ``bad locus'' (see \cite[\S4]{EskinKontsevichMoller2018_Lower-bounds-for-Lyapunov-exponents-of-flat-bundles-on-curves}) is empty if $[w]\in \LGr(V_{\bR})\setminus \Omega_L$, where $\Omega_L$ is the domain of discontinuity from \autoref{corintro:domains_of_discontinuity}(i).
	For a vector in $\Omega_L$ or $\bS^{1,3}(W_{\bR})$, the bad locus consists of precisely one point.

	In the hypergeometric examples, the MUM Lagrangian belongs to the complement of $\Omega_L$, so the nonvanishing Conjecture~6.4 of \cite{EskinKontsevichMoller2018_Lower-bounds-for-Lyapunov-exponents-of-flat-bundles-on-curves} holds.
\end{theoremintro}
\noindent The reader interested only in the proof of the formula for the sum of Lyapunov exponents from \autoref{thmintro:formula_for_the_sum_of_lyapunov_exponents} can read just \autoref{sec:hodge_theory_and_growth}.
The rest of the paper is devoted to deducing the Anosov-type properties of the monodromy, as well as analyzing further the domains of discontinuity.

Let $\wtilde{X}$ be the universal cover associated to the basepoint $x_0$, whose lift to $\wtilde{X}$ we will continue to denote by $x_0$.
Equipped with the complete hyperbolic metric, $\wtilde{X}$ can be identified with the hyperbolic plane $\bH^2$ and we will write $\partial \wtilde{X}$ for its boundary, which can be identified with $\bP^1(\bR)$.

\begin{theoremintro}[log-Anosov property]
	\label{thmintro:log_anosov_property_introduction}
	The monodromy representation $\rho$ is log-Anosov, in the sense of \autoref{def:log_anosov_representation}.
	Specifically, there exist constants $c,\ve>0$ such that
	\[
		\mu_1(\rho(\gamma))-\mu_2(\rho(\gamma))\geq \ve \cdot \dist(x_0,\gamma x_0) - c
	\]
	where $\mu_1\geq \mu_2 \geq 0$ are the logarithms of the singular values and $\dist$ refers to the hyperbolic distance.

	Furthermore, there exists a continuous $\rho$-equivariant map
	\[
		\xi \colon \partial \wtilde{X}\to \bP(V_{\bR})
	\]
	that is dynamics-preserving and transversal.
\end{theoremintro}
Transversality means that for two distinct boundary points $p_1\neq p_2$ the line $\xi(p_1)$ is disjoint from the symplectic-orthogonal of $\xi(p_2)$,
while dynamics-preserving is detailed in \autoref{sssec:transversality_and_dynamics}.
Informally, it means that points on the boundary fixed by $\gamma\in \pi_1(X)$ map to points in $\bP(V_{\bR})$ fixed by $\rho(\gamma)$.

\autoref{thmintro:log_anosov_property_introduction} is a concatenation of \autoref{thm:anosov_property_from_assumption_A}, which establishes the singular value gap using Hodge-theoretic techniques, and \autoref{thm:existence_of_boundary_maps} which gives the boundary map.
Recall also that the singular values of a matrix $g$ are the square roots of the eigenvalues of $g\cdot g^{\dag}$, and in the case of symplectic matrices the singular values have a symmetry under $\lambda\mapsto \tfrac 1\lambda$, so in the statement of the theorem we only consider the logarithms which yield positive numbers.

A standard consequence of the log-Anosov property is the existence of a generous supply of domains of discontinuity.
Recall that the real Lagrangian Grassmannian $\LGr(V_\bR)$ is the quadric of null vectors in $\bP(W_{\bR})$, were $W=\Lambda^2_{\circ}V$ is the reduced second exterior power.

\begin{corollaryintro}[Domains of Discontinuity]
	\label{corintro:domains_of_discontinuity}
	Let $\Gamma\subset \Sp_4(V_{\bR})$ be the image of the monodromy group.
	\begin{enumerate}
		\item In the real Lagrangian Grassmannian $\LGr(V_\bR)$ there exists an open nonempty set $\Omega_L$ on which $\Gamma$ acts properly discontinuously.
		\item The pseudosphere $\bS^{1,3}$ of unit vectors in $W_{\bR}$ is a domain of discontinuity for $\Gamma$.
		\item The complex Grassmannian of Lagrangians of signature $(1,1)$ for the indefinite hermitian metric on $V_{\bC}$, denoted $\LGr^{1,1}(V_{\bC})$, is also a domain of discontinuity for $\Gamma$.
		\item In the complex projective space $\bP(V_{\bC})$ there exists an open, nonempty set $\Omega_P$ on which $\Gamma$ acts properly discontinuously.
	\end{enumerate}
\end{corollaryintro}
\noindent These results are proved in \autoref{thm:real_uniformization_lagrangian}, \autoref{thm:real_uniformization_pseudo_hyperbolic}, \autoref{thm:proper_discontinuity_on_indefinite_lagrangians}, and \autoref{thm:proper_discontinuity_on_complex_projective_space} respectively.
Note that the log-Anosov property, as well as the existence of the above domains of discontinuity, is a topological, or coarse-geometric future of the monodromy group.
We can show, however, that some of the domains of discontinuity are uniformized using the variation of Hodge structure.

Domains of discontinuity of this flavor were constructed by Guichard--Wienhard \cite[Part 2]{GuichardWienhard2012_Anosov-representations:-domains-of-discontinuity-and-applications}, see also Benoist \cite{Benoist_Actions-propres-sur-les-espaces-homogenes-reductifs} who considered domains of discontinuity that are homogeneous under the action of the ambient Lie group.
We provide a self-contained treatment in \autoref{ssec:stability_and_the_numerical_criterion} in terms of the notion of ``stable point'', though the mechanism behind the proof of the proper discontinuity criterion is essentially the same.

\subsubsection*{Uniformization from Hodge structures}
To state the uniformization results, we need a linear-algebraic construction.
Let $F^2 = \cV^{3,0}\oplus \cV^{2,1}$ be the middle term of the Hodge filtration.
To such a complex Lagrangian associate the set of real Lagrangians $L$ such that their complexification is \emph{not} transverse to $F^2$, i.e. $L_{\bC}\cap F^2\neq 0$.
We denote this set by $\beta(F^2)\subset \LGr(V_{\bR})$ and call it an ``electron'' (see \autoref{sssec:on_electrons}), it is a real $1$-dimensional circle in $\LGr(V_{\bR})$ which is moreover timelike for the causal structure on $\LGr(V_{\bR})$.
This allows us to uniformize the domain of discontinuity $\Omega_L\subset \LGr(V_{\bR})$:

\begin{theoremintro}[Uniformization of real Lagrangians]
	\label{thmintro:uniformization_of_real_lagrangians}
	Let $Bad\to X$ be the circle bundle of electrons over $X$, and $\wtilde{Bad}$ its lift to the universal cover $\wtilde{X}$.
	Then the natural developing map $\wtilde{Bad}\xrightarrow{\Dev}\LGr(V_{\bR})$ is a diffeomorphism between $\wtilde{Bad}$ and the domain of discontinuity $\Omega_L$.
\end{theoremintro}
\noindent This is a combination of \autoref{thm:real_uniformization_lagrangian} and \autoref{thm:anosov_property_from_assumption_A}(ii).
For why the circle bundle is called ``bad'', see \autoref{sssec:on_electrons}.

There is an extension of the uniformization result to the real pseudosphere $\bS^{1,3}(W_{\bR})$.
Recall that $\bW=\left(\Lambda^2_{\circ}\bV\right)$ is the variation of Hodge structure associated to the reduced second exterior power of $\bV$, Tate-twisted from weight $6$ to weight $4$, with Hodge numbers $(1,1,1,1,1)$, and Hodge bundles denoted $\cW^{p,q}$.

\begin{theoremintro}[Uniformization of pseudosphere]
	\label{thmintro:uniformization_of_pseudo_sphere}
	Let $\cW^{1,3}_{\times}\to X$ be the $\bC^{\times}$-bundle of nonzero vectors in the Hodge bundle $\cW^{1,3}$, and let $\wtilde{\cW}^{1,3}_{\times}$ be its lift to the universal cover $\wtilde{X}$.

	There is a natural developing map $\wtilde{\cW}^{1,3}_{\times}\xrightarrow{\Dev}\bS^{1,3}(W_{\bR})$ which is a bijective diffeomorphism.
\end{theoremintro}
\noindent The developing map is constructed in \autoref{eqn:developing_map_pseudosphere}, using the middle Hodge bundle $\cW^{2,2}$ as well as $\cW^{1,3}$, to parametrize the unit vectors orthogonal to $\cW^{0,4}$.

Additionally, in \autoref{thm:boundary_values_in_adjoint_representation}, we show that if we consider the adjoint representation VHS, then the map from the universal cover $\wtilde{X}$ to the Grassmannian of $2$-planes $\Gr(2;W_{\bR})$ obtained from $\cW^{4,0}\oplus \cW^{0,4}$, has well-defined boundary values.
These coincide with the limit curve $\xi$ from \autoref{thmintro:log_anosov_property_introduction}.

\subsubsection*{Hypergeometric equations}
	\label{sssec:hypergeometric_equations}
A large class of examples of VHS satisfying assumption A comes from hypergeometric ordinary differential equations (see \autoref{sec:hypergeometric_local_systems} for the relevant notions).
A classification of the hypergeometric parameters which satisfy it is provided by \autoref{thm:good_hypergeometrics} and reads:

\begin{theoremintro}[Good hypergeometric parameters]
	\label{thmintro:good_hypergeometric_parameters}
	The parameters of hypergeometric differential equations that satisfy assumption A are listed in \autoref{table:good_hypgeom_real} and \autoref{table:good_hypgeom_integral}.
\end{theoremintro}
\noindent \autoref{table:good_hypgeom_integral} includes the seven thin cases of Brav--Thomas \cite{BravThomas2014_Thin-monodromy-in-Sp4}, as well as the infinite family from \cite{FilipFougeron2021_A-cyclotomic-family-of-thin-hypergeometric-monodromy-groups-in-Sp4R}.
In particular, assumption A distinguishes the seven thin cases of Brav--Thomas from the remaining seven cases in the Doran--Morgan list.
In the case of an orbifold point of order $N$, with local exponents $0<\alpha_1<\alpha_2<\alpha_3<\alpha_4<1$, the requirement of assumption A is that $N\cdot (\alpha_3-\alpha_2)=1$.
It means that on the finite cover removing the orbifold point, the corresponding second fundamental form is an isomorphism.
Using that $\alpha_3=1-\alpha_2$ it follows that $\alpha_2=\frac{N-1}{2N}$.
A glance at the table from \cite{BravThomas2014_Thin-monodromy-in-Sp4} reveals that this is precisely what distinguishes the thin cases with orbifold points from the remaining cases.
Note that four of the thin cases have cusps instead of orbifold points, where assumption A is analyzed in \autoref{prop:boundedness_of_second_fundamental_form}.

\begin{table}
	\begin{tabular}{ccc}
	\toprule
	$\alpha_{\bullet}$                                     & $\beta_{\bullet}$                            & \quad {\text{conditions}}\\ \hline
	\renewcommand{\arraystretch}{1.5}
	$\left(\mu,\tfrac 12,\tfrac 12,1-\mu\right)$ & $(0,0,0,0)$  & $\mu\in\left(0,\tfrac12\right]$\\
	 & $\left(0,0,\nu,1-\nu\right)$ & $0<\nu<\mu \leq\tfrac 12$\\
	 \bottomrule
	\end{tabular}
	\caption{\label{table:good_hypgeom_real}Parameters $\mu,\nu$ are real.}

	\renewcommand{\arraystretch}{1}
	\vskip 2em

	\begin{tabular}{cc}
	\toprule
	 $\beta_\bullet$ & \text{ conditions}  \\ \hline
 	\renewcommand{\arraystretch}{1.5}
	 $\left(
	 \tfrac 12,\tfrac 12,\tfrac 12,\tfrac 12
	 \right)$                         & \\
	 $(0,0,0,0)$                      & \\
	 $\left(0,0,\mu,1\!-\!\mu\right)$ & $0<\mu<\frac{2k-1}{2N}$\\
	 $\left(
	 \mu,\tfrac 12,\tfrac 12,1\!-\!\mu
	 \right)$                         & $\tfrac{N-1}{2N}<\mu<\tfrac 12$
	 \\
	$ \left(
	 	\tfrac{M-(2k_M+1)}{2M},
	 	\tfrac{M-1}{2M},
	 	\tfrac{M+1}{2M},
	 	\tfrac{M+(2k_M+1)}{2M}
	 \right)$                         & $\tfrac{2k_M+1}{M} < \tfrac 1N$\\
 	\bottomrule
	\end{tabular}
	\caption{\label{table:good_hypgeom_integral}
	Parameters $M,k_M$ are integers, $\mu$ is real.\\
	Throughout $\alpha_{\bullet}= \left(
	\tfrac{N-(2k+1)}{2N},
	\tfrac{N-1}{2N},
	\tfrac{N+1}{2N},
	\tfrac{N+(2k+1)}{2N}
	\right)$
	with arbitrary integers $k\geq 1, N>2k+1$.}
\end{table}

The tables are constructed from two restrictions.
One is local, and specifies what the exponents of the ODE have to be in order for the second fundamental form $\sigma_{2,1}$ to be an isomorphism (see \autoref{prop:assumption_a_in_the_local_case}).
The second restriction is global and comes from results of Fedorov \cite{Fedorov2018_Variations-of-Hodge-structures-for-hypergeometric-differential-operators-and-parabolic}, and requires the exponents $\alpha_\bullet,\beta_\bullet$ to be arranged in a specific way $\mod 1$, so that the Hodge numbers are $(1,1,1,1)$, see \autoref{sssec:variation_of_hodge_structure_fedorov}.

Let us note that the families above are of several types (in all cases subject to some inequalities on parameters): one type of family depends on four integers $M,N,k_M,k_N$, another type of family depends on two real numbers $\mu,\nu$, and finally there are mixed cases where the family depends on two integers $(N,k_N)$ and a real number $\mu$.

\subsubsection*{Torelli theorems}
	\label{sssec:torelli_theorems}
With the above results, we can gain some insight into the global structure of some moduli spaces of Calabi--Yau $3$-folds.
Specifically, the real uniformization result from \autoref{thmintro:uniformization_of_real_lagrangians} implies a strong Torelli theorem for VHS satisfying assumption A, in particular for the much-studied mirror quintic introduced in \cite{CandelasOssaGreen1991_A-pair-of-Calabi-Yau-manifolds-as-an-exactly-soluble-superconformal-theory} and the six other families from \cite{Doran_Morgan} satisfying our assumption:

\begin{theoremintro}[Strong Torelli using Lagrangians]
	\label{thmintro:strong_torelli_using_lagrangians}
	Suppose that $\bV\to X$ is a VHS satisfying assumption A, and lift it to the universal cover $\wtilde{X}$.
	Then:
	\begin{enumerate}
		\item The maps induced by the Hodge filtration $F^2=\cV^{3,0}\oplus \cV^{2,1}$:
		\begin{align*}
			\wtilde{X}&\xrightarrow{F^2} \LGr^{1,1}(V_{\bC})\\
			\intertext{and taking the quotient by $\pi_1$:}
			{X}&\xrightarrow{F^2} \leftquot{\Gamma}{\LGr^{1,1}(V_{\bC})}\\
		\end{align*}
		are injective.
		\item Furthermore, for $x,y\in \wtilde{X}$, if $L$ is a real Lagrangian such that $\left(F^2(x)\cap L_{\bC}\right)\neq 0 \neq \left(F^2(y)\cap L_{\bC}\right)$ then $x=y$.
	\end{enumerate}
\end{theoremintro}
\noindent A further dichotomy, related to rational Lagrangian subspaces, is contained in \autoref{corintro:dichotomy_for_rational_lagrangian_subspaces}.
\begin{proof}
	The claims are an immediate consequence of \autoref{thmintro:uniformization_of_real_lagrangians}, which associates to $F^2$ the electron $\beta(F^2)$ and establishes injectivity of the corresponding developing map.
\end{proof}
For the mirror quintic family, a \emph{generic} Torelli theorem using the full Griffiths period domain was proved by Usui \cite{Usui2008_Generic-Torelli-theorem-for-quintic-mirror-family}.

\begin{remarkintro}[On Griffiths intermediate Jacobians]
	\label{rmkintro:on_griffiths_jacobians}
	When the real weight $3$ Hodge structure has an underlying integral structure, one can associate to it the Griffiths intermediate Jacobian $V_{\bZ}\backslash V_{\bC}/F^2$.
	In general this is not an abelian variety, and the period domain of such objects is $\LGr^{1,1}(V_{\bC})$, a pseudo-hermitian homogeneous space, in contrast to Siegel spaces parametrizing marked abelian varieties.

	It is not possible to take a Hausdorff quotient of $\LGr^{1,1}(V_{\bC})$ by $\Sp_4(\bZ)$, or any lattice in $\Sp_4(\bR)$.
	However, \autoref{corintro:domains_of_discontinuity} implies that for the monodromy $\Gamma$ of a VHS satisfying assumption A, \emph{it is} possible to take the quotient.
	\autoref{thmintro:strong_torelli_using_lagrangians} then says that the period map to this quotient is injective.
	So it can be viewed a Torelli theorem in the classical sense.
\end{remarkintro}

\subsubsection*{Integral vectors}
	\label{sssec:integral_vectors}
The ``limit set'' curves provided by \autoref{thmintro:log_anosov_property_introduction} are fractal curves (in fact, it is possible and not hard to prove that in hypergeometric examples, because of the rank $1$ unipotent, the limit curve cannot be rectifiable).
\laternote{expand on lack of rectifiability after showing minimality}
Nonetheless, we can classify the rational points on the curve, and similarly for the limit set in the Lagrangian Grassmannian.
This is possible when the monodromy is contained in integer matrices, for instance in the mirror quintic example as well as the six other families of Doran--Morgan \cite{Doran_Morgan} that satisfy assumption A.

\begin{theoremintro}[Rational directions on limit curve]
	\label{thmintro:rational_directions_on_limit_curve}
	Suppose that the image of the monodromy group $\Gamma$ is contained in $\Sp_4(\bZ)$.
	Let $\xi\colon \partial\wtilde{X}\to \bP(V_\bR)$ be the boundary map, and let $\Lambda_L\subset \LGr(V_{\bR})$ be defined as the complement of the domain of discontinuity $\Omega_L\subset \LGr(V_{\bR})$ from \autoref{thmintro:uniformization_of_real_lagrangians}.
	\begin{enumerate}
		\item A line $[v]\in \xi(\partial\wtilde{X})\subset \bP(V_\bR)$ has rational coordinates if and only if there exists a unipotent transformation $\gamma\in \Gamma$ such that $v$ is both in the kernel and image of $\gamma-\id$.
		In particular, the rational vectors on $\xi(\partial\wtilde{X})$ fall into finitely many orbits under the action of $\Gamma$, corresponding to the cusps of $X$.
		\item A line $[w]\in \LGr\left(V_\bQ\right)$ representing a Lagrangian $L_{[w]}\subset V_{\bQ}$ is contained in $\Lambda_L$ if and only if there exists a unipotent $\gamma\in \Gamma$ and $[v]$ as in part (i), fixed by $\gamma$, such that $[v]\subset L_{[w]}$.
	\end{enumerate}
\end{theoremintro}
\noindent This result follows from \autoref{thm:integral_vectors_must_come_from_cusps}, which establishes the result in slightly greater generality.
This result should be useful to build a compactification of the quotients of some of the domains of discontinuity appearing in \autoref{corintro:domains_of_discontinuity}, particularly $\Omega_L\subset \LGr(V_\bR)$.

\autoref{thmintro:rational_directions_on_limit_curve} above, combined with the strong Torelli theorem provides an interesting property of rational Lagrangian subspaces:
\begin{corollaryintro}[Dichotomy for rational Lagrangian subspaces]
	\label{corintro:dichotomy_for_rational_lagrangian_subspaces}
	With assumptions as in \autoref{thmintro:rational_directions_on_limit_curve}, for every rational Lagrangian subspace $L\subset V_{\bQ}$, precisely one of the following holds:
	\begin{itemize}
		\item either there exists a unipotent transformation $\gamma\in \Gamma$ and vector $v\in L$ such that $v$ is both in the kernel and in the image of $\gamma-\id$,
		\item or there exists a unique $x\in \wtilde{X}$ such that $L_{\bC}\cap F^2(x)\neq \{0\}$.
	\end{itemize}
\end{corollaryintro}
\noindent Note that the property of being both in the kernel, and in the image, of $\gamma-\id$ is equivalent (in $\Sp_4$) to $v$ belonging to the deepest part of the monodromy weight filtration, see \autoref{sssec:weight_filtration}.


\subsubsection*{Minimality of the action}
	\label{sssec:minimality_of_the_action}
In the case of Hitchin representations (see \cite{Hitchin1992_Lie-groups-and-Teichmuller-space} were they were introduced) to $\Sp_{4}(\bR)$, there is a limit ``curve'' $\Lambda_P\subset \bP(V_{\bR})$ and a limit surface $\Lambda_L\subset \LGr(V_{\bR})$, but furthermore there is a continuous equivariant map $\Lambda_P\into \Lambda_L$, in other words there is an equivariant choice of full flag.
The action of the group on the limit surface $\Lambda_L$ preserves the limit curve, and on the complement the action is conjugated to that of a Fuchsian lattice in $\SL_{2}(\bR)$ acting on $\SL_{2}(\bR)/A$, where $A$ is the diagonal group.
In particular, the orbits of the action are ``wild'' and there are orbit closures of arbitrary Hausdorff dimension below $2$.

It is therefore of interest that the following holds:
\begin{theoremintro}[Minimality on limit set]
	\label{thmintro:minimality_on_limit_set}
	Suppose $\rho\colon \pi_1(X)\to \Sp_4(\bR)$ is an $\alpha_1$-log-Anosov representation as above.
	Suppose also that there is at least one parabolic boundary point where the monodromy is a rank $1$ unipotent.

	Then the action on the limit set $\Lambda_L\subset \LGr(V_{\bR})$ is minimal.
\end{theoremintro}
\noindent This result is established in \autoref{thm:minimality_of_the_action}.
Note that all hypergeometric examples of \autoref{thmintro:good_hypergeometric_parameters} satisfy the assumptions.
In particular, the equivariant \emph{measurable} section provided by the Oseledets theorem has \emph{dense} image in $\Lambda_L$ (see \autoref{sssec:oseledets_lagrangians_dense}).

\subsubsection*{Schwarz reflection in higher rank}
Hypergeometric equations with real parameters have an additional complex-conjugation symmetry.
This allows one to embed with index $2$ the monodromy group in a group generated by three involutions, corresponding to the three segments on the real projective line formed by eliminating $0,1,\infty$.
In fact, explicit formulas \`a la Levelt are possible, see \autoref{sssec:matrices_of_anti_involutions}.

This information can be used to ``Schwarz-reflect'' the variation of Hodge structure, in analogy with the case of triangle reflection groups.
First, recall that in the case of triangle reflection groups, the image of the period map on the real axis is necessarily a subsegment of a hyperbolic geodesic, and the geodesic itself is determined by the ``monodromy'' reflection which fixes it.

Two notable features arise in this higher weight situation.
First, the image of the period map is not uniquely determined by the monodromy reflection.
However, the Griffiths transversality conditions pleasantly decouple and the image is essentially determined by a spacelike curve in the pseudo-Riemannian space $\bS^{1,1}$.
This puts rather stringent conditions on its global behavior, and in particular makes it the graph of a globally Lipschitz function in appropriate coordinates.

The second feature is the construction of a tiling of the domain of discontinuity $\bS^{1,3}(W_\bR)$ from \autoref{corintro:domains_of_discontinuity}, using polyhedra obtained from the variation of Hodge structure.
We refer to \autoref{ssec:polyhedra_and_reflections} for more details on this construction and remark that this tiling can be used as an alternative route to obtaining the log-Anosov property and its consequences.
Our assumption A can be understood geometrically as a tiling condition, analogous to the one arising in the \Poincare polyhedron theorem.
It seems likely that this approach can be generalized to higher rank hypergeometric variations.



\subsection*{Further techniques and results}
	\label{ssec:further_techniques_and_results}

\subsubsection*{GIT and proper discontinuity}
	\label{sssec:git_and_proper_discontinuity}
There is a large literature on criteria for proper discontinuity of an action of a closed subgroup $\Gamma$ in a semisimple Lie group $G$ on a $G$-space $M$.
For example Benoist \cite{Benoist_Actions-propres-sur-les-espaces-homogenes-reductifs} considers $M=G/H$ for noncompact subgroups $H$, and related criteria for domains in flag manifolds appear in Guichard--Wienhard \cite{GuichardWienhard2012_Anosov-representations:-domains-of-discontinuity-and-applications}, Kapovich--Leeb--Porti \cite{KapovichLeebPorti2018_Dynamics-on-flag-manifolds:-domains-of-proper-discontinuity-and-cocompactness}, and references therein.
To make the text self-contained, and to provide some tools for more general situations, we consider a general $G$-representation $V$ and show that $\Gamma$ acts properly discontinuously on the set of stable points in $\bP(V)$, see \autoref{thm:proper_discontinuity_criterion} for a more general statement.
That there is a relation to Mumford's constructions in Geometric Invariant Theory \cite{MumfordFogartyKirwan_Geometric-invariant-theory} was first suggested by Kapovich--Leeb--Porti \cite{KapovichLeebPorti2018_Dynamics-on-flag-manifolds:-domains-of-proper-discontinuity-and-cocompactness}, who used this point of view to construct domains of discontinuity in flag manifolds.

We take this point of view a bit further and define stable points in an arbitrary $G$-representation $V$ so that the assertion of the Hilbert--Mumford numerical criterion holds (see \autoref{def:stable_point}).
Semistable points are defined analogously.
The class of $1$-parameter semigroups that we use for the ``numerical criterion'' are given by the limit set of the group $\Gamma$.
We refer to \autoref{ssec:stability_and_the_numerical_criterion} for the relevant notions.

\subsubsection*{Index estimates, variational methods}
	\label{sssec:index_estimates_variational_methods}
The proofs of the main analytic properties of the VHS, contained in \autoref{sec:hodge_theory_and_growth}, are based on some inequalities which provide control on the growth of vectors under parallel transport.
Some of the arguments have a variational flavor, and indeed it is quite convenient to invoke at certain steps the Palais--Smale condition and the Ekeland variational principle, even though our situation is essentially $2$-dimensional.

\subsubsection*{Dominated cocycles and posets}
	\label{sssec:dominated_cocycles_and_posets}
While the notion of Anosov, or log-Anosov, representation, is intrinsic to the group $G$, it is almost always necessary to look at particular $G$-representations.
To each $G$-representation $\phi\colon G\to \GL(V)$ there is an associated local system $\bV$, and if there is a VHS on one such local system, it can be propagated to other $G$-representations.

The dynamical properties of the cocycle induced by $\bV$ under parallel transport along the geodesic flow are encoded by the notion of \emph{dominated bundle, or cocycle}.
In the context of Anosov representations, this was observed by Bochi, Potrie, and Sambarino \cite{BochiPotrieSambarino2019_Anosov-representations-and-dominated-splittings}, where most of the analysis was concerned with cocycles with only two pieces, one strictly dominating the other.
The natural structure, in the context of Anosov representations, is a splitting indexed by a \emph{poset}, see \autoref{def:dominated_cocycle}.
This allows us to do a finer analysis of the dynamical behavior.
We illustrate the flexibility of this notion with some examples in \autoref{ssec:dominated_cocycles_and_representations}.
The necessity to consider poset-indexed bundles in domination arises when considering representations other than the ``standard ones''; the adjoint representation of $\Sp_4(\bR)$ for instance exhibits a nontrivial poset structure, see \autoref{fig:adjoint_rep_poset} and \autoref{thm:domination_in_log_anosov_representations} for the general version.
Let us note that poset structures on Lyapunov bundles have also recently appeared in the work of Ledrappier and Lessa \cite{LessaLedrappier2021_Exact-dimension-of-Furstenberg-measures}.

In most situations in the literature, the representation of $G$ is used as an auxiliary tool and can be picked at will, c.f. for instance the ``Tits'', or ``\Plucker'' representation used in \cite[Prop.~8.3]{BochiPotrieSambarino2019_Anosov-representations-and-dominated-splittings}.
In the present paper, the representations to consider are dictated by Hodge theory and so we need to extract further dynamical information, encoded by the poset of dominated bundles.

\subsubsection*{Diffeomorphism types of real domains of discontinuity}
The real $3$-manifold obtained as a quotient of the domain of discontinuity in the Lagrangian Grassmannian, constructed in \autoref{corintro:domains_of_discontinuity}, has a natural structure of (orbifold) circle bundle over a Riemann surface, by \autoref{thmintro:uniformization_of_real_lagrangians}.
This makes it a Seifert fibered $3$-manifold, for which we refer to the lecture notes \cite{JankinsNeumann1983_Lectures-on-Seifert-manifolds} for more information.
Furthermore, \autoref{thmintro:uniformization_of_pseudo_sphere} provides a uniformization of the corresponding $\bC^{\times}$-bundle by the pseudo-Riemannian sphere $\bS^{1,3}$.
This ties in with Neumann's appendix to \cite{JankinsNeumann1983_Lectures-on-Seifert-manifolds}, which identifies holomorphic $\bC^\times$-bundles (with negative Euler class) with quasi-homogeneous surface singularities and endows the associated circle bundle with a geometric structure of Thurston type (see also \cite{KulkarniRaymond1985_3-dimensional-Lorentz-space-forms-and-Seifert-fiber-spaces}).
It would be interesting to understand if there is a relation between the uniformization constructed here using Hodge theory, and those constructed by Neumann.

\subsubsection*{Thin hypergeometric groups}
No lattice in $\Sp_4(\bR)$ can have a domain of discontinuity as in \autoref{corintro:domains_of_discontinuity}.
It follows that the monodromy group $\Gamma$, when it has an integral structure, is necessarily a ``thin group'' in the sense of Sarnak \cite{Sarnak2014_Notes-on-thin-matrix-groups}.
Let us note that the proofs in the present paper offer an alternate route to the results from \cite{FilipFougeron2021_A-cyclotomic-family-of-thin-hypergeometric-monodromy-groups-in-Sp4R} and \cite{BravThomas2014_Thin-monodromy-in-Sp4}, where thinness is established using ping-pong and an explicit construction of cones.
These explicit cones have, nonetheless, other applications to a more detailed understanding of the monodromy groups, and which will be explored in a separate paper \cite{FilipCones}.

Note also that if the parameters $\mu,\nu$ appearing in \autoref{table:good_hypgeom_real}, \ref{table:good_hypgeom_integral}, and \ref{table:maximal_hypgeom} are taken to be \emph{rational}, then the entries of the monodromy groups will be algebraic integers.
We can therefore apply Galois conjugation to the entries and in fact the Galois group of $\bQ$ will act via its cyclotomic quotient.
The new group will be the monodromy of another hypergeometric equation, where the parameters are obtained by multiplying the original ones by a specific integer and reducing modulo $1$.
The two monodromy groups will be abstractly isomorphic, via the isomorphism given by Galois conjugation, and since the first one was thin, so will the second one.
Note that thinness of the first group follows also because it is commensurable to a free or a surface group, a property preserved under abstract isomorphism.

To put into a broader perspective the hypergeometric examples analyzed in \autoref{sec:hypergeometric_local_systems}, let us note that Beukers--Heckman \cite{Beukers_Heckman} were concerned with the classification of finite monodromy groups, i.e. whose image is compact and integral.
This situation can be regarded as the ``rank $0$'' case, where rank refers to the real rank of a real semisimple Lie group.
Fuchs, Meiri, and Sarnak \cite{FuchsMeiriSarnak2014_Hyperbolic-monodromy-groups-for-the-hypergeometric-equation-and-Cartan-involutions} classified the monodromy groups acting on hyperbolic space, i.e. a ``rank $1$'' situation.
The results of this paper can then be placed in a ``rank $2$'' situation, which as we show leads to interesting geometric structures and brings in the period map and the variation of Hodge structure (which do not seem to play a direct role in the previously mentioned works).

Because of the rich geometric structures that result from assumption A (and its variant ``assumption B'' described below) it follows that any VHS satisfying one of these assumptions is necessarily thin.
It would be interesting to understand if there are conditions under which the converse holds.

\begin{figure*}[htbp!]
	\centering
	\includegraphics[width=0.49\linewidth]{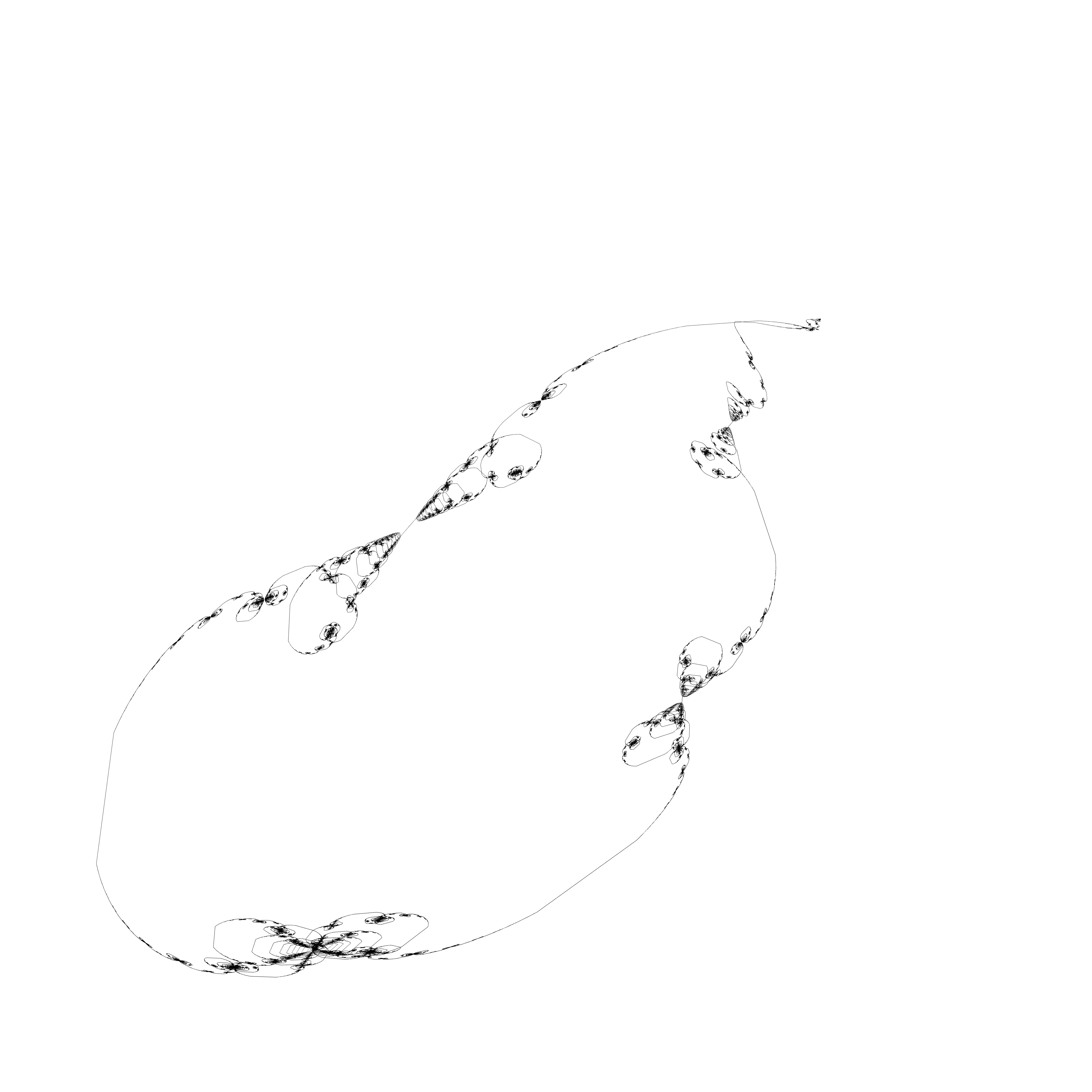}
	\includegraphics[width=0.49\linewidth]{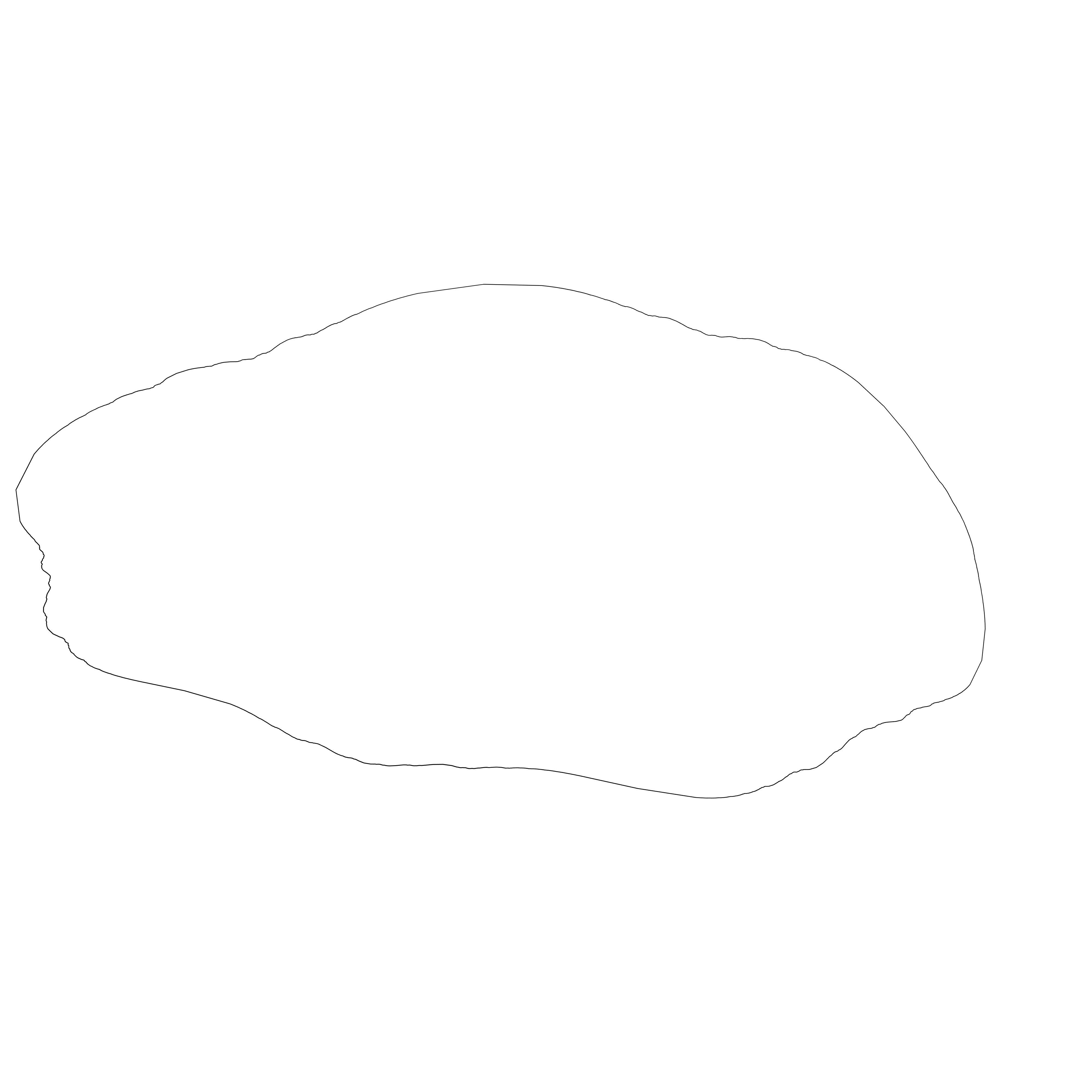}
	\captionsetup{labelformat=empty}
	\caption{\textsc{Figure:} Equivariant boundary curves in the Dwork family, on the left for the mirror quintic (rank $4$ monodromy in $\Sp_4(\bR)$ satisfying assumption A), on the right for the next case (rank $5$ monodromy in $\SO_{2,3}(\bR)$ satisfying assumption B, i.e. maximal).}
	\label{fig:Dwork}
\end{figure*}

\subsubsection*{Maximal representations}
In \autoref{ssec:maximal_hypergeometric_monodromy} we consider a situation analogous to ``assumption A'', but which requires a different second fundamental form to be an isomorphism.
The condition on the monodromy then translates to a well-studied notion -- maximal representations of surface groups \cite{BurgerIozziLabourie2005_Maximal-representations-of-surface-groups:-symplectic-Anosov-structures}.
Among the symplectic rank $4$ hypergeometric equations it is not possible to have a maximal representation, because of the rank $1$ unipotent at the singular point $1\in \bP^1(\bC)$.
On the other hand, it is possible to have rank $5$ hypergeometric maximal representations into $\SO_{2,3}(\bR)$.
They are classified in \autoref{table:maximal_hypgeom}.
Note that a VHS which is both maximal and satisfies assumption A must necessarily be a tensor construction on the uniformizing (Fuchsian) VHS of the base Riemann surface.
A detailed investigation of the relation between maximal representations, Hodge theory, as well as a classification in higher rank of maximal hypergeometric monodromy will appear in a separate text.

Many of the properties of maximal representations are fundamentally different from the Anosov representations that satisfy assumption A.
Perhaps the most striking is the difference in their limit sets, illustrated in the figure with a numerical simulation.
The limit set of a maximal representation is always Lipschitz, whereas in the case of hypergeometric equations satisfying assumption A it is not rectifiable.
Computing its Hausdorff dimension would be an interesting challenge.

Let us finally note that the case of maximal representations also leads to a formula for Lyapunov exponents, this time expressing the top one:
\[
	\lambda_1(\bV)=\frac{\deg \cV^{0,3}_{ext}}{\chi(X)}
\]
See \autoref{sssec:lyapunov_exponent_maximal_case} for a discussion.

\begin{table}
	\begin{tabular}{cc}
	\toprule
	$\alpha_{\bullet}$                                     & $\beta_{\bullet}$                           \\ \hline
	\renewcommand{\arraystretch}{1.5}
	$\left(\mu,\tfrac 12,\tfrac 12, \tfrac 12, 1-\mu \right)$ 
	& 
	$\left(0,0,0,\frac{M}{2M+1},\frac{M+1}{2M+1}\right)$ \\
	\text{ or } & \text{ or }\\
	 $\left(\frac{N-k_N}{2N},\frac{N-1}{2N}, \frac{1}{2},
	 \frac{N+1}{2N},\frac{N+k_N}{2N}\right)$ 
	 &
	 $\left(0, \frac{k_M}{M}, \frac{k_M+1}{M},\frac{M-(k_M+1)}{M},\frac{M-k_M}{M}\right)$
	 \\
	 \bottomrule
	\end{tabular}
	\caption{\label{table:maximal_hypgeom}Any set choice from the first column is compatible with any choice from the second, subject to the condition $\alpha_{min}>\beta_{med}$, where $\alpha_{min}:=\mu$ or $\tfrac{N-k_N}{2N}$, and $\beta_{med}:=\frac{M}{2M+1}$ or $\frac{k_M+1}{M}$ depending on the choices.
	The parameter $\mu$ is real, while $M,N,k_M,k_N$ are positive integers with $1<k_N<N$ and $2(k_M+1)<M$.}
\end{table}

\subsubsection*{MUM Lagrangian is good, explicitly}
	\label{sssec:mum_lagrangian_is_good_explicitly}
The proof of the main conjectures from \cite{EskinKontsevichMoller2018_Lower-bounds-for-Lyapunov-exponents-of-flat-bundles-on-curves} is accomplished in \autoref{ssec:gradient_and_index_estimates}.
In \autoref{rmk:formula_for_sum_of_lyapunov_exponents}, we explain why \autoref{thm:index_estimates} already suffices to obtain both the formula for the sum of Lyapunov exponents, and the assertion that the ``MUM Lagrangian'' (Conjecture~6.4 of loc.cit.) is ``good''.
In particular, this establishes \cite[Conjecture~6.4]{EskinKontsevichMoller2018_Lower-bounds-for-Lyapunov-exponents-of-flat-bundles-on-curves} on the explicit power series that vanishes nowhere on the unit disk.

\subsubsection*{Formula for Lyapunov exponents, and extra punctures}
	\label{sssec:formula_for_lyapunov_exponents_and_punctures}
The formula for the sum of Lyapunov exponents from \autoref{thmintro:formula_for_the_sum_of_lyapunov_exponents} holds in slightly greater generality.
Namely, suppose that $Y=X\setminus S$, where $S$ is a finite set of points, and $X$ is as before, i.e. finite volume hyperbolic orbifold, with $\bV\to X$ the local system underlying a VHS that satisfies assumption A.
Then the formula for the sum of Lyapunov exponents holds on $Y$ as well, even though the monodromy representation of the fundamental group of $Y$ is not log-Anosov.
This holds because the ``bad locus'' is empty on $Y$, if it is empty on $X$.

Let us note that this observation can be used to obtain information on classical dynamical systems, whose statement does not involve any Hodge theory, but whose proof needs it.
Namely, with the setup as above, let $\bF\to X$ be the local system giving the Fuchsian uniformization of $X$, i.e. the monodromy, valued in $\SL_2(\bR)$, is the discrete subgroup which yields $X$ after quotienting $\bH^2$.
Then the positive Lyapunov exponent is $1$ (with appropriate normalizations of the geodesic flow), and the system is uniformly hyperbolic.
For the geodesic flow on $Y$, the cocycle induced by $\bF\vert_Y$ is no longer uniformly hyperbolic.
However, its positive Lyapunov exponent is $\frac{\chi(Y)}{\chi(X)}$, where $\chi$ is the Euler characteristic.

\subsubsection*{Sturm's theorem on zeros and disconjugacy}
	\label{sssec:sturm_s_theorem_on_zeros_and_disconjugacy}
It is a classical result of Sturm's in the theory of ODEs that given the equation $y''+q\cdot y=0$, between any two zeros of one solution, any other solution must vanish at least once.
Equations whose solutions have at most one zero are called \emph{disconjugate} (compare with \autoref{thm:index_estimates}).
A geometric way to interpret Sturm's theorem is to view the differential operator as a map from the base space to $\bP^1$: the $\bP^1$ is the projectivization of the $2$-dimensional space of solutions and a point maps to the line of solutions vanishing at that point.
The discussion naturally complexifies, leading to complex projective structures on Riemann surfaces and univalence criteria, of which some of the most general are Epstein's \cite{Epstein1987_Univalence-criteria-and-surfaces-in-hyperbolic-space}.
Our uniformization results, and associated developing maps, can be viewed as saying that the corresponding Picard--Fuchs equation is ``disconjugate on the universal cover'', a global restriction on its behavior.
Note that this is a fragile property, as illustrated by the conditions that arise when classifying hypergeometric equations having this property.

\subsubsection*{Some speculations}
	\label{sssec:some_speculations}
Our constructions produced domains of discontinuity in real and complex flag manifolds.
After reviewing the analogous situation for compact Riemann surfaces with a real structure in \autoref{sssec:real_algebraic_curves_recollections}, we do a similar analysis for domains of discontinuity in $\LGr(V_\bC)$ and $\LGr(V_\bR)$.
Some further questions about the relationship between these domains of discontinuity are formulated in \autoref{question:properties_of_the_quotient_Frankenstein}.
Some related constructions, for domains of discontinuity in complex flag manifolds, have been considered by Dumas--Sanders \cite{DumasSanders_Uniformization-of-compact-complex-manifolds-by-Anosov-representations}.

\subsubsection*{Further remarks on Lyapunov exponents}
	\label{sssec:further_remarks_on_lyapunov_exponents}
In the case when the monodromy is a lattice, or more generally has full limit set in the corresponding flag manifold, the inequality between Lyapunov exponents and degrees of Hodge bundles proved in \cite{EskinKontsevichMoller2018_Lower-bounds-for-Lyapunov-exponents-of-flat-bundles-on-curves} can be strict.
See for instance the work of Daniel and Deroin \cite{DanielDeroin2019_Lyapunov-exponents-of-the-Brownian-motion-on-a-Kahler-manifold} where results along these lines are proved, albeit under the assumption that the base of the VHS is compact.

The numerical experiments of Kontsevich that led to the discovery of the formula in \autoref{thmintro:formula_for_the_sum_of_lyapunov_exponents} were further refined by Fougeron \cite{Fougeron2019_Parabolic-Degrees-and-Lyapunov-Exponents-for-Hypergeometric-Local-Systems}.
This lead to the numerical discovery of another infinite family of examples, which was proved to be thin in \cite{FilipFougeron2021_A-cyclotomic-family-of-thin-hypergeometric-monodromy-groups-in-Sp4R}.
Note that this family, as well as the original 7 thin cases, are contained in \autoref{table:good_hypgeom_integral}.

For some further results on how the Lyapunov exponents depend on the representation, from the point of view of Higgs bundles, see the work of Costantini \cite{Costantini2020_Lyapunov-exponents-holomorphic-flat-bundles-and-de-Rham-moduli}.

\subsubsection*{Outline of the text}
	\label{sssec:outline_of_the_text}
\autoref{sec:hodge_theory_and_growth} establishes the main analytic results regarding the class of variations of Hodge structure that we consider.
The conjecture from \cite{EskinKontsevichMoller2018_Lower-bounds-for-Lyapunov-exponents-of-flat-bundles-on-curves} on the sum of Lyapunov exponents is established there, and the reader interested only in this formula can stop reading at \autoref{rmk:formula_for_sum_of_lyapunov_exponents}, halfway through the section.
After the basic analytic properties are established, we obtain some further quantitative estimates and construct the developing maps which play a role in subsequent constructions.

\autoref{sec:hypergeometric_local_systems} classifies the hypergeometric differential equations that satisfy our assumption.
Additionally, we develop the Schwarz reflection structure of the monodromy and VHS, and explicitly analyze the period map on the real line.

\autoref{sec:elements_of_lie_theory_and_anosov_representations} contains the main Lie-theoretic results.
It is independent of the previous sections.
After recalling some of the classical notions, we introduce poset-dominated cocycles and log-Anosov representations.
The analogy with GIT is developed in \autoref{ssec:stability_and_the_numerical_criterion}, which in particular gives a criterion for proper discontinuity.
We then establish \autoref{thmintro:rational_directions_on_limit_curve} which controls the rational points on the limit curve, followed by \autoref{thmintro:minimality_on_limit_set} which yields minimality of the action on the limit set.

\autoref{sec:hodge_theory_and_anosov_representations} brings together the results from the previous sections on Anosov representations and Hodge theory.
In particular we show that the domains of discontinuity are indeed uniformized by the Hodge structures, and produce some further interesting domains.
We also classify in this section the \emph{maximal} hypergeometric local systems of rank $5$, producing another class of examples where a formula for the sum of Lyapunov exponents holds.

Finally, \autoref{sec:unipotent_dynamics} shows that using adapted metrics in the cusp, the behavior of dominated cocycles is as good as in the compact part.
We recall some further notions, including the monodromy weight filtration, that illustrate the usefulness of Hodge-theoretic techniques in the context of Anosov representations.


\subsubsection*{Notes and references}
For more on Sturm's theorem, see \cite{Arnolcprime-d1985_Sturm-theorems-and-symplectic-geometry}, where a symplectic extension for \emph{real} and \emph{Hamiltonian} ODEs is proved.
A substantial difference with our case is that, besides complexification, our complex developing map to the Lagrangian gives a complex null-curve, whereas Arnold considers curves which are timelike.

\autoref{thmintro:rational_directions_on_limit_curve} is inspired by an analogous one, in the setting of triangle reflection groups, due to McMullen \cite[Thm.~1.1]{McMullen2021_Billiards-heights-and-the-arithmetic-of-nonarithmetic-groups}.
An introduction to some of the applications of Lyapunov exponents and Hodge theory to \Teichmuller dynamics is in the lecture notes of Forni and Matheus \cite{ForniMatheus2014_Introduction-to-Teichmuller-theory-and-its-applications-to-dynamics-of-interval-exchange}.

Collier, Tholozan, and Toulisse \cite{CollierTholozanToulisse2019_The-geometry-of-maximal-representations-of-surface-groups-into} consider in the broader context of Higgs bundles the case of maximal representations and their associated uniformization (what could be viewed as a counterpart ``assumption B'' to our ``assumption A'').
It would be interesting to find, besides the hypergeometric examples in \autoref{ssec:maximal_hypergeometric_monodromy}, monodromy representations that are maximal and have an integral structure.

The original observation that for weight $1$ variations of Hodge structure, one can obtain a formula for the sum of Lyapunov exponents, goes back to Kontsevich \cite{Kontsevich1997_Lyapunov-exponents-and-Hodge-theory} and was substantially developed in the context of \Teichmuller dynamics by Forni \cite{Forni2002_Deviation-of-ergodic-averages-for-area-preserving-flows-on-surfaces-of-higher}.
An extension to families of K3 surfaces can be found in \cite{Filip2018_Families-of-K3-surfaces-and-Lyapunov-exponents}.

More background on Anosov representations is available in Labourie's original paper \cite{LabourieAnosov}, as well as Guichard--Wienhard \cite{GuichardWienhard2012_Anosov-representations:-domains-of-discontinuity-and-applications}, or the survey of Kapovich--Leeb--Porti \cite{KapovichLeebPorti2016_Some-recent-results-on-Anosov-representations}, and references therein.

A Torelli theorem for generic quintic threefolds was established by Voisin \cite{Voisin1999_A-generic-Torelli-theorem-for-the-quintic-threefold}.
However, note that Torelli theorems \emph{can} fail for $1$-parameter families, for example in examples constructed by Szendr\"{o}i \cite{Szendroi2000_Calabi-Yau-threefolds-with-a-curve-of-singularities-and-counterexamples-to-the-Torelli}.

\subsubsection*{Conventions}
	\label{sssec:notational_conventions}
The topological notions \emph{closed, open, etc.} are always for the analytic topology, unless the modifier `Zariski-' is applied.
The constants $C,\ve,$ etc. that appear in statements are distinct from one result to the next and we do not make the dependence explicit, as only the existence of such constants is sufficient.
The notation $X=O(Y)$ means that there exists $c>0$ such that $|X|\leq c Y$.
Additionally, the inequality $X\leqapprox Y$ means there exists $c>0$ such that $X\leq cY$.
Unless further specified, the constants in such inequalities are allowed to depend on fixed background geometric data, including the Riemann surface and variation of Hodge structure.

\subsubsection*{Acknowledgments}
Most of the work on this paper was completed during the author's stay at the Institute for Advanced Study in Princeton during the academic year 2018-2019.
I am deeply grateful for the excellent working conditions.

I am grateful to Fanny Kassel for discussions around the criterion of proper discontinuity in \autoref{thm:proper_discontinuity_criterion}, to Carlos Matheus for bringing my attention to the Bochi--Potrie--Sambarino paper \cite{BochiPotrieSambarino2019_Anosov-representations-and-dominated-splittings}, to Brian Collier for questions around the constructions in \autoref{ssec:developing_maps_to_real_homogeneous_spaces}, and to Giovanni Forni for questions and suggestions that improved the presentation.

This material is based upon work supported by the US National Science Foundation under Grants No. DMS-2005470 (SF), No. DMS-1638352 (IAS), as well as
 DMS-1107452, 1107263, 1107367 ``RNMS: Geometric Structures and Representation Varieties'' (the GEAR Network).

This research was partially conducted during the period the author served as a Clay Research Fellow.



\section{Hodge theory and growth}
	\label{sec:hodge_theory_and_growth}

\paragraph{Outline}
In this section we use tools from Hodge theory establish the necessary growth estimates of flat sections.
These estimates imply the conjectured formula for the sum of Lyapunov exponents from \cite{EskinKontsevichMoller2018_Lower-bounds-for-Lyapunov-exponents-of-flat-bundles-on-curves}, and will be used in \autoref{sec:hodge_theory_and_anosov_representations} to establish the Anosov property for the particular class of variations of Hodge structure under consideration.

In \autoref{ssec:hodge_theory_background} we set up preliminaries from Hodge theory, introduce the class of examples we study and analyze their degenerations.
The basic concept is \autoref{def:assumption_a} that we call ``assumption A''.
Next, in \autoref{ssec:gradient_and_index_estimates} we establish the key analytic estimates that yield all subsequent results.
The basic technique is based on showing that specific functions associated to flat sections can only have local minima, and then using more quantitative information to ensure the correct growth at infinity.
Techniques from the calculus of variations provide a convenient framework.

Next, in \autoref{ssec:symplectic_orthogonal_dictionary} we introduce the necessary constructions from linear algebra that will be used to produce domains of discontinuity.
We also discuss the correspondence between symplectic geometry in dimension $4$ and indefinite-orthogonal geometry in dimension $5$, arising from the isogeny of Lie groups $\Sp_4(\bR)\to \SO_{2,3}(\bR)$.
At different steps in the argument, it is more convenient to use one of the two geometries.

Finally, in \autoref{ssec:developing_maps_to_real_homogeneous_spaces} we construct the uniformization maps that arise from the variations of Hodge structure satisfying assumption A.



\subsection{Hodge Theory background}
	\label{ssec:hodge_theory_background}

An introduction to variations of Hodge structures and period domains can be found in the survey of Griffiths and Schmid \cite{GriffithsSchmid_Recent-developments-in-Hodge-theory:-a-discussion-of-techniques}
as well as the more recent monograph \cite{CarlsonMuller-Stach_Period-mappings-and-period-domains}.
Everything that we need is concisely covered in the lectures of Griffiths \cite[Ch.~I-IV]{Griffiths_Topics-in-transcendental-algebraic-geometry}.

\subsubsection{Setup}
	\label{sssec:setup_hodge_theory_background}
Let $\ov{X}$ be a compact Riemann surface, $D\subset X$ a possibly empty finite set of points, and let $X:=\ov{X}\setminus D$ be the complement.
Assume that $X$ has negative Euler characteristic and so has a unique complete constant curvature $-1$ metric, which we will refer to as the hyperbolic metric.
Fix also a basepoint $x_0\in X$, which we will frequently identify with its lift to the associated universal cover $\wtilde{X}$.

\subsubsection{Variations of Hodge structure}
	\label{sssec:variations_of_hodge_structure}
Recall that a real, polarized, weight $n$ variation of Hodge structure (VHS) over $X$ consists of:
\begin{itemize}
	\item A local system of real vector spaces $\bV\to X$, as well as a nondegenerate flat bilinear pairing on the fibers, denoted $\ip{-,-}_{I}$, which is $(-1)^n$-symmetric.
	The flat connection on $\bV$ is denoted $\nabla^{GM}$ and called the Gauss--Manin connection.
	\item A decomposition of the complexified local system
	\[
		\bV_{\bC}:= \oplus \cV^{p,q} \text{ into complex subbundles}
	\]
	such that under complex conjugation $\ov{\cV^{p,q}}=\cV^{q,p}$.
	Assume also that the decomposition is orthogonal for the indefinite hermitian pairing $\ip{v,v}_{IH}:=(\sqrt{-1})^{n}\ip{v,\ov{v}}_{I}$ and this hermitian pairing is positive, resp. negative, definite on $\cV^{p,q}$ according to whether $p$ is odd, resp. even.
	Denote by $\ip{-,-}_{H}$ the positive-definite hermitian metric, called the Hodge metric, obtained from $\ip{-,-}_{IH}$ by flipping appropriately the signs on $\cV^{p,q}$.
	\item The Gauss--Manin connection decomposes as
	\begin{align}
		\label{eqn:Gauss_Manin_Higgs_field}
		\nabla^{GM} = \nabla^{Ch} + \sigma + \sigma^{\dag}
	\end{align}
	where $\nabla^{Ch}$ is the Chern connection for a holomorphic structure on $\cV^{p,q}$, and $\sigma = \sum \sigma_{p,q}$ with
	\[
		\sigma_{p,q}\colon \cV^{p,q}\to \cV^{p-1,q+1}\otimes \cK_{X}
	\]
	holomorphic maps, called second fundamental forms.
	Here $\cK_{X}$ denotes the canonical bundle of $X$, and adjoints are for the Hodge metric.
\end{itemize}
The \emph{Hodge filtration} bundles are $F^p\cV:= \oplus_{p'\geq p} \cV^{p',n-p'}$ and yield holomorphic subbundles of $\bV_{\bC}$.
We will work mostly on the universal cover $\wtilde{X}$, on which the local system can be identified, using the Gauss--Manin connection, with $\wtilde{X}\times V_{\bR}$, where $V_{\bR}:=\bV(x_0)$ is the fiber over $x_0$.

Given a flat section of $\bV_{\bC}$ on $\wtilde{X}$, we can identify it with a vector $v$ in the fixed vector space.
Its $(p,q)$-components are defined by $v= \sum v^{p,q}(x)$ with $v^{p,q}(x)\in \cV^{p,q}(x)$, and satisfy, using $\autoref{eqn:Gauss_Manin_Higgs_field}$ and $\nabla^{GM}v=0$:
\begin{align}
	\label{eqn:nabla_Ch_flat_components}
	\nabla^{Ch}v^{p,q} =-\sigma_{p+1,q-1}v^{p+1,q-1} - \sigma^{\dag}_{p,q}v^{p-1,q+1}.
\end{align}
Note in particular that for the extremal components, i.e. $v^{n,0}$ and $v^{0,n}$, one of the two terms vanishes.

\subsubsection{The weight $3$ notation}
	\label{sssec:the_weight_3_notation}
The situation of interest for this paper comes from a weight $3$ variation
\[
	\bV_{\bC}:=\cV^{3,0}\oplus \cV^{2,1} \oplus \cV^{1,2} \oplus \cV^{0,3}
\]
which is equipped with a symplectic form.
We will take $\dim \cV^{p,q}=1$ for all $p,q$.

\subsubsection{The weight $4$ notation}
	\label{sssec:the_weight_4_notation}
The second exterior power of the earlier weight $3$ VHS yields a weight $6$ VHS, which however splits off a $1$-dimensional piece given by the invariant symplectic form -- this yields a $1$-dimensional $(3,3)$-piece which is flat and orthogonal to the rest of the VHS.
Denote by $\bW$ the remaining $5$-dimensional piece, and shift the weights by $(1,1)$ to simplify notation:
\[
	\bW_{\bC} = \cW^{4,0} \oplus \cW^{3,1} \oplus \cW^{2,2} \oplus \cW^{1,3} \oplus \cW^{0,4}
\]
where the signature of the indefinite hermitian metric is $(-,+,-,+,-)$ on the respective summands, each of which is $1$-dimensional.

It is immediate to check that for instance 
$\cW^{4,0}\isom \cV^{3,0}\otimes \cV^{2,1}$
and
$\cW^{3,1}\isom \cV^{3,0}\otimes \cV^{1,2}$
and so $\sigma_{4,0}=\id\otimes \sigma_{2,1}$.

More pictorially, we can describe the maps and bundles as:
\begin{equation}
	\label{eqn:W_hodge_bundles_sigma_maps}
\begin{tikzcd}
	\cW^{4,0}
	\arrow[r, bend right, swap, "\sigma_{4,0}"]
	\arrow[r, bend left, leftarrow, "{\sigma_{4,0}^\dag}"]
	& 
	\cW^{3,1}
	\arrow[r, bend right, swap, "\sigma_{3,1}"]
	\arrow[r, bend left, leftarrow, "{\sigma_{3,1}^\dag}"]
	&
	\cW^{2,2}
	\arrow[r, bend right, swap, "\sigma_{3,1}^t"]
	\arrow[r, bend left, leftarrow, "\ov{\sigma_{3,1}}"]
	&
	\cW^{1,3}
	\arrow[r, bend right, swap, "\sigma_{4,0}^t"]
	\arrow[r, bend left, leftarrow, "\ov{\sigma_{4,0}}"]
	&
	\cW^{0,4}
\end{tikzcd}
\end{equation}
where we use the identification of duals to write the right part of the diagram of second fundamental forms.

\subsubsection{Deligne extension}
	\label{sssec:deligne_extension}
As explained in \autoref{sssec:orbifold_points_elliptic_parts} below, orbifold points are treated by passing to the appropriate finite cover.
Suppose now that $d\in D=\ov{X}\setminus X$ is a cusp, and further that the monodromy of the local system around $d$ is unipotent.
Then there exists a canonical choice of extension $\bV_{ext}$ of the local system in a neighborhood of $d$, called the Deligne extension.
Additionally, the Hodge bundles $\cV^{p,q}$ have extensions $\cV^{p,q}_{ext}$ as well, obtained by first extending the Hodge filtration bundles.

\begin{definition}[Assumption A]
	\label{def:assumption_a}
	A weight $3$, rank $4$, variation of Hodge structure will be said to satisfy \emph{assumption A} if
	\[
		\sigma_{2,1}\colon \cV^{2,1}_{ext}\to \cV^{1,2}_{ext}\otimes \cK_{\ov{X}}(\log D)
	\]
	is an isomorphism, where $\cK_{\ov{X}}(\log D)$ denotes the bundle of meromorphic $1$-forms on $\ov{X}$ with logarithmic (i.e. $\tfrac{dz}{z}$) poles at the points in $D$.

	Analogously, a weight $4$, rank $5$ variation of Hodge structure will be said to satisfy \emph{assumption A} if
	\[
		\sigma_{1,3}\colon \cW^{1,3}_{ext} \to \cW^{0,4}_{ext}\otimes \cK_{\ov{X}}(\log D)
	\]
	is an isomorphism.
\end{definition}
Note that a weight $3$ VHS satisfies assumption A if and only if its associated weight $4$ VHS via the construction in \autoref{sssec:the_weight_4_notation} satisfies it.
Indeed, we have that $\sigma_{1,3}=\sigma_{4,0}^{t}$ and the earlier remarks about the relation between second fundamental forms give the equivalence.

\emph{From now on, we assume that our variations of Hodge structure satisfy assumption A.}

\subsubsection{Notation for norms}
	\label{sssec:notation_for_norms}
Several norms will appear throughout the arguments below.
The flat indefinite hermitian form is denoted $\ip{-,-}_{IH}$, and $\norm{v}^2_{IH}:=\ip{v,v}_{IH}$.
Similarly $\ip{-,-}_H$ and $\norm{-}^2_{H}$ will denote the positive-definite Hodge pairing and norm.
For tangent vectors or covectors, we will use the norm coming from the fixed background hyperbolic metric on $X$, and which will be denoted $\norm{-}_{hyp}$.
Note that the hyperbolic metric can be viewed as a singular hermitian metric on (the dual of) $\cK_{X}(\log D)$.

When it will be clear from the context (i.e. which bundle the vector $v$ belongs to), the notation $\norm{v}$ will denote either the Hodge norm, or the hyperbolic norm.
When speaking of ``absolute'' constants, or uniformly bounded quantities, we will refer to constants that only depend on the original VHS over $X$, and the fixed hyperbolic metric on $X$.
The notation $A\geqapprox B$ means that there exists an absolute constant $c>0$ such that $c\cdot A\geq B$.

\begin{proposition}[Boundedness of second fundamental form]
	\label{prop:boundedness_of_second_fundamental_form}
	If the weight $3$ VHS satisfies assumption A, then there exists a constant $C>0$ such that
	\[
		C\geq	\norm{\sigma_{2,1}} \geq \frac{1}{C}
	\]
	where we view $\sigma$ as a section of $\left(\cV^{2,1}\right)^{\dual}\otimes \cV^{1,2}\otimes \cK_{X}(\log D)$ and each term of the tensor product is equipped with its Hodge, resp. hyperbolic metric.
	The same holds for $\norm{\sigma_{1,3}}$ in the case of a weight $4$ VHS.

	Furthermore, in weight $3$, at any of the punctures, the Hodge numbers of the limiting mixed Hodge structure, and corresponding monodromy, are the second and fourth in \autoref{fig:Hodge_numbers}.
	
	Conversely, if only the two listed situations occur at the punctures, and $\sigma_{2,1}$ does not vanish on $X$, then assumption A is satisfied.
\end{proposition}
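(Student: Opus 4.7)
The plan is to split the analysis into the compact core of $X$ and small disks around each cusp. On any compact $K \subset X$, $\sigma_{2,1}$ is an isomorphism of smooth line bundles (by assumption A, restricted to $X$), so continuity and compactness give uniform two-sided bounds on $\|\sigma_{2,1}\|$ over $K$. The substantive analysis is at each cusp $d \in D$, and proceeds in three steps: (i) extract the asymptotic behavior of the Hodge norm from Schmid's theorems, (ii) enumerate which LMHS types are compatible with assumption A, and (iii) verify the converse using the link between the limiting second fundamental form and the nilpotent log-monodromy $N$.

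For (i), the plan is to invoke Schmid's nilpotent orbit theorem (giving a holomorphic Deligne frame of $F^p_{ext}$) together with the $\SL_2$-orbit theorem, which for a weight $3$ VHS shows that a local Deligne frame $e^{p,q}$ of $\cV^{p,q}_{ext}$ has Hodge norm $\|e^{p,q}(z)\|_H \sim (-\log|z|)^{(\ell-3)/2}$, where $\ell$ is the weight in the LMHS of its projection to $F^p_{lim}/F^{p+1}_{lim}$. Combined with $\|dz/z\|_{hyp} \sim |\log|z||$ and the local expression $\sigma_{2,1}(e^{2,1}) = f(z)\, e^{1,2} \otimes \tfrac{dz}{z}$ with $f$ holomorphic, this yields
\[
	\|\sigma_{2,1}\|^2 \sim |f(z)|^2 \cdot (-\log|z|)^{\ell_2 - \ell_1 + 2},
\]
where $\ell_1, \ell_2$ are the LMHS weights of $\cV^{2,1}_{lim}$ and $\cV^{1,2}_{lim}$. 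Since assumption A forces $f(0) \neq 0$ (isomorphism of line bundles at $d$), $\|\sigma_{2,1}\|$ is bounded above and below near $d$ if and only if $\ell_1 = \ell_2 + 2$.

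For step (ii), enumerate the nilpotent Jordan types of unipotent monodromy in $\Sp_4$: by Jacobson--Morozov with the symplectic constraint (odd blocks come in matched pairs), the possibilities are $[1^4]$ (trivial), $[2,1,1]$, $[2,2]$, and $[4]$ (MUM). Computing the Deligne bigrading $I^{p,q}$ of the LMHS, constrained by $\dim F^p_{lim} = \dim F^p$, shows that the only admissible Hodge types are: for $[2,1,1]$, $Gr^W_3 = I^{3,0} \oplus I^{0,3}$ (the alternative $I^{2,1} \oplus I^{1,2}$ forces $\dim F^3_{lim} = 0$); for $[2,2]$, $Gr^W_4 = I^{3,1} \oplus I^{1,3}$ with $Gr^W_2 = I^{2,0} \oplus I^{0,2}$; for $[4]$, $Gr^W_{2k} = I^{k,k}$. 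Identifying $\cV^{2,1}_{lim} = \bigoplus_q I^{2,q}$ and $\cV^{1,2}_{lim} = \bigoplus_q I^{1,q}$ yields $(\ell_1, \ell_2) = (4,2)$ in cases $[2,1,1]$ and $[4]$, while $[2,2]$ gives $(2,4)$ (reversing the inequality) and the trivial case gives $(3,3)$ with no log growth (and no genuine $dz/z$ pole). The condition $\ell_1 = \ell_2 + 2$ thus selects exactly $[2,1,1]$ and $[4]$, which are the second and fourth entries of \autoref{fig:Hodge_numbers}.

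For step (iii), assume that the LMHS at each cusp is one of these two admissible types and $\sigma_{2,1}$ is nonvanishing on $X$. The relation $\nabla^{GM} e = -\tfrac{dz}{2\pi i z}\, N e$ in the Deligne frame shows that the limiting value of the $(2,1)\to(1,2)$ component of $\nabla^{GM}$ is a nonzero multiple of the induced map $N: I^{2,2} \to I^{1,1}$. In both admissible cases this is an isomorphism, because $N: Gr^W_4 \to Gr^W_2$ is an isomorphism by hard Lefschetz for the monodromy weight filtration. So the Deligne-extended $\sigma_{2,1}$ is nonzero at each cusp, and combined with nonvanishing on $X$ is a nowhere-vanishing section of a line bundle, hence an isomorphism. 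The main obstacle is carrying out the $\SL_2$-orbit asymptotics carefully for the specific Hodge line subbundles and verifying that the limiting $(2,1) \to (1,2)$ component of $\nabla^{GM}$ is indeed the claimed $N$-induced map on $I^{2,2}$, free of any contribution from $\sigma^\dag$; both are standard in Schmid's theory but require careful bookkeeping of the weight and Hodge filtrations.
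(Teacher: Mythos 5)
Your approach is essentially the paper's: localize at cusps, invoke Schmid's nilpotent and $\SL_2$-orbit theorems, and classify limiting mixed Hodge structures. The paper carries out the case analysis by writing out the nilpotent orbits $F^\bullet(\tau)=e^{\tau N}F^\bullet_{lim}$ explicitly for each LMHS type (and reduces MUM to $\Sym^3$ of the weight-$1$ model from the preceding remark), reading off the $\sigma_{2,1}$-asymptotics directly; you instead package the same information into the norm asymptotic $\|e^{p,q}\|_H\sim(-\log|z|)^{(\ell-3)/2}$ and the numerical criterion $\ell_1=\ell_2+2$. Both are correct; your route is cleaner to generalize to higher rank, while the paper's is more concrete about where exactly the weight-$1$ computation recurs.

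One organizational point you should fix: as written, the forward implication is not actually closed. In step (i) you establish that boundedness of $\|\sigma_{2,1}\|$ near $d$ is equivalent to $f(0)\neq 0$ together with $\ell_1=\ell_2+2$. Assumption A gives $f(0)\neq 0$, but by itself it says nothing about $\ell_1,\ell_2$. Step (ii) computes $(\ell_1,\ell_2)$ for each LMHS type, and you then assert that the condition $\ell_1=\ell_2+2$ ``selects'' the admissible types --- but to conclude that assumption A \emph{forces} the LMHS to be one of these types you need an independent argument, which is exactly the residue identity you state only in step (iii): the value of the Deligne-extended $\sigma_{2,1}$ at $d$ is (up to a nonzero scalar) the map induced by $N$ on $\cV^{2,1}_{lim}\to\cV^{1,2}_{lim}$. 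In the $[2,2]$ type this induced map vanishes (as does, in a degenerate way, the map in the $[1^4]$ type), so $f(0)=0$ and assumption A fails; this, not the $\ell$-criterion, is what rules these cases out. So move the residue identity from the converse into the main argument and use it in both directions: it gives ``assumption A $\Leftrightarrow$ LMHS is $[2,1,1]$-good or $[4]$ at each cusp, and $\sigma_{2,1}$ nonvanishing on $X$,'' after which the $\ell$-criterion delivers the two-sided bound. The paper does this implicitly by showing $F^2(\tau)=F^2_{lim}$ is constant in the excluded case, which forces the exponential decay of $f$ and hence $f(0)=0$; making the same point through the residue of $\nabla^{GM}$ is fine and arguably cleaner, but it needs to be placed in the right spot.

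A second, smaller wording issue: for the $[1^4]$ case you write ``no genuine $dz/z$ pole.'' What actually happens is the opposite of what that suggests: the bundle $\cK_{\ov{X}}(\log D)$ \emph{does} have a log pole at $d\in D$ by fiat, but a nonvanishing holomorphic $\sigma_{2,1}$ has no pole, so as a section of the target tensored with $\cK_{\ov{X}}(\log D)$ it acquires a zero at $d$ and can't be an isomorphism. If $d$ genuinely has trivial monodromy, assumption A is violated --- the correct conclusion, just phrased misleadingly.
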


\begin{figure}[htbp!]
	\centering
	\begin{tabular}{c|c|c|c}
	1 &   &   & \\ \hline
	  & 1 &   & \\ \hline
	  &   & 1 & \\ \hline
	  &   &   & 1 \\
	\end{tabular}
	\quad
	\begin{tabular}{c|c|c|c}
	1 &   &   & \\ \hline
	  &   & 1 & \\ \hline
	  & 1 &   & \\ \hline
	  &   &   & 1 \\
	\end{tabular}
	\quad
	\begin{tabular}{c|c|c|c}
	  & 1 &   & \\ \hline
	1 &   &   & \\ \hline
	  &   &   & 1 \\ \hline
	  &   & 1 & \\
	\end{tabular}
	\quad
	\begin{tabular}{c|c|c|c}
	  &   &   & 1 \\ \hline
	  &   & 1 & \\ \hline
	  & 1 & 	 & \\ \hline
	1 &   &   & \\
	\end{tabular}
	\caption{The possible Hodge numbers of a degeneration.
	From left to right: first, the original (pre-degeneration) numbers,
	then a rank $1$ unipotent with an invariant line,
	then a rank $1$ unipotent with an invariant Lagrangian,
	then a maximally unipotent monodromy.
	}
	\label{fig:Hodge_numbers}
\end{figure}

\begin{remark}[On singularities of the hyperbolic metric]
	\label{rmk:on_singularities_of_the_hyperbolic_metric}
	Recall that on the punctured unit disc with coordinate $q$, the hyperbolic metric is (up to a constant factor) equal to $\tfrac{|dq|^2}{\left(|q|\log|q|\right)^2}$.
	This is the metric $\frac{|d\tau|^2}{(\Im \tau)^2}$ descended from the upper half-plane by $\tau \mapsto \exp(\twopii \tau)=:q$.
	In particular, the vector field $q\partial_q$ has norm $\frac{1}{|\log |q||}$ and thus the logarithmic differential $\frac{dq}{q}$ has unbounded norm.

	Let us see that in the standard weight $1$ degeneration the bound claimed in \autoref{prop:boundedness_of_second_fundamental_form} holds.
	Specifically, we have that $\bC^2 = v^{1,0}(\tau) \oplus \ov{v^{1,0}(\tau)}$ where $v^{1,0}(\tau)=a+\tau b$.
	Then $\nabla_{\partial_{\tau}}v^{1,0} = b$, and so as sections of line bundles, we have that $\sigma_{1,0}=\frac{b}{v^{1,0}\cdot \partial_{\tau}}$.
	Computing the Hodge and hyperbolic norms yields $\norm{v^{1,0}}^{2}=2\Im \tau$, $\norm{b}^{2}=\frac{2}{\Im\tau}$ and $\norm{\partial_{\tau}}^2=\frac{1}{(\Im \tau)^2}$ from which the bounded (in fact, constant) norm of $\sigma^{1,0}$ follows.
\end{remark}

\begin{proof}[Proof of \autoref{prop:boundedness_of_second_fundamental_form}]
	It is immediate that all questions are local at the punctures, since on a compact set a continuous nonvanishing function is bounded above and below.
	In any of the cases, it suffices to establish the bounds for the nilpotent orbit corresponding to the degeneration, and in fact the $\SL_2$-orbit as well, since by the results of Schmid \cite[Thm.~4.12, Thm.~5.13]{Schmid_VHS}, the metrics in the two cases are comparable by uniform constants, which in fact go to $1$ as $\Im \tau\to +\infty$.

	In the maximally unipotent monodromy case, the behavior of the nilpotent orbit is the same as that of the third symmetric power of the example analyzed in \autoref{rmk:on_singularities_of_the_hyperbolic_metric}, so the conclusions follow.

	To proceed with the next cases, we will freely use standard terminology in the subject, see e.g. \cite[\S2,3]{Robles2017_Degenerations-of-Hodge-structure} and references therein.
	Fix a symplectic basis $e_1,f_1,e_2,f_2$ of $V_\bR$ with $\ip{e_i,f_i}_I=1$ and $0$ otherwise.
	Let $N:=\log T$ be the logarithm of the monodromy, working under the assumption that $T$ is purely unipotent.
	The elliptic part, when present, does not affect the metric estimates (see also \autoref{sssec:orbifold_points_elliptic_parts} below).
	Recall also that the nilpotent orbit on a (subset of) the upper half plane is given by the formula $e^{\tau N}F^{\bullet}_{lim}$ where $F^{\bullet}_{lim}$ is the limit of the Hodge filtration (in the Deligne extension of the local system).
	For asymptotic questions, we can also work under the assumption that the limiting mixed Hodge structure is $\bR$-split, so it suffices to describe the spaces $I^{p,q}$, subject to the polarization and complex-conjugation axioms of a polarized mixed Hodge structure.
	Then the limiting Hodge filtrations are given by
	\[
		F^{p}_{lim} = \bigoplus_{r\geq p, \forall s}I^{r,s}.
	\]

	The second diagram in \autoref{fig:Hodge_numbers} corresponds to $Ne_1=-f_1$ and $0$ otherwise (the reason for $-f_1$ instead of $f_1$ is to have the correct alternating signature below).
	Then the $\bR$-splitting is given by
	\begin{align}
		\label{eqn:nilpotent_orbit_rank1_line}
		\begin{split}
		I^{3,0} & = e_2 + \sqrt{-1}f_2 \\
		I^{0,3} & = \ov{I^{3,0}}
		\end{split}
		\begin{split}
		I^{2,2} & = e_1\\
		I^{1,1} & = f_1
		\end{split}
	\end{align}
	so that for the Hodge filtrations and nilpotent orbit we find:
	\begin{align*}
		\begin{split}
		F^3_{lim} & = span(e_2 + \sqrt{-1}f_2)\\
		F^2_{lim} & = F^3_{lim} + e_1\\
		F^1_{lim} & = F^2_{lim} + f_1
		\end{split}
		\begin{split}
		F^3(\tau) & = F^3_{lim}\\
		F^2(\tau) & = F^3(\tau) + (e_1 - \tau f_1)\\
		F^1(\tau) & = F^2(\tau) + f_1
		\end{split}
	\end{align*}
	The desired estimate on $\sigma^{2,1}$ now follows exactly as in \autoref{rmk:on_singularities_of_the_hyperbolic_metric}.

	The third diagram of Hodge numbers corresponds to the nilpotent operator acting by $Ne_i=f_i$ and $I^{p,q}$-components:
	\begin{align}
		\label{eqn:nilpotent_orbit_rank1_Lagrangian}
		\begin{split}
		I^{3,1} = e_1 + \sqrt{-1}e_2\\
		I^{1,3} = e_1 - \sqrt{-1}e_2\\
		\end{split}
		\begin{split}
		I^{2,0} = f_1 + \sqrt{-1}f_2\\
		I^{0,2} = f_1 - \sqrt{-1}f_2\\
		\end{split}
	\end{align}
	which leads to the limit Hodge filtrations and nilpotent orbit:
	\begin{align*}
		\begin{split}
		F^3_{lim} & = e_1 + \sqrt{-1}e_2\\
		F^2_{lim} & = F^3_{lim} + (f_1+\sqrt{-1}f_2)\\
		F^1_{lim} & = F^2_{lim} + (e_1+\sqrt{-1}e_2)
		\end{split}
		\begin{split}
		F^3(\tau) & = (e_1+\tau f_1)+\sqrt{-1}(e_2+\tau f_2)\\
		F^2(\tau) & = F^2_{lim}\\
		F^1(\tau) & = F^2_{lim} + \tau(f_1+\sqrt{-1}f_2)
		\end{split}
	\end{align*}
	Visibly, the second fundamental form $\sigma_{2,1}$ vanishes.
\end{proof}

\subsubsection{Orbifold points, elliptic parts}
	\label{sssec:orbifold_points_elliptic_parts}
The arguments below, and definitions above, apply also to the situation where $X$ is an orbifold, under the following assumption:
There exists a finite orbifold cover $X'\to X$ such that $X'$ is a Riemann surface and assumption A applies to the VHS pulled back to $X'$.
In other words, there exists a torsion-free finite index subgroup $\pi_1(X')$ of $\pi_1^{orb}(X)$ such that the assumptions and definitions apply to $X'$.

Additionally, it is possible for the monodromy matrix $T$ to decompose into commuting elliptic and unipotent parts $T=T^{u}\cdot T^e$.
Furthermore, when the monodromy is not integral it is possible for $T^e$ to be of infinite order.
Nonetheless, the metric estimates of \autoref{prop:boundedness_of_second_fundamental_form} apply and this also leads to cases that satisfy assumption A in the hypergeometric examples.



\subsection{Gradient and index estimates}
	\label{ssec:gradient_and_index_estimates}

We proceed with a weight $4$ VHS as in \autoref{sssec:the_weight_4_notation}.
In this section, we establish the basic gradient estimates and show that certain functions can only have critical points which are local minima, and in fact at most one local local minimum can occur.

The conjectured formula for the sum of Lyapunov exponents from \cite{EskinKontsevichMoller2018_Lower-bounds-for-Lyapunov-exponents-of-flat-bundles-on-curves} follows from \autoref{thm:index_estimates}, and also the slightly stronger statement that the MUM Lagrangian is good, see \autoref{rmk:formula_for_sum_of_lyapunov_exponents}.

\subsubsection{Setup}
	\label{sssec:setup_gradient_and_index_estimates}
Let $w\in W_\bR\setminus 0$ be a null, or a positive vector.
In other words $\ip{w,w}_{I}\geq 0$, and if $w$ is a positive vector we normalize it to $\ip{w,w}_I=1$.
Using that $w$ is real, for any Hodge decomposition
\[
	w = w^{4,0} \oplus w^{3,1}\oplus w^{2,2} \oplus w^{1,3} \oplus w^{0,4}
\]
we thus have
\[
	2\norm{w^{1,3}}^{2} - \left(2\norm{{w}^{0,4}}^2 + \norm{w^{2,2}}^2\right) = \ip{w,w}_I \geq 0
\]
where the last inequality holds because the vector is nonzero.
In particular, note that
\begin{align}
	\label{eqn:basic_non_degeneracy}
	\begin{split}
	w^{1,3}\neq 0 & \text{ and } \norm{w^{1,3}} \geq \norm{w^{0,4}}\\
	\norm{w^{1,3}} \geq 1 & \text{ if }\ip{w,w}_{I}=1
	\end{split}
\end{align}
Introduce now on $\wtilde{X}$ the function
\[
	f_w(x) := \norm{w^{0,4}(x)}^2
\]
relative to the Hodge decomposition at $x\in \wtilde{X}$.
When $w$ is fixed and clear from the context, we will omit it as a subscript.

\subsubsection{The directing vector field}
	\label{sssec:the_directing_vector_field}
Recall also that we have the second fundamental form $\sigma_{1,3}\colon \cW^{1,3}\to \cW^{0,4}\otimes \cK_X$.
Since $\sigma_{1,3}$ is nowhere vanishing by \autoref{def:assumption_a} and $w^{1,3}$ doesn't vanish on $\wtilde{X}$ either, there exists a unique vector field $V_w$ such that
\[
	\sigma_{1,3}(V_w)w^{1,3} = -w^{0,4} \text{ or in local coordinates }
	V_w = \frac{-w^{0,4}}{\sigma_{1,3}\left(w^{1,3}\right)}.
\]
Crucially, note that for the definition of $V_w$, both the nonvanishing of $\sigma_{1,3}$ and $w^{1,3}$ were used.

\begin{proposition}[Gradient estimates]
	\label{prop:gradient_estimates}
	Let $V=V_w$ and $f=f_w$ be as above.
	Equip $\wtilde{X}$ with the hyperbolic metric.
	\begin{enumerate}
		\item For the Lie derivative $\cL_V$ we have that
		\[
			\cL_V f = 2f \text{ and }V, \grad f \text{ are proportional.}
		\]
		Furthermore we have
		\begin{align}
			\label{eqn:gradf_V_2f_identity}
			\norm{\grad f}_{hyp}\cdot \norm{V}_{hyp} = 2f.
		\end{align}
		\item We have the following ``steepness'' estimate
		\begin{align}
			\label{eqn:steepness_basic}
			\norm{V}_{hyp}\leqapprox 1 \text{ and thus } \norm{\grad f}_{hyp}\geqapprox f.
		\end{align}
		
		\item If furthermore $\ip{w,w}_I=1$ then we also have the inequality
		\begin{align}
			\label{eqn:steepness_low_f}
			\norm{V}_{hyp}\leqapprox f^{1/2} \text{ and so }
			\norm{\grad f}_{hyp} \geqapprox f^{1/2}.
		\end{align}
	\end{enumerate}
\end{proposition}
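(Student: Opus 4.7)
The plan is to derive everything from the identity \autoref{eqn:gradf_V_2f_identity} of part (i); parts (ii) and (iii) then follow by inserting the two-sided bound on $\sigma_{1,3}$ from \autoref{prop:boundedness_of_second_fundamental_form} together with the elementary polarization inequalities of \autoref{eqn:basic_non_degeneracy}.

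For (i) I would start from the flatness expansion \autoref{eqn:nabla_Ch_flat_components} applied to the extremal component $w^{0,4}$. The absence of a $(-1,5)$ piece forces $\nabla^{Ch,(0,1)}w^{0,4}=0$ (so $w^{0,4}$ is holomorphic in the intrinsic holomorphic structure on the line bundle $\cW^{0,4}$), while $\nabla^{Ch,(1,0)}w^{0,4}=-\sigma_{1,3}(w^{1,3})$. The very definition of $V_w$ then gives $\nabla^{Ch}_V w^{0,4}=w^{0,4}$ and $\nabla^{Ch}_{\bar V}w^{0,4}=0$. Identifying $V$ with the associated real vector field $V+\bar V$ and invoking the Hermitian compatibility of $\nabla^{Ch}$ produces
\[
\cL_V f = 2\Re\ip{\nabla^{Ch}_V w^{0,4}, w^{0,4}}_H = 2f.
\]
Because $T^{1,0}X$ has complex rank one, the identity $\partial f(V)=f$ pins down $\partial f = f\cdot V^{\vee}$, and conjugating gives $df = f(V^{\vee}+\overline{V^{\vee}})$; passing to the gradient through the K\"ahler identification for the hyperbolic metric forces $\grad f$ to be a positive real multiple of $V+\bar V$, and a short computation in an isothermal coordinate $z$ with $g_{hyp}=\lambda|dz|^2$, $V=a\,\partial_z$, yields \autoref{eqn:gradf_V_2f_identity} with the correct constant.

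For (ii) I would rearrange the defining relation $\sigma_{1,3}(V)w^{1,3}=-w^{0,4}$. By \autoref{prop:boundedness_of_second_fundamental_form} the norm of $\sigma_{1,3}$ is pinched between two positive constants uniformly on $\wtilde{X}$ (including near the cusps), so
\[
\norm{V}_{hyp} \approx \frac{\norm{w^{0,4}}_H}{\norm{w^{1,3}}_H} = \frac{f^{1/2}}{\norm{w^{1,3}}_H}.
\]
The first inequality of \autoref{eqn:basic_non_degeneracy}, valid for any null or positive $w$, gives $\norm{w^{1,3}}_H \geq \norm{w^{0,4}}_H = f^{1/2}$, whence $\norm{V}_{hyp}\leqapprox 1$; feeding this into \autoref{eqn:gradf_V_2f_identity} delivers $\norm{\grad f}_{hyp}\geqapprox f$.

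For (iii) the stronger normalization $\ip{w,w}_I=1$ upgrades the bound to $\norm{w^{1,3}}_H\geq 1$ via the second part of \autoref{eqn:basic_non_degeneracy}, sharpening the previous estimate to $\norm{V}_{hyp}\leqapprox f^{1/2}$; a final application of \autoref{eqn:gradf_V_2f_identity} yields $\norm{\grad f}_{hyp}\geqapprox f^{1/2}$. The only delicate step is (i): the bookkeeping of real versus $(1,0)$ objects, and the attendant factor of $\sqrt{2}$, must be handled carefully so that the absolute constant $2$ in \autoref{eqn:gradf_V_2f_identity} comes out cleanly. Once that is in place, parts (ii) and (iii) are essentially arithmetic with the already-established bounds on $\sigma_{1,3}$ and the polarization inequalities for the real Hodge structure on $\bW$.
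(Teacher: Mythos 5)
Your proposal is correct and follows essentially the same route as the paper: compute $df$ via \autoref{eqn:nabla_Ch_flat_components}, deduce $\cL_V f = 2f$ from the defining relation for $V$, obtain proportionality of $V$ and $\grad f$ via the imaginary direction, and then for (ii)--(iii) feed the pinching of $\sigma_{1,3}$ and \autoref{eqn:basic_non_degeneracy} back into the identity. The one place you defer to a coordinate check --- pinning the constant $2$ in \autoref{eqn:gradf_V_2f_identity} through the real/$(1,0)$ identification --- the paper sidesteps more cleanly by combining the two scalar relations $\cL_V f = 2f$ and $\cL_{\grad f} f = \norm{\grad f}^2$ with the proportionality $\norm{\grad f}^2 V = 2f\,\grad f$, which is manifestly normalization- and scale-invariant and avoids the $\sqrt{2}$ bookkeeping you flag as delicate.
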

Note that the inequality in the last point is stronger than the one in the second point only when $f$ is sufficiently small.
\begin{proof}
	Let us compute the differential of $f$:
	\begin{align*}
		df  & = d \ip{w^{0,4},w^{0,4}}_{H}\\
			& = \ip{\nabla^{Ch}w^{0,4},w^{0,4}}_H + 
			\ip{w^{0,4},\nabla^{Ch}w^{0,4}}_H\\
			& = \ip{-\sigma_{1,3}w^{1,3},w^{0,4}}_H +
			\ip{w^{0,4},-\sigma_{1,3}w^{1,3}}_H
	\end{align*}
	Note that the two terms are complex-conjugates of one another.
	Using the defining property of $V$ that $\sigma_{1,3}(V)w^{1,3}=-w^{0,4}$ it follows that $\cL_V f = 2f$ as claimed.
	Note that for a complex $1$-form $\alpha$ and vector field $V$ we have that $\iota_V\ov{\alpha}=\ov{\iota_V \alpha}$ for the insertion of a vector field.

	Next, observe that $\sqrt{-1} V$ satisfies $\cL_{\sqrt{-1}V}f=0$, since the first term in the formula for $df$ would yield $\sqrt{-1}f$ and the second one its complex-conjugate, so the two terms cancel out.
	It follows that $V$ and $\grad f$ are proportional.
	Combining the two formulas (all vectors measured in the hyperbolic metric)
	\[
		\cL_V f = 2f \quad \text{ and }\cL_{\grad f} f = \norm{\grad f}^{2}
	\]
	it follows that $\norm{\grad f}^2 V = 2f \grad f$, or by taking norms we find that 
	\[
		\norm{\grad f}\cdot \norm{V} = 2f.
	\]
	Note that if we rescale the background hyperbolic metric by a factor of $e^{2\lambda}$, so that $\norm{V}$ scales by $e^{\lambda}$, then $\grad f$ scales by $e^{-2\lambda}$ and thus $\norm{\grad f}$ will scale by $e^{-\lambda}$ so the stated identity is scale-invariant.

	To establish the estimate $1\geqapprox \norm{V}$, note that as a section of the tangent bundle $TX$, we have that $V=w^{0,4}/\sigma_{1,3}w^{1,3}$ by definition.
	Now from \autoref{prop:boundedness_of_second_fundamental_form} it follows that $1\leqapprox \norm{\sigma_{1,3}}\leqapprox 1$.
	From the condition $\ip{w,w}_I\geq 0$, see \autoref{eqn:basic_non_degeneracy}, we also have that $\norm{w^{0,4}}\leq \norm{w^{1,3}}$ and so the claimed estimate follows.

	To establish also the estimate $\norm{V}\leqapprox f^{1/2}$, observe that we can also use $\norm{w^{1,3}}\geq 1$, as well as $\norm{w^{0,4}}=f^{1/2}$, which can be plugged into the definition of $V$.
\end{proof}

The estimates we have obtained can now be used to analyze the Morse index of $f$, as well as establish some growth properties.

\begin{theorem}[Index estimates]
	\label{thm:index_estimates}
	Let $f=f_w$ be as above.
	\begin{enumerate}
		\item The only critical points of $f$ are local minima, which occur when $w^{0,4}(x)=0$.
		Furthermore these are nondegenerate local minima.
		\item If $\ip{w,w}_I\geq 0$ then $f$ has at most one critical point, and furthermore $\inf_{\wtilde{X}}f=0$.
		\item If $\ip{w,w}_I > 0$ then $f$ has precisely one critical point.
	\end{enumerate}
\end{theorem}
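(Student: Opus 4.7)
The plan is to deduce all three claims from the gradient estimates of \autoref{prop:gradient_estimates}, combined with Ekeland's variational principle and a mountain pass argument.

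For (i), the lower bound $\norm{\grad f}_{hyp} \geqapprox f$ from \autoref{eqn:steepness_basic} forces any critical point $x_0$ of $f$ to satisfy $f(x_0)=0$, i.e.\ $w^{0,4}(x_0)=0$, and by reality also $w^{4,0}(x_0)=0$. Under the running assumption $\ip{w,w}_I \geq 0$, the inequality in \autoref{eqn:basic_non_degeneracy} guarantees $w^{1,3}(x_0) \neq 0$ (otherwise $w^{3,1}=0$ by conjugation and $w^{2,2}=0$ from the polarization, contradicting $w \neq 0$). Then \autoref{eqn:nabla_Ch_flat_components} reduces to $\nabla^{Ch} w^{0,4}=-\sigma_{1,3} w^{1,3}$, and since both factors are nonvanishing at $x_0$ (the first by assumption A), a local Taylor expansion in a holomorphic coordinate shows $f=\norm{w^{0,4}}^2_H$ vanishes to exactly second order at $x_0$, producing a nondegenerate local minimum.

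For (ii), I first establish $\inf_{\wtilde X} f=0$ via Ekeland's variational principle: applied to $f$ on the complete hyperbolic plane $\wtilde X$, it yields a sequence $x_n$ with $f(x_n)\to \inf f$ and $\norm{\grad f(x_n)}_{hyp}\to 0$, which combined with $\norm{\grad f}_{hyp} \geqapprox f$ forces $\inf f=0$. For uniqueness, suppose two distinct critical points $x_1 \neq x_2$ existed; by (i) both are nondegenerate local minima of $f$ at level $0$, so the mountain pass value $c:=\inf_{\gamma}\max_t f(\gamma(t))$ over continuous paths from $x_1$ to $x_2$ is strictly positive. The Palais--Smale condition at any level $c'>0$ is vacuous in our setting: any PS sequence would satisfy $f\to c'>0$ and $\norm{\grad f}\to 0$, directly contradicting the gradient lower bound. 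The mountain pass lemma therefore produces a critical point of $f$ at level $c>0$, contradicting (i).

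For (iii), I invoke the sharper estimate $\norm{V}_{hyp}\leqapprox f^{1/2}$ from \autoref{eqn:steepness_low_f}. Fix $x_0 \in \wtilde X$ with $f(x_0)>0$ (such $x_0$ exists because $w^{0,4} \not\equiv 0$: if it were, then $\sigma_{1,3} w^{1,3} \equiv 0$ by \autoref{eqn:nabla_Ch_flat_components} would force $w^{1,3}\equiv 0$ since $\sigma_{1,3}$ is an isomorphism, contradicting the nonvanishing of $w^{1,3}$). Because $\norm{V}_{hyp}$ is uniformly bounded and $\wtilde X$ is complete, the backward flow $\phi_t$ of $V$ exists for all $t \leq 0$. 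The identity $\cL_V f=2f$ integrates to $f(\phi_t(x_0))=e^{2t}f(x_0)$, and the total hyperbolic arclength satisfies
\[
\int_{-\infty}^{0}\norm{V(\phi_s(x_0))}_{hyp}\,ds \leqapprox f(x_0)^{1/2}\int_{-\infty}^{0}e^{s}\,ds < \infty.
\]
By completeness of the hyperbolic metric, the trajectory converges to some $x_* \in \wtilde X$ with $f(x_*)=0$, providing the desired critical point; uniqueness then comes from (ii). The main obstacle I anticipate is executing the mountain pass step rigorously on the non-compact manifold $\wtilde X$; however, the gradient lower bound $\norm{\grad f}_{hyp} \geqapprox f$ renders the Palais--Smale condition at every positive level vacuously satisfied, so the standard mountain pass framework applies without further technical difficulty.
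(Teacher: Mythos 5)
Your proof is correct, and parts (i) and (ii) follow essentially the same route as the paper: in (i) the nondegeneracy comes from the nonvanishing of $\nabla^{Ch}w^{0,4}=-\sigma_{1,3}w^{1,3}$, and in (ii) the paper also uses Ekeland for $\inf f=0$ and a mountain pass argument for uniqueness. For (ii) the paper invokes a version of the mountain pass lemma (Bisgard) that \emph{produces} a Palais--Smale sequence from the geometry alone, and then the steepness estimate kills it; you instead observe that steepness makes PS hold vacuously at positive levels, apply the standard mountain pass theorem to get a critical point at level $c>0$, and contradict (i). These are logically equivalent formulations of the same idea — the paper's phrasing avoids having to quote a mountain pass theorem on a complete Riemannian (rather than Banach) ambient, but that is a minor issue here.

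Part (iii) is where you genuinely diverge, and your argument is cleaner. The paper flows along $-\grad f$, shows $g(t):=f(\gamma(t))$ decays exponentially via $-\dot g \geq \ve_0 g$, then estimates arclength by a Cauchy--Schwarz calculation split over unit time intervals, and separately argues the flow stays in a compact set to get global existence. You instead flow backward along $V$, where the identity $\cL_V f = 2f$ gives the \emph{exact} decay $f(\phi_t(x_0))=e^{2t}f(x_0)$ with no differential inequality needed; the bound $\norm{V}_{hyp}\leqapprox f^{1/2}$ from \autoref{eqn:steepness_low_f} then makes the total arclength $\int_{-\infty}^{0}\norm{V(\phi_s)}_{hyp}\,ds\leqapprox f(x_0)^{1/2}$ a one-line estimate, and the uniform bound $\norm{V}_{hyp}\leqapprox 1$ combined with completeness of $\wtilde X$ handles global existence of the flow immediately. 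Both buy the same conclusion (the limit point $x_*$ has $f(x_*)=0$ and is the unique critical point by (ii)), but exploiting the exact eigenvalue identity for $V$ rather than the differential inequality for $\grad f$ substantially shortens the bookkeeping. One small point worth making explicit: $V$ vanishes exactly at the zeros of $w^{0,4}$ (and is globally defined and smooth since $w^{1,3}$ never vanishes under \autoref{eqn:basic_non_degeneracy}), so the limit point $x_*$ is reached only as $t\to -\infty$, consistent with its being a stationary point of $V$.
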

\begin{proof}
	It follows from \autoref{eqn:gradf_V_2f_identity} that if $\grad f$ vanishes, then necessarily $f$ also vanishes.
	So the only critical points are local minima.
	To check that they are nondegenerate, observe that $w^{0,4}$ is a holomorphic section of $\cW^{0,4}$, so it suffices to establish that it only has simple zeros.
	But $\nabla^{Ch}w^{0,4}=\sigma_{1,3}w^{1,3}\neq 0$ pointwise, so $w^{0,4}$ cannot vanish to second order or higher at a point.

	To establish (ii), namely that $f$ cannot have two critical points (necessarily local minima), we would like to invoke a mountain pass lemma.
	Most formulations assume that $f$ already satisfies a Palais--Smale condition, which is not (yet) available for $f$, so we will use it in the form which produces a Palais--Smale sequence.
	Specifically, by \cite[Thm.~3.1]{Bisgard2015_Mountain-passes-and-saddle-points}, suppose that $f(x_0)=f(x_1)=0$, with $x_0\neq x_1$, and let $r_0,\ve_0>0$ be such that $x_1\notin B(x_0,r_0)$ and $f\vert_{\partial B(x_0,r_0)}\geq \ve_0$.
	Then there exists a sequence $y_n$ such that $f(y_n)\geq \ve_0$ and $\norm{\grad f(y_n)}\to 0$.
	The assumptions are satisfied, by picking sufficiently small $r_0,\ve_0>0$.
	But the conclusion contradicts the steepness estimate \autoref{eqn:steepness_basic}, since that estimate shows that $\norm{\grad f(y_n)}\to 0$ implies $f(y_n)\to 0$.

	To check that $\inf_{\wtilde{X}}f=0$, let $c\geq 0$ be this infimum.
	By the Ekeland variational principle, see e.g. \cite[\S I.5]{Struwe2008_Variational-methods}, there exists a sequence $x_n$ with $f(x_n)\to c$ and $\norm{\grad f(x_n)}\to 0$.
	Again from the steepness condition \autoref{eqn:steepness_basic} it follows that $f(x_n)\to 0$, i.e. $c=0$.

	To establish (iii), namely that when $\ip{w,w}>0$ (say normalized to $1$), $f$ must achieve a minimum, we will show that the gradient flow converges, in fact, to a minimum located at finite distance.
	Specifically, start at some $x_0\in \wtilde{X}$ and let $\gamma\colon [0,t_0)\to \wtilde{X}$ be the unique curve with $\gamma(0)=x_0$ and $\dot{\gamma}(t)=-\grad f(\gamma(t))$, and $t_0>0$ is the maximal existence time (along the way we will establish $t_0=+\infty$).
	Set $g(t):=f(\gamma(t))$, so $-\dot{g}(t)=\norm{\grad f(\gamma(t))}^2$ which by the other steepness estimate \autoref{eqn:steepness_low_f} satisfies $-\dot{g}(t)\geq \ve_0\cdot g(t)$ for some $\ve_0>0$.
	It follows that $\dot{g}/g \leq -\ve_0$, so by integrating this logarithmic derivative (and exponentiating) on any interval $[a,b]\subset[0,t_0)$ we have $g(b)\leq g(a)e^{-\ve_0 \cdot t}$.

	The distance traversed by the curve $\gamma(t)$ on the time interval $[a,b]$ is $\int_{a}^{b}\norm{\dot{\gamma}(t)}dt$ which can be estimated by Cauchy--Schwartz:
	\begin{align*}
		\left(\int_{a}^{b}\norm{\dot{\gamma}(t)}dt\right)^2 & \leq 
		\left[\int_{a}^{b}\norm{\dot{\gamma}(t)}^2dt\right] \cdot (b-a)\\
		& = \left[\int_{a}^b |\dot{g}|dt\right] (b-a)\\
		& = (g(a)-g(b))\cdot (b-a) \leq g(a)\left(1-e^{-\ve_0\cdot (b-a)}\right)(b-a)
	\end{align*}
	Take now $b-a=1$ and use the estimate $g(a)\leq e^{-\ve_0\cdot a}g(0)$ to find that the length of the curve on a time interval $[x,x+1]$ is bounded by $\left(g(0) e^{-\ve_0\cdot x}(1-e^{-\ve_0})\right)^{1/2}$.
	Any interval $[x,y]\subseteq [0,\infty+)$ can now be divided into $\lceil y-x\rceil$ consecutive intervals $[x,x+1], [x+1,x+2], \ldots \left[\lfloor y\rfloor,y\right]$, and applying the previous estimate we find that the total length is bounded by
	\begin{align}
		\label{eqn:distance_bound_O1}
		\begin{split}
		\dist(\gamma(x),\gamma(y))& \leq g(0)^{1/2}(1-e^{-\ve_0})^{1/2}\sum_{n=0}^{\infty} e^{-\ve_0\cdot (x+n)/2}\\
		& 
		\leq g(0)^{1/2}\frac{\left(1-e^{-\ve_0}\right)^{1/2}}{1-e^{-\ve_0/2}}\\
		& \leq c_3 \cdot g(0)^{1/2}
		\end{split}
	\end{align}
	where $c_3$ is some fixed constant.
	In particular, $\gamma$ stays in a compact set $K$, for which there exists an $\ve_1(K)>0$ such that the gradient flow exists for time $\ve_1$ at any point in $K$, therefore the gradient flow exists for all (positive) times.

	The limit point of the curve $\gamma(t)$ is necessarily critical, thus a local minimum, and hence the global minimum with value $0$ as claimed.
\end{proof}

\begin{remark}[Formula for sum of Lyapunov exponents]
	\label{rmk:formula_for_sum_of_lyapunov_exponents}
	\autoref{thm:index_estimates} already implies the formula for Lyapunov exponents conjectured in \cite[\S1]{EskinKontsevichMoller2018_Lower-bounds-for-Lyapunov-exponents-of-flat-bundles-on-curves}.
	Indeed \autoref{thm:good_hypergeometrics} below shows that the seven ``good'' families of loc. cit. satisfy assumption A (and in fact our \autoref{table:good_hypgeom_real} and \autoref{table:good_hypgeom_integral} provide infinitely many other examples).

	Next, the proof of \cite[Thm.~4.1]{EskinKontsevichMoller2018_Lower-bounds-for-Lyapunov-exponents-of-flat-bundles-on-curves}, in the case at hand, starts with a Lagrangian subspace $L$ of $V_\bR$ and considers the corresponding vector $w_{L}\in W_{\bR}$ obtained by wedging two independent vectors in $L$.
	The argument then applies verbatim, with $u=w_L$, and observing that the function $\norm{u_z}$ in loc. cit. agrees with $f_{w_L}$ in the present text.
	Next, $f_{w_L}$ has at most one zero in all of $\bH$, by \autoref{thm:index_estimates}(ii).
	Then the inequality (15) in loc. cit. is an equality, up to adding a constant on the lhs if the hyperbolic disc $D(u)$ contains the one allowed zero.
	The chain of deductions following inequality (15) in loc. cit. then becomes a chain of equalities, if we take into account that $\lim_{T\to +\infty}\frac{1}{T}C=0$ for any constant $C$.
\end{remark}

\begin{remark}[MUM Lagrangian is good]
	\label{rmk:mum_lagrangian_is_good}
	Let us also explain why Conjecture~6.4 of \cite{EskinKontsevichMoller2018_Lower-bounds-for-Lyapunov-exponents-of-flat-bundles-on-curves} follows from \autoref{thm:index_estimates}(ii).
	The stated conjecture is equivalent to the statement that for a particular choice of Lagrangian $L \subset V_{\bR}$, the locus of vanishing of the function $\norm{u_z}$ is empty.
	The Lagrangian has the property that it fixed by the monodromy around a cusp of the Riemann surface.
	More generally, suppose that there exists an infinite order element $\gamma\in \pi_1(X)$ which fixes a Lagrangian $L\subset V_{\bR}$.
	Then if $w_{L}\in \Lambda^2L$ denotes the (necessarily isotropic) vector in $W_{\bR}$, the associated function $f_{w_L}$ of \autoref{thm:index_estimates} cannot have any zeros on $\wtilde{X}$.
	Indeed, the function and hence the set of zeros will be invariant under $\gamma$, any orbit of $\gamma$ is infinite on $\wtilde{X}$, while the theorem just proved shows that there is at most one zero.
\end{remark}

We next establish a quantitative growth property for the norm of positive-definite vectors.

\begin{lemma}[Exponential growth]
	\label{lem:exponential_growth_pos_def}
	Fix $w\in W_\bR$ satisfying $\ip{w,w}_{I}=1$, let $x_0\in \wtilde{X}$ be the unique minimum of $f:=f_w$.
	There exist uniform constants $\ve,C_1,C_2>0$ (independent of $w$ or $x_0$) such that for any $x\in \wtilde{X}$ we have:
	\[
		f(x)\geq \frac{1}{C_1}e^{\ve \dist(x_0,x)} - C_2
	\]
	where the distance is in the hyperbolic metric on $\wtilde{X}$.
\end{lemma}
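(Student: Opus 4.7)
}

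The key is that the two steepness bounds from \autoref{prop:gradient_estimates} play complementary roles: the stronger one $\norm{V}_{hyp}\leqapprox f^{1/2}$ controls the geometry near the minimum $x_0$, whereas the weaker one $\norm{V}_{hyp}\leqapprox 1$ (equivalently $\norm{\grad f}_{hyp}\geqapprox f$) yields the exponential growth at infinity. The plan is to combine them via a triangle inequality.

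First, I would extract from the proof of \autoref{thm:index_estimates}(iii) the bound
\[
\dist(x_0,y)\leq c_3\,f(y)^{1/2}\qquad\text{for every }y\in\wtilde{X},
\]
which is exactly the inequality \autoref{eqn:distance_bound_O1} applied to the gradient descent curve originating at $y$ and terminating at the unique critical point $x_0$. This provides a quadratic lower bound $f(y)\geq \dist(x_0,y)^2/c_3^2$ and, crucially, tells us that the sublevel set $\{f\leq\kappa\}$ is contained in the hyperbolic ball of radius $c_3\sqrt{\kappa}$ centered at $x_0$.

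Next, I would exploit the weaker estimate $\norm{\grad f}_{hyp}\geqapprox f$ along the gradient descent curve from $x$. Parametrizing this curve by arc length $s$, one has $\tfrac{df}{ds}=-\norm{\grad f}_{hyp}\leq -c\,f$ for some absolute $c>0$, giving $f(s)\leq f(x)\,e^{-cs}$. Fix a threshold $\kappa>0$ (e.g.\ $\kappa=1$). When $f(x)\geq\kappa$, the curve must cross $\{f=\kappa\}$ at some point $y$ whose arc-length parameter satisfies $s_y\leq \tfrac{1}{c}\log\bigl(f(x)/\kappa\bigr)$, so $\dist(x,y)\leq \tfrac{1}{c}\log\bigl(f(x)/\kappa\bigr)$. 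The triangle inequality together with Step~1 applied to $y$ then yields
\[
\dist(x_0,x)\;\leq\;\dist(x_0,y)+\dist(y,x)\;\leq\;c_3\sqrt{\kappa}+\tfrac{1}{c}\log\!\bigl(f(x)/\kappa\bigr),
\]
which rearranges to the desired exponential bound $f(x)\geq \kappa\,e^{-cc_3\sqrt{\kappa}}\,e^{c\,\dist(x_0,x)}$ with $\varepsilon=c$ and $C_1=\kappa^{-1}e^{cc_3\sqrt{\kappa}}$.

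Finally, the complementary regime $f(x)<\kappa$ is handled by Step~1: it forces $\dist(x_0,x)<c_3\sqrt{\kappa}$, which is bounded, so one absorbs the discrepancy into the additive constant $C_2$ by choosing it large enough (any $C_2\geq(1/C_1)\exp(\varepsilon c_3\sqrt{\kappa})$ will do). The only mildly delicate point is to verify that the gradient descent curve from $x$ actually reaches $\{f=\kappa\}$ whenever $f(x)\geq\kappa$; this follows from \autoref{thm:index_estimates}(iii) since the descent curve converges to $x_0$ where $f$ attains its minimum value $0$ (the latter is forced by $\grad f=0$ at $x_0$ and the relation $\norm{\grad f}_{hyp}\cdot\norm{V}_{hyp}=2f$ with $\norm{V}_{hyp}$ bounded above).
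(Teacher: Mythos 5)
Your proposal is correct, and the overall skeleton matches the paper's proof: both split into a near-minimum regime controlled by the stronger steepness bound (via \autoref{eqn:distance_bound_O1}, which gives $\dist(x_0,y)\leq c_3 f(y)^{1/2}$) and a far regime controlled by the weaker bound $\norm{\grad f}_{hyp}\geqapprox f$ from \autoref{eqn:steepness_basic}. Where you diverge is in how the far regime is executed. The paper works in the time parametrization $\dot\gamma=-\grad f$, so that $-\dot g=\norm{\grad f}^2\geq\ve g^2$, i.e.\ a Riccati-type inequality with only $g(t)\lesssim 1/t$ decay; to convert this into a distance bound it then has to chop the descent into dyadic steps (each halving $g$), apply Cauchy--Schwarz on each step to bound the arc length there by a universal constant, and sum $\approx\log_2 f(x)$ such steps. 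Your reparametrization by arc length replaces this with the single ODE $\tfrac{df}{ds}=-\norm{\grad f}\leq -cf$, whose exponential decay $f(s)\leq f(x)e^{-cs}$ immediately bounds the arc length (hence the distance) from $x$ to the level set $\{f=\kappa\}$ by $\tfrac{1}{c}\log(f(x)/\kappa)$, with no dyadic decomposition or Cauchy--Schwarz needed. This is a genuinely cleaner way to extract the logarithmic distance bound from the same gradient estimate, and everything else (the triangle inequality, absorbing the bounded near-minimum radius $c_3\sqrt\kappa$ into the constants, handling $f(x)<\kappa$ by enlarging $C_2$) is arranged correctly and yields uniform constants. The one point you flagged as delicate --- that the descent curve from $x$ actually crosses $\{f=\kappa\}$ --- is indeed supplied by \autoref{thm:index_estimates}(iii), since the total arc length is finite by \autoref{eqn:distance_bound_O1}, so the arc-length parametrization covers a finite interval on which $f$ decreases from $f(x)$ to $0$.
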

\begin{proof}
	We will prove the equivalent
	\[
		\dist(x_0,x) \leq c_1' + \frac{1}{\ve}\log\left(1 + f(x)\right)
	\]
	As in the proof of \autoref{thm:index_estimates}, take $\gamma\colon [0,\infty)\to \wtilde{X}$ to be the gradient flow line satisfying $\dot{\gamma}(t)=-\grad f$.
	Next, observe that if we started at a point $x'$ where $f(x')\leq 2$, then by the estimate in \autoref{eqn:distance_bound_O1}, we have that $\dist(x_0,x')\leq c_3$ for some absolute constant $c_3$.
	So it suffices to prove that
	\[
		\dist(x,x')\leq c_1''+\frac{1}{\ve}\log(c_2+f(x))
	\]
	where $f(x)>2$ and $x'=\gamma(t')$ is such that $f(x')=\frac{f(x)}{2^k}\in [1,2)$, for some integer $k\geq 1$.

	Setting as before $g(t):=f(\gamma(t))$ we have that $\dot{g}=-\norm{\grad f}^2$ and so we need to estimate $\int_{0}^{t'}|\dot{g}(t)|^{1/2}dt$.
	To do so, we will use the other steepness inequality \autoref{eqn:steepness_basic} which gives
	\[
		-\dot{g}\geq \ve g^2 \text{ for some fixed }\ve>0.
	\]
	Let $t_1>0$ be such that $g(t_1)=\frac{1}{2}g(0)$, which exists since the gradient flow line eventually gets arbitrarily close to the minimum by \autoref{eqn:distance_bound_O1}.
	Let us rewrite the differential inequality as
	\[
		\frac{d}{dt}\frac{1}{g(t)}\geq \ve
	\]
	and integrate from $0$ to $t_1$ to find that
	\[
		\frac{1}{g(0)}\geq \ve t_1 \quad \text{ i.e. } t_1 \leq \frac{1}{g(0)\ve}.
	\]
	In particular, since $g(0)\geq 2$ we have that $t_1$ is uniformly bounded.
	Next, we apply Cauchy--Schwartz again to find
	\begin{align*}
		\left(\int_{0}^{t_1}|\dot{g}(t)|^{1/2}\right)^2 & \leq
		\left(\int_0^{t_1}|\dot{g}(t)|(t+1)dt\right)
		\left(\int_{0}^{t_1}\frac{dt}{t+1}\right)\\
		& \leq (t_1+1)\left(\int_{0}^{t_1}|\dot{g}(t)|dt\right)
		\log(1+t_1)\\
		& = (t_1+1)\cdot \frac{g(0)}{2}\cdot \log(1+t_1)
	\end{align*}
	Using that $\log(1+x)\leq x$ for $x\geq 0$, that $t_1\leq \frac{1}{g(0)\cdot \ve}$ but is also uniformly bounded above, it follows that the distance is uniformly bounded by some absolute constant $c_4$, independently of $g(0)$.
	Repeating this argument for times $t_1<t_2<\cdots <t_k$ where $k=\lfloor\log_2 g(0)\rfloor $, we find that
	\[
		\dist(x,x')\leq c_4 \cdot k \leq c_4 \cdot \log_2 f(x).
	\]
	This implies the desired bound
	\[
		\dist(x,x_0)\leq c_3 + c_4 \log_2(1+f(x))
	\]
	where the additive factor of $1$ inside the logarithm is to ensure that the bound is valid even if $f(x)\leq 1$.	
\end{proof}
The same proof yields the following variant for isotropic vectors.
Indeed, the proof above shows that in logarithmic time the gradient flow brings the value of the function to within $O(1)$, and it suffices to get the following conclusion:

\begin{lemma}[Exponential growth, isotropic version]
	\label{lem:exponential_growth_isotropic_version}
	Suppose that $w\in W_{\bR}\setminus \{0\}$ is an isotropic vector, such that $f_w$ achieves its (necessarily unique) minimum at $x_0$.
	Then there exist constants $C_1,C_2,\ve>0$, uniform for $w$ varying in a compact set and still satisfying the assumptions, such that
	\[
		f_w(x)\geq \frac{1}{C_1}e^{\ve \dist(x_0,x)} - C_2 \quad \forall x\in \wtilde{X}.
	\]
\end{lemma}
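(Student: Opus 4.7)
The plan is to run the argument of \autoref{lem:exponential_growth_pos_def} essentially verbatim, supplying one new ingredient---an upgrade from the weak steepness estimate to the strong estimate \autoref{eqn:steepness_low_f} in a uniform neighborhood of the minimum, where a lower bound on $\norm{w^{1,3}}_H$ will be forced by isotropy.

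The first half of the proof of \autoref{lem:exponential_growth_pos_def} uses only the general steepness estimate \autoref{eqn:steepness_basic}, equivalent to $-\dot g \geq \ve g^2$ along a gradient flow line $\gamma(t)$ of $-\grad f_w$, and requires no positivity hypothesis on $w$. I would first apply this half of the argument starting at $x$ to produce a time $t_\ast$ with $g(t_\ast) := f_w(\gamma(t_\ast)) \in [1,2)$ and $\dist(x,\gamma(t_\ast)) \leqapprox \log(1+f_w(x))$. Setting $x':=\gamma(t_\ast)$, it will remain to bound $\dist(x',x_0)$ by a constant depending only on the compact parameter set $K \ni w$.

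Next I would exploit isotropy: expanding $\ip{w,w}_I=0$ in the indefinite hermitian form of \autoref{sssec:the_weight_4_notation}, together with the reality identities $w^{3,1}=\overline{w^{1,3}}$ and $w^{4,0}=\overline{w^{0,4}}$, gives
\[
	2\norm{w^{1,3}(y)}_H^2 = 2\norm{w^{0,4}(y)}_H^2 + \norm{w^{2,2}(y)}_H^2 \qquad \forall\, y\in \wtilde X.
\]
At $y=x_0$ one has $w^{0,4}(x_0)=0$; were $w^{1,3}(x_0)$ also to vanish, the identity would force $w^{2,2}(x_0)=0$ and hence $w=0$ at $x_0$, contradicting $w\neq 0$. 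Thus $w^{1,3}(x_0)\neq 0$. Since the minimum is nondegenerate by \autoref{thm:index_estimates}(i), the implicit function theorem makes $w\mapsto x_0(w)$ continuous; after passing to a fundamental domain using $\pi_1(X)$-equivariance, compactness of $K$ will supply uniform constants $c_0(K), \eta_0(K) > 0$ with $\norm{w^{1,3}}_H \geq c_0$ on the sublevel set $U_w := \{f_w\leq \eta_0\}$, and such that each $U_w$ is a topological disc around $x_0(w)$.

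Combining this lower bound on $\norm{w^{1,3}}_H$ with the definition of $V_w$ and \autoref{prop:boundedness_of_second_fundamental_form} then yields $\norm{V_w}_{hyp} \leqapprox f_w^{1/2}$ on $U_w$, i.e.\ the strong steepness estimate \autoref{eqn:steepness_low_f} with uniform constants. Monotonicity of $f_w$ along the gradient flow ensures that the flow from $x'$ enters $U_w$ in uniformly bounded time and stays there afterwards, so the Cauchy--Schwartz / geometric-series calculation leading to \autoref{eqn:distance_bound_O1} bounds the remaining length of the flow line by $c\cdot \eta_0^{1/2}$. Adding the two phases will give $\dist(x_0,x) \leqapprox 1 + \log(1+f_w(x))$, equivalent to the claimed exponential lower bound. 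The main obstacle is securing the uniformity of $U_w$ as $w$ ranges over $K$; once this is in place, the remainder is a direct transcription of the positive-definite proof.
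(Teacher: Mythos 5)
Your outline tracks the paper's own (very terse) remark before the lemma: run the log-time half of the positive-definite argument, which only needs the weak steepness $\norm{\grad f}\geqapprox f$, and then argue that the ``remaining'' distance from $x'$ (where $f_w(x')\in[1,2)$) to $x_0$ is $O(1)$, uniformly over $w\in K$. The useful new observation you supply is correct: for real isotropic $w$, expanding $\ip{w,w}_I=0$ gives $\norm{w^{1,3}(x)}_H = \tfrac12\norm{w}_{H,x}$ for every $x$, hence $w^{1,3}(x_0)\neq 0$ and, after normalizing $\norm{w}_{H,x_0}=1$, $\norm{w^{1,3}(x_0)}_H=\tfrac12$ exactly. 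Combined with \autoref{prop:boundedness_of_second_fundamental_form} and the defining formula $V_w=-w^{0,4}/\sigma_{1,3}(w^{1,3})$, this does give the strong steepness \autoref{eqn:steepness_low_f} at any point where $\norm{w^{1,3}}_H$ is bounded below, and continuity together with compactness of $K$ gives such a lower bound on a \emph{fixed ball} around $x_0$.

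The gap is in the sentence ``and such that each $U_w$ is a topological disc around $x_0(w)$,'' where $U_w:=\{f_w\leq\eta_0\}$. Continuity plus nondegeneracy of the minimum only controls $f_w$ and $\norm{w^{1,3}}_H$ on a small ball $B(x_0,r_0)$, not on the full sublevel set $\{f_w\leq\eta_0\}$ in $\wtilde X$. Containment $\{f_w\leq\eta_0\}\subset B(x_0,r_0)$ is logically equivalent to the properness of $f_w$ near its minimum, which is essentially what the lemma asserts; it cannot be assumed. Concretely, along the gradient flow from $x'$ one knows $g(t)\to 0$ (weak steepness), but the flow line could a priori diverge in $\wtilde X$ with $g\to 0$, because $\norm{w^{1,3}}_H=\tfrac12\norm{w}_{H,\cdot}$ can become arbitrarily small for a null vector $w$ as the Hodge frame moves to infinity in $\SO_{2,3}(\bR)/K$; in that regime the bound $\norm{V_w}_{hyp}\leqapprox f_w^{1/2}/\norm{w^{1,3}}_H$ degenerates, the strong steepness fails, and the weak steepness alone does not make the arclength integral $\int\lvert\dot g\rvert^{1/2}\,dt$ convergent (e.g.\ with $-\dot g=\ve g^2$ saturated the arclength diverges). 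So the geometric-series estimate \autoref{eqn:distance_bound_O1} is not available on all of the flow trajectory, only on the part that happens to lie in $B(x_0,r_0)$, and ``the flow enters $U_w$ and stays there'' does not by itself place the flow in $B(x_0,r_0)$.

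To close this you need an additional input, not supplied here, showing either that the gradient flow converges to $x_0$, or (equivalently, given your identity $\norm{w^{1,3}}_H=\tfrac12\norm{w}_{H,\cdot}$) that $\norm{w}_{H,\gamma(t)}$ stays uniformly bounded below along the flow, or directly that $\{f_w\leq 2\}$ lies in a ball of bounded radius around $x_0$. Note that the later identification $\Omega_L=\LGr(V_\bR)\setminus\Lambda_L$ in \autoref{thm:anosov_property_from_assumption_A}(ii) would supply exactly this, but its proof uses the present lemma, so that route is circular. I would flag this explicitly: the paper's text before the lemma is a one-sentence remark, and the step you have asserted is precisely the content that still needs an argument.
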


The above growth estimates will be used in \autoref{sec:hodge_theory_and_anosov_representations}, see e.g. \autoref{thm:anosov_property_from_assumption_A}, to establish that the monodromy representation is log-Anosov (after introducing the appropriate Lie-theoretic notions in \autoref{sec:elements_of_lie_theory_and_anosov_representations}).



\subsection{Symplectic--Orthogonal dictionary}
	\label{ssec:symplectic_orthogonal_dictionary}

Before we can draw more consequences from the results of the previous section, we need to introduce some further concepts from symplectic geometry and relate it to the orthogonal group.
An exposition of many of the constructions described here is in the lectures notes \cite{BarbotCharetteDrumm2008_A-primer-on-the-21-Einstein-universe}, with the exception of the concept of ``electron'' which we introduce in \autoref{def:electrons} below.

\subsubsection{Setup}
	\label{sssec:setup_symplectic_orthogonal_dictionary}
Let $V$ be a real $4$-dimensional symplectic vector space.
We will typically write $I(v,w)$ for the symplectic pairing of two vectors.
When a basis will be useful, we take $e_1,e_2,f_1,f_2\in V$ to be one, satisfying $I(e_i,f_i)=1$ and vanishing otherwise.

We will also need $W:=\Lambda^2_\circ V$, the subspace of $\Lambda^2V$ of vectors in the kernel of the symplectic form.
Indeed $\Lambda^2 V$ is equipped with a symmetric inner product of signature $(3,3)$, given by wedging and using the volume form on $\Lambda^4 V$.
Restricted to $W$, the inner product has signature $(2,3)$.

The dictionary between the symplectic geometry of $V$ and the Minkowski geometry of $W$ is summarized in \autoref{sssec:symplectic_orthogonal_dictionary}.
Many constructions work over $\bR$ and over $\bC$ -- when a specific field is considered it will appear as a subscript; otherwise, the stated construction works over both fields.

\subsubsection{The corresponding groups}
	\label{sssec:the_corresponding_groups}
The map $\Sp(V)\to \SO(W)$ gives over $\bR$ the double cover $\Sp_4(\bR)\to \SO_{2,3}(\bR)$.
Restricted to maximal compact subgroups this yields $\bbU(2)\to \operatorname{S}(\Orthog_2(\bR)\times \Orthog_3(\bR))$.
When dealing with Hodge decompositions, the group $\bbU(1,1)$ will arise as the stabilizer of $F^2=V^{3,0}\oplus V^{2,1}$, and it maps to $\operatorname{S}(\Orthog_2(\bR)\times \Orthog_{1,2}(\bR))$.

\subsubsection{Lagrangian Grassmannian}
	\label{sssec:lagrangian_grassmannian}
Let $\LGr(V)$ denote the Grassmannian of Lagrangian subspaces of $V$.
The natural \Plucker embedding $\LGr(V)\into \bP(W)$ identifies the Lagrangian Grassmannian with the quadric of primitive vectors, when viewing $W$ as a subset of $\Lambda^2 V$.
Equivalently, $\LGr(V)$ is the quadric of isotropic vectors in $W$.

Over $\bR$ it can be useful to consider the Grassmannian of \emph{oriented} Lagrangians, which is a double cover of $\LGr(V_{\bR})$ and will be denoted $\LGr^+(V_{\bR})$.
If $\bP^+(W_{\bR})$ is the double cover of $\bP(W_{\bR})$, then we have the natural inclusion $\LGr^{+}(V_{\bR})\subset \bP^+(W_{\bR})$.

\subsubsection{Causal structure}
	\label{sssec:causal_structure_LGr}
At an isotropic line $[w]$, the tangent space to the quadric of null vectors can be identified with the linear space
\[
	T_{[w]}\LGr(V) = \Hom\left([w],\leftquot{[w]}{[w]^\perp}\right)
\]
where $[w]^\perp$ is the orthogonal of $[w]$ for the indefinite pairing.
Over $\bR$ the quotient $\leftquot{[w]}{[w]^\perp}$ carries a natural inner product of signature $(1,2)$ and so $Q^0(W_\bR)$ carries a natural conformal class of such inner products.
This determines a causal structure invariant under the corresponding group.
For this reason $\LGr(V_\bR)$ is also denoted $\Ein^{1,2}(W_\bR)$ as a ``conformal Einstein universe''.
It is a compactification of flat Minkowski space of signature $(1,2)$, which can be obtained by taking the complement of $[w]^{\perp}\cap\Ein^{1,2}(W_\bR)$, for any $[w]\in \Ein^{1,2}$ (the described intersection is the union of all null rays through $[w]$).

\subsubsection{Photons}
	\label{sssec:photons}
Let $l\subset V$ be a line.
It determines a flag $l\subset l^\perp\subset V$, where $\perp$ is taken for the symplectic form.
Define the \emph{photon} associated to $l$ by
\[
	\phi(l):= \left\lbrace L\subset V \colon L \text{ is Lagrangian and }l\subset L \right\rbrace \subset \LGr(V).
\]
The definition makes sense both over $\bR$ and over $\bC$, with associated photons denoted $\phi_{\bR}(l)$ (or just $\phi(l)$) and $\phi_{\bC}(l)$.

Note that any Lagrangian $L$ satisfying $l\subset L$ necessarily satisfies (by taking symplectic-orthogonal complements) $L\subset l^\perp$.
Therefore $\phi(l)$ is naturally parametrized by $\bP\left(\leftquot{l}{l^\perp}\right)$.
In the embedding $\LGr(V)\subset \bP(W)$ we have the natural identification $\phi(l)\isom \bP(l\wedge l^\perp)$.

In fact, any isotropic $2$-dimensional space $\xi_2\subset W$ determines a photon $\bP(\xi_2)\subset \bP(W)$.
For the Minkowski structure on $\LGr(V_{\bR})$ the curves $\phi(l)$ are lightlike, i.e. their tangent directions are null vectors.

\begin{definition}[Electrons]
	\label{def:electrons}
	Let $F^2\subset V_\bC$ be a $2$-dimensional subspace, Lagrangian for the symplectic form and with the restriction of the Hermitian pairing on $F^2$ of signature $(1,1)$.
	Define the subset
	\[
		\beta(F^2):=\left\lbrace L\in \LGr(V_\bR)\colon L_\bC \cap F^2 \neq \left\lbrace 0 \right\rbrace \right\rbrace
	\]
	and call it the \emph{electron} associated to $F^2$.
\end{definition}

\subsubsection{On electrons}
	\label{sssec:on_electrons}
Some of the reasons for calling $\beta(F^2)$ above an ``electron'' are:
\begin{itemize}
	\item In $\LGr(V_{\bR})$ with its Minkowski structure, the curves $\beta(F^2)$ are timelike (so they correspond to a particle moving below the speed of light).
	It will follow from \autoref{thm:anosov_property_from_assumption_A} that when the electron approaches the boundary it tends to a photon.
	\item In the physics literature, electrons are denoted by $e^-$ or $\beta^-$, and $\beta$ is also convenient for the terminology of ``bad'' Lagrangians, see \autoref{def:bad_bundle} below.
\end{itemize}
To geometrically identify the set $\beta(F^2)$, note that if a real Lagrangian $L$ satisfies $L_\bC\cap F^2 = \bC\alpha$ for $\alpha\in F^2$, then we have the following properties:
\begin{enumerate}
	\item $L_{\bC} = span(\alpha,\ov{\alpha})$
	\item $\ip{\alpha,\ov{\alpha}}_{I}=0$
\end{enumerate}
The first follows because $\alpha$ and $\ov{\alpha}$ are never proportional (since $V_{\bC}=F^2\oplus \ov{F^2}$ so $F^2$ cannot contain a real vector) and because $L$ is a real subspace, so $\ov{\alpha}\in L_{\bC}$.
The second property follows because $L$ is Lagrangian.

This discussion implies that we have a natural identification
\[
	\beta(F^2) = \left\lbrace [\alpha]\in \bP(F^2) \colon \ip{\alpha,\ov{\alpha}}_{I}=0 \right\rbrace \subset \bP(F^2)\isom \bP^1(\bC)
\]
In other words the photon identifies with the equator of null vectors in $\bP(F^2)$; this equator separates the upper/lower half planes of positive/negative directions.

The ``moduli space'' of electrons is the Grassmannian of complex Lagrangians where the hermitian pairing has signature $(1,1)$, which as a homogeneous space is:
\begin{align}
 	\label{eqn:space_of_electrons}
 	\rightquot{\Sp_4(\bR)}{U(1,1)}=L\LGr^{1,1}(V_\bC)
 \end{align}
 and is naturally a complex manifold.

\subsubsection{Electrons via second exterior power}
 	\label{sssec:electrons_via_second_exterior_power}
Just like there is a definition of the photon as $\phi(l)=\bP(l\wedge l^\perp)\subset \Ein^{1,2}\subset \bP(W)$, an analogous construction exists for electrons.
If $V_\bC=F^2\oplus \ov{F^2}$ with $F^2$ a complex Lagrangian of signature $(1,1)$, then in $W_\bC$ we have:
\[
	W_{\bC} = \underbrace{\left(\Lambda^2 F^2 \oplus \ov{\Lambda^2F^2}\right)}_{N_{\bC}} \oplus N^{\perp}_{\bC}
\]
The subspace $N_\bC$ is invariant under complex conjugation, hence leads to a real subspace $N\subset W_{\bR}$.
Its orthogonal complement also comes from a real subspace $N^\perp$.

Suppose now that $L\in \beta(F^2)$ is a real Lagrangian.
Then its \Plucker coordinate is orthogonal to $\Lambda^2F^2$, since a pair of subspaces intersect if and only if their \Plucker coordinates wedge to zero (so, are orthogonal for the specified inner product).
Because $L$ is real it is therefore also orthogonal to $\ov{\Lambda^2F^2}$, so the \Plucker coordinate of $L$ is in $N^\perp$, and it is an isotropic vector there.

To emphasize the dependence of $N$ on $F^2$, define:
\begin{align}
	\label{eqn:P_definition}
	N\left(F^2\right):= \left(\Lambda^2 F^2 \oplus \ov{\Lambda^2 F^2}\right)\cap W_\bR
\end{align}
and let $\Ein^{1}\left(N(F^2)^\perp\right)$ denote the projectivized quadric of null vectors in $N^\perp(F^2)$.
Then we have the identification
\[
	\beta(F^2) \isom \Ein^{1}\left(N(F^2)^\perp\right) \subset 
	\Ein^{1,2}\subset \bP(W_\bR).
\]
We have denoted by $\Ein$ the quadric of null vectors, and more generally we will use the notation $\Ein^{p,q}\subset \bP(\bR^{p+1,q+1})$ for the quadric of null vectors in a space of indefinite signature.

\subsubsection{Oriented electrons}
	\label{sssec:oriented_electrons}
Given $\alpha \in F^2\subset V_\bC$ with $\ip{\alpha,\ov{\alpha}}_{I}=0$, the associated real Lagrangian $span(\alpha,\ov{\alpha})$ has a natural orientation given by $\sqrt{-1}\alpha\wedge\ov{\alpha}$.
Changing $\alpha$ by a complex number does not affect the orientation.
Let us therefore denote by $\beta^{+}(F^2)\subset \LGr^{+}(V_{\bR})$ the family of oriented Lagrangians, and call this the oriented electron.

It follows that given an electron in $\LGr(V_\bR)$, its preimage in $\LGr^{+}(V_\bR)$ has two connected components.
This can also be seen in the model $\bP^+(W_\bR)$, where the preimage of the electron is a null quadric in a subspace of signature $(2,1)$.
It is connected in $\bP(W_\bR)$ but has two components in $\bP^+(W_\bR)$.
Since in $W_\bC$, the data of the electron is determined by the Hodge bundle $\cW^{4,0}=\Lambda^2 F^2\cV$, we will write $\beta^+(\cW^{4,0})$ to denote the two connected components of the oriented electron in $\bP^+(W_\bR)$.

\subsubsection{Pseudospheres}
	\label{sssec:pseudospheres}
Starting from a Hodge decomposition of $W_\bC$, let us denote by
\[
	\bS^{1,1}(\cW^{0,4}):=\{ w\in W_\bR\colon \ip{w,w}_{I}=1 \text{ and }\ip{w,\cW^{0,4}}_{I}=0 \}
\]
or in other words, the normalized real positive definite vectors that belong to $\cW^{3,1}\oplus \cW^{2,2}\oplus \cW^{1,3}$.
The corresponding projective two-to-one quotient will be denoted by $\bP^{1,1}(\cW^{0,4})$.

Note that $\bS^{1,1}(\cW^{0,4})$ is homeomorphic to $\bS^1\times \bR^1$, carries a pseudo-Riemannian metric of signature $(1,1)$, and its two ``ends'' on the null quadric are precisely the two components of the oriented electron.
Furthermore, we have the containment
\[
	\bS^{1,1}(\cW^{0,4})\subset \bS^{1,3}(W_{\bR})=\{w\in W_{\bR}\colon \ip{w,w}_I=1\}
\]
and we will denote by $\bP^{1,3}(W_{\bR})$ the two-to-one projective quotient.

\subsubsection{Symplectic--Orthogonal dictionary}
	\label{sssec:symplectic_orthogonal_dictionary}
The next table summarizes the constructions we introduced above.
\newcommand{\mc}[3]{\multicolumn{#1}{#2}{#3}}
\renewcommand{\arraystretch}{1.1}
\noindent
\begin{longtable}[c]{C{0.45\textwidth} C{0.54\textwidth}}
{\textbf{Symplectic}}            & {\textbf{Minkowski}}\\
{\centering $V_\bR \isom \bR^4$} & {$W_\bR = \Lambda^2_\circ (V_\bR) \isom \bR^{2,3}$} \\
\midrule
$\Sp_4(\bR)$ & $\SO_{2,3}(\bR)$ \\
$\bbU(2)$       & $S(\bbO_2(\bR)\times \bbO_3(\bR))$\\
$\bbU(1,1)$     & $S(\bbO_{2,1}(\bR)\times \bbO_2(\bR))$\\
\midrule
Lagrangian Grassmannian          & Einstein universe \\
$\LGr(V)$                        & $E^{1,2}\subset \bP(W)$ \\
& Quadric of isotropic vectors \\
\midrule
\multicolumn{2}{c}{\textbf{Photons}}\\
Line $l\subset V \Rightarrow$ Flag $l\subset l^\perp \subset V$ & Isotropic $2$-plane $l\wedge l^\perp \subset W$\\
$\phi(l)\subset \LGr(V)$                                        & $E^{1,2}\supset \bP(l\wedge l^\perp)\isom \phi(l)$\\
\midrule
\multicolumn{2}{c}{\textbf{Electrons}}\\
$V_{\bC} = F^2\oplus \ov{F^{2}}$                                & $W_{\bC} = \underbrace{\left(\Lambda^2 F^2 \oplus \ov{\Lambda^2F^2}\right)}_{N_{\bC}} \oplus N^{\perp}_{\bC}$ \\
signature of $F^2$ is $(1,1)$                                   & signature of $N$ is $(0,2)$, of $N^\perp$ is $(2,1)$ \\
$\beta(F^2)\subset \LGr(V_{\bR})$                               & $E^{1,2} \supset \Ein^1(N^{\perp})\isom \beta(F^2)$ \\
\midrule
$1+2+1$ dominated                & $2+1+2$ dominated\\
\bottomrule
 \end{longtable}
The last row, related to domination, will be clarified after \autoref{thm:domination_in_log_anosov_representations}.



\subsection{Developing maps to real homogeneous spaces}
	\label{ssec:developing_maps_to_real_homogeneous_spaces}

In this section we show that an assumption A VHS can be used to uniformize domains of discontinuity associated to its monodromy.
One domain is an open set in the real Lagrangian Grassmannian, while another is the full pseudosphere of unit vectors in $W_{\bR}$.

\subsubsection{Setup}
	\label{sssec:setup_developing_maps_to_real_homogeneous_spaces}
We keep the notation from \autoref{ssec:hodge_theory_background} and \autoref{ssec:gradient_and_index_estimates}, and will make use of the concepts introduced in \autoref{ssec:symplectic_orthogonal_dictionary}.
In particular, $\wtilde{X}$ is the universal cover, we have a variation of Hodge structure $\bW\to X$ of weight $4$ and Hodge numbers $(1,1,1,1,1)$, and it satisfies assumption A from \autoref{def:assumption_a}.
In the discussion below we will pretend that $\bW$ is the (reduced) second exterior power of a weight $3$ variation $\bV$, but this is not necessary.
In order for it to be the case, one might have to pass to a finite cover of $X$, so that the monodromy representation can be lifted from $\SO_{2,3}^{\circ}(\bR)$ to its spin double cover $\Sp_{4}(\bR)$.
All the constructions below can be done while speaking of the variation $\bW$, but we will keep $\bV$ to clarify some of the geometric concepts.
To fix further notation, let
\[
	\rho_V\colon \pi_1(X)\to \Sp(V_{\bR})
	\quad
	\rho_W\colon \pi_1(X)\to \SO(W_{\bR})
\]
be the monodromy representations of the variations of Hodge structure.

In the context of the formula for Lyapunov exponents, in \cite{EskinKontsevichMoller2018_Lower-bounds-for-Lyapunov-exponents-of-flat-bundles-on-curves} a real Lagrangian is called ``bad'' if its complexification $L_\bC$ intersects the middle piece of the Hodge filtration $F^2(x)$ at some point $x\in \wtilde{X}$.
For a fixed $F^2(x)$, the collection of all ``bad'' real Lagrangians is the electron $\beta(F^2(x))$.

\begin{definition}[Bad bundle]
	\label{def:bad_bundle}
	Define $\wtilde{Bad}\to \wtilde{X}$ to be the bundle whose fiber above $x\in \wtilde{X}$ is $\beta(F^2(x))$.
	The bad bundle is naturally a subset $\wtilde{Bad}\subset \wtilde{X}\times \LGr(V_{\bR})$, which is $\pi_1(X)$-invariant when the action on the second factor is via the representation $\rho_V$.
	This invariance implies that $\wtilde{Bad}$ descends to a bundle
	\[
		Bad:=\left(\leftquot{\pi_1(X)}{\wtilde{Bad}}\right) \to \left(\leftquot{\pi_1(X)}{\wtilde{X}}\right) = X
	\]
\end{definition}

\begin{definition}[Pseudosphere bundle]
	\label{def:pseudosphere_bundle}
	Let $\wtilde{PS}\to \wtilde{X}$ be the bundle whose fiber above $x\in \wtilde{X}$ is $\bP^{1,1}(\cW^{0,4}(x))\subset \bP(W_\bR)$, see \autoref{sssec:pseudospheres}.
	There is a natural inclusion $\wtilde{PS}\subset \wtilde{X}\times \bP(W_{\bR})$ and since $\wtilde{PS}$ is a $\rho_W$-equivariant bundle, it descends to a bundle $PS\to X$.
\end{definition}

\subsubsection{Developing maps}
	\label{sssec:developing_maps}
Define the Lagrangian \emph{developing map} by projecting the bad bundle to the Lagrangian Grassmannian factor
\[
	\wtilde{Bad} \xrightarrow{\Dev^{L}} \LGr(V_{\bR}).
\]
The map $\Dev^{L}$ is by construction $\pi_1(X)$-equivariant.

Similarly, define the ``positive'' \emph{developing map} by projecting the pseudosphere bundle to the projective factor:
\[
	\wtilde{PS}\xrightarrow{\Dev^P} \bP(W_\bR)
\]
Recall also that $\bP^{1,3}(W_{\bR})$ denotes the projectivized positive-definite vectors, and so $\Dev^P$ lands there.

\subsubsection{Oriented versions}
	\label{sssec:oriented_versions}
We can also decorate the above constructions by taking the double covers that correspond to a choice of relevant orientation.
The corresponding objects will be denoted by a $+$ superscript, or by replacing $\bP$ with $\bS$ where appropriate.

Remark that if $\bW$ is the (reduced) second exterior power of $\bV$, then we have a preferred choice of connected component in $Bad^+$, as explained in \autoref{sssec:oriented_electrons}.

\begin{theorem}[Real uniformization, Lagrangian]
	\label{thm:real_uniformization_lagrangian}
	\leavevmode
	\begin{enumerate}
		\item There exists an open set $\Omega_{L}\subset \LGr(V_\bR)=\Ein^{1,2}(W_\bR)$ such that $\Dev^L$ is a bijective diffeomorphism between $\wtilde{Bad}$ and $\Omega_L$.
		\item Similarly, if $\bW=\Lambda^2_{\circ}\bV$, then the preimage $\Omega^+_L$ of $\Omega_L$ in $\LGr^+(V_\bR)$ has two components, and $\Dev^{L,+}$ is a bijection onto one of them.
		\item The action of the monodromy group on $\Omega_L$ is properly discontinuous and the quotient is diffeomorphic to the total space of the bundle $Bad\to X$.
		This, in turn, is diffeomorphic to the unit circle bundle of the complex line bundle $\cW^{3,1}\to X$, or a two-to-one quotient thereof, depending on whether the real part of the Hodge bundle $\cW^{2,2}$ is topologically trivial, or not.
	\end{enumerate}
\end{theorem}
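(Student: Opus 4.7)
The backbone is the identification of the bad bundle with the zero locus of a specific holomorphic section, combined with the uniqueness statement in \autoref{thm:index_estimates}. For any real Lagrangian $L\subset V_\bR$, pick a Pl\"ucker vector $w_L\in \Lambda^2 L\subset W_\bR$; it is isotropic since $L$ is Lagrangian. Using that $\cW^{4,0}=\Lambda^2 F^2$ and that $\cW^{4,0}$ and $\cW^{0,4}$ are paired by $\ip{-,-}_I$, the condition $(x,L)\in \wtilde{Bad}$, i.e.\ $L_\bC\cap F^2(x)\neq 0$, is equivalent to $\Lambda^2 L\wedge \Lambda^2 F^2(x)=0$, and hence to $w_L^{0,4}(x)=0$, i.e.\ $f_{w_L}(x)=0$. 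Applying \autoref{thm:index_estimates}(ii) (the case $\ip{w_L,w_L}_I=0$) gives at most one zero; part (i) of that same theorem says the zero, when it exists, is a nondegenerate local minimum. These two facts together yield injectivity of $\Dev^L$ and the fact that the preimage of each $L\in \Dev^L(\wtilde{Bad})$ is a single point.

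For part (i), set $\Omega_L:=\Dev^L(\wtilde{Bad})$. To show $\Dev^L$ is a local diffeomorphism I compute its differential at $(x,L)$. The bad bundle is cut out in $\wtilde X\times \LGr(V_\bR)$ by the two real equations $w_L^{0,4}(x)=0$, so a tangent vector is a pair $(\dot x,\dot L)$ satisfying the linearized constraint. Nondegeneracy of the zero means that the $\partial_x$-part of this constraint gives an $\bR$-linear isomorphism $T_x\wtilde X\xrightarrow{\sim}\cW^{0,4}(x)$ (both real $2$-dimensional). Hence $\dot L=0$ forces $\dot x=0$, and since $\dim \wtilde{Bad}=2+3-2=3=\dim \LGr(V_\bR)$, the differential is bijective. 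Combined with the global injectivity, this makes $\Dev^L$ a diffeomorphism onto the open set $\Omega_L$.

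For part (ii), when $\bW=\Lambda^2_\circ \bV$ the Hodge bundle $\cW^{4,0}=\Lambda^2 F^2$ is canonically a second exterior power, so the rule $\alpha\mapsto \sqrt{-1}\,\alpha\wedge\ov{\alpha}$ from \autoref{sssec:oriented_electrons} gives a continuous choice of component of $\beta^+(F^2(x))$ over each $x$. This lifts $\wtilde{Bad}$ to $\wtilde{Bad}^+\subset \wtilde X\times \LGr^+(V_\bR)$, and the same local-diffeomorphism argument shows that $\Dev^{L,+}$ bijects $\wtilde{Bad}^+$ onto one of the two components of the preimage of $\Omega_L$ in $\LGr^+(V_\bR)$; the other component is the image of the orientation-reversed lift.

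For part (iii), the $\pi_1(X)$-action on $\wtilde X\times \LGr(V_\bR)$ via deck transformations on the first factor and $\rho_V$ on the second is proper (proper on $\wtilde X$ already), preserves $\wtilde{Bad}$, and corresponds via $\Dev^L$ to the $\Gamma$-action on $\Omega_L$; hence the latter action is properly discontinuous with quotient $\Gamma\backslash \Omega_L\cong \pi_1(X)\backslash \wtilde{Bad}=Bad\to X$. To identify $Bad$ topologically, use a local frame $v\in \cV^{3,0}$, $u\in \cV^{2,1}$ of unit Hodge norm to write $[\alpha]=[a:b]\in \bP(F^2(x))$ with $|a|=|b|$; the ratio $b/a$ is intrinsically an element of $\Hom(\cV^{3,0},\cV^{2,1})=(\cV^{3,0})^{-1}\otimes \cV^{2,1}$, which via the symplectic pairing identifies with $\cV^{0,3}\otimes\cV^{2,1}=\cW^{1,3}$ (the complex conjugate of $\cW^{3,1}$, hence isomorphic as a real oriented circle bundle). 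The $\bZ/2$ ambiguity comes from the fact that reversing the orientation of the real Lagrangian $L$ corresponds to a nontrivial involution of the circle, which is globally trivializable over $X$ exactly when the real line bundle $\cW^{2,2}_\bR$ (the real structure on the middle Hodge bundle) is topologically trivial.

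\textbf{Main obstacle.} The analytic content---injectivity of $\Dev^L$ and local-diffeomorphism on tangent spaces---follows quickly from \autoref{thm:index_estimates} once one recognizes the equivalence $(x,L)\in \wtilde{Bad}\Leftrightarrow f_{w_L}(x)=0$. The one subtle point is the final topological identification of $Bad$ with (a quotient of) the unit circle bundle of $\cW^{3,1}$: it requires tracking both the pairing-induced isomorphism $(\cV^{3,0})^{-1}\cong \cV^{0,3}$ and the effect of complex conjugation on the relevant line bundles, and then identifying the $\bZ/2$-quotient invariant with topological triviality of $\cW^{2,2}_\bR$.
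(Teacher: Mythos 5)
Your proposal follows essentially the same route as the paper for the analytic heart of the theorem: the equivalence $(x,L)\in\wtilde{Bad}\iff f_{w_L}(x)=0$, followed by \autoref{thm:index_estimates}(ii) to get global injectivity of $\Dev^L$, is exactly the paper's argument for part (i). For the local-diffeomorphism statement you give a slightly more self-contained account than the paper (which refers to \autoref{sssec:the_developing_map_is_a_local_diffeomorphism}, where the computation is carried out explicitly only for $\Dev^P$ and $\Dev^L$ is declared ``completely analogous''): you cut $\wtilde{Bad}$ out of $\wtilde X\times\LGr(V_\bR)$ by the two real equations $w_L^{0,4}(x)=0$, use the nondegeneracy $\nabla^{Ch}w^{0,4}=-\sigma_{1,3}w^{1,3}\neq 0$ to get an isomorphism $T_x\wtilde X\to\cW^{0,4}(x)$, and conclude by dimension count; this is a valid and slightly more transparent variant.

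The genuine difference is in the topological identification of $Bad$ in part (iii). The paper passes to a double cover of $X$ on which $\cW^{2,2}_\bR$ becomes trivial, fixes a unit real section $1$ of $\cW^{2,2}$, and then writes down the explicit parametrization $w=\tfrac{1}{\sqrt2}\ov{z}\oplus 1\oplus\tfrac{1}{\sqrt2}z$ for $z\in\cW^{1,3}$ with $\|z\|=1$; the case dichotomy in the theorem statement reflects whether this double cover is trivial. You instead use the graph/slope map $[\alpha]\mapsto T_\alpha\in\Hom(\cV^{3,0},\cV^{2,1})$, together with the symplectic identification $(\cV^{3,0})^{*}\cong\cV^{0,3}$ and $\cV^{0,3}\otimes\cV^{2,1}\cong\cW^{1,3}$, to directly biject the electron with the unit circle of $\cW^{1,3}$. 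This is a nice intrinsic alternative that never needs to choose a real unit section of $\cW^{2,2}$, and it works because the electron lives in $\bP(F^2)\setminus[\cV^{2,1}]$ where the slope is everywhere defined; it is in fact a positive-real rescaling of the paper's $z$-coordinate (one checks $w_L^{1,3}\propto T_\alpha$ with a positive factor).

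However, your closing sentence explaining the $\bZ/2$-dichotomy is not correct as stated: $Bad$ parametrizes \emph{unoriented} real Lagrangians, so ``reversing the orientation of $L$'' does not act on its fibers, and the slope map you constructed is a well-defined fiberwise bijection with $\bS(\cW^{1,3})$ with no $\bZ/2$ ambiguity at all. The dichotomy in the theorem is an artifact of the paper's normalization by a real unit section of $\cW^{2,2}$: when $\cW^{2,2}_\bR$ is nontrivial that section only exists on the double cover, and going around a $w_1$-nontrivial loop forces $z\mapsto -z$. Since this is just a twist of the circle bundle by a $\bZ/2$-flat line bundle, the resulting total space is still diffeomorphic to $\bS(\cW^{1,3})$, which is what your slope map sees directly. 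So the diffeomorphism type you derive is consistent with the paper, but the mechanism you cite for the ``two-to-one quotient'' is wrong; the correct source of the $\bZ/2$ is the possible nontriviality of $\cW^{2,2}_\bR$ as a real line bundle, which your intrinsic construction happens to bypass.
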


\begin{theorem}[Real uniformization, pseudo-hyperbolic]
	\label{thm:real_uniformization_pseudo_hyperbolic}
	\leavevmode
	\begin{enumerate}
		\item The developing map $\Dev^{P}$ is a bijective diffeomorphism between $\wtilde{PS}$ and $\bP^{1,3}(W_{\bR})$.
		\item The same holds for $\Dev^{P,+}$, $\wtilde{PS}^+$, and $\bS^{1,3}(W_{\bR})$.
		\item The action of the monodromy group on $\bS^{1,3}(W_{\bR})$ is properly discontinuous, and the quotient is diffeomorphic to the total space of the bundle $PS^+\to X$.
		This, in turn, is diffeomorphic to the $\bC^{\times}$-bundle of nonzero vectors of the complex line bundle $\cW^{1,3}\to X$.
	\end{enumerate}
\end{theorem}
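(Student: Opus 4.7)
My plan is to establish (ii) from the variational results of \autoref{thm:index_estimates}, obtain (i) by descending through the antipodal quotient $w \leftrightarrow -w$, and deduce (iii) from the equivariance of the developing map together with an explicit fiberwise parametrization. The main obstacle I anticipate is upgrading the set-theoretic bijection $\Dev^{P,+}$ to a diffeomorphism: the nondegeneracy needed for smoothness of the inverse relies critically on assumption A, via the nonvanishing of $\sigma_{1,3}\cdot w^{1,3}$ at the single zero of $w^{0,4}$.

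For (ii), given $w \in \bS^{1,3}(W_{\bR})$ (so $\ip{w,w}_I = 1 > 0$), I would apply \autoref{thm:index_estimates}(iii) to $f_w(x) = \norm{w^{0,4}(x)}_H^2$: the function has a unique critical point $x(w) \in \wtilde{X}$, which by part (i) of the same theorem is a nondegenerate local minimum with $w^{0,4}(x(w)) = 0$, equivalently $(x(w), w) \in \wtilde{PS}^+$. This produces a set-theoretic two-sided inverse $w \mapsto (x(w), w)$ to $\Dev^{P,+}$. Smoothness of $\Dev^{P,+}$ is immediate from its definition as a projection; to see it is a local diffeomorphism I would check injectivity of its differential at each $(x_0, w_0)$. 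If $(\dot x, \dot w) \in \ker d\Dev^{P,+}$ then $\dot w = 0$, and differentiating the constraint that $w^{0,4}$ vanish along any path in $\wtilde{PS}^+$ with $w$ fixed yields $(d_{x_0} w_0^{0,4})(\dot x) = 0$; but $w_0^{0,4}$ is holomorphic on $\wtilde{X}$ (since $(p-1,q+1) = (-1,5)$ in \autoref{eqn:nabla_Ch_flat_components} does not exist), with holomorphic derivative $-\sigma_{1,3}(x_0)\,w_0^{1,3}(x_0) \neq 0$ by assumption A and \autoref{eqn:basic_non_degeneracy}, forcing $\dot x = 0$. A bijective local diffeomorphism between manifolds of equal dimension is a diffeomorphism, and part (i) then follows by descending both sides through the antipodal involution.

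For (iii), the $\pi_1(X)$-action on $\wtilde{PS}^+$ (from its action on $\wtilde{X}$ combined with the fiberwise action through $\rho_W$) is free and properly discontinuous, so equivariance of $\Dev^{P,+}$ transports this to a properly discontinuous $\Gamma$-action on $\bS^{1,3}(W_{\bR})$ with quotient identified with $PS^+$. To identify $PS^+$ with $\cW^{1,3}_{\times}$, I would parametrize each fiber $\bS^{1,1}(\cW^{0,4}(x))$ via its Hodge decomposition $w = \overline{w^{1,3}} + w^{2,2} + w^{1,3}$ with $w^{2,2} \in \cW^{2,2}(x)_{\bR}$, subject to the normalization $2\norm{w^{1,3}}_H^2 - \norm{w^{2,2}}_H^2 = 1$ coming from $\ip{w,w}_I = 1$. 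The fiber is then a one-sheeted hyperboloid equipped with a free $\bS^1$-action rotating the argument of $w^{1,3}$, whose associated principal $\bS^1$-bundle over $X$ coincides with the unit circle bundle of $\cW^{1,3}$. Since $\cW^{1,3}_{\times}$ is the analogous $\bR_{>0}$-bundle associated to this same principal bundle via polar decomposition, any $\bS^1$-equivariant diffeomorphism $\bS^1 \times \bR \cong \bS^1 \times \bR_{>0}$ glues to a global diffeomorphism $PS^+ \cong \cW^{1,3}_{\times}$ over $X$.
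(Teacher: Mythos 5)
Your proposal is correct and shares the paper's central ingredient — \autoref{thm:index_estimates} supplies the unique zero of $f_w=\norm{w^{0,4}}^2$ for each positive-definite $w$ — but it packages the two remaining technical steps differently.

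For smoothness of the inverse in (ii), the paper works in \autoref{sssec:the_developing_map_is_a_local_diffeomorphism} through \autoref{sssec:differentiating_using_the_flat_connection} with an explicit local trivialization of $\cW^{1,3}$ and $\cW^{2,2}$, writes out the formula \autoref{eqn:local_dev_map_formula} for $\Dev^S$, and exhibits the full differential as a lower-triangular $3\times 3$ block matrix whose last diagonal block $T_{z_0}\wtilde{X}\to N$ is $v\mapsto \sigma_{1,3}(v)s+\ov{\sigma_{1,3}(v)s}$. You instead observe that $\wtilde{PS}^+$ is cut out in $\wtilde{X}\times\bS^{1,3}(W_\bR)$ by the condition $w^{0,4}(x)=0$, and since this is a holomorphic section of $\cW^{0,4}$ with intrinsic derivative $-\sigma_{1,3}\,w^{1,3}$ (nonzero by assumption A and the nonvanishing $w^{1,3}$ forced by positivity), the implicit function theorem exhibits $\wtilde{PS}^+$ locally as a graph over $\bS^{1,3}$. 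Both arguments reduce to the same nonvanishing of $\sigma_{1,3}\cdot w^{1,3}$; yours is more compact and avoids the trivialization bookkeeping, though the paper's block-triangular computation has the virtue of handling $\Dev^L$ and $\Dev^P$ simultaneously.

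For the identification $PS^+\cong\cW^{1,3}_\times$ in (iii), the paper writes down the explicit developing formula \autoref{eqn:developing_map_pseudosphere} in terms of $z\in\cW^{1,3}\setminus\{0\}$, which produces the diffeomorphism directly. You use a bundle-theoretic argument: the free fiberwise $\bS^1$-action rotating the phase of $w^{1,3}$ exhibits $PS^+$ as a cylinder bundle whose associated principal $\bS^1$-bundle is the unit circle bundle of $\cW^{1,3}$, so it is diffeomorphic over $X$ to the $\bC^\times$-bundle $\cW^{1,3}_\times$. This is correct (the equivariance of the $\bS^1$-action with the $\pi_1$-action follows from the $\rho_W$-equivariance of the Hodge decomposition, and the quotient $\bR$-bundle is trivial), but the paper's explicit formula is more economical and leaves less to implicit verification — in particular the argument that an $\bS^1$-equivariant fiber diffeomorphism ``glues'' is essentially equivalent to writing down such a formula.

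One small remark on ordering: you derive (i) from (ii) by descending through the antipodal involution, whereas the paper proves the projective statement first (in parallel with \autoref{thm:real_uniformization_lagrangian}). This is an inessential difference.
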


\begin{remark}[On uniformization]
	\label{rmk:on_uniformization}
	As will be proved later on, in \autoref{thm:anosov_property_from_assumption_A}, the open set $\Omega_L$ appearing in \autoref{thm:real_uniformization_lagrangian} is a domain of discontinuity coming from the log-Anosov property of the monodromy representation.
	These further properties will allow us to also identify the complement $\Lambda_L=\LGr(V_{\bR})\setminus \Omega_L$ with a limit set of the Anosov representation.
	As will be clear from the proof below, the information in \autoref{thm:index_estimates} already implies that for instance the ``MUM Lagrangian'' in the case of hypergeometric equations, see \autoref{rmk:formula_for_sum_of_lyapunov_exponents}, is in $\Lambda_L$.
\end{remark}

\begin{proof}[Proof of \autoref{thm:real_uniformization_lagrangian}]
	Both $\LGr(V_\bR)$ and $\wtilde{Bad}$ are real $3$-dimensional manifolds.
	The developing map is clearly smooth.
	We will establish below that it is also a local diffeomorphism (\autoref{sssec:the_developing_map_is_a_local_diffeomorphism}), i.e. has invertible differential.
	Let us check that it is also globally injective.
	Indeed, with notation from \autoref{ssec:gradient_and_index_estimates}, a null vector $w\in W_{\bR}$ determines a function $f_{w}$ on $\wtilde{X}$, as well as a Lagrangian $[w]\in \LGr(V_\bR)=\Ein^{1,2}(W_{\bR})$.
	Scaling $w$ only multiplies $f_w$ by a positive scalar.

	Now from the definition of $f_w$, it vanishes at $x\in \wtilde{X}$ if and only if $w^{0,4}(x)=0$, which happens if and only if $[w]\in \beta(\cW^{0,4})$, i.e. $[w]$ belongs to the corresponding electron.
	By \autoref{thm:index_estimates}, since $w$ is a null vector, $f_w$ can have at most one zero.
	Therefore, it is in the image of $\Dev^L$ for at most one point.

	The identification of $\wtilde{Bad}$ and the image of $\Dev^L$, denoted $\Omega_L$, is now clear.
	Proper discontinuity of the action on $\Omega_L$ follows since the identification is equivariant, and the action on $\wtilde{Bad}$ is visibly properly discontinuous since that is the case for the action of $\pi_1(X)$ on $\wtilde{X}$.

	It remains to identify the bundle $Bad$ with the appropriate circle bundle.
	It suffices to do so on the lift to the universal cover $\wtilde{Bad}$, in an equivariant way.
	First, by possibly passing to a double cover of $X$, we can assume that $\cW^{2,2}$ is a trivial bundle and has a preferred trivializing section.
	A double cover suffices since $\cW^{2,2}$ has a real structure and compatible inner product which yields a zero curvature metric.

	Now, given a Hodge decomposition
	\[
		\cW^{4,0}(x)\oplus \cW^{3,1}(x) \oplus \cW^{2,2}(x)
		\oplus \cW^{1,3}(x) \oplus \cW^{0,4}(x)
	\]
	we can parametrize the real null vectors orthogonal to $\cW^{0,4}$ by the unit circle bundle of $\cW^{1,3}$ by:
	\[
		w = \tfrac{1}{\sqrt{2}}\ov{z} \oplus 1 \oplus \tfrac{1}{\sqrt 2}z \quad \text{ for } z\in \cW^{1,3} \text{ and }\norm{z}^2=1
	\]
	where $1$ denotes the fixed unit norm trivializing section of $\cW^{2,2}$.
\end{proof}
\begin{proof}[Proof of \autoref{thm:real_uniformization_pseudo_hyperbolic}]
	The proof is completely analogous to the previous one.
	The only difference is that if $w$ is a positive definite vector in $W_{\bR}$, then by \autoref{thm:index_estimates} the function $f_w$ has precisely one zero, so $\Dev^{P}$ is not only injective but also surjective.

	To exhibit the identification of the fibers with Hodge data, we proceed analogously, although the formulas are slightly different:
	\begin{align}
		\label{eqn:developing_map_pseudosphere}
		w := \frac{\ov{z}}{\sqrt{2}\norm{z}} \oplus
		\frac{\norm{z}^2-1}{\norm{z}^2+1} \oplus
		\frac{z}{\sqrt{2}\norm{z}} \quad \text{ for } z\in \cW^{1,3}\setminus \{0\}.
	\end{align}
	Clearly the $\cW^{3,1}$ and $\cW^{1,3}$ components have norm $\tfrac{1}{2}$ each, and the middle component in $\cW^{2,2}$ has norm strictly bounded by $1$.
	The formula is obtained by normalizing any positive-definite vector to have $\cW^{3,1}$ component of norm $\tfrac 12$, and then identifying the interval $(0,\infty)$, to which $\norm{z}$ belongs, with $(-1,1)$ by a succession of $\log$ and hyperbolic tangent.
\end{proof}

\subsubsection{The developing map is a local diffeomorphism}
	\label{sssec:the_developing_map_is_a_local_diffeomorphism}
It remains to verify that $\Dev^L$ and $\Dev^P$ are local diffeomorphisms (as opposed to merely injective maps).
We will only present the calculations in the case of $\Dev^P$, the case of $\Dev^L$ being completely analogous.
When computing the relevant differential, it will also be the case that the matrix of interest will be block-triangular, and in fact the differential of $\Dev^L$ could be identified with a block-diagonal subpart.
Furthermore, the case of $\Dev^L$ is treated by \cite[Prop.~4.21]{CollierTholozanToulisse2019_The-geometry-of-maximal-representations-of-surface-groups-into} in a slightly more general setting, but it also involves a global argument, since their derivative ends up having an extra term coming from a more general Higgs field.
In the case of variations of Hodge structure the corresponding term vanishes, so the result is a purely local calculation.

\subsubsection{Explicit trivialization of the developing map}
	\label{sssec:explicit_trivialization_of_the_developing_map}
We work locally on $\wtilde{X}$, equipped with a coordinate $z$.
Let also $\alpha(z)$ be a local holomorphic section of $\cW^{1,3}$ and define $s(z):=\frac{\alpha(z)}{\norm{\alpha(z)}}$ to be the associated unit-norm section.
Let also $s_{2,2}$ be a local trivializing, unit norm, section of $\cW^{2,2}$.
Then the developing map can be described locally as
\begin{align}
	\label{eqn:local_dev_map_formula}
	\begin{split}
	\wtilde{X}\times (-1,1) \times \bT^1
	& \xrightarrow{\Dev^S}
	\bP^{1,3}(W_{\bR})\\
	(z,t,\theta) & \mapsto
	[w(z,t,\theta)]:=
	e^{-\sqrt{-1}\theta} \frac{\ov{s(z)}}{\sqrt{2}} +
	t\cdot s_{2,2} +
	e^{\sqrt{-1}\theta} \frac{s(z)}{\sqrt{2}}
	\end{split}
\end{align}
Note that the three terms on the right are the coordinates with respect to the Hodge decomposition at $z$ in positions $(3,1),(2,2),(1,3)$.
We will omit the superscript $S$ and just write $\Dev$ from now on, and also write $\bP^{1,3}$ instead of $\bP^{1,3}\left(W_{\bR}\right)$.

Let us introduce some further notation.
Work in a neighborhood of a point $z_0\in \wtilde{X}$.
Using the Hodge decomposition at $z_0$, we can write:
\begin{align*}
	N = N(z_0) & :=\left(\cW^{4,0}\oplus \cW^{0,4}\right)\cap W_{\bR}\\
	P = P(z_0) & :=\left(\cW^{3,1}\oplus \cW^{1,3}\right)\cap W_{\bR}\\
	l = l(z_0) & :=\cW^{2,2}\cap W_{\bR}
\end{align*}
so that $W_{\bR} = N \oplus P \oplus l$ which will be our basic decomposition with respect to which we will further decompose vectors and linear maps.

Consider now a point $[w_0]\in \bP^{1,3}$ in the image of the fiber over $z_0$, i.e. $[w_0]=\Dev(z_0,t_0,\theta_0)$ for some $(t_0,\theta_0)$.
By changing $\alpha(z)$ by a complex scalar of norm $1$, and to simplify notation, we can assume that $\theta_0=0$.
Introduce the subspaces
\begin{align*}
	P' & =P\cap w_0^{\perp} \text{ which is of the form} \left(\sqrt{-1}\cdot \ov{s(z_0)}+ 0+ \sqrt{-1}\cdot s(z_0)\right)\\
	l' & = \text{vectors of the form }\left(\frac{t_0}{\sqrt{2}}\ov{s(z_0)} + s_{2,2} + \frac{t_0}{\sqrt{2}}s(z_0)\right)
\end{align*}
where we have included the components with respect to the three pieces of the Hodge decomposition separately.
Computing the dot products, it follows that
\[
	[w_0]^{\perp} \isom N \oplus l' \oplus P'
\]
and so:
\[
	T_{[w_0]}\bP^{1,3} = \Hom([w_0],[w_0]^{\perp}) = \Hom([w_0], \oplus l' \oplus P' ).
\]

\subsubsection{The derivative of the developing map}
	\label{sssec:the_derivative_of_the_developing_map}
We now have a decomposition of the tangent space on the source of the developing map, and on the target, so the differential can be decomposed (writing $\bI:=(-1,1)$):
\[
	T_{z_0}\wtilde{X} \times T_{t_0}\bI \times T_{0}\bT^1
	\xrightarrow{D_{(z_0,\theta_0,t_0)}\Dev^S}
	\Hom([w_0],N\oplus l' \oplus P')
\]
To simplify notation, we will write $D$ for $D_{(z_0,\theta_0,t_0)}\Dev^S$ and omit the $\Hom([w_0],-)$ in the decomposition on the right.

It is now immediate to see from \autoref{eqn:local_dev_map_formula} that the differential is lower-triangular:
\[
	D
	=
	\begin{bmatrix}
		D\left(T_{z_0}{\wtilde{X}}\to N\right) & 0 & 0\\
		D\left(T_{z_0}{\wtilde{X}}\to l'\right) & D\left(T_{t_0}\bI \to l'\right) & 0\\
		D\left(T_{z_0}{\wtilde{X}}\to P'\right) & D\left(T_{t_0}\bI \to P'\right) & D\left(T_{0}\bT^1\to P'\right)
	\end{bmatrix}
\]
Indeed differentiating \autoref{eqn:local_dev_map_formula} in the $\theta$-parameter only changes the $P$-coordinate of the image, and moves it in the $P'$-direction.
Similarly, differentiating in the $t$-direction gives a non-trivial variation in the $l'$-direction.
So it remains to check that $D(T_{z_0}\wtilde{X}\to N)$ is nondegenerate.
As we will see in a moment, this is precisely given by the second fundamental form $\sigma_{1,3}$, which is the same as $\sigma_{4,0}^{t}$ and is nondegenerate by assumption A.

\subsubsection{Differentiating using the flat connection}
	\label{sssec:differentiating_using_the_flat_connection}
To compute the needed differential, we apply the Gauss--Manin connection to the individual Hodge components of $\Dev^S$ as defined in \autoref{eqn:local_dev_map_formula}.
We can drop any terms which belong to $P\oplus l$, to find (recall that $s$ is a section of $\cW^{1,3}$):
\begin{align*}
	\nabla^{GM}s & = \cancel{\nabla^{Ch}s} + \sigma_{1,3} s + \cancel{\sigma_{2,2}^{\dag}s}\\
	& = \sigma_{1,3}s \mod P\oplus l
\end{align*}
Similarly we find
\begin{align*}
	\nabla^{GM}\ov{s} & = \sigma_{4,0}^{\dag}s \mod P\oplus l \\
	\intertext{and }
	\nabla^{GM}s_{2,2} & = 0 \mod P\oplus l
\end{align*}
So we find that the induced map $T_{z_0}\wtilde{X}\to N$ is
\[
	v\mapsto \sigma_{1,3}(v)s + \ov{\sigma_{1,3}(v)s} \in N = \left(\cW^{4,0}(z_0) \oplus \cW^{0,4}(z_0)\right)\cap W_{\bR}
\]
In particular it is nondegenerate, and in fact if we equip $T_{z_0}\wtilde{X}$ and $N$ with their natural complex structures (on $N$ from the Hodge decomposition) then the isomorphism is conformal.

\begin{remark}[On naturality]
	\label{rmk:on_naturality}
	It is in fact also possible to avoid altogether local trivializations in the above computations.
	Indeed, on the total space of $\cW^{1,3}\to \wtilde{X}$, with zero section removed, there is a natural decomposition of the tangent space coming from the Chern connection.
	One can then decompose the differential as in \autoref{sssec:the_derivative_of_the_developing_map} and write each block of the matrix in terms of naturally defined operators, without the need to choose a trivialization.
\end{remark}




\section{Hypergeometric local systems}
	\label{sec:hypergeometric_local_systems}

For general information on hypergeometric equations, see Beukers--Heckman \cite{Beukers_Heckman} or Yoshida's book \cite{Yoshida1997_Hypergeometric-functions-my-love}.

\paragraph{Outline}
In this section we show that a large class of examples that satisfy assumption A arises from hypergeometric equations.
The classification of those hypergeometric parameters is accomplished in \autoref{ssec:assumption_a_for_hypergeometrics} and amounts to essentially a local calculation.
Indeed the condition can be seen as a ``tiling'' property which is local, in the spirit of triangle reflection groups arising from hypergeometric equations that must have angles of the form $\pi/N$ for integers $N$.
The local classification, based on the possible degenerations and the ``tiling'' condition, is in \autoref{prop:assumption_a_in_the_local_case}.

We take up the ``tiling'' point of view further and in \autoref{ssec:schwarz_reflection_for_hypergeometrics} we describe the Schwarz reflection structure of general hypergeometric equations.
This embeds the monodromy in a reflection group with index $2$, and we provide the explicit matrices in terms of the hypergeometric parameters.

Next, in \autoref{ssec:polyhedra_and_reflections} we specialize again to the assumption A case, and describe the tiling of a pseudosphere obtained from using Schwarz reflection.
One can reverse the reasoning and obtain many of the global results on the monodromy by starting from the tiling, but we will not pursue this here.

The period map of the VHS on the real axis takes a particularly simple geometric form and we describe it explicitly in \autoref{sssec:period_map_on_the_real_axis} and subsequent paragraphs.
This is of interest since in general the Griffiths transversality conditions impose restrictions on the period map which are hard to understand globally, but as we show here some control coming from the pseudo-Riemannian point of view can be obtained.


\laternote{Resurrect remark above}


\subsection{Assumption A for hypergeometrics}
	\label{ssec:assumption_a_for_hypergeometrics}

\subsubsection{Setup}
	\label{sssec:setup_assumption_a_for_hypergeometrics}
Hypergeometric equations are traditionally defined on $X:=\bC\setminus \{0,1\}$, although as we shall see in some of the examples of interest it will be useful to consider either $0$ or $\infty$ as orbifold points of some finite order.
Let $\alpha_1,\beta_1,\ldots,\alpha_n,\beta_n$ be a list of $2n$ numbers (while we'll aways take them real later, at this stage they can be complex).
Define the differential operator
\[
	D_{\alpha,\beta}:=\prod_{i=1}^n \left(D - \beta_i\right)
	- z\prod_{i=1}^n (D+\alpha_i)\quad \text{ where } D = z\partial_z.
\]
With these conventions, the exponents (or: Riemann scheme) of the operator are:
\begin{itemize}
	\item at $0$: $\beta_1,\ldots, \beta_n$
	\item at $\infty$: $\alpha_1,\ldots, \alpha_n$
	\item at $1$: $0, 1, \ldots, n-2, \gamma:=(n-1) - \sum_{i=1}^n (\alpha_i + \beta_i)$
\end{itemize}
The parameters are slightly different from those of \cite{Beukers_Heckman}, specifically $\beta_i^{BH}=1-\beta_i$, and also slightly different from \cite{Fedorov2018_Variations-of-Hodge-structures-for-hypergeometric-differential-operators-and-parabolic} where $\alpha_i^{Fe}=-\alpha_i$.

\subsubsection{Monodromy}
	\label{sssec:monodromy_hypergeometric}
The solutions of hypergeometric equations give a local system over the base space, and so a representation $\rho\colon \pi_1(X)\to \GL_n(\bC)$.
When the parameters $\alpha_i$ are invariant modulo $1$ under $x\mapsto -x$, and similarly for $\beta_i$, the representation takes real values.
Furthermore, if irreducible (see below) these local systems are (relatively) rigid, i.e. once the conjugacy classes of the monodromy matrices around cusps are fixed, the representation is unique.

\subsubsection{Variation of Hodge structure}
	\label{sssec:variation_of_hodge_structure_fedorov}
If the local system is rigid, it underlies a variation of Hodge structure by the results of Simpson \cite{Simpson1990_Harmonic-bundles-on-noncompact-curves}.
The Hodge numbers were calculated by Fedorov \cite[Thm.~1]{Fedorov2018_Variations-of-Hodge-structures-for-hypergeometric-differential-operators-and-parabolic}.
The recipe is as follows.
We assume that $\alpha_i,\beta_i$ are real, belong to $[0,1)$ and listed in nondecreasing order (this can always be assumed since shifting the parameters by integers does not affect the monodromy).
Assume, for simplicity, that both lists of numbers are symmetric with respect to $x\mapsto 1-x \mod 1$, so the passage between our list of parameters and Fedorov's is unaffected.
Assume, also, that $\alpha_i\neq \beta_j,\forall i,j$: this ensures the local system is irreducible.

Define now $\rho(k):=\# \{j \colon \alpha_j\leq \beta_k\}-k$
In other words, $\rho(k)$ can be computed by reading the list of all the numbers from left to right until reaching $\beta_k$, starting with the value $0$, and every time an $\alpha$-value is encountered adding $1$, and every time a $\beta$-value is encountered subtracting $1$.
Then the Hodge numbers of the underling VHS are given, up to shift, by $h^{p}=\# \rho^{-1}(p)$.

\subsubsection{Local exponents}
	\label{sssec:local_exponents}
The above algorithm will yield the parameter space of $\alpha_\bullet,\beta_{\bullet}$ for which the Hodge numbers of the rank $4$ local system are $(1,1,1,1)$.
We also need to make sure that assumption A from \autoref{def:assumption_a} is satisfied.
The criterion is provided by Lemma~6.3 of \cite{EskinKontsevichMoller2018_Lower-bounds-for-Lyapunov-exponents-of-flat-bundles-on-curves}, although see some caveats in \autoref{sssec:on_local_exponents_and_regularity} below.
The map $\sigma_{2,1}$ of interest to us is denoted in loc.cit. by $\tau_1$.

Specifically, the criterion implies that away from the singular points $\{0,1,\infty\}$ all the $\sigma_{p,q}$ are isomorphisms.
At the point $1$, the monodromy is a rank $1$ unipotent matrix preserving a line but not a $2$-dimensional subspace, so the map $\sigma_{2,1}$ is an isomorphism by the analysis in \autoref{prop:boundedness_of_second_fundamental_form}, it corresponds to the second diagram in \autoref{fig:Hodge_numbers} (see also \autoref{sssec:on_local_exponents_and_regularity} below).
We now list in more detail the situations around singular points where assumption A is satisfied locally, depending on the local exponents and including the orbifold case.

\begin{proposition}[Assumption A in the local case]
	\label{prop:assumption_a_in_the_local_case}
	Suppose that $0\leq\mu_1\leq\mu_2\leq \mu_4\leq \mu_4<1$ are also symmetric under $x\mapsto-x \mod 1$ and give the local exponents at the origin of the Picard--Fuchs equation of a variation of Hodge structure on the punctured unit disc.
	Then there is a sufficiently small neighborhood of the origin on which assumption A is satisfied, perhaps in the orbifold sense i.e. after passing to a finite cover, if and only if we are in the following cases:
	\begin{align*}
		(0,0,0,0) \text{ or } (\tfrac 12,\tfrac 12,\tfrac 12,\tfrac 12)
		&\\
		(0,0,\mu,1-\mu) \text{ or } \left(\mu,\tfrac{1}{2},\tfrac{1}{2},1-\mu\right)
		& \quad \text{any real }\mu\in\left(0,\tfrac{1}{2}\right)\\
		\left(
		\tfrac{N-(2k+1)}{2N},
		\tfrac{N-1}{2N},
		\tfrac{N+1}{2N},
		\tfrac{N+(2k+1)}{2N}
		\right) & \quad
		\text{any integers } k\geq 1, N>2k+1.
	\end{align*}
\end{proposition}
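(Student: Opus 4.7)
The plan is to reduce to the purely unipotent-monodromy case by passing to a finite orbifold cover, and then to apply \autoref{prop:boundedness_of_second_fundamental_form}. Writing the local monodromy $T = T_s T_u$ in Jordan decomposition with $T_s$ of finite order $N$, the orbifold cover $w\mapsto w^N = z$ kills $T_s$ and leaves purely unipotent monodromy $T_u^N$. Both the Deligne extensions of the Hodge bundles and the second fundamental form behave naturally under this pullback, with $\cK_{X}(\log D)$ scaling by $N$, so assumption A on the original disc is equivalent to assumption A on the cover.

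On the cover, \autoref{prop:boundedness_of_second_fundamental_form} gives the complete list of possibilities: only the second diamond of \autoref{fig:Hodge_numbers} (rank-one nilpotent with invariant line) and the fourth (maximally unipotent) are compatible with $\sigma_{2,1}$ being an isomorphism of extension bundles, while the third is excluded because the explicit nilpotent-orbit computation there shows $\sigma_{2,1}$ vanishes identically. Translating back through the cover, consider first the generic case where all four $\mu_i$ are distinct modulo $\bZ$, so $T_u = \id$ and the VHS extends smoothly on the cover. Then $\sigma_{2,1}$, as a section of $\Hom(\cV^{2,1}_{ext},\cV^{1,2}_{ext})\otimes \cK(\log)$, is an isomorphism of the Deligne extensions iff the exponent shift $\mu_3 - \mu_2$ combined with the $N$-fold scaling of $\tfrac{dz}{z}$ yields a unit in the sense of line bundles, i.e.\ $N(\mu_3 - \mu_2) = 1$. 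Combined with the symmetry $\mu_2 + \mu_3 = 1$, this forces $\mu_2 = \tfrac{N-1}{2N}$ and $\mu_3 = \tfrac{N+1}{2N}$; the remaining pair $\mu_1 + \mu_4 = 1$ is then of the form $\left(\tfrac{N-(2k+1)}{2N},\tfrac{N+(2k+1)}{2N}\right)$ for some integer $k\geq 1$ with $N > 2k+1$.

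The remaining cases arise from coincidences of the $\mu_i$ modulo $\bZ$. If all four $\mu_i$ coincide modulo $\bZ$, the symmetry forces the common value to be $0$ or $\tfrac 12$, and after the degree-$2$ cover (in the latter case) we land in the purely unipotent case where both diamond $2$ and diamond $4$ (MUM) are available. If exactly the middle two $\mu_2, \mu_3$ coincide modulo $\bZ$, they must both equal $0$ or $\tfrac 12$, the eigenspace of $T_s$ at that eigenvalue is two-dimensional, and a rank-one unipotent block can be accommodated there to realize diamond $2$; this produces the families $(0, 0, \mu, 1-\mu)$ and $(\mu, \tfrac 12, \tfrac 12, 1-\mu)$. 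Any other coincidence pattern (for instance $\mu_1 = \mu_2$ with the middle two distinct) is excluded, either because the resulting Jordan type necessarily falls into the forbidden diamond $3$, or because the Hodge number count of \autoref{sssec:variation_of_hodge_structure_fedorov} fails to give $(1,1,1,1)$.

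The main obstacle is the local bookkeeping underlying the identity $N(\mu_3 - \mu_2) = 1$: one has to identify which Hodge line bundle $\cV^{p,q}_{ext}$ carries which of the four exponents, verify that this ordering is the one predicted by Fedorov's recipe (\autoref{sssec:variation_of_hodge_structure_fedorov}), and then follow the effect of both the orbifold cover and the logarithmic twist on $\sigma_{2,1}$. Once this local dictionary is pinned down, the enumeration of compatible exponents is straightforward, and the two tables of the introduction together with the hypergeometric constraints of \autoref{sssec:variation_of_hodge_structure_fedorov} yield precisely the list in the statement.
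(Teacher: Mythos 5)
Your strategy is aligned with the paper's: reduce to unipotent monodromy via a finite cover, use \autoref{prop:boundedness_of_second_fundamental_form} to rule out diamond~$3$, and handle the generic case through $N(\mu_3-\mu_2)=1$ and $N(\mu_2-\mu_1)=k$. But the classification of the degenerate (coincident-exponent) cases is internally inconsistent, and this is a genuine gap. You claim that ``exactly the middle two $\mu_2,\mu_3$ coincide modulo $\bZ$'' produces $(0,0,\mu,1-\mu)$; in the sorted ordering of that tuple $\mu_1=\mu_2=0$ coincide while $\mu_2=0\neq\mu=\mu_3$ are distinct, so the middle pair does \emph{not} coincide. (Indeed, if $\mu_2=\mu_3=0$ then $\mu_1=0$ as well, so ``exactly the middle two coincide and equal $0$'' cannot occur.) Worse, you then discard ``$\mu_1=\mu_2$ with the middle two distinct'' --- but that is precisely the coincidence pattern of $(0,0,\mu,1-\mu)$, which must appear in the final list. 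The pattern that should actually be excluded is $\mu_1=\mu_2\in(0,\tfrac12)$, where the $x\mapsto-x$ symmetry forces $\mu_3=\mu_4=1-\mu_1$ and the degeneration falls into diamond~$3$. The paper avoids this confusion by first branching on whether $\mu_1=0$, which cleanly separates the good tuple $(0,0,\mu,1-\mu)$ from the bad tuple $(\mu,\mu,1-\mu,1-\mu)$, and only then isolating the case $\mu_2=\mu_3=\tfrac12$.

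A secondary issue: the families $(0,0,\mu,1-\mu)$ and $(\mu,\tfrac12,\tfrac12,1-\mu)$ allow arbitrary real $\mu\in(0,\tfrac12)$, so the elliptic part $T_s$ may have infinite order. Your opening step ``write $T=T_sT_u$ with $T_s$ of finite order $N$ and pass to the cover $w\mapsto w^N$'' then has no content. The paper addresses this in \autoref{sssec:orbifold_points_elliptic_parts}, noting that the metric estimates of \autoref{prop:boundedness_of_second_fundamental_form} persist with an infinite-order elliptic factor, so assumption~A can be formulated and checked directly at such cusps without a cover; your argument needs a corresponding remark for the degenerate branches.
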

\begin{proof}
	We will refer to \autoref{fig:Hodge_numbers} and refer to the possibilities for the unipotent part as case 1, case 2, and case MUM, for the second, third, and fourth diagrams of Hodge numbers.
	We will now list the possibilities for exponents, in order of increasing nondegeneracy, and assign them respective cases.
	Case 1 and case MUM are good, case 2 isn't.

	Suppose first that $\mu_1=0$ then by symmetry also $\mu_2=0$.
	If $\mu_3=0$ then necessarily $\mu_4=0$, we are in case MUM.
	If $\mu_3>0$, we could have $\mu_3=\mu_4=\tfrac 12$.
	This leads to case 2 (there's on a finite cover a $2$-dimensional invariant space).
	Otherwise $\mu_4=1-\mu_3\neq \mu_3>0$.
	This leads to case 1, and note that $\mu_3\in(0,\tfrac 12)$ can be an arbitrary real.

	Now suppose that $\mu_1> 0$, so we have $\mu_4=1-\mu_1$.
	If $\mu_2=\mu_1$, so $\mu_3=\mu_4$ then we are again in case 2, unless also $\mu_2=\mu_3$, i.e. all are equal leading to exponents $(\tfrac 12,\tfrac 12,\tfrac 12,\tfrac 12)$, which is case MUM (the elliptic part is $-1$, which can be eliminated by passing to a double cover).
	If $\mu_2>\mu_1$, we could have $\mu_2=\mu_3=\tfrac 12$, which leads again to case 1 and exponents $(\mu,\tfrac 12, \tfrac 12,1-\mu)$ for any real $\mu\in(0,\tfrac 12)$.

	So we are left with the least degenerate case $0<\mu_1<\mu_2<1-\mu_2<1-\mu_1$.
	The monodromy is thus an elliptic matrix, so unless it is of finite order we are not in a good case.
	When passing to a finite cover of order $N$, the exponents multiply by $N$.
	By \cite[Lemma~6.3]{EskinKontsevichMoller2018_Lower-bounds-for-Lyapunov-exponents-of-flat-bundles-on-curves} (it actually applies in this case) for assumption A to hold, we must have $N(1-\mu_2)-N(\mu_2)=1$, or equivalently $\mu_2=\frac{N-1}{2N}$.
	Furthermore the local system $\bW$ should be trivial after passage to this finite cover, so its exponents should be integers.
	This leads to the requirement $N(\mu_2-\mu_1)=k\in \bN$, or equivalently $\mu_2 = \frac{N-(2k+1)}{2N}$ with $N>2k+1$.
	So this leads to the final set of good possibilities:
	\[
		\left(
		\frac{N-(2k+1)}{2N},
		\frac{N-1}{2N},
		\frac{N+1}{2N},
		\frac{N+(2k+1)}{2N}
		\right)
	\]
	with the requirements $k\geq 1, N>2k+1$ (in particular $N\geq 4$).
\end{proof}
We are now ready to list the hypergeometric equations which satisfy assumption A.
\begin{theorem}[Good hypergeometrics]
	\label{thm:good_hypergeometrics}
	The variation of Hodge structures underling a hypergeometric equation satisfies assumption A if and only if it appears, up to exchanging the roles of $\alpha$ and $\beta$, in either \autoref{table:good_hypgeom_real} or \autoref{table:good_hypgeom_integral} from the introduction.
\end{theorem}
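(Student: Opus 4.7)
The proof plan is to combine the local classification of \autoref{prop:assumption_a_in_the_local_case} at each singular point $\{0,1,\infty\}$ with the Hodge-number constraint coming from Fedorov's recipe (\autoref{sssec:variation_of_hodge_structure_fedorov}), and show that the resulting system of constraints on $(\alpha_\bullet,\beta_\bullet)$ is equivalent to the enumeration in \autoref{table:good_hypgeom_real} and \autoref{table:good_hypgeom_integral}. Away from the three singular points the second fundamental form $\sigma_{2,1}$ of a hypergeometric VHS is automatically an isomorphism by rigidity (cf.\ \cite[Lemma~6.3]{EskinKontsevichMoller2018_Lower-bounds-for-Lyapunov-exponents-of-flat-bundles-on-curves}), so assumption A reduces to a local verification at each cusp or orbifold point.

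First I would treat the singular point $1$: the local exponents are $(0,1,\ldots,n{-}2,\gamma)$, so the monodromy is either trivial (when $\gamma\in\bZ$, requiring a separate small check) or a rank $1$ unipotent fixing a line and not a Lagrangian, placing us in the second diagram of \autoref{fig:Hodge_numbers}. By \autoref{prop:boundedness_of_second_fundamental_form} this is always a good case, so no restriction comes from the point $1$ beyond what is needed to rule out the ``bad'' third diagram. Next I would apply \autoref{prop:assumption_a_in_the_local_case} to the exponents at $0$ (namely the $\beta_i$ reduced mod $1$ and sorted) and independently at $\infty$ (the $\alpha_i$). This yields a finite list of local types at each of $0$ and $\infty$: the trivial case $(0,0,0,0)$, the half-integer case $(\tfrac12,\tfrac12,\tfrac12,\tfrac12)$, the two one-parameter families $(0,0,\mu,1{-}\mu)$ and $(\mu,\tfrac12,\tfrac12,1{-}\mu)$, and the two-integer family $\left(\tfrac{N-(2k+1)}{2N},\tfrac{N-1}{2N},\tfrac{N+1}{2N},\tfrac{N+(2k+1)}{2N}\right)$.

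The next step is the global constraint: the Hodge numbers of the VHS must equal $(1,1,1,1)$. Running Fedorov's recipe requires that when one interleaves the sorted $\alpha$'s and $\beta$'s on $[0,1)$, the resulting partial sums $\rho(k)$ take each value in $\{0,1,2,3\}$ exactly once. This is the interlacing condition $\alpha_i\neq \beta_j$ (irreducibility) strengthened so that the $\alpha$'s and $\beta$'s strictly alternate in the ordering on $[0,1)$. I would then cross the admissible local types at $0$ with those at $\infty$ and keep only the pairs for which the combined list alternates correctly. This is a finite bookkeeping problem: each family at $0$ combined with each family at $\infty$ gives either an empty set, a single configuration, or a subfamily cut out by explicit strict inequalities comparing the parameters $(\mu,\nu,N,k,M,k_M)$ on the two sides.

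The main obstacle is the last step — systematically organizing the case-analysis so as to match exactly the entries of \autoref{table:good_hypgeom_real} and \autoref{table:good_hypgeom_integral} without duplication or omission. Most crosses vanish because of the alternation requirement (for instance two copies of $(0,0,0,0)$ on both sides are incompatible with $(1,1,1,1)$ Hodge numbers), and the surviving combinations come with sharp inequalities: e.g.\ pairing $(\mu,\tfrac12,\tfrac12,1{-}\mu)$ at $\infty$ with the two-integer family at $0$ forces $\tfrac{N-1}{2N}<\mu<\tfrac12$ to avoid collisions among the exponents, which is precisely the condition recorded in the fourth row of \autoref{table:good_hypgeom_integral}; similarly the final row arises from pairing the two-integer families on both sides and comparing their innermost exponents. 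Finally, I would use the $x\mapsto 1-x$ symmetry assumed throughout and the $\alpha\leftrightarrow\beta$ swap to eliminate redundant entries, which reduces the full enumeration to the two tables as stated. The converse direction — that each listed pair indeed satisfies assumption A — follows by running \autoref{prop:assumption_a_in_the_local_case} in reverse at each singular point and invoking the rigidity of hypergeometric VHS to propagate to the whole of $X$.
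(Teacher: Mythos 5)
Your overall strategy — combining the local classification at each singular point (\autoref{prop:assumption_a_in_the_local_case}) with the global Hodge-number constraint from Fedorov's recipe, then doing bookkeeping — matches the paper's approach. However, there is a genuine error in your translation of Fedorov's recipe into a combinatorial condition on the parameters, and it inverts the condition.

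You write that Hodge numbers $(1,1,1,1)$ require that ``the $\alpha$'s and $\beta$'s strictly alternate in the ordering on $[0,1)$.'' This is backwards. If the sorted values alternate $\alpha_1<\beta_1<\alpha_2<\beta_2<\alpha_3<\beta_3<\alpha_4<\beta_4$, then in Fedorov's recipe each $\rho(k)$ equals $k-k=0$, giving $h^0=4$ and Hodge numbers $(4,0,0,0)$ — this is the \emph{unitary} (finite-monodromy) case of Beukers--Heckman, which is precisely what assumption A excludes. The condition that makes $\rho(k)$ hit $0,1,2,3$ once each is the \emph{opposite} of interlacing: the $\alpha$'s and $\beta$'s must form two contiguous clusters (arcs) on the circle $\bR/\bZ$. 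For instance, $\alpha_1<\alpha_2<\alpha_3<\alpha_4<\beta_1<\beta_2<\beta_3<\beta_4$ yields $\rho(k)=4-k$ for $k=1,\dots,4$, giving $(1,1,1,1)$. If you carry out the bookkeeping in the final step with the alternation requirement, you will end up classifying the finite hypergeometric monodromy groups rather than the ones with Hodge numbers $(1,1,1,1)$, and you will not reproduce \autoref{table:good_hypgeom_real} or \autoref{table:good_hypgeom_integral}. (Your own tables do satisfy the clustering condition; e.g.\ in the second row of \autoref{table:good_hypgeom_real}, $0<\nu<\mu\leq\tfrac12$ keeps the arc $\{0,0,\nu,1-\nu\}$ disjoint from the arc $\{\mu,\tfrac12,\tfrac12,1-\mu\}$ — they do not alternate.) With the corrected combinatorial condition, the rest of your plan — the case-analysis crossing local types at $0$ and $\infty$, extracting the inequalities between $\mu,\nu,N,k,M,k_M$ needed to keep the two clusters disjoint, and using the $\alpha\leftrightarrow\beta$ and $x\mapsto 1-x$ symmetries — is exactly the paper's proof.
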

\begin{proof}
	To obtain the list, we take all the possible good local choices from \autoref{prop:assumption_a_in_the_local_case}, subject to the condition that $\alpha_i\neq \beta_j \quad \forall i,j$ and that the Hodge numbers, as dictated by Fedorov's recipe in \autoref{sssec:variation_of_hodge_structure_fedorov}, are $(1,1,1,1)$.
	This occurs if and only if, when placing the numbers on the circle, all the $\alpha$'s and all the $\beta$'s form contiguous clusters (the opposite of interlacing, which leads to finite monodromy).
\end{proof}

\subsubsection{On local exponents and regularity}
	\label{sssec:on_local_exponents_and_regularity}
Let us clarify the following point.
It is asserted in \cite{EskinKontsevichMoller2018_Lower-bounds-for-Lyapunov-exponents-of-flat-bundles-on-curves}, towards the top of pg. 2328, that if the local exponents at a point are $k,k+1,k+2,\ldots, k+n-1$ then the point is regular.
This requires some qualifications, since for instance the differential operator $(D+1)D + z$ has local exponents $-1,0$ at the origin.
It has a single-valued solution $f_0(z)=1+a_1z+\cdots$ corresponding to the exponent $0$, but a multi-valued solution $f_0(z)\log z + f_{-1}(z)$ where $f_{-1}(z)=\tfrac{1}{z} + b_0 + b_1z + \cdots$.

Such a situation can (and does) occur at the point $z=1$ of the hypergeometric equation, where taking the parameters to be $(\alpha_1,\alpha_2,1-\alpha_2,1-\alpha_1)$ and $(\beta_1,\beta_2,1-\beta_2,1-\beta_1)$ yields the exponents at $1$ to be $-1,0,1,2$.
Nonetheless, the monodromy matrix around $1$ is a rank $1$ unipotent preserving a line.



\subsection{Schwarz reflection for hypergeometrics}
	\label{ssec:schwarz_reflection_for_hypergeometrics}

\subsubsection{Setup}
	\label{sssec:setup_schwarz_reflection_for_hypergeometrics}
We return for this section to the general setting of hypergeometric equations, see the notation in \autoref{sssec:setup_assumption_a_for_hypergeometrics}.
Assume that the parameters $\alpha_i,\beta_j$ are all real.
We now explain how to obtain a Schwarz reflection structure for the associated local system.
This will provide further geometric content to the underlying variation of Hodge structure.

\subsubsection{Levelt basis}
	\label{sssec:levelt_basis}
Let us fix a point in the upper half-plane, say $\sqrt{-1}$, which will be the basepoint for the fundamental group and monodromy representation (see however \autoref{rmk:groupoids_duality} below).
A key role throughout the discussion below will be played by the open real segments $(\infty,0),(0,1), (1,\infty)$.
Let the generators of the fundamental group be $g_0$, which crosses first $(0,1)$ and then $(\infty,0)$ and $g_{\infty}$ to cross first $(1,\infty)$ and then $(\infty,0)$.
Let also $g_1$ be the path which crosses first $(1,\infty)$ and then $(0,1)$, so that we have the relation $g_0g_1=g_{\infty}$.

Note that relative to the convention in \cite{Beukers_Heckman}, our $g_{\infty}$ is the same but $g_0,g_1$ are inverses of loc.cit.
Set also $a_j:=\exp(\twopii \alpha_j)$ and $b_j:=\exp(\twopii \beta_j)$, again note that our $a_j$ are the same as in loc.cit, while $b_j$ are the complex conjugates.
Setting
\begin{align*}
	p_A(t):=\prod_{j=1}^n (t-a_j) & = t^n + A_1 t^{n-1} + \cdots + A_n\\
	p_B(t):=\prod_{j=1}^n (t-b_j) & = t^n + B_1 t^{n-1} + \cdots + B_n
\end{align*}
the theorem of Levelt, \cite[Thm.~3.5]{Beukers_Heckman} gives that the monodromy of the hypergeometric equation can be conjugated to have the following generators along $g_\infty$ and $g_0$:
\begin{align}
	\label{eqn:Levelt_matrices_my_convention}
	h_{\infty}=A := 
	\begin{bmatrix}
	0 & 0 & \ldots & 0 & -A_n\\
	1 & 0 & \ldots & 0 & -A_{n-1}\\
	0 & 1 & \ldots & 0 & -A_{n-2}\\
	  &   & \ddots &   & \\
	0 & 0 & \ldots & 1 & -A_{1}
	\end{bmatrix}
	\quad
	h_0 = B:= \begin{bmatrix}
	0 & 0 & \ldots & 0 & -\ov{B_n}\\
	1 & 0 & \ldots & 0 & -\ov{B_{n-1}}\\
	0 & 1 & \ldots & 0 & -\ov{B_{n-2}}\\
	  &   & \ddots &   & \\
	0 & 0 & \ldots & 1 & -\ov{B_{1}}
	\end{bmatrix}
\end{align}

\subsubsection{Schwarz reflection}
	\label{sssec:schwarz_reflection}
Since we assumed that our parameters $\alpha_i,\beta_j$ are real, it follows that the differential equation has a complex-conjugation symmetry.
Specifically, starting with a basis of solutions near $\sqrt{-1}$, we can analytically continue along a path that crosses the real axis and reaches $-\sqrt{-1}$, and then apply complex-conjugation, i.e. $\tilde{f}(z):=\ov{f(\ov{z})}$ to obtain another set of functions defined near $\sqrt{-1}$.
Because the differential operator has real coefficients, the functions are again a basis of solutions.

If $V_{\bC}$ denotes the complex vector space of solutions near $\sqrt{-1}$, then the mapped just described is a complex anti-linear self-map of $V_{\bC}$.
Furthermore, it is visibly an involution.
The real subspace of solutions fixed by this operation can be described as follows.
Pick a point $p\in (a,b)$, where $(a,b)$ is the open segment of the real axis crossed by the defining path.
Then around $p$, there is a basis of solutions whose power series have real coefficients.
The analytic continuation of these solutions to $\sqrt{-1}$ yields the fixed points of the complex anti-linear involution described.

\subsubsection{Matrices of anti-involutions}
	\label{sssec:matrices_of_anti_involutions}
Let $\cR_A,\cR_B,\cR_C$ be the transformations described in \autoref{sssec:schwarz_reflection} above, corresponding to crossing along $(1,\infty)$, $(0,1)$, and $(\infty,0)$ on the real axis.
Then we must have
\[
	h_0 = B = \cR_C \circ \cR_B \quad
	h_\infty = A = \cR_C \circ \cR_A \quad
	h_1 = \cR_B \circ \cR_A \quad
\]
Denote by $\cC\colon \bC^n\to \bC^n$ the operation of coordinate-wise complex conjugation.
Since the transformations $\cR_{\bullet}$ are complex anti-linear, there exist complex linear transformations $R_A,R_B,R_C$ such that $\cR_{X}=\cC\circ R_X$.
After conjugation, these can be arranged to be:
\begin{gather}
	\label{eqn:reflection_matrices_Levelt}
	\begin{align*}
	R_A:= \begin{bmatrix}
	0 & \ldots & 0 & 1 & -{A_1}\\
	0 & \ldots & 1 & 0 & -{A_{2}}\\
	  & \reflectbox{$\ddots$} &   &   & \\
	1 & \ldots & 0 & 0 & -{A_{n-1}}\\
	0 & \ldots & 0 & 0 & -{A_{n}}
	\end{bmatrix}
	\quad
	R_B:= \begin{bmatrix}
	0 & \ldots & 0 & 1 & -\ov{B_1}\\
	0 & \ldots & 1 & 0 & -\ov{B_{2}}\\
	  & \reflectbox{$\ddots$} &   &   & \\
	1 & \ldots & 0 & 0 & -\ov{B_{n-1}}\\
	0 & \ldots & 0 & 0 & -\ov{B_{n}}
	\end{bmatrix}
	\end{align*}\\
	\text{ as well as }
	R_C:= \begin{bmatrix}
	0 & 0 & \ldots & 0 & 1 \\
	0 & 0 & \ldots & 1 & 0 \\
	  & & \reflectbox{$\ddots$} &   &   \\
	0 & 1 & \ldots & 0 & 0\\
	1 & 0 & \ldots & 0 & 0
	\end{bmatrix}
\end{gather}
Clearly $\cR_{C}^2=1$, so let us verify that the same holds for $\cR_{A}$ (the calculation for $\cR_{B}$ is analogous).
Recall that $\alpha_i$ is real, therefore $a_i=\exp(\twopii \alpha_i)$ satisfies $a_i^{-1}=\ov{a_i}$.
The resulting relation for the coefficients $A_i$ becomes:
\begin{align*}
	p_A(t) & = \prod (t-a_i) = t^n \prod \left(1-\frac{a_i}{t}\right)\\
	& = t^n \left(\prod a_i\right)\prod\left(a_i^{-1}-\frac{1}{t}\right)\\
	& = t^n A_n \prod\left(\ov{a_i}-\frac{1}{t}\right) = t^n A_n \cdot \ov{p_A\left(\frac{1}{t}\right)}
\end{align*}
so we find that
\[
	A_l = A_n \cdot \ov{A_{n-l}} \quad \forall l=0,\ldots,n \text{ setting }A_0=1
\]
Computing now $R_A\circ \cC \circ R_A$ yields precisely the identity matrix, showing that $\cR_A^2=\id$.
\laternote{Prove that the above matrices are actually the correct ones.
Perhaps use Levelt basis?}	

\begin{remark}[Groupoids, duality]
	\label{rmk:groupoids_duality}
	\leavevmode
	\begin{enumerate}
		\item It would be more intrinsic to work with a fundamental groupoid in the above situation.
		Specifically, $\bP^1(\bC)\setminus \{0,1,\infty\}$ can be broken up into the two half-planes with nonzero imaginary part, each of which is contractible.
		These would be the two objects of the groupoid.
		The generating arrows (and their inverses) are labeled by which one of the real segments $(\infty,0),(0,1),(1,\infty)$ they cross.
		\item Some of the above discussion is implicit in \cite{Beukers_Heckman}, see e.g. Prop.~6.1 which shows that the dual local system is given by taking the inverse of the parameters $a_i=\exp(\twopii \alpha_i)$ and $b_j=\exp(\twopii \beta_j)$.
		\item A bit more generally, for the above discussion it suffices to have that the sets $\{\alpha_i\}$ and $\{\beta_j\}$ are invariant under $z\mapsto \ov{z}$.
		In this case the differential operator will still have real coefficients and the needed relation among the $A_i,B_j$ still holds.
	\end{enumerate}
\end{remark}

\subsubsection{Eigenvectors}
	\label{sssec:eigenvectors}
We make a further simplifying assumption, which holds in our examples of interest: assume that the coefficients $A_i$ and $B_j$ are real, i.e. the set of parameters $\alpha_{\bullet}$ is invariant under $x\mapsto -x \mod 1$ and the same for $\beta_{\bullet}$.
In this case, the monodromy group is real, and we can assume that the complex conjugation $\cC$ is with respect to this real structure.
Note also that each of the complex anti-linear transformations $\cR_{A},\cR_{B},\cR_C$ also induces a real structure -- given by its set of fixed vectors -- which corresponds to solutions of the differential equation which have real Taylor coefficients on the corresponding segments of the real axis.

We next look at the eigenvectors of the matrices $R_A,R_B$ and $R_C$ from \autoref{eqn:reflection_matrices_Levelt}.
It is clear that $R_C$ has $\left\lceil\tfrac{n}{2}\right\rceil$ eigenvalues $+1$, and $\left\lfloor\tfrac{n}{2}\right\rfloor$ eigenvalues $-1$, with eigenvectors which are symmetric, resp. antisymmetric, when flipping the coordinates across the center.
For $R_A$ and $R_B$, there are again $\left\lceil\tfrac{n-1}{2}\right\rceil$ eigenvalues $+1$, and $\left\lfloor\tfrac{n-1}{2}\right\rfloor$ eigenvalues $-1$, which are in fact common eigenvectors with the same eigenvalues, whose span is the subspace of vectors with last coordinate vanishing.
Finally, $R_A$ has an eigenvector
\[
	\begin{bmatrix}
	A_{n-1} & A_{n-2} & \cdots &  A_{1} &2
	\end{bmatrix}^t
	\text{with eigenvalue }-A_n.
\]
Note that by assumption $A_n^2=1$, and that we have the basic relation $A_i = A_n A_{n-i}$.
The eigenvector and eigenvalue for $R_B$ is analogous.

\laternote{include discussion of the bilinear form
I suspect that the action of the $R_\bullet$ is $(-1)^n$-symmetric, i.e. flips the symplectic form but preserves the orthogonal form.
Maybe easier to see at the complex level?}



\subsection{Polyhedra and reflections}
	\label{ssec:polyhedra_and_reflections}

An analysis of the local structure of the period map in the case which is of interest to us is in the paper of Bryant and Griffiths 
\cite{BryantGriffiths1983_Some-observations-on-the-infinitesimal-period-relations-for-regular-threefolds}.
Using the Schwarz reflection structure, we are also able to get some of its global properties as well.
Specifically, we get analogues of hyperbolic triangles, in this case polyhedra in the pseudo-sphere $\bS^{1,3}$.
The tiling condition, analogous to that in the \Poincare polyhedron theorem, is essentially equivalent to assumption A.

\subsubsection{Setup}
	\label{sssec:setup_polyhedra_and_reflections}
We specialize the discussion in \autoref{ssec:schwarz_reflection_for_hypergeometrics} to the case $n=4$ and with parameters as in \autoref{ssec:assumption_a_for_hypergeometrics}, i.e. satisfying assumption A.
Then all three reflections have the same structure: they have $2$-dimensional eigenspaces for the two eigenvalues $\pm 1$, and the eigenspaces are Lagrangian for the symplectic form.
We therefore focus on one of them, say $R_B$.

Pick a basis $e_1,e_2,f_1,f_2$ of $V_{\bR}$ such that for the symplectic pairing we have $\ip{e_i,f_i}_I=1$ and $0$ otherwise.
Assume also that
\[
	R_B e_i = -e_i \text{ and }R_B f_i = f_i
\]
and take a basis of the reduced second exterior power $W:=\Lambda^2_{\circ}V$:
\begin{align}
	\label{eqn:exterior_power_basis_coordinates_eigenvalues}
	\begin{matrix}
	e_1 \wedge e_2 & e_1 \wedge f_2 & e_1\wedge f_1 - e_2 \wedge f_2
	& f_1 \wedge e_2 & f_1\wedge f_2\\
	a & b & c & d & e \\
	+1 & -1 & -1 & -1 & +1
	\end{matrix}
\end{align}
where underneath we have written the corresponding names of the coordinate functions for an element of the vector space, and below them the eigenvalues of the matrix $\Lambda^2_{\circ} R_B$.
So the quadratic form on $W$ is (up to a factor of $\tfrac 12$):
\[
	Q(w,w):=-a\cdot e + b\cdot d - c^2.
\]
Note that the signature of $Q$ on $W_{\bR}$ is $(2,3)$.

\subsubsection{Period map on the real axis}
	\label{sssec:period_map_on_the_real_axis}
Recall that we are working in a neighborhood $U$ of the segment $(0,1)$ on the real axis.
Recall also that there are two variations of Hodge structure, $\bV$ and $\bW:=\Lambda^2_{\circ}\bV$.
We will describe the middle element of the Hodge filtration, i.e. $F^2\cV$, which is the same as to describe $\cW^{4,0}$.
Viewing it as a line in $W_{\bC}$, this gives a holomorphic map
\[
	\gamma\colon U \to \bP(W_{\bC})
\]
which furthermore is equivariant for the action of complex conjugation and $\cR_{\cB}$:
\[
	\gamma(\ov{z}) = \cR_{B} \left(\gamma(z)\right)
\]
where recall that $R_{\cB}=\cC\circ R_B$, where $\cC$ is complex conjugation of all coordinates.
Restricting to the real axis, and calling the coordinate $t$, we find
\[
	\gamma(t) = \cC \left(R_{B}\cdot \gamma(t)\right)
\]
Let us lift $\gamma$ to a map to $W_{\bC}$, with coordinates $(a:b:c:d:e)$, to find, using the information on the eigenvalues of $R_B$ from \autoref{eqn:exterior_power_basis_coordinates_eigenvalues}:
\[
	a = \ov{a} \quad 
	b = -\ov{b} \quad
	c = -\ov{c} \quad
	d = -\ov{d} \quad
	e = \ov{d}
\]
or equivalently:
\[
	\gamma = \left(a:\sqrt{-1}b:\sqrt{-1}c:\sqrt{-1}d:e\right) \text{ for }a,b,c,d,e \text{ real-valued functions.}
\]
Let us note that it is straightforward to read off the Lagrangian subspace in $V_{\bC}$ parametrized by $\gamma$, or equivalently factorize the vector with coordinates as above:
\begin{align}
	\label{eqn:gamma_factorization}
	\gamma =\frac{1}{a} \left[
	a\cdot  e_1 + \sqrt{-1}\left(d\cdot  f_1 + c\cdot f_2\right) 
	\right]
	\wedge
	\left[
	a\cdot  e_2 + \sqrt{-1}\left(c\cdot f_1 +  b\cdot f_2\right) 
	\right]
\end{align}
under the assumptions $ae = c^2 - bd = 1$, see \autoref{sssec:further_analysis_of_the_image_of_the_period_map} below.

\subsubsection{Riemann bilinear relations}
	\label{sssec:riemann_bilinear_relations}
To extract further information, we recall the constraints on $\cW^{4,0}$, which we denote by $\gamma$, and its motion:
\begin{description}
	\item[Lagrangian] The line is isotropic, i.e. $Q(\gamma,\gamma)=0$.
	\item[Definiteness] The line is negative-definite for the hermitian pairing, i.e. $Q(\gamma,\ov{\gamma})<0$.
	\item[Griffiths transversality] The tangent $\dot{\gamma}$ is also isotropic, i.e. $Q(\dot{\gamma},\dot{\gamma})=0$.
\end{description}
Let us now express these three conditions in terms of the coordinate functions introduced above (keeping in mind the factors of $\sqrt{-1})$:
\begin{align*}
	Q(\gamma,\gamma)=0 & \iff -ae-bd + c^2 =0\\
	Q(\gamma,\ov{\gamma})<0 & \iff -ae + bd - c^2 < 0\\
	Q(\dot{\gamma},\dot{\gamma})=0 & \iff -\dot{a}\dot{e} - \dot{b}\dot{d} + \dot{c}^2 = 0
\end{align*}
By adding the first two equations, it follows that $a\cdot e>0$.
We also have a scaling symmetry, i.e. we can multiply $\gamma$ by any nonvanishing function.
Let us then rescale it by the unique real-analytic function such that $a\cdot e=1$ holds pointwise, and $a>0$.
Then we the above equations can be rewritten as
\begin{align*}
	a\, e & = c^2 - b\, d = 1\\
	\dot{a}\, \dot{e} & = (\dot{c})^2 - \dot{b}\, \dot{d} \leq 0
\end{align*}
The assertion that the last line is nonpositive follows from differentiating $a\, e = 1$.
In fact, under assumption A (\autoref{def:assumption_a}), the inequality is strict since its vanishing at a point is equivalent to the vanishing of the second fundamental form $\sigma_{4,0}$ at the same point.
We assume for convenience assumption A from now on.

\subsubsection{Further analysis of the image of the period map}
	\label{sssec:further_analysis_of_the_image_of_the_period_map}
In the classical case of triangle reflection groups, the constraints of the Schwartz reflection are enough to pin down geometrically the \emph{image} of the period map along the real axis -- it is a piece of the conformal image of a great circle on the Riemann sphere.
Of course, one has in addition the \emph{parametrization} of this image, given by hypergeometric functions.

In the present case, the constraints coming from Schwarz reflection do not pin down uniquely the image, but they do give substantial geometric constraints.
Specifically, we can reparametrize the image of $(0,1)$ under $\gamma$ such that $a(t)=\exp(t)$ and $e(t)=\exp(-t)$ for $t\in(t_0,t_1)$ an appropriate time interval (perhaps infinite to one or both sides).
With these normalizations, the only remaining information is in the central three coordinates:
\begin{align}
	\label{eqn:period_map_causal_piece}
	\begin{split}
	c^2 - b\, d & = +1\\
	(\dot{c})^2 - \dot{b}\, \dot{d} & = -1
	\end{split}
\end{align}
Set $\gamma_1=(b,c,d)$, $\bR^{2,1}$ with coordinates $(b,c,d)$ and quadratic form $Q_1=c^2-bd$, and $\bS^{1,1}:=Q_1^{-1}(1)$.
Then $\gamma_1$ yields a curve on $\bS^{1,1}$ which moves in a timelike manner.
Note that because we normalized $a>0$, the curve is well-defined on $\bS^{1,1}$ and not its two-to-one projective quotient.

\subsubsection{Causal structure}
	\label{sssec:causal_structure}
A curve on $\bS^{1,1}$ satisfying \autoref{eqn:period_map_causal_piece} is rather heavily constrained.
Let us recall the basic structure of the pseudo-Riemannian spaces $\bS^{p,q}$ which are the unit vectors in $\bR^{p+1,q}$, with the induced pseudo-Riemannian metric $g_{p,q}$.
Let $(\bB^{q},g_q)$ denote hyperbolic space, in its ball model, with associated hyperbolic metric.
Let $(\bS^p,g_p)$ denote the sphere with its standard metric, and $\bS^{p}_{north}$ the open ``north'' spherical cap, with its induced spherical metric.
Then we have
\[
	\left(\bS^{p,q},g_{p,q}\right) \equiv \left(\bS^{p},g_p\right)\rtimes_{\rho}\left(\bB^q,-{g_q}\right)
	\approx_{conf} \left(\bS^{p},g_p\right)\times \left(\bS_{north}^q,-g_{q}\right)
\]
Let us explain the notation.
The middle term is a warped product pseudo-Riemannian metric, where the warping function $\rho$ on $\bB^{q}$ is $\left(\frac{1+\norm{\bbx}^2}{1-\norm{\bbx}^2}\right)^2$.
The first identity is an isometry between this warped product and the original $\bS^{p,q}$.
The second identity asserts that this warped product model is \emph{conformal}, as a pseudo-Riemannian space, to a product between $(\bS^{p},-g_{p})$ and the unit ball viewed as a spherical cap in $\bS^{q}$.
This last identity puts heavy constraints on any $q$-dimensional submanifold of $\bS^{p,q}$ on which the pseudo-Riemannian product is strictly negative-definite: it must be locally the graph of a Lipschitz function $f\colon \bB^{q}\to \bS^p$, and in fact globally so under some completeness assumptions.

The calculations that establish the above identities are contained, for instance, in \cite[Prop.~3.5]{CollierTholozanToulisse2019_The-geometry-of-maximal-representations-of-surface-groups-into} (except that loc.cit. treats $\bH^{p,q}$ and $p=2$, but the calculations are analogous in general).

\subsubsection{Behavior at endpoints}
	\label{sssec:behavior_at_endpoints}
For each of the endpoints of the original interval $(0,1)$, there are several possibilities for the behavior, depending on the type of degeneration of the Hodge structure.
We analyze the possibilities, according to the list in \autoref{fig:Hodge_numbers}.
Let us also emphasize that there are two kinds of limits we can take:
\begin{description}
	\item[Line limit] The limit of $\cW^{4,0}$ in $\LGr(V_\bC)\subset \bP(W_\bC)$.
	This is also affectionately called in the Hodge theory literature the ``naive'' limit, and will be denoted $\cW^{4,0}_{nv}$.
	\item[Plane limit] The limit of $\left(\cW^{4,0}\oplus \cW^{0,4}\right)\cap W_{\bR}$ in $\Gr(2;W_{\bR})$, which will be denoted $P^2_{nv}$.
\end{description}
Note that in the corresponding second exterior power of $W_{\bR}$, the coordinates of the vector of interest for the plane limit are
\[
	(a:0:0:0:e)\wedge (0:b:c:d:0) \in \Lambda^2 W_{\bR}
\]
since it is determined by the real and imaginary parts of the vector $\gamma$.
In fact, by \cite[Lemma~3.12]{CattaniKaplanSchmid1986_Degeneration-of-Hodge-structures}, see also \cite[Rmk.~3.15]{Robles2017_Degenerations-of-Hodge-structure}, the limits can be computed by looking at the decompositions $I^{p,q}$ of the limit mixed Hodge structure.

Let us note that $\LGr(V_{\bC})$ and $\Gr^+(2;W_{\bR})$ (the double cover of oriented $2$-planes) share some similar features, but also differences, for the action of $G=\Sp_{4}(\bR)$.
The open orbits coincide and give the two copies of Siegel space (depending on orientation/signature) and a copy of $\LGr^{1,1}(V_\bC)$ (Lagrangians where the hermitian pairing has signature $(1,1)$), resp. $\Gr^+_{0,2}\left(W_{\bR}\right)$ ($2$-planes where the quadratic form has signature $(0,2)$).
However, the minimal, closed, orbits are different.
In the Lagrangian Grassmannian it's $\LGr(V_{\bR})$, while in $\Gr^+(2;W_{\bR})$ it is the Grassmannian of isotropic $2$-planes in $W_{\bR}$, which is naturally identified with $\bP^{+}(V_{\bR})$.
As we shall see, the ``plane limit'', i.e. in $\Gr^+(2;W_{\bR})$, will be more useful -- in \autoref{thm:boundary_values_in_adjoint_representation} we show that \emph{all} directions in the boundary of the universal cover have continuous limits in $\Gr^+(2;W_{\bR})$.

Before proceeding to the analysis of degenerations, let us warn that we will refer to the formulas from the proof of \autoref{prop:boundedness_of_second_fundamental_form}, but the basis $e_i,f_i$ used there, and the one used to express the reflection $R_B$, need not be the same.
So we will extract the conclusions geometrically, in terms of algebraic data associated to the monodromy matrix, instead of referring to a specific basis.
Furthermore, the reflection $R_B$ alone does not determine the unipotent monodromy around the corresponding cusp, so this will lead to some further cases.

\subsubsection{Elliptic case}
	\label{sssec:elliptic_case_limit_point_schwarz}
It could be that the Hodge structure does not degenerate at all.
In this case, the line $\cW^{4,0}$ is a fixed point of the corresponding elliptic monodromy matrix and remains in $\LGr^{1,1}(V_{\bC})$.
It also determines a negative-definite two-plane, and these are the line and plane limits.

\subsubsection{Rank 1, invariant line}
	\label{sssec:rank_1_invariant_line}
This corresponds to the limit mixed Hodge structure and decomposition described in \autoref{eqn:nilpotent_orbit_rank1_line}.
With notation as in in loc.cit. it follows that the ``naive'' limit line is
\[
	\cW^{4,0}_{nv} = (e_2+\sqrt{-1}f_2)\wedge f_1
\]
but recall that this notation is not compatible with the basis used to express $R_B$.
Geometrically, $\cW^{4,0}_{nv}$ is the second exterior of a complex Lagrangian, which is not real, but which contains the line $l$ which is the image of the nilpotent $N$.
The plane limit can be computed similarly to be
\[
	P^2_{nv}=span(f_1\wedge f_2, f_1\wedge e_2)
\]
in the notation of loc.cit, which in this case agrees with $\left(\cW^{4,0}_{nv}\oplus \ov{\cW^{4,0}_{nv}}\right)\cap W_{\bR}$, and geometrically corresponds to $l\wedge l^{\perp}$, where $l$ is the image of the nilpotent $N$ and $l^{\perp}$ is its symplectic-orthogonal.
In other words, the limit is the photon associated to $l$, see \autoref{sssec:photons}.
Translating to the coordinates used for $W_{\bR}$, we find that the limit is
\[
	\left(a_0:\sqrt{-1}b_0:\sqrt{-1}c_0:\sqrt{-1}d_0\sqrt{-1}:e_0\right)
\]
with
\[
	a_0 e_0 = 0 \quad c_0^2-b_0d_0 = 0
\]
and $(a_0:e_0)\neq 0 \neq (c_0:b_0:d_0)$.

\subsubsection{Rank 1, invariant Lagrangian}
	\label{sssec:rank_1_invariant_lagrangian}
In this case, it follows from the decomposition used in \autoref{sssec:rank_1_invariant_lagrangian}, and the associated nilpotent orbit, that the naive limit $F^2_{nv}$ belongs to $\LGr^{1,1}(V_{\bC})$, i.e. there is no degeneration of this piece of the Hodge filtration.
Recall that this case contradicts assumption A, and in fact the developing map in \autoref{thm:real_uniformization_pseudo_hyperbolic} will be incomplete at this point.

\subsubsection{MUM case}
	\label{sssec:mum_case}
The corresponding decompositions $I^{p,q}$ of the limit mixed Hodge structure can be written by taking the third symmetric power of a weight $1$, dimension $2$, situation.
Let $l\subset L \subset l^{\perp}\subset V_{\bR}$ be the flag preserved by the MUM monodromy matrix, so $l$ is a line and $L$ is a Lagrangian.
Then a direction calculation gives that
\[
	\cW^{4,0}_{nv} = \Lambda^2 L \text{ and }P^2_{nv} = l\wedge l^{\perp}.
\]
In particular, note that the limit line does not determine the limit plane in this case.
In terms of the coordinates for $R_B$, it follows that
\[
	\gamma(t) \text{ limits to }(0:b_0:c_0:d_0:0)\text{ where }c_0^2-b_0d_0 = 0.
\]
The reasons is that the MUM Lagrangian will be preserved by $R_B$, for instance because the fixed curve of the reflection on the base will still have the same naive limit, so it must be that for the coordinates of $\cW^{4,0}_{nv}$, either the middle three, or the outer two, vanish.
However, the action of $R_B$ changes the orientation of the MUM Lagrangian $L$, since the composition with another reflection should yield a nontrivial unipotent transformation inside $L$, so it must be the case that the outer two coordinates of $L$ vanish.

On the other hand, the limit of
\[
	(a:0:0:0:e) \wedge (0:b:c:d:0) \text{ is } (a_0:0:0:0:e_0) \wedge (0:b_0:c_0:d_0:0)
\]
where $a_0e_0=0$ and $a_0+e_0=1$, and where $(b_0:c_0:d_0)$ is the same as the coordinates of $L$.
This isotropic $2$-plane corresponds to the photon $\phi(l)$ determined by the line $d_0 f_1 + c_0 f_2$ (or equivalently $c_0 f_1 + b_0 f_2$, if $d_0=0$ and using the defining relation).

\subsubsection{Fundamental polyhedron}
	\label{sssec:fundamental_polyhedron}
We can now put together the information from the analysis of the period map on the each of the segments on the real axis, together with \autoref{thm:real_uniformization_pseudo_hyperbolic}.
Specifically, let $X$ be the orbifold associated to one of the good hypergeometric parameters provided by \autoref{thm:good_hypergeometrics}.
It has an open set identified with $\bP^1(\bC)\setminus \{0,1\infty\}$, and possibly orbifold points of orders $M$ or $N$ at $0$, or $\infty$.
Let $\ov{X}$ denote the completed orbifold, so $\ov{X}\setminus X=:S$ consists of $1$, and possibly $0$ or $\infty$.
We can decompose $X$ into cells:
\[
	X = X^+ \coprod X^- \coprod X^{r} \coprod X^{orb}
\]
where $X^{\pm}$ correspond to the upper/lower half planes, $X^{r}$ corresponds to the part on the real axis and itself decomposes into three open segments $X^{r}_A,X^r_{B},X^{r}_C$ according to the three reflections from \autoref{sssec:matrices_of_anti_involutions}, and $X^{orb}$ corresponds to the orbifold points (there can be $0,1$ or $2$ such points).

Recall that \autoref{thm:real_uniformization_pseudo_hyperbolic} provides a developing map $\Dev^{P,+}$ from a $\bC^{\times}$-bundle over $\wtilde{X}$ to the pseudosphere $\bS^{1,3}(W_{\bR})$, equivariant for the action of the monodromy group $\Gamma$ generated by the three involutions $R_A,R_B,R_C$.
A direct consequence of theorem \autoref{thm:real_uniformization_pseudo_hyperbolic} and the analysis of the period map on the real axis is:

\begin{theorem}[Fundamental polyhedron]
	\label{thm:fundamental_polyhedron}
	Define $\cP^+$ to be the closure in $\bS^{1,3}(W_{\bR})$ of $\Dev^{P,+}(X^+)$.
	Then it has a cell decomposition, with
	\begin{description}
		\item[interior] the image of $\Dev^{P,+}(X^+)$, diffeomorphic to $\bR^{3}\times \bS^1$.
		\item[codimension 1] the image of $\Dev^{P,+}(X^r)$, consisting of three faces diffeomorphic to $\bR^{2}\times \bS^1$.
		\item[codimension 2] the image of the finitely many orbifold points $X^{orb}$, each diffeomorphic to $\bR^{1}\times \bS^1$.
	\end{description}
	Additionally, the closure of $\cP^{+}$ in $\bP^{+}(W_{\bR})$ consists of finitely many photons, corresponding to the cusps, i.e. points in $\ov{X}\setminus X$.
	The cusps are of two types, depending on the corresponding unipotent part of the monodromy matrix -- a rank $1$ unipotent (\autoref{sssec:rank_1_invariant_line}) or maximally unipotent monodromy (\autoref{sssec:mum_case}).

	The polyhedron $\cP^+$ is a fundamental domain for the action of the group $\Gamma=\ip{R_A,R_B,R_C}$ generated by the three reflections.
	Each reflection fixes one of the codimension $1$ faces as a set, and inside it has a fixed point locus of real dimension $1$.
\end{theorem}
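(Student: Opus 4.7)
The plan is to transport the cell decomposition of $X$ through the $\Gamma$-equivariant diffeomorphism supplied by \autoref{thm:real_uniformization_pseudo_hyperbolic} and then analyze the limits at cusps. That theorem identifies $\wtilde{PS}^+$ --- a $\bC^\times$-bundle over $\wtilde X\cong\bH^2$ with fiber homeomorphic to $\bR\times\bS^1$ --- with the pseudosphere $\bS^{1,3}(W_\bR)$ via a diffeomorphism $\Dev^{P,+}$. The cell decomposition $X=X^+\sqcup X^-\sqcup X^r\sqcup X^{orb}$ lifts to $\wtilde X$, and by the Schwarz reflection structure of \autoref{ssec:schwarz_reflection_for_hypergeometrics} the extended group $\Gamma=\langle R_A,R_B,R_C\rangle$ contains $\rho(\pi_1(X))$ as a subgroup of index two. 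In particular a chosen lift of $X^+$ in $\wtilde X$ is a fundamental polygon for $\Gamma$, bordered by lifts of the three segments $X^r_A, X^r_B, X^r_C$ (each stabilized by one of $R_A,R_B,R_C$) and meeting finitely many orbifold points.

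First I would extract the cell structure of $\cP^+$ from this polygon via $\Dev^{P,+}$: the lift of $X^+$ is contractible of real dimension $2$, each lift of a segment $X^r_\bullet$ has real dimension $1$, and the orbifold points are isolated. Since the $\bC^\times$-fiber has topological type $\bR\times\bS^1$ and is trivializable over each simply connected piece, the products yield the announced interior $\bR^3\times\bS^1$, the three codimension-one faces $\bR^2\times\bS^1$, and the codimension-two strata $\bR\times\bS^1$. The $\Gamma$-translates of this polygon tile $\wtilde X$, so by equivariance the $\Gamma$-translates of $\cP^+$ tile $\bS^{1,3}(W_\bR)$, which is the fundamental polyhedron statement. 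For each reflection, $R_X$ preserves the corresponding face setwise because it fixes its axis pointwise, and it acts on the $\bC^\times$-fiber via the induced anti-involution on $\cW^{1,3}$; the $+1$-eigenspace of the latter is a real line, so the fixed locus of $R_X$ on the face is the product of this real line with the fixed segment, of real dimension $1$, as asserted.

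The hard part is the closure in $\bP^+(W_\bR)$: showing that each cusp $p\in\ov X\setminus X$ contributes a single photon. I would combine the limit mixed Hodge structure analysis of \autoref{sssec:rank_1_invariant_line} and \autoref{sssec:mum_case} with the metric control of \autoref{prop:boundedness_of_second_fundamental_form}. In both unipotent types allowed by assumption A, the naive plane limit of $(\cW^{4,0}\oplus\cW^{0,4})\cap W_\bR$ at the cusp is an isotropic $2$-plane of the form $l\wedge l^\perp$, where $l$ is the relevant monodromy-invariant line --- the image of the nilpotent $N$ in the rank-one invariant-line case, and the deepest weight line in the MUM case. The explicit developing formula \autoref{eqn:developing_map_pseudosphere} then shows that as the basepoint approaches the cusp, the $\bS^1$-phase in the fiber sweeps out exactly the projectivization of this plane, namely the photon $\phi(l)$. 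The uniform boundedness of $\sigma_{1,3}$ in the Deligne extension provided by \autoref{prop:boundedness_of_second_fundamental_form} is what ensures this convergence is uniform over the $\bS^1$-fiber, so the full fiber limit is one photon rather than a proper subset; summing over the finite set of cusps gives the finitely many bounding photons.
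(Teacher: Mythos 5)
Your overall plan coincides with the paper's: transport the cell decomposition of $\wtilde{X}$ through the $\Gamma$-equivariant diffeomorphism $\Dev^{P,+}$ from \autoref{thm:real_uniformization_pseudo_hyperbolic}, deduce the cell and tiling structure of $\cP^+$, and use the degeneration analysis of \autoref{sssec:rank_1_invariant_line} and \autoref{sssec:mum_case} for the boundary photons. The bookkeeping for the cells, the tiling argument, and the cusp discussion are all sound (you even supply more detail on the cusp limits than the paper's proof itself does).

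The computation of the fixed locus of a single reflection, however, is wrong, and this is precisely the step the paper treats explicitly. You assert that $R_X$ acts on the $\bC^{\times}$-fiber via ``the induced anti-involution on $\cW^{1,3}$'' and that its fixed set is a real line. But that is not the action that makes $\Dev^{P,+}$ equivariant. Using the parametrization $z\in\cW^{1,3}\setminus\{0\}$ of \autoref{eqn:developing_map_pseudosphere}, the real involution $R_X$ swaps $\cW^{3,1}\leftrightarrow\cW^{1,3}$ and, crucially, \emph{negates} the real line $\cW^{2,2}\cap W_{\bR}$: the $(+1)$-eigenspace of $R_X$ inside the orthogonal complement of $(\cW^{4,0}\oplus\cW^{0,4})\cap W_{\bR}$ is the one-dimensional \emph{positive}-definite line through $(-a:0:0:0:e)$, which therefore cannot be the negative-definite line $\cW^{2,2}\cap W_{\bR}$; since $R_X$ preserves the latter, it must act on it by $-1$. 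Because the $\cW^{2,2}$-coordinate $\tfrac{\norm{z}^2-1}{\norm{z}^2+1}$ changes sign, the induced action on the fiber is the holomorphic inversion $z\mapsto c'/z$ (forcing $\norm{z}\mapsto 1/\norm{z}$), not an anti-linear involution. Its fixed set in each fiber is the pair of points with $\norm{z}=1$ and $z^2=c'$, i.e.\ zero-dimensional, so the total fixed locus over the one-dimensional segment is one-dimensional, as asserted. Under your proposed anti-involution the fixed set in each fiber would be a full real ray-pair, giving a two-dimensional fixed locus over the face — note that your own final sentence (``the product of this real line with the fixed segment, of real dimension $1$'') already does not add up dimensionally, which should have flagged the missing $\cW^{2,2}$-sign.
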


\begin{remark}[On diffeomorphism types]
	\label{rmk:on_diffeomorphism_types}
	By the discussion in \autoref{sssec:causal_structure}, we have that $\bS^{1,3}(W_{\bR})$ is diffeomorphic to $\bS^{1}\times \bR^{3}$ (where the second factor is better viewed as $\bH^3$).
	In fact, all the spaces appearing in \autoref{thm:fundamental_polyhedron} have an $\bS^1$-factor.
	If we ignore it, and view the picture in $\bH^3$, then a cartoon structure of $\cP^+$ is as follows.
	Its interior if foliated by a real $2$-parameter family of geodesics.
	The codimension $1$ faces are $1$-parameter families of geodesics, and the codimension $2$ faces (coming from orbifold points) are single geodesics.
	The faces that occur ``at infinity'' yield just points in the boundary (recall that we ignore the $\bS^1$-factor).
	In this sense, the picture is reminiscent of a quasi-Fuchsian surface in $\bH^3$, of small curvature, and its family of normal geodesics, except that the surface is missing and only the geodesics remain (see Epstein \cite{Epstein1987_Univalence-criteria-and-surfaces-in-hyperbolic-space} for more on the hyperbolic version).
\end{remark}

\begin{proof}[Proof of \autoref{thm:fundamental_polyhedron}]
	By \autoref{thm:real_uniformization_pseudo_hyperbolic}, the developing map $\Dev^{P,+}$ is a diffeomorphism between a $\bC^{\times}$-bundle over $\wtilde{X}$ and $\bS^{1,3}(W_{\bR})$.
	The assertions about the decompositions of $\cP^+$ and $\bS^{1,3}$ follow from the analogous ones for the decomposition of $\wtilde{X}$ under the reflection group generated by the orientation-reversing hyperbolic isometries corresponding to the three open segments on the real axis.

	It remains to analyze the behavior of a single reflection acting on the $\bC^{\times}$-bundle over the real axis, since the reflection fixes pointwise the lift of the axis in the hyperbolic plane.
	Recall that if
	\[
		\gamma(t) = (a:\sqrt{-1}b:\sqrt{-1}c:\sqrt{-1}d:e)
	\]
	say normalized to $ae=1=c^2-bd$, then the fiber over the corresponding point consists of real unit positive-definite vectors that are orthogonal to $\gamma(t)$ (and hence to its complex-conjugate).
	Of these, precisely one is fixed by the reflection, namely $(-a:0:0:0:e)$ (see \autoref{eqn:exterior_power_basis_coordinates_eigenvalues}), so as $t$ moves along the real axis, the fixed points sweep out a real $1$-dimensional submanifold.
\end{proof}



\laternote{Resurrect this section, with uniformization based at elliptic point}

\section{Elements of Lie theory and Anosov representations}
	\label{sec:elements_of_lie_theory_and_anosov_representations}

This section develops the necessary tools to study the global geometry of the monodromy of a VHS satisfying assumption A.
The relevant notion is that of Anosov representation, introduced by Labourie \cite{LabourieAnosov} and studied further by Guichard--Wienhard \cite{GuichardWienhard2012_Anosov-representations:-domains-of-discontinuity-and-applications}, as well as Kapovich--Leeb--Porti \cite{KapovichLeebPorti2018_Dynamics-on-flag-manifolds:-domains-of-proper-discontinuity-and-cocompactness} and many subsequent authors.

Our main examples of monodromy contain unipotents, so we need to extend the relevant concepts in Anosov representations allowing for this possibility.
We call the resulting representations ``log-Anosov'', based on the logarithmic divergence of certain sequences associated to the cusp behavior, as well as in analogy with the algebro-geometric context where logarithmic poles and more general logarithmic structures make the situation at cusps well-behaved.

In a first draft of this text we developed some of the basic notions related to log-Anosov representations, but these are no longer necessary with the appearance of a treatment of this case by Zhu \cite{Zhu2019_Relatively-dominated-representations} and Kapovich--Leeb \cite{KapovichLeeb2018_Relativizing-characterizations-of-Anosov-subgroups-I}, as well as Canary--Zhang--Zimmer \cite{CanaryZhangZimmer2021_Cusped-Hitchin-representations-and-Anosov-representations-of-geometrically-finite}.
Let us also emphasize that one can take an intrinsic approach, looking at representations with target a semisimple group $G$, or an extrinsic approach where $G$ is itself mapped via a linear representation.
Both points of view are useful, and it is particularly useful to pass between different representations.

To do so effectively, we introduce in \autoref{ssec:dominated_cocycles_and_representations} (log)-dominated cocycles.
These generalize the notions introduced by Bochi--Potrie--Sambarino \cite{BochiPotrieSambarino2019_Anosov-representations-and-dominated-splittings} to allow for a general poset structure in the domination bundles.
One (of many) motivations is that the notion of domination in cocycles is not stable under the constructions of linear algebra (direct sum, tensor product, etc.) and one looses information in the standard definition each time such an operation is performed.

We introduce another convenient concept, that of a stable point, in analogy with the notion in Geometric Invariant Theory, in \autoref{ssec:stability_and_the_numerical_criterion}.
The analogy was proposed by Kapovich--Leeb--Porti \cite{KapovichLeebPorti2018_Dynamics-on-flag-manifolds:-domains-of-proper-discontinuity-and-cocompactness}, where it was used in the context of actions on flag manifolds.
We use it to give a self-contained treatment of the proper discontinuity results we need later.
It is also likely that the same tools will have broader applicability.

Two further new results on Anosov representations might be of interest.
First, we classify the integral points on the limit set, showing that they have to come from cusps (see \autoref{thm:integral_vectors_must_come_from_cusps}).
Second, we show in \autoref{thm:minimality_of_the_action} that for an $\alpha_1$-log-Anosov representation containing a special kind of unipotent, the action on the limit set in the Lagrangian Grassmannian is minimal.
This is in contrast to Hitchin representations, where this is far from being true.

Finally, in \autoref{sec:unipotent_dynamics} we address a useful technical point, which is not needed in the main text.
Namely, with the correct choice of metric in the cusp, the dynamics under the geodesic flow will exhibit the correct exponential behavior, as if the space were compact.
We introduce there the notion of log-proximal element (an analogue of proximal semisimple elements), the monodromy weight filtration, which controls the growth, and the notion of admissible metric which ensures the correct asymptotic behavior.


\subsection{Conventions and definitions}
	\label{ssec:conventions_and_definitions}

A concise summary of most of the structure theory of real Lie groups used below is contained in Knapp's \cite{Knapp1997_Structure-theory-of-semisimple-Lie-groups}.

\subsubsection{Setup}
	\label{sssec:setup_conventions_and_definitions}
Let $G$ denote the real points of a real linear semisimple algebraic group, with Lie algebra $\frakg$.
Pick a Cartan involution $\sigma\colon G\to G$, with fixed point set a maximal compact $K\subset G$.
Continue to denote the involution on the Lie algebra by $\sigma\colon \frakg \to \frakg$ and let $\frakg = \frakk \oplus \frakq$ denote the $\pm1$-eigenspace decomposition, with $\frakk = \Lie K$.

\subsubsection{Restricted root systems}
	\label{sssec:restricted_root_systems}

Pick a maximal abelian subalgebra $\fraka \subset \frakq$.
The adjoint action of $\fraka$ on $\frakg$ gives a weight space decomposition
\[
	\frakg = \frakg_0 \oplus \bigoplus_{\alpha\in \Phi}\frakg_\alpha
\]
where $\frakg_0$ is the centralizer of $\fraka$ and $\Phi \subset \fraka^\dual$ are the \emph{restricted roots}.
Note that $\frakg_0 = \fraka \oplus (\frakg_0\cap \frakk)$ since nothing in $\frakq$ can commute with $\fraka$.

Pick now a set of \emph{positive roots} $\Phi^+\subset \Phi$, i.e. $\Phi^+$ is contained in a half-space and $\Phi = \Phi^+ \coprod \Phi^-$ (with $-\Phi^+=\Phi^-$) and let $\Delta\subset \Phi^+$ denote the resulting set of \emph{simple roots}; the elements of $\Delta$ give a vector space basis for $\fraka$.
The \emph{positive Weyl chamber} $\fraka^+\subset \fraka$ is defined by
\[
	\fraka^+ = \left\lbrace v\in \fraka \colon \alpha(v)\geq 0,\, \forall \alpha \in \Delta \right\rbrace
\]
and the associated semigroup is denoted $A^+\subset G$.

Recall also that there is a Weyl-group invariant inner product, denoted $(x|y)$, on $\fraka^\dual$ and $\fraka$ which we will use to identify them for some constructions.
For the fixed choice of simple roots $\Delta$, there are associated \emph{fundamental weights} $\left\lbrace \omega_\alpha \right\rbrace_{\alpha\in \Delta}\subset \fraka^\dual$ defined by the relation
\[
	\frac{2}{(\beta|\beta)}(\beta|\omega_\alpha) = \delta_{\beta\alpha} \quad\forall \beta,\alpha\in \Delta
\]
Using the identification of $\fraka$ and $\fraka^\dual$ via the invariant inner product, the fundamental weights can also be viewed as generators of the positive Weyl chamber.

\subsubsection{The centralizer of $\fraka$ in $K$}
	\label{sssec:the_centralizer_of_a_in_k}
Consider the subgroup of elements in $K$ with trivial adjoint action on $\fraka$.
It will be denoted $Z_K(\fraka)$, and it is more commonly referred to in Lie theory as $M\subset K$.
Its Lie algebra is $\frakg_0\cap \frakk$ in the notation of \autoref{sssec:restricted_root_systems}.

\subsubsection{Parabolic subgroups}
	\label{sssec:parabolic_subgroups}
Fix a subset $\theta\subseteq \Delta$ of simple roots.
Our convention in \autoref{def:log_anosov_representation} is that the singular values of the Anosov representation will diverge away from the union of the faces $\left( \ker \alpha \right)\cap \fraka^+$ for $\alpha \in \theta$.

Set $\theta^c := \Delta \setminus \theta$.
Define $\Phi_{\theta^c}$ to be the intersection of $\Phi$ with the real span of the elements in ${\theta^c}$, and similarly the decomposition $\Phi_{\theta^c} = \Phi_{\theta^c}^+\coprod \Phi_{\theta^c}^-$.
With these conventions, define further:
\begin{itemize}
	\item The parabolic subalgebra: 
	\[
		\frakp_\theta^+:= \frakg_0 \bigoplus_{\alpha \in \Phi^+} \frakg_\alpha \bigoplus_{\alpha\in \Phi^-_{\theta^c}} \frakg_\alpha.
	\]
	Note that it is $\Phi^-_{\theta^c}$ that's used.
	\item The parabolic subgroup $P_\theta^{+}$ with $\Lie P_\theta^{+} = \frakp_\theta^{+}$
	\item The flag manifold $\cF_\theta^{+} := G/P_\theta^{+}$
	\item Analogously $\frakp_{\theta^c}^-,P_{\theta^c}^-$, and $\cF_{\theta}^-$
	\item The Levi subgroup and subalgebra
	\[
		L_{\theta} = P^{+}_{\theta} \cap P^{-}_{\theta} \quad
		\frakl_{\theta} = \frakp^{+}_{\theta} \cap \frakp^{-}_{\theta}=
		\frakg_0 \bigoplus_{\alpha\in \Phi_{\theta^c}}\frakg_{\alpha}.
	\]
	\item The maximal compact of the Levi:
	\[
		M_{\theta}:= K\cap L_{\theta}.
	\]
\end{itemize}
For example, when $\theta=\Delta$ the parabolic $P_\theta^+$ is the minimal one and the flag manifold $\cF_\theta^+$ is the maximal one, i.e. it parametrizes full flags.

\subsubsection{Partial order}
	\label{sssec:partial_order}
Continue with the fixed (nonempty) set $\theta\subset \Delta$ of simple roots.
Any other root $\lambda$ can be expressed as
\[
	\lambda = \sum_{\alpha\in\Delta} n_{\alpha}(\lambda)\alpha \quad \text{ with }n_{\alpha}(\lambda)\in \bZ.
\]
Define a partial order on the weight lattice by the requirement:
\begin{align*}
	\lambda'\succ_{\theta}\lambda \text{ if and only if: }\forall \alpha\in \Delta \text{ we have } &
	n_\alpha(\lambda')\geq n_{\alpha}(\lambda)\\
	\text{ and }\exists\alpha\in \theta \text{ such that }&n_{\alpha}(\lambda')> n_{\alpha}(\lambda).
\end{align*}
Note that the definition of the partial order naturally extends to any real $\lambda$.

\subsubsection{Polar or $KAK$ decomposition, Cartan projection}
	\label{sssec:polar_decomposition_cartan_projection}
Recall that any $g\in G$ can be expressed as
\[
	g = k_-(g) e^{\mu(g)}k_+(g) \quad k_\pm(g)\in K \text{ and } \mu(g)\in \fraka^+.
\]
While $\mu(g)$ is unique, the elements $k_\pm(g)$ are well-defined only up to a choice of element $m\in K$ which centralizes $e^{\mu(g)}$, by changing the elements to $k_-(g)m$ and $m^{-1}k_+(g)$.
Clearly any element in $Z_K(\fraka)$ can be used as $m$, but for some $e^{\mu(g)}$ the centralizer can be larger.
The map
\begin{align}
	\label{eqn:Cartan projection}
	\mu\colon G \to \fraka^+
\end{align}
is called the \emph{Cartan projection} of $G$.

\begin{definition}[$\theta$-divergence]
	\label{def:theta_divergence}
	Let $\theta\subset \Delta$ be a subset of the simple roots.
	\begin{enumerate}
		\item A sequence $v_i \in \fraka^+$ is called \emph{$\theta$-divergent} if
		\[
			\alpha(v_i) \to +\infty \quad \forall \alpha\in \theta.
		\]
		\item A sequence $\gamma_i\in G$ is \emph{$\theta$-divergent} if $\mu(\gamma_i)$ is.
		\item A subset $\Gamma\subset G$ is \emph{$\theta$-divergent} if every sequence in $\Gamma$ that leaves every compact set in $G$ is $\theta$-divergent.
	\end{enumerate}
	We will typically be interested in the situation when $\Gamma\subset G$ is a discrete subgroup.
\end{definition}

\subsubsection{Opposition involution}
	\label{sssec:opposition_involution}
Recall (see \cite[Thm. 29.1]{Bump_Lie-groups} and discussion afterwards) that there exists $w_0\in K$, whose adjoint action preserves $\fraka$ and has the following property.
Given any $g\in G$, $w_0$ relates the $KAK$ decompositions of $g$ and $g^{-1}$ by
\[
	g = k_-e^\mu k_+ \Rightarrow g^{-1}= 
	k_+^{-1} e^{-\mu} k_-^{-1} = 
	(k_+^{-1} w_0^{-1}) e^{\Ad_{w_0}(-\mu)} (w_0 k_-^{-1})
\]
The element $w_0$ is well-defined only up to the left and right action of $Z_K(\fraka)$ .

Concretely, $\Ad_{w_0}(-\mu)\in \fraka^+$ if $\mu\in\fraka^+$.
Therefore $w_0$ gives the action of the longest element in the Weyl group and the map $\mu \mapsto \Ad_{w_0}(-\mu)$ is called the \emph{opposition involution} on $\fraka^+$ and $\fraka$.

\begin{example}[The root system of $\Sp_{2g}$]
	\label{eg:the_root_system_of_sp_2g}
	Let $V$ be a $2g$-dimensional real vector space with a non-degenerate symplectic form and set $G=\Sp(V), \frakg = \Lie G$.
	Choose an identification of $V$ with $\bR^{2g}$ such that the symplectic form is $\begin{bmatrix}
		0 & \id_{g} \\
		-\id_{g} & 0
	\end{bmatrix}$ and choose the maximal compact subgroup
	\[
		K = \left\lbrace \begin{bmatrix}
			A & -B \\
			B & A
		\end{bmatrix}\colon A+\sqrt{-1}B\in U(g) \right\rbrace
	\]
	to be the unitary group $U(g)=\left\lbrace X\in \Mat_{g\times g}(\bC) \colon  XX^\dagger = \id_g\right\rbrace$.
	The Cartan subalgebra can be chosen to consist of diagonal matrices
	\[
		\fraka = \left\lbrace v = \begin{bmatrix}
			D & 0 \\
			0 & -D \\
		\end{bmatrix}
		\colon D = \begin{bmatrix}
			\mu_1(v) &        & \\
			         & \ddots & \\
			         &        & \mu_g(v)
		\end{bmatrix}
			\right\rbrace
	\]

\begin{figure}[htbp!]
	\centering
	\includegraphics[width=0.32\linewidth]{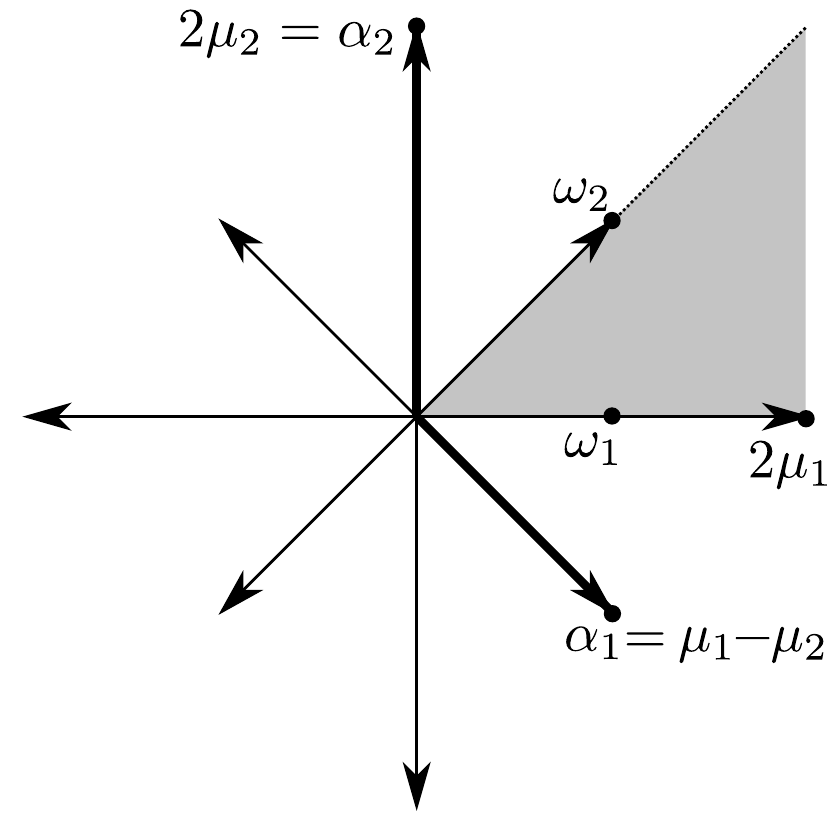}
	\includegraphics[width=0.32\linewidth]{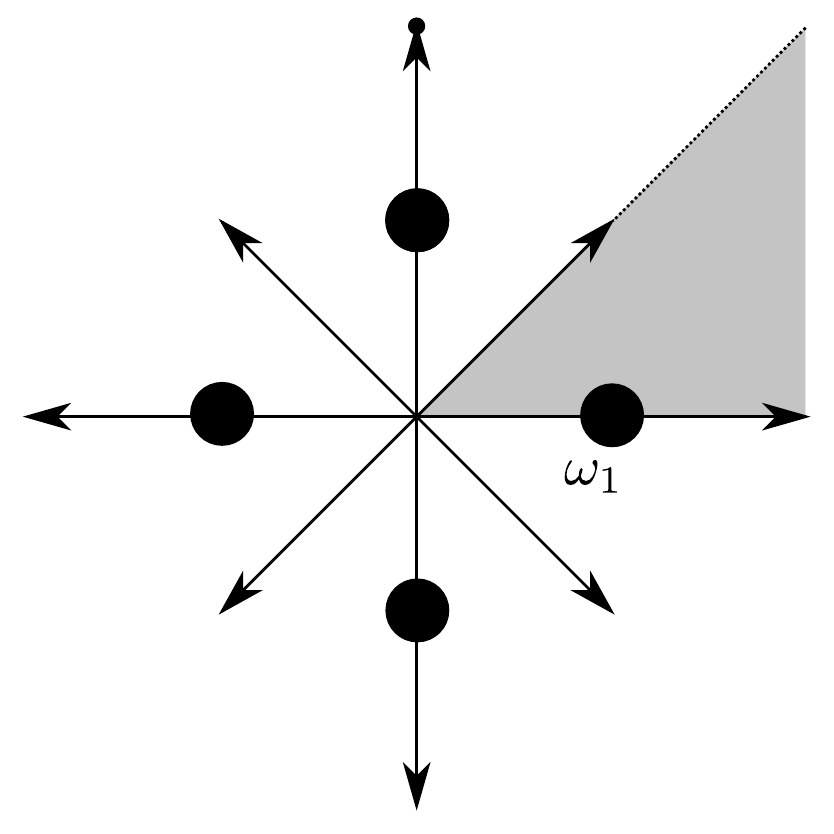}
	\includegraphics[width=0.32\linewidth]{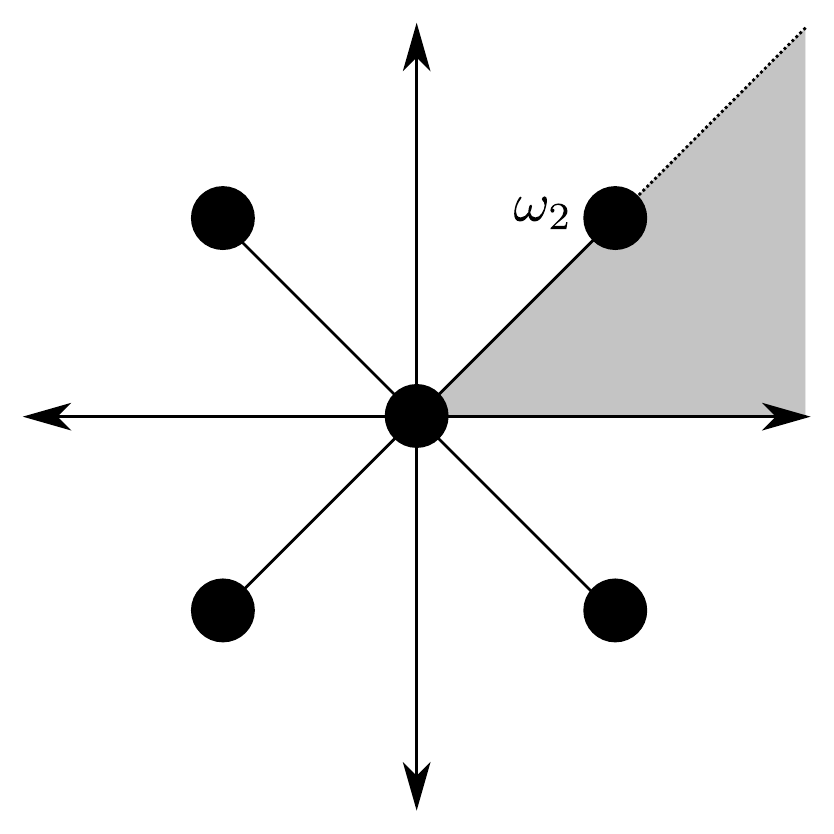}
	\captionsetup{width=0.9\linewidth}
	\caption{Left: The root system of $\Sp_4$, with its Weyl chamber.
	The fundamental weights are $\omega_1,\omega_2$.
	Middle: Weights for the standard representation.
	Right: Weights for the second exterior power of the standard representation.}
	\label{fig:sp4roots}
\end{figure}

	The roots are then
	\[
		\Phi=\left\lbrace \pm 2\mu_i \colon 1\leq i \leq g\right\rbrace \cup \left\lbrace \pm \mu_i \pm \mu_j \colon 1\leq i < j \leq g \right\rbrace
	\]
	and we can choose the simple roots
\begin{align}
	\label{eqn:simple_roots_sp2g}
	\Delta = \left\lbrace \alpha_1:=\mu_1-\mu_2,\ldots, \alpha_{g-1} = \mu_{g-1}-\mu_g, \alpha_g = 2\mu_g \right\rbrace
\end{align}
	so that the positive chamber is
	\[
		\fraka^+ = \left\lbrace v\in \fraka \colon \mu_1(v)\geq \mu_2(v)\geq \cdots \geq \mu_g(v)\geq 0 \right\rbrace
	\]
	Note that the opposition involution is trivial (equal to the identity) for the symplectic group.
	Equivalently, the longest element $w_0$ in the Weyl group is $-\id$.
\end{example}

\subsubsection{Notation and conventions for surfaces}
	\label{ssec:setup_notation_and_conventions_for_surfaces}
We now set some notation and conventions on surfaces that we follow in the text.
Throughout $X$ denotes a complete, finite volume, hyperbolic real $2$-dimensional orbifold.
Fix a basepoint $x_0\in X$ that is not in the orbifold part.
This gives a fundamental group $\pi_1(X,x_0)$ and a universal cover $\wtilde{X}$.
We will frequently write $\pi_1(X)$ instead of $\pi_1(X,x_0)$.
There is a canonical lift of the basepoint to the universal cover, which by abuse of notation will still be denoted $x_0$.
Fix an isometry of $\wtilde{X}$ with the hyperbolic plane $\bH$ in the upper half-plane model.

Most differential-geometric and coarse-geometric constructions are insensitive to passing to a finite-index subgroup.
In those situations, we can and do pass to a finite cover and assume that $X$ has no orbifold points and is orientable.

\subsubsection{Standard cusp neighborhood}
	\label{sssec:standard_cusp_neighborhood}
Consider the region in the upper half-plane $\{\Im \tau > A\}$ and the quotient by $\tau \mapsto \tau+1$.
Parts of the surface $X$ which are isometric to such quotients will be called \emph{standard cusp neighborhoods}.
As $A\to +\infty$ the size of the standard cusp neighborhood shrinks.

In examples where the orbifold fundamental group is generated by reflections as in \autoref{sec:hypergeometric_local_systems}, standard cusp neighborhoods means quotients of $\{\Im \tau > A\}$ by the two reflections $\tau \mapsto -\ov{\tau}$ and $\tau \mapsto - \ov{\tau}+1$.

\subsubsection{Compact part of a surface}
	\label{sssec:compact_part_of_a_surface}
For each puncture of $X$ fix a standard cusp neighborhood and denote their union by $X_O$, an open set.
Denote the complement by $X_K$ and call it the \emph{compact part} of $X$.
We require that the cusp neighborhoods are disjoint and small enough that the inclusion $X_K\into X$ induces an isomorphism of fundamental groups.

\subsubsection{Geometry in the universal cover}
	\label{sssec:geometry_in_the_universal_cover}
Let $\partial \wtilde{X}$ denote the visual boundary of the universal cover.
It can be viewed as the set of equivalence classes of geodesic rays, with two rays equivalent when they are a bounded distance apart.
Equivalently, for a fixed basepoint it can be viewed as the set geodesic rays starting at the basepoint.

\begin{definition}[Opposite point]
	\label{def:opposite_point}
	For $x\in \wtilde{X}$ and $p\in \partial\wtilde{X}$, define $op_x(p)\in \partial \wtilde{X}$ to be the boundary point obtained by extending the geodesic ray from $x$ to $p$ in the opposite direction.
\end{definition}

\subsubsection{Two-point boundary}
	\label{sssec:two_point_boundary}
Consider the set of (ordered) distinct points on the boundary:
\[
	\partial \wtilde{X}^{(2)}:= \partial\wtilde{X}\times \partial\wtilde{X}
	\setminus \text{diag}(\partial\wtilde{X}).
\]
This is a noncompact space, but for every $x\in X$, the embedding
\[
	p\mapsto (p,op_{x}(p)) \quad \partial \wtilde{X}\into \partial\wtilde{X}^{(2)}
\]
has compact image.

\subsubsection{Fundamental domain}
	\label{sssec:fundamental_domain}
Fix a fundamental domain on $\wtilde{X}$ with piecewise geodesic boundary, say a Dirichlet domain for $x_0$.
Let $\wtilde{X}_{K}$ denote the lift of the compact part to the fundamental domain and $\wtilde{X}_{O}$ the lift of the complement, also inside the fundamental domain.

\subsubsection{Cusps, peripheral subgroups}
	\label{sssec:cusps_peripheral_subgroups}
Every cusp of $X$ determines a $\pi_1(X)$-invariant collection of points in $\partial\widetilde{X}$.
We call these \emph{parabolic points} and the stabilizer in $\pi_1(X)$ of a parabolic point will be called a \emph{peripheral subgroup}.
For instance, in the examples of \autoref{sec:hypergeometric_local_systems} peripheral subgroups are isomorphic to $(\bZ/2)*(\bZ/2)$ and generated by two reflections.
A \emph{parabolic subgroup} will refer to a subgroup of a peripheral subgroup that is isomorphic to $\bZ$.

\subsubsection{Unit tangent bundle and geodesic flow}
	\label{sssec:unit_tangent_bundle_and_geodesic_flow}
Let $T^1X, T^1\wtilde{X}$ denote the unit tangent bundles of $X$ and its universal cover.
After the identification of $\wtilde{X}$ with $\bH$, we have a identifications
\[
	T^1\wtilde{X} = \PSL_2(\bR) \text{ and } T^1X = \leftquot{\pi_1(X)}{\PSL_2(\bR)}.
\]
The geodesic flow $g_t$ is given by the right action of the matrix
\[
	g_t := \begin{bmatrix}
		e^t & 0 \\
		0 & e^{-t}
	\end{bmatrix}
\]
where we identify the matrix and the flow it induces when convenient.
We will also use the identification 
\begin{align}
	\label{eqn:united_tangent_bundle_trivialization}
	\wtilde{X}\times \partial\wtilde{X} \toisom T^1\wtilde{X}
\end{align}
which takes $(x,p)$ to the tangent vector at $x$ whose geodesic lands at $p$.

\subsubsection{Norm on fundamental group}
	\label{sssec:norm_on_fundamental_group}
Because we fixed an underlying complete hyperbolic metric and so a Fuchsian representation $\rho_{F}\colon \pi_1(X)\to \PSL_2(\bR)$, we can put a norm on the fundamental group via
\[
	\norm{\gamma}:=\norm{\rho_F(\gamma)} \text{ where the last expression is the matrix norm.}
\]
Any other Fuchsian representation would give comparable norms for our later purposes.
Note also that if $X$ were compact, it would suffice to use the word length to get a comparable quantity.
In geometric terms, we could have equivalently used $e^{\dist(x_0,\gamma x_0)}$, where $x_0$ is the fixed basepoint in $\wtilde{X}\isom \bH^2$, or rather $\cosh$ of the distance.

\subsubsection{Representations and flat bundles}
	\label{sssec:representations_and_flat_bundles}
Given a representation $\rho\colon \pi_1(X) \to \GL(V)$ denote by $\bV_\rho\to X$ the associated local system.
It is constructed as a quotient
\[
	\bV_\rho = \leftquot{\pi_1(X)}{\wtilde{X}\times V} \text{ with }\gamma\cdot (x,v)=(\gamma x, \rho(\gamma)v).
\]
We will abuse notation and write $\bV_{\rho}\to T^1X$ for the same local system lifted to the unit tangent bundle.
When convenient, the subscript $\rho$ in $\bV_\rho$ will be omitted.
The fiber of $\bV$ over $x\in X$, resp. $(x,p)\in T^1X$, will be denoted $\bV(x)$ resp. $\bV(x,p)$.

\subsubsection{Quasi-unipotency at the cusps}
	\label{sssec:quasi_unipotency_at_the_cusps}
All representations $\rho$ of $\pi_1(X)$ are assumed to have the following property.
For every peripheral subgroup, there is a finite-index cyclic subgroup generated by $\gamma$ and such that $\rho(\gamma)=T^{e}\cdot T^{u}$, where $T^e$ and $T^u$ commute, $T^{e}$ is an elliptic matrix (i.e. contained in a compact group) and $T^u$ is a unipotent matrix.

\subsubsection{Metrics on flat bundles}
	\label{sssec:metrics_on_flat_bundles}
For a local system $\bV_\rho$ coming from a representation, a metric on it is a choice of positive definite inner product on each fiber, that varies with appropriately specified regularity (continuous, smooth, real-analytic) on the point on the base.
Every metric on a vector bundle is assumed at least continuous, which suffices for most arguments; the Hodge metrics, to which the conclusions will be applied, are real-analytic.
We will only consider metrics that depend on the basepoint in $X$, not $T^1X$.

Fix now a continuous metric on $\bV_\rho$, such that $h(x)$ determines a metric on the vector space $\bV_\rho(x)$.
Lifting the discussion to the universal cover $\wtilde{X}$, this gives a $\pi_1(X)$-equivariant map
\begin{align}
	\label{eqn:metric_map_universal_cover}
	h\colon \widetilde{X}\to \GL(V)/K(x_0)
\end{align}
where $K(x_0)$ is the compact group associated to the metric at the basepoint $x_0$.
Using the trivialization of $\bV_{\rho}$ over $\wtilde{X}$ as $X\times V$, the norm of a vector $v\in V$ at $x\in \wtilde{X}$ is given by
\begin{align}
	\label{eqn:norm_of_vector_flat_bundle}
	\norm{v}_{x}:=\norm{h(x)^{-1}v}_{x_0}
\end{align}
where any $K(x_0)$-coset representative of $h(x)$ can be used.
With this convention, the norm function is left $\Gamma$-invariant, as it should be: $\norm{\rho(\gamma)v}_{\gamma x}=\norm{v}_{x}$ since $h$ is.



\subsection{Dominated cocycles and representations}
	\label{ssec:dominated_cocycles_and_representations}

The notion of domination is a classical one in dynamical systems, generalizing that of hyperbolicity.
In this section we develop it further, allowing for a poset indexing the dominating bundles.
We refer to Bochi--Gourmelon \cite{BochiGourmelon2009_Some-characterizations-of-domination} and Bochi--Potrie--Sambarino \cite{BochiPotrieSambarino2019_Anosov-representations-and-dominated-splittings} for more on classical dominated representations and their relation to Anosov representations.

\subsubsection{Setup}
	\label{sssec:setup_dominated_cocycles_and_representations}
For the next definition, take $(M,g_t)$ to be a continuous $1$-parameter flow on a topological space; we will very soon specialize to $M=T^1X$ as above.
If $M$ is a metric space, we will say the flow is Lipschitz if $d(g_t x, g_t y)\leq C e^{|t|L}d(x,y)$ for any $x,y\in M$ and some fixed constants $C,L>0$.

A cocycle $E\to M$ over $g_t$ is a vector bundle with a lift of the action of $g_t$ to $E$ by fiberwise linear transformations.
Cocycles will be equipped with metrics, and if there is an underlying measure the Oseledets theorem applies (see \cite{sfilip_MET_lectures} for more background).
Most frequently the norm of $g_t$ acting on the fiber of $E$ is pointwise bounded by $e^{|t|L}$ for some $L>0$.
We will write the action of $g_t$ on the right.

\begin{definition}[Dominated cocycle]
	\label{def:dominated_cocycle}
	Let $\Theta$ be a partially ordered set (poset), with order relation $\succ$.
	A cocycle $E\to (M,g_t)$ has a (weak) $\Theta$-dominated splitting, or simply a \emph{(weak) dominated splitting}, if there exists a decomposition
	\[
		E  = \bigoplus_{A\in \Theta}E_A \text{ into continuous subcocycles}
	\]
	and for any compact set $K\subset M$ there exist $C=C(K),\ve=\ve(K)>0$, such that for any $A_1\succ A_2$ and $v_i\in E_{A_i}(m)\setminus 0$, with $m\in K$, we have that
	\begin{align}
		\label{eqn:domination_condition}
		C \cdot \frac{\norm{v_1 g_t}}{\norm{v_1}} \geq e^{\ve t}\cdot \frac{\norm{v_2 g_t}}{\norm{v_2}} \quad \forall t\geq 0.
	\end{align}
\end{definition}

\subsubsection{Domination in the compact part}
	\label{sssec:domination_in_the_compact_part}
In the above definition, if $M$ is compact then we take $K=M$ from the start.
Suppose now that $M$ is not compact.
Then we will say that the splitting of the cocycle is \emph{dominated in the compact part} if, with the notation as in \autoref{def:dominated_cocycle} the condition from \autoref{eqn:domination_condition} holds for all $t\geq 0$ \emph{and such that $mg_t\in K$}.

As we shall see in \autoref{thm:log_domination_in_the_cusps}, in the case of interest to us, namely $M=T^1X$, the condition of being dominated in the compact part implies being dominated in general, as long as the metric is adapted in the sense of \autoref{def:adapted_metric}.

\begin{definition}[Total domination]
	\label{def:total_domination}
	Let $\Theta$ be a poset.
	A subset $S\subset \Theta$ is called \emph{totally dominated} if, setting $S':=\Theta\setminus S$, it holds that $s'\succ s$ for any $s'\in S',s\in S$.
	Similarly, $S'\subset \Theta$ is called \emph{totally dominating} if $S:=\Theta\setminus S'$ is totally dominated.
	Given a $\Theta$-dominated cocycle $E$, set $E_S:=\oplus_{A\in S}E_A$, and call $E_S$ totally dominating, or dominated, if $S$ is.

	An element $X\in \Theta$ will be called a \emph{top} if $\{X\}$ is totally dominating, and a \emph{bottom} if it is totally dominated.
	A top and a bottom of $\Theta$ are unique if they exist.
	If there exists at least one totally dominating subcocycle, we will omit the adjective ``weak'' and just say ``dominated splitting''.
\end{definition}

The reader can consult \autoref{eg:adjoint_representation_of_sp_4} and \autoref{eg:reducible_representation_of_sl_2times_sl_2} for some examples illustrating the non-trivial poset structure that can arise among dominating bundles.

\begin{remark}[On total domination]
	\label{rmk:on_total_domination}
	In general, good properties of the subcocycles such as \Holder continuity, stability under perturbations, etc. will only hold for totally dominating or totally dominated ones.
	Indeed, a totally dominated subcocycle $E_S$ has a unique totally dominating complement $E_{S'}$, and then we are in the classical setting of domination.
	We will say that $E=E_{S'}\oplus E_{S}$ is a dominated splitting.

	Finally, let us remark that the dominated cocycles and posets that arise from Anosov representations have additional symmetries, coming from duality and time reversal, see for instance \autoref{eg:adjoint_representation_of_sp_4} below.
\end{remark}

\subsubsection{Domination and duality}
	\label{sssec:domination_and_duality}
Recall that to any cocycle $E$ we can associate the dual cocycle $E^{\dual}$, of dual vector spaces, with dynamics defined by
\[
	(\xi\cdot g_t)(v) := \xi(v\cdot g_{-t}) \quad \forall \xi\in E^{\dual}(x), v\in E(xg_{t}).
\]
When the cocycle comes from a representation of the fundamental group of $M$, this operation corresponds to taking the dual, or contragredient, representation.
Note that if $E$ is $\Theta$-dominated, then $E^{\dual}$ is $\Theta^{op}$-dominated, where $\Theta^{op}$ is the poset with order relation reversed.

\subsubsection{Domination and time reversal}
	\label{sssec:domination_and_time_reversal}
Suppose now that we consider instead of the flow $g_t$ the time-reversed flow $\tilde{g}_t:=g_{-t}$, but keep the cocycle the same.
Then the poset $\wtilde{\Theta}$ indexing the domination bundles becomes $\Theta^{op}$.



\laternote{Stability under perturbations should hold only for perturbations in $\End^0\oplus \End^{ss}$, see commented section above}

\subsubsection{Subbundles on $T^1X$ invariant under the geodesic flow}
	\label{sssec:invariant_subbundles_on_t1x}
Suppose that $\rho\colon \pi_1(X)\to \GL(V)$ is a linear representation and $\bV_\rho$ is the associated local system over $X$ or $T^1X$.
Recall that the unit tangent bundle $T^1\wtilde{X}$ can be identified with $\partial \wtilde{X}^{(2)}\times \bR$ such that the action of the geodesic flow is on the $\bR$-factor only, by translation.
A $g_t$-invariant decomposition of $\bV_{\rho}$ is then the same as a $\pi_1(X)$-equivariant map $\partial\wtilde{X}^2\to \cD_{\bbd}(V)$ where $\cD_{\bbd}(V)$ is the space of decompositions $V=\oplus V_{d_i}$ and $\bbd=\{d_i\}$.
Note that $\cD_{\bbd}(V)\isom \GL(V)/\prod \GL_{d_i}(\bR)$.

A bit more generally, one can replace $\GL(V)$ by a reductive group, or semisimple, group $G$ and $\cD_{\bbd}(V)$ by $G/L_{\theta}$, where $L_{\theta}$ is a Levi subgroup of a parabolic group, see \autoref{sssec:two_and_three_point_boundary_maps} below.




\subsection{log-Anosov representations}
	\label{ssec:log_anosov_representations}

\subsubsection{Setup}
	\label{sssec:setup_log_anosov_representations}
We will freely use the notation introduced in \autoref{ssec:conventions_and_definitions}, in particular $G$ is a real semisimple Lie group, $K$ a maximal compact, $\fraka$ a Cartan subalgebra, and $\theta \subseteq \Delta$ a fixed nonempty subset of the simple roots.

\begin{definition}[log-Anosov representation]
	\label{def:log_anosov_representation}
	A representation $\rho\colon \pi_1(X)\to G$ is \emph{$\theta$-log-Anosov} if there exist $C,\ve>0$ such that
	\[
		\alpha \Big( \mu(\rho(\gamma)) \Big) \geq \ve \cdot \log \norm{\gamma} - C \quad \forall \gamma\in \pi_1(X), \forall \alpha\in \theta
	\] 
	where $\mu\colon G\to {\fraka^+}$ is the Cartan projection and $\norm{\gamma}$ is the norm introduced in \autoref{sssec:norm_on_fundamental_group} using a(ny) Fuchsian representation.
\end{definition}

\subsubsection{Basic properties}
	\label{sssec:basic_properties}
Since the geodesic flow on $T^1X$ is conjugate to its time reversal, it follows that we can take $\theta$ to be invariant under the opposition involution, i.e. $\theta=\theta^{op}$, see \cite[Lemma~3.18]{GuichardWienhard2012_Anosov-representations:-domains-of-discontinuity-and-applications}.
This can also be deduced from the discussion in \autoref{sssec:domination_and_time_reversal}, as well as the formalism developed below for passing between log-dominated and log-Anosov representations.

The following is a consequence of results of Zhu \cite[Thm.~1.2]{Zhu2019_Relatively-dominated-representations} and can also be deduced from the results of Gu\'eritaud, Guichard, Kassel, and Wienhard \cite[Thm.~1.1]{GGKW_AnosovReps_GT} (though it doesn't apply directly in this setting, the mechanism is the same and can be deduced from Lemma~5.8 of loc.cit.)
Recall that $\theta=\theta^{op}$, so we can speak of the transversality of two $\theta$-parabolics.

\begin{theorem}[Existence of boundary maps]
	\label{thm:existence_of_boundary_maps}
	Suppose that $\rho\colon \pi_1(X)\to G$ is $\theta$-log-Anosov.
	Then there exists a continuous, $\rho$-equivariant map
	\[
		\xi\colon \partial \wtilde{X}\to \cF_{\theta}^{+}
	\]
	which is furthermore dynamics-preserving and satisfies transversality.
\end{theorem}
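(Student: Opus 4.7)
The plan is to build $\xi$ in two stages, first on non-parabolic boundary points using the Cartan projection growth, and then extend continuously across parabolic points using the quasi-unipotency assumption from \autoref{sssec:quasi_unipotency_at_the_cusps}. I would organize the argument as follows.

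First, I would translate the log-Anosov inequality from \autoref{def:log_anosov_representation} into a statement about geodesics in $\wtilde{X}$. Since $\log\norm{\gamma}$ is comparable up to additive constants to $\dist(x_0,\gamma x_0)$, the inequality $\alpha(\mu(\rho(\gamma)))\geq \ve\dist(x_0,\gamma x_0)-C'$ holds for all $\alpha\in\theta$. Given a non-parabolic boundary point $p\in\partial\wtilde{X}$, choose a geodesic ray $r\colon [0,\infty)\to\wtilde{X}$ from $x_0$ to $p$, and for each $t$ pick $\gamma_t\in\pi_1(X)$ with $\gamma_t x_0$ at bounded distance from $r(t)$ inside the compact part. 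Write the Cartan decomposition $\rho(\gamma_t)=k_-(t)\,e^{\mu(t)}\,k_+(t)$. Define $\xi(p)$ as the limit in $\cF_\theta^+$ of $k_-(t)\cdot P_\theta^+$. The existence and independence of choices would be proven by a standard contraction argument: any two choices $\gamma_t,\gamma_t'$ with bounded separation differ by $\rho$ of a bounded element, and the $\theta$-divergence of $e^{\mu(t)}$ contracts the $P_\theta^+$-orbit exponentially onto the attracting flag, with quantitative rate $e^{-\ve t}$. This same contraction shows the resulting map is continuous away from the parabolic points and dynamics-preserving on loxodromic fixed points.

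Second, at a parabolic point $p$ fixed by the peripheral subgroup with generator $\gamma_0$ (on a finite-index cyclic subgroup) satisfying $\rho(\gamma_0)=T^eT^u$ with $T^e$ elliptic and $T^u$ unipotent commuting, I would define $\xi(p)$ to be the unique $\theta$-parabolic subgroup of $G$ containing $T^u$ in its unipotent radical (i.e.\ the attracting $\theta$-flag of the unipotent part $T^u$). That such a parabolic exists and is unique, and is fixed by $T^e$ as well, follows from the quasi-unipotency assumption together with the fact that applying $\alpha(\mu(\rho(\gamma_0^n)))\to\infty$ forces $T^u$ to be nontrivial in the appropriate direction for every $\alpha\in\theta$. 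Equivariance under the peripheral subgroup is then automatic.

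The key step, and the main obstacle, is continuity of $\xi$ at parabolic points. Consider a sequence $p_n\to p$ with $p$ parabolic. Approximate each $p_n$ by group elements $\gamma_n$ as in the first step, with $\dist(x_0,\gamma_n x_0)\to \infty$. The subtlety is that some $\gamma_n$ may travel deep into the cusp neighborhood of $p$, where word length overestimates distance and where $\rho(\gamma_n)$ is close to a power of the peripheral unipotent. My plan is to decompose each $\gamma_n$ along its geodesic excursion: write $\gamma_n=\alpha_n\cdot \gamma_0^{k_n}\cdot \beta_n$ where $\alpha_n,\beta_n$ carry the excursions to/from a fixed horoball about $p$ and remain in the compact part, while $\gamma_0^{k_n}$ is the excursion inside. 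Then $\rho(\gamma_n)=\rho(\alpha_n)\cdot (T^eT^u)^{k_n}\cdot \rho(\beta_n)$, and the large factor $(T^u)^{k_n}$ collapses the attracting $\theta$-flag of $\rho(\gamma_n)$ toward the unipotent flag $\xi(p)$, while the bounded factors $\rho(\alpha_n),\rho(\beta_n)$ perturb only by a compact factor that is absorbed in the limit once $k_n$ is large; when $k_n$ remains bounded, the $\alpha_n$ grow and we are back in the first case with $p_n$ tending to $p$ along a non-cuspidal direction, handled by the first step. This argument requires a careful quantitative comparison of the singular value behavior of $(T^u)^k$ against the compact-part log-Anosov estimate, and it is where the logarithmic (rather than linear) decoration on ``Anosov'' is essential.

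Finally, transversality of $\xi(p_1),\xi(p_2)$ for $p_1\neq p_2$ would be established by choosing $\gamma\in\pi_1(X)$ with axis separating $p_1,p_2$ and applying the contraction along $\rho(\gamma^n)$ and $\rho(\gamma^{-n})$: the attracting flag of one and the repelling flag (attracting for the opposite chamber, hence transverse by the opposition involution invariance $\theta=\theta^{op}$) are transverse by construction, and the contraction of $\xi$-values onto these flags yields transversality in the limit. Dynamics-preserving at loxodromic fixed points is already built into the construction, and at parabolic points it follows from the intrinsic definition via the invariant flag of $T^u$.
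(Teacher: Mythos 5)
Your definition of $\xi(p)$ at a parabolic boundary point is not correct: a $\theta$-parabolic containing $T^u$ in its unipotent radical need not exist. Consider the main motivating example, $G=\Sp_4(\bR)$ with $\theta=\{\alpha_1\}$, so $\cF_\theta^+=\bP(V_{\bR})$, at a MUM cusp (as in the mirror quintic). There $T^u$ is a regular unipotent with a single Jordan block, so $(T^u-\id)^3\neq 0$. But the unipotent radical of the line-stabilizer $P_{\alpha_1}$ consists of $u$ with $(u-\id)V\subset l^\perp$, $(u-\id)l^\perp\subset l$, $(u-\id)l=0$, hence $(u-\id)^3=0$. So a regular unipotent lies in the unipotent radical of the Borel but of \emph{no} maximal parabolic, and your ``unique $\theta$-parabolic'' does not exist. (Your formulation does happen to work for the rank-$1$ unipotent with an invariant line, the other allowed cusp type under assumption A, which may be what misled you.) The correct definition goes through the monodromy weight filtration of $N=\log T^u$ (see \autoref{sssec:weight_filtration} and \autoref{def:log_proximal_unipotent_transformation}): in the MUM example $\xi(p)=W_{-3}(N)=\ker N=\operatorname{Im}N^3$, the deepest weight subspace. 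The log-Anosov inequality applied to powers of $\gamma_0$ is what forces $N$ to be log-proximal and hence determines a unique $\theta$-flag; containment of $T^u$ in a parabolic radical is the wrong invariant.

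Beyond this, your outline is the standard strategy --- essentially what the references the paper cites here (Zhu, and GGKW via their Lemma~5.8) carry out, so there is no internal ``paper's proof'' to compare against. But the step you describe as ``requiring a careful quantitative comparison'' (continuity of $\xi$ at parabolic points via the excursion decomposition $\gamma_n=\alpha_n\gamma_0^{k_n}\beta_n$) is exactly the content that distinguishes the log-Anosov case from the cocompact one and cannot be left as a plan. What must be supplied is the comparison of the polynomial growth of $(T^u)^{k_n}$ against the hyperbolic distance $\approx 2\log k_n$ traversed during the cusp excursion; this is what \autoref{prop:estimates_for_strictly_adapted_metrics}, \autoref{prop:control_of_planes_under_unipotent_domination}, and \autoref{prop:stable_spaces_near_the_cusp} in the appendix establish, yielding the H\"older rate at which $\xi(p)\to W_{d-1}(N)$ as $p$ tends to the parabolic point. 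Your transversality argument is similarly too quick: contracting $\xi(p_1)$ and $\xi(p_2)$ toward the attracting/repelling flags of a loxodromic $\gamma$ constrains limits along orbits, not the fixed pair $(\xi(p_1),\xi(p_2))$ itself; one needs the contraction to be uniform and to combine it with density of axis endpoints plus continuity of $\xi$.
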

\noindent We will write from now on $\cF_{\theta}$ to refer to $\cF_{\theta}^+$.
Let us note that $\xi$ is \Holder-continuous, although we will not need this fact (see e.g. \cite[Thm.~A]{AraujoBufetovFilip2016_On-Holder-continuity-of-Oseledets-subspaces} for a statement which, when applied with uniform constants, implies the result).

\subsubsection{Transversality and dynamics}
	\label{sssec:transversality_and_dynamics}
In the statement of the above theorem, transversality means that such for two distinct points on the boundary $p_1\neq p_2$, the two parabolics $\xi(p_1),\xi(p_2)$ are transverse.
Being dynamics-preserving means the following.
If $\gamma\in \pi_1(X)$ corresponds to a closed geodesic with endpoints $p_{\pm}\in \partial\wtilde{X}$, then $\rho(\gamma)$ acts on $\cF_{\theta}$ with attracting/repelling fixed points at $\xi(p_{\pm})$, and vice versa for $\gamma^{-1}$.
Note that in general $\rho(\gamma)$ will have other fixed points as well, with intermediate (i.e. a mixture of attracting and repelling) behavior, but these are never in the image of $\xi$.

If $\gamma\in \pi_1(X)$ stabilizes a parabolic point $p$ on $\partial\wtilde{X}$ (and generates a cyclic subgroup), then $\xi(p)$ is a fixed point of the unipotent $\rho(\gamma)$ and furthermore the corresponding fixed flag is a subflag of the monodromy weight filtration as defined in \autoref{sssec:weight_filtration}.
Note that \cite{GGKW_AnosovReps_GT} does not directly address the unipotent case, but \cite[Def.~7.1]{Zhu2019_Relatively-dominated-representations} does.
The assertions in the parabolic case also follow from \autoref{prop:stable_spaces_near_the_cusp} below.

\subsubsection{Two and three-point boundary maps}
	\label{sssec:two_and_three_point_boundary_maps}
Because distinct parabolics in the image of the boundary map $\xi$ from \autoref{thm:existence_of_boundary_maps} are transverse, and because $\theta=\theta^{op}$ by assumption, we obtain an induced map from the two-point boundary to conjugacy classes of Levi subgroups:
\begin{align*}
	\xi^{(2)} &\colon \partial\wtilde{X}^{(2)}\to G/L_{\theta}\\
	\xi^{(2)}&(p_1,p_2):=\xi(p_1)\cap \xi(p_2)
\end{align*}
which is $\pi_1(X)$-equivariant.

Recall that $\partial\wtilde{X}^{(2)}$ is identified with the quotient of $T^1\wtilde{X}$ by the geodesic flow.
Consider the principal (right) $G$-bundle $\wtilde{\cG}\to T^1\wtilde{X}$, which descends to $\cG\to T^1X$.
Then the data of the map $\xi^{(2)}$ is the same as an equivariant section of $\wtilde{\cG}/L_{\theta}$, which therefore descends to a $g_t$-invariant section $\sigma_2$ of $\cG/L_{\theta}\to T^1X$.

Recall that $M_{\theta}:=K\cap L_{\theta}$ is a maximal compact in $L_{\theta}$, therefore $L_{\theta}/M_{\theta}$ is a contractible symmetric space.
Therefore, we can lift the section $\sigma$ to a continuous section $\sigma_3$ of $\cG/M_{\theta}\to T^1X$, since the fibers of $\cG/M_{\theta}\to \cG/L_{\theta}$ are contractible.

Note that we can make an arbitrary choice in the cusp, and then glue it by a partition of unity (using barycenters in the symmetric space) in the compact part.
In the cusp, it is most natural to use the ``strictly adapted metrics'' from \autoref{def:strictly_adapted_metric}, which in particular give homomorphisms $\SL_{2}(\bR)\to G$ compatible with the unipotent, and use the identification between $T^1\wtilde{X}$ and $\PSL_2(\bR)$.
However, we will only look at the behavior of $\sigma_3$ in the preimage of the compact part in $T^1\wtilde{X}$, so the choice in the cusp is immaterial for the arguments below.

We can now lift $\sigma_3$ to a map called $\xi^{(3)}$ on the universal cover and obtain a commutative diagram:
\begin{equation}
	\label{eqn:three_point_boundary_map}
\begin{tikzcd}
	T^1\wtilde{X}
	\arrow[r, "\xi^{(3)}"]
	\arrow[d, ""]
	& 
	G/M_{\theta}
	\arrow[d, ""]
	\\
	\partial\wtilde{X}^{(2)}
	\arrow[r, "\xi^{(2)}"]
	&
	G/L_{\theta}
\end{tikzcd}
\end{equation}

\subsubsection{Pullback and pushforward of the Anosov condition}
	\label{sssec:pullback_and_pushforward_of_the_anosov_condition}
It is quite immediate to describe the ``pullback'' behavior of the Anosov condition, see e.g. \cite[Prop.~4.1]{GuichardWienhard2012_Anosov-representations:-domains-of-discontinuity-and-applications} (note that our conventions are opposite of loc.cit. in that the roles of $\theta$ and $\theta^c$ are reversed).
Namely, suppose $\phi\colon G_1\to G_2$ is a homomorphism of real semisimple Lie groups and $\rho\colon \pi_1(X)\to G_1$ is a representation.
We will put the respective index ($1$ or $2$) on the Lie theoretic data associated to each group.
Suppose that $\phi\circ\rho$ is $\theta_2$-Anosov.
Then $\rho$ is $\theta_1$-Anosov, where $\alpha\in\theta_1$ if and only if $\alpha \notin \operatorname{span}_{\alpha_2\in \theta_2^c} \phi^t(\alpha_2)$, where we can arrange the choice of Lie theoretic data such that $\phi$ induces a map $\fraka_{1}\xrightarrow{\phi}\fraka_{2}$ and a reverse map $\fraka_2^{\dual}\xrightarrow{\phi^t}\fraka_1^{\dual}$.
This result is immediate for both Anosov, and log-Anosov representations.

The behavior under ``pushforward'', i.e. knowing that $\rho$ is $\theta_1$-Anosov, and concluding something analogous for $\phi\circ \rho$, requires a better control on how the positive Weyl chambers are mapped, see \cite[Prop.~4.6]{GuichardWienhard2012_Anosov-representations:-domains-of-discontinuity-and-applications}.
Instead of developing that aspect, we will see how the language of weakly dominated bundles allows us to get analogous information, even in situations where the Anosov conclusion might not hold on $G_2$, see e.g. \autoref{eg:reducible_representation_of_sl_2times_sl_2} and the example in \cite[\S4.3]{GuichardWienhard2012_Anosov-representations:-domains-of-discontinuity-and-applications}.

\subsubsection{Partial order on Levi-irreducible pieces}
	\label{sssec:partial_order_on_levi_irreducible_pieces}
Recall that $L_{\theta}$ denotes a Levi subgroup of $P_{\theta}^+$, defined in \autoref{sssec:parabolic_subgroups}, and $\frakl_{\theta}$ is its Lie algebra.
Let $V$ be a representation of $G$ (irreducibility is not needed here, but can be assumed).
Decompose $V$ according to irreducible $L_{\theta}$, or $\frakl_{\theta}$, representations:
\[
	V = \oplus V^{i,\theta}
\]
Each piece in turn decomposes further into weight subspaces for the Cartan subalgebra $\fraka$, with corresponding nonempty root subspaces $V_{\alpha}^{i,\theta}$ with $\alpha\in S^{i,\theta}\subset \fraka^{\dual}$.
Coarsening the partial order $\succ_{\theta}$ from \autoref{sssec:partial_order}, define a partial order on the set of $L_{\theta}$-subrepresentations of $V$ by:
\begin{align*}
	V^{i,\theta} \succ_{\theta}^{V} V^{j,\theta} \text{ if and only if }\alpha\succ_{\theta}\alpha'\quad  \forall \alpha\in S^{i,\theta},\forall \alpha'\in S^{j,\theta}.
\end{align*}
In other words, a representation $\theta$-dominates a second one if and only if any of its weights $\theta$-dominates any of the weights in the second one.

Note that this notion of domination is not absolute on $L_{\theta}$-representations, but rather depends on how they occur inside $V$.
Additionally, there could be several isotypic components of $\frakl_{\theta}$-representations (see e.g. the proof of \cite[Thm.~6.8]{Filip_Semisimplicity-and-rigidity-of-the-Kontsevich-Zorich-cocycle} for how to gather those together in an invariant way).
For economy of notation, we will omit the vector space of isotypic components.

\subsubsection{Metrics on cocycles}
	\label{sssec:metrics_on_cocycles}
Continuing with the linear representation $G\xrightarrow{\phi}\SL(V)$, and make the choices such that the maximal compact of $G$ maps inside a maximal compact in $\SL(V)$, and similarly for maximally split Cartan algebras.
Let $\bV\to T^1X$ be the cocycle associated to this linear representation.
We will equip $\bV$ with a metric which depends on the point in $T^1X$, not just $X$, using $\xi^{(3)}$.
Indeed the projection $G/M_{\theta}\to G/K$, composed with $\SL(V)/\SO(V)$, yields the needed metric.

Next, recall that a $G$-equivariant bundle on $G/L_{\theta}$ is the same as an $L_{\theta}$-representation.
Therefore, the $G$-equivariant vector bundle $\cV:=(G/L_{\theta})\times V$ has a $G$-equivariant decomposition $\cV=\oplus \cV^{i,\theta}$ according to the decomposition into $L_{\theta}$-irreducibles from \autoref{sssec:partial_order_on_levi_irreducible_pieces}.
By \autoref{sssec:two_and_three_point_boundary_maps}, the $\theta$-log-Anosov representation admits a $\pi_1(X)$-equivariant map
\[
	\xi^{(2)}\colon \partial \wtilde{X}^{(2)}\to G/L_{\theta}
\]
such that, applying the construction of \autoref{sssec:invariant_subbundles_on_t1x}, we obtain the $g_t$-invariant subbundles on $T^1X$.
These will be denoted $\bV^{i,\theta}$ and with indexing in bijection with the representations $V^{i,\theta}$.

\begin{theorem}[Domination in log-Anosov representations]
	\label{thm:domination_in_log_anosov_representations}
	Suppose that $\rho\colon \pi_1(X)\to G$ is $\theta$-log-dominated.
	Let $\phi\colon G\to \GL(V)$ be a representation of $V$.
	Define $\Theta(\phi)$ to be the set of representations of $\frakl_{\theta}$ occurring in $V$, with the order relation as defined \autoref{sssec:partial_order_on_levi_irreducible_pieces}.

	Then $\phi\circ\rho$ is $\Theta(\phi)$-log-dominated in the compact part, with bundles induced from the boundary map $\xi^{(3)}$ from \autoref{eqn:three_point_boundary_map}, and the $G$-equivariant bundles on $G/L_{\theta}$ via the construction of \autoref{sssec:invariant_subbundles_on_t1x}.
\end{theorem}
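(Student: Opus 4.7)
The plan is as follows. First, I would set up the bundles and metric carefully using $\xi^{(3)}$. Choose a continuous lift $h\colon T^1\wtilde{X}\to G$ of $\xi^{(3)}$, and define $\bV^{i,\theta}(\tilde m):=\phi(h(\tilde m))V^{i,\theta}$ together with the metric $\norm{v}_{\tilde m}:=\norm{\phi(h(\tilde m))^{-1}v}_{std}$ for a fixed $K$-invariant inner product $\norm{\cdot}_{std}$ on $V$. The $M_\theta$-ambiguity in $h$ does not affect either definition, since $V^{i,\theta}$ is $L_\theta$-invariant and $M_\theta\subset K$ acts by isometries. Because $\xi^{(2)}$ factors through $\partial\wtilde X^{(2)}$, the decomposition $\bV=\bigoplus \bV^{i,\theta}$ is preserved by $g_t$, and $\pi_1$-equivariance of $\xi^{(3)}$ ensures all constructions descend to $T^1X$.

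Second, I would reduce the ratio of norms along a flow orbit to a single element $\eta\in L_\theta$. Since $\xi^{(3)}(\tilde m)$ and $\xi^{(3)}(\tilde m g_t)$ have the same image in $G/L_\theta$ (this image depends only on the geodesic's endpoints), the element $\eta:=h(\tilde m g_t)^{-1}h(\tilde m)$ lies in $L_\theta$. For $v=\phi(h(\tilde m))w\in \bV^{i,\theta}(\tilde m)$ the ratio of norms becomes
\[
\frac{\norm{v}_{\tilde m g_t}}{\norm{v}_{\tilde m}}=\frac{\norm{\phi(\eta)w}_{std}}{\norm{w}_{std}},
\]
and $\phi(\eta)$ preserves each $V^{i,\theta}$. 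Using the inclusion of Weyl chambers $\fraka^+\subset\fraka^+_{L_\theta}$, one checks that the Cartan projection of $\eta$ computed inside $L_\theta$ agrees with $\mu(\eta)$ computed in $G$. Consequently the singular values of $\phi(\eta)|_{V^{i,\theta}}$ are exactly $\{e^{\lambda(\mu(\eta))}:\lambda\in S^{i,\theta}\}$.

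Third, I would relate $\mu(\eta)$ to the Cartan projection of an element of $\pi_1(X)$ of comparable size. Fix a relatively compact fundamental set $\wtilde K$ for the preimage of the compact part $X_K$ in $T^1\wtilde X$, and normalize $h$ so that $h(\wtilde K)$ lies in a fixed compact subset of $G$; its $\pi_1$-translates are then uniformly bounded by the equivariance of $\xi^{(3)}$. When both $\tilde m$ and $\tilde m g_t$ project to $X_K$, choose $\gamma\in\pi_1(X)$ with $\tilde m g_t=\gamma \tilde m_0$ for some $\tilde m_0\in\wtilde K$. Hyperbolic geometry gives $\log\norm{\gamma}=t+O(1)$, and the identity
\[
\rho(\gamma)^{-1}=h(\tilde m_0)\,\eta\,h(\tilde m)^{-1}
\]
combined with the standard sub-additivity $\norm{\mu(b_1 g b_2)-\mu(g)}\leq C(b_1,b_2)$ yields $\mu(\eta)=\iota\bigl(\mu(\rho(\gamma))\bigr)+O(1)$, with $\iota$ the opposition involution. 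Because $\theta=\theta^{op}$, the log-Anosov hypothesis gives $\alpha(\mu(\eta))\geq\ve t-C'$ for every $\alpha\in\theta$, while $\alpha(\mu(\eta))\geq 0$ for every $\alpha\in\Delta$.

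Finally, I would combine these ingredients with the definition of $\succ^V_\theta$. For $V^{i,\theta}\succ^V_\theta V^{j,\theta}$ and any $\lambda\in S^{i,\theta},\lambda'\in S^{j,\theta}$, writing $\lambda-\lambda'=\sum_{\alpha\in\Delta}n_\alpha\alpha$ with $n_\alpha\geq 0$ and $n_\alpha>0$ for at least one $\alpha\in\theta$ gives $(\lambda-\lambda')(\mu(\eta))\geq\ve t-C''$. Taking the infimum over $\lambda\in S^{i,\theta}$ and the supremum over $\lambda'\in S^{j,\theta}$ of the singular value formulas then produces exactly the domination inequality from \autoref{def:dominated_cocycle}. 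I expect the main obstacle to be in the second step: matching conventions so that $\eta\in L_\theta$ has the prescribed action on each $V^{i,\theta}$, and verifying that the $L_\theta$-Cartan projection coincides with the $G$-Cartan projection so that the singular values of $\phi(\eta)|_{V^{i,\theta}}$ can be read off directly.
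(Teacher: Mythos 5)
Your proposal is correct and follows essentially the same approach as the paper's proof: reduce the ratio of norms along a geodesic segment to the Cartan projection of a single element $\eta = l(u_x,u_y)\in L_\theta$ via the lift $\xi^{(3)}$, do a $KAK$ decomposition of $\eta$ with $K$-factors in $M_\theta$ so that the subspaces $V^{i,\theta}$ are preserved, and then invoke the log-Anosov hypothesis together with the sub-additivity of the Cartan projection to bound $\alpha(\mu(\eta))$ from below for $\alpha\in\theta$. Your third step makes explicit the comparison to $\mu(\rho(\gamma))$ (including the opposition involution and the compactness of $h(\wtilde K)$) that the paper condenses into a citation of Fact~2.18(3) of \cite{GGKW_AnosovReps_GT}, and the concern you flag at the end about the $L_\theta$- versus $G$-Cartan projection is resolved by noting that $W(L_\theta)$ can be represented in $M_\theta$ and the weight set $S^{i,\theta}$ is $W(L_\theta)$-invariant, so the singular values of $\phi(\eta)|_{V^{i,\theta}}$ are unambiguous.
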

As will be proved in \autoref{thm:log_domination_in_the_cusps}, if the metric is adapted, in the sense of \autoref{def:adapted_metric}, the domination condition holds for all times $t\geq 0$ and all starting points in a compact set.

\begin{proof}
	We will use the map $\xi^{(3)}$ from \autoref{eqn:three_point_boundary_map} to check the domination condition, and the metric described in \autoref{sssec:metrics_on_cocycles}.
	First, recall that $\wtilde{X}_K\subset\wtilde{X}$ denotes a fundamental domain for the compact part $X_K\subset X$.
	We will verify the domination condition along geodesics of the form $(p,op_x(p))\in \partial \wtilde{X}^{(2)}$, for starting and returning points in the compact part of $X$, and where $x\in \wtilde{X}_K$, and our estimates will be uniform for such $x$.

	Consider now one such geodesic, say $(p,op_{x}(p))\in \partial\wtilde{X}^{(2)}$, and a point $y$ on it, projecting to the compact part of $X$.
	Let $u_x,u_y\in T^1\wtilde{X}$ be the corresponding unit vectors.
	By the construction of the map $\xi^{(3)}$, we have that $\xi^{(3)}(u_x)$ and $\xi^{(3)}(u_y)$, as elements of $G/M_{\theta}$, project to the same point in $G/L_{\theta}$, namely $\xi^{(2)}(p,op_{x}(p))$.
	It follows that there exist $g_x\in G$ and $l=l(u_x,u_y)\in L_{\theta}$ such that
	\[
		\xi^{(3)}(u_x)=g_x\cdot M_{\theta}
		\quad \text{ and }
		\xi^{(3)}(u_y) = g_x \cdot l \cdot M_{\theta} 
	\]
	and furthermore $g_x$ (it depends on $u_x$, not just $x$) can be picked in a compact set of $G$, depending only on the fixed compact part of $X$, and $\rho$ as well as the lift $\xi^{(3)}$.

	Recall that the flat vector bundle $\bV=T^1\wtilde{X}\times V$ decomposes into invariant subbundles according to the maps $\xi^{(2)}$, i.e. if $V=\oplus V^{i,\theta}$ is the decomposition into representations, then over $u\in T^{1}\wtilde{X}$ the decomposition is $\oplus \cV^{i,\theta}(u)=\xi^{(2)}(u)\cdot \oplus V^{i,\theta}$.
	To compute the norm of a vector $v\in V$ at $u$, we need to apply the $\xi^{(3)}$-map, namely $\norm{v}_{u}:=\norm{\xi^{(3)}(u)^{-1}v}_{u_0}$, where $u_0$ is a reference basepoint in $T^1\wtilde{X}$, sitting over the reference basepoint $x_0\in \wtilde{X}$, and such that $\xi^{(3)}(u_0)$ is the identity coset in $G/M_{\theta}$.

	Take now two $L_{\theta}$-subrepresentations $V^{1,\theta}\succ_{\theta}^{V}V^{2,\theta}$, and suppose $v_i \in \cV^{i,\theta}(u_x)$ are unit vectors.
	So $v_i = g_x v_{i,0}$ with $v_{i,0}\in V^{i,\theta}$, and such that $\norm{v_i}_{u_x}\approx \norm{v_{i,0}}_{u_0}$, i.e. the norms are comparable up to uniform constants since $x$ belongs to a fixed compact fundamental domain.

	We now want to compute the norms of the same vectors, but at $u_y$ (i.e. their parallel transport to $u_y$, which in the trivialization on the universal cover does not change the vectors).
	Then by definition we have
	\begin{align*}
		\norm{v_i}_{u_y} & 
		= \norm{\xi^{(3)}(u_y)^{-1}v_i}_{u_0}
		= \norm{\xi^{(3)}(u_y)^{-1} g_x v_{i,0}}_{u_0}\\
		& = \norm{(g_x\cdot  l\cdot M_{\theta})^{-1}g_x v_{i,0}}_{u_0}\\
		& = \norm{l\cdot v_{i,0}}_{u_0}
	\end{align*}
	where $l=l(x,y)\in L_{\theta}$.
	Let us now write out the KAK decomposition of $l$ in $L_{\theta}$ as
	\[
		l = m_- e^{\mu(l)} m_+ \quad \text{with }m_{\pm}\in M_{\theta}, \mu(l)\in \fraka^{+}.
	\]
	The vector $m_+ v_{i,0}$ can be arbitrary in $V^{i,\theta}$, but its norm is uniformly bounded above and away from $0$ by constants that only depend on the initial geometry.
	By assumption $V^{1,\theta}\succ_{\theta}^V V^{2,\theta}$, so any root of $V^{1,\theta}$ strictly $\theta$-dominates any root of $V^{2,\theta}$.
	Therefore, to establish that $\norm{v_1}_{u_y} \geq \tfrac{1}{C}e^{\ve \cdot d(x,y)} \norm{v_2}_{u_y}$, for some fixed $C,\ve>0$, it suffices to check that there exist constants $c_1,\ve>0$ such that
	\[
		\alpha(\mu(l)) \geq \ve\cdot d(x,y) - c_1 \quad \forall\alpha\in \theta.
	\]
	This, however, follows from \autoref{def:log_anosov_representation}, which implies this bound with $l$ replaced by $g\in G$ such that $\xi^{3}(u_y) = g\cdot \xi^{(3)}(u_x)$, and standard comparison estimates on the Cartan projection such as Fact~2.18(3) in \cite{GGKW_AnosovReps_GT}.
\end{proof}

The following is a consequence of the proof.
\begin{corollary}[Definitely contracting vectors]
	\label{cor:definitely_contracting_vectors}
	Consider the bundle
	\[
		\bV^{-}:=\bigoplus_{0\succ_{\theta}^V V^{i,\theta}} \bV^{i,\theta}
	\]
	where the summation is over all $L_{\theta}$-representations such that \emph{any} root in $V^{i,\theta}$ is $\theta$-dominated by the zero root.

	Then for any $v\in \bV^{-}(m)$ it holds that $\norm{vg_t}\to 0$ as $t\to +\infty$.
\end{corollary}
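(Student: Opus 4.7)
The plan is to re-run the norm computation from the proof of \autoref{thm:domination_in_log_anosov_representations}, but in the absolute rather than comparative form, specialized to the summands $\bV^{i,\theta}$ with $0\succ_\theta^V V^{i,\theta}$. First, fix $v\in \bV^-(u_x)$ with $u_x$ in the preimage of a compact set, and let $u_y=u_x g_t$ with $t\geq 0$ also projecting to the compact part of $X$. As in the proof of \autoref{thm:domination_in_log_anosov_representations}, we have $v=g_x\cdot v_0$ with $v_0\in V^-:=\bigoplus_{0\succ_\theta^V V^{i,\theta}}V^{i,\theta}\subset V$ and
\[
\|v\|_{u_y}\;=\;\|l\cdot v_0\|_{u_0},
\]
where $l=l(u_x,u_y)\in L_\theta$ and $g_x$ ranges in a compact set depending only on the chosen compact part of $X$ and on $\xi^{(3)}$.

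Next, write the polar decomposition $l=m_-\,e^{\mu(l)}\,m_+$ inside $L_\theta$, with $m_\pm\in M_\theta=K\cap L_\theta$ and $\mu(l)\in\mathfrak a^+$. Since $M_\theta$ is compact, $m_+$ and $m_-$ act with operator norms bounded by an absolute constant on every $L_\theta$-irreducible summand $V^{i,\theta}$ (note that $M_\theta\subset L_\theta$ preserves each such summand). Therefore it suffices to estimate the action of $e^{\mu(l)}$ on each weight space $V^{i,\theta}_\alpha$, where it is scalar multiplication by $e^{\alpha(\mu(l))}$. By the $\theta$-log-Anosov hypothesis applied along the same comparison with the Cartan projection used in the previous proof, there exist $\varepsilon,c>0$ such that
\[
\beta(\mu(l))\;\geq\;\varepsilon\cdot d(x,y)-c \qquad \forall\,\beta\in\theta.
\]

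The key observation is then purely combinatorial: by definition of $0\succ_\theta^V V^{i,\theta}$, every weight $\alpha$ occurring in $V^{i,\theta}$ satisfies $-\alpha=\sum_{\beta\in\Delta}n_\beta\beta$ with all $n_\beta\geq 0$ and at least one $n_{\beta_0}>0$ for some $\beta_0\in\theta$. Evaluating at $\mu(l)\in\mathfrak a^+$ and using that $\beta(\mu(l))\geq 0$ for every simple root together with the previous inequality gives
\[
\alpha(\mu(l))\;\leq\;-n_{\beta_0}\bigl(\varepsilon\,d(x,y)-c\bigr)\;\leq\;-\varepsilon'\,d(x,y)+c'
\]
for uniform constants $\varepsilon',c'>0$. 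Summing over the finitely many weights appearing in $V^-$ and absorbing the compact factors $m_\pm$ and $g_x$ into a multiplicative constant, we conclude $\|v\|_{u_y}\leq C\,e^{-\varepsilon' d(x,y)}\,\|v\|_{u_x}$, which forces $\|vg_t\|\to 0$ as $t\to+\infty$ whenever the orbit returns to the compact part with unbounded distance.

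The one small technical issue is to extend the estimate to all starting points and times, not only those for which both $u_x$ and $u_y$ lie above the compact part. This is where one invokes \autoref{thm:log_domination_in_the_cusps}: with an adapted metric the same bound remains valid along cuspidal excursions, so the geodesic can be decomposed into a bounded number of compact-part segments and cuspidal transits, each contributing at worst a bounded multiplicative factor and at best the claimed exponential contraction. I do not anticipate a substantive obstacle here; the only care needed is to verify that the weight-permutation action of $M_\theta$ on $V^{i,\theta}$ respects the $L_\theta$-summand, which is immediate from $M_\theta\subset L_\theta$.
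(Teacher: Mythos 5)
Your proof correctly re-runs the norm computation from the proof of \autoref{thm:domination_in_log_anosov_representations} in absolute rather than comparative form, which is exactly what the paper intends by stating the corollary as ``a consequence of the proof''. The decisive combinatorial step — that every weight $\alpha$ in a summand with $0\succ_\theta^V V^{i,\theta}$ has $-\alpha$ a non-negative integer combination of simple roots with some $\theta$-coefficient strictly positive, so that evaluation against $\mu(l)\in\fraka^+$ together with the lower bound $\beta(\mu(l))\geq \ve\,d(x,y)-c$ for $\beta\in\theta$ forces $\alpha(\mu(l))\leq -\ve'\,d(x,y)+c'$ — is the same mechanism used in the paper, and your handling of the cuspidal excursions via \autoref{thm:log_domination_in_the_cusps} matches the remark immediately following \autoref{thm:domination_in_log_anosov_representations}.
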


\subsubsection{An alternative proof of \autoref{thm:domination_in_log_anosov_representations}}
	\label{sssec:an_alternative_proof_of_thm:domination_in_log_anosov_representations}
It is also possible to deduce the theorem just proved, as well as the stated corollary, using slight modifications of \cite[Prop.~4.3]{GuichardWienhard2012_Anosov-representations:-domains-of-discontinuity-and-applications}.
Indeed the quoted result considers a decomposition of $V$ into a line and a hyperplane and establishes domination there.
The proof applies almost directly, except that one needs to use the quantity
\[
	\min_{\forall \beta_{x}\in V^{x,\theta}} (\beta_i-\beta_j)(\mu(u,t))
\]
where $\mu(u,t)$ denotes the Cartan projection that is being controlled, and $\beta_i,\beta_j$ are arbitrary roots in $V^{i,\theta},V^{j,\theta}$.
The condition $V^{i,\theta}\succ_\theta^V V^{j,\theta}$ implies that $\beta_i-\beta_j\succ 0$ (for the usual order on roots) and furthermore for at least one $\alpha\in \theta$ appears with a positive coefficient: $n_{\alpha}(\beta_i-\beta_j)>0$.

\begin{example}[Adjoint representation of $\Sp_4$]
	\label{eg:adjoint_representation_of_sp_4}
	We freely use the notation introduced in \autoref{eg:the_root_system_of_sp_2g}.
	The two simple roots are $\alpha_1=\mu_1-\mu_2$ and $\alpha_2=2\mu_2$.
	Assume that $\rho\colon \pi_1(X)\to \Sp_{4}(\bR)$ is $\theta:=\{\alpha_1\}$-log-dominated, which is the example of interest in view of \autoref{thm:anosov_property_from_assumption_A} below.

	The adjoint representation corresponds to the dominant weight $2\omega_1=2\mu_1$.
	The associated $g_t$-cocycle splits into six invariant subbundles, with domination relation as depicted in \autoref{fig:adjoint_rep_poset}.
	Note that the trivial root has multiplicity two, but it gets divided among two subbundles.
\end{example}

\begin{figure}[htbp!]
	\centering
	\includegraphics[width=0.29\linewidth]{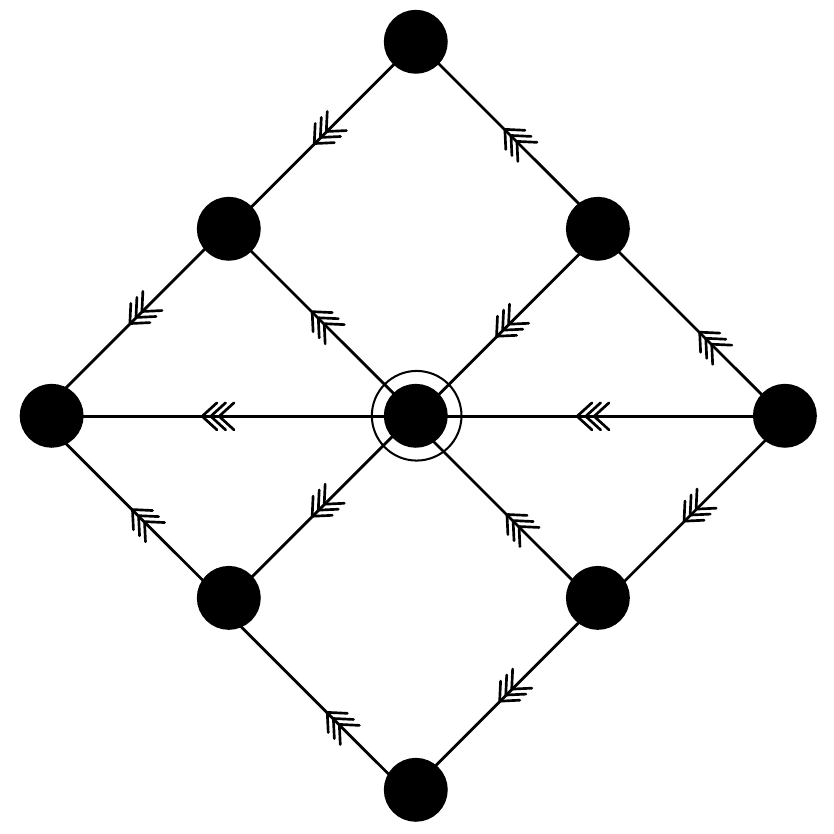}
	\includegraphics[width=0.67\linewidth]{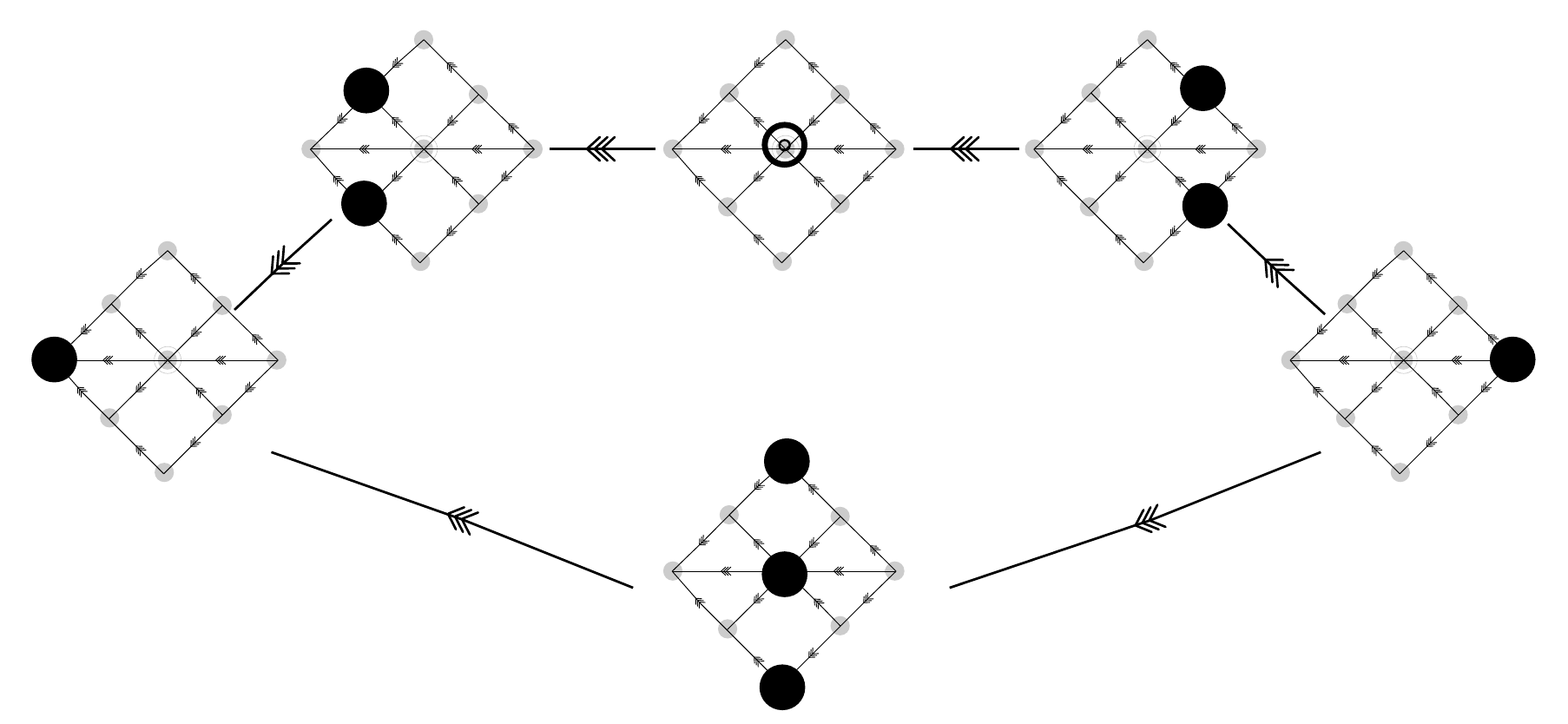}
	\caption{Left: the roots of the adjoint representation.
	Right: The poset of representations for $\frakl_{\alpha_1}$.}
	\label{fig:adjoint_rep_poset}
\end{figure}

\begin{example}[Reducible representation of $\SL_2\times \SL_2$]
	\label{eg:reducible_representation_of_sl_2times_sl_2}
	We take up an example of Guichard--Wienhard \cite[\S4.3]{GuichardWienhard2012_Anosov-representations:-domains-of-discontinuity-and-applications}.
	Namely, let $G:=\SL_2(\bR)\times \SL_2(\bR)$, and let $\rho_i\colon \pi_1(X)\to \SL_{2}(\bR)$ be such that $\rho_1$ is a Fuchsian representation, while $\rho_2$ is arbitrary (say, with dense image).
	Then $\rho:=(\rho_1,\rho_2)$ is $\alpha_1$-Anosov, where $\alpha_1$ is the simple root of the first factor.
	If we take the (reducible) representation which is the direct sum of the standard representations of each factor of $G$, then we obtain a weakly dominated representation.
	The poset has three bundles: two $1$-dimensional bundles, one of which dominates the other, coming from the first factor, and a $2$-dimensional bundle which has no domination relation with others coming from the second factor.
\end{example}

\begin{example}[Reducible representation of $\SL_2\times \SL_2$, bis]
	\label{eg:reducible_representation_of_sl_2times_sl_2_bis}
	We continue with the example \autoref{eg:reducible_representation_of_sl_2times_sl_2} above, but assume that one of the $\SL_2$-factors corresponds to a uniformizing Fuchsian representation, with entries in a ring of integers in a number field, while the second factor is the Galois-conjugate local system which also admits a variation of Hodge structure (of weight $1$).
	This situation occurs in \Teichmuller dynamics, see for instance \cite{McMullen2021_Billiards-heights-and-the-arithmetic-of-nonarithmetic-groups} or \cite{Moller2006_Variations-of-Hodge-structures-of-a-Teichmuller-curve}, as well as \cite{Filip_Semisimplicity-and-rigidity-of-the-Kontsevich-Zorich-cocycle} where the situation beyond $\SL_2$ is considered.

	In this case, the Schwarz lemma implies that the expansion in the first factor \emph{dominates} that in the second factor.
	Therefore, the poset structure of the bundles is such that the $1$-dimensional expanding subspace in the first factor dominates the second factor, which in turn dominates the contracting subspace in the first factor.
\end{example}

\subsubsection{Stable filtration}
	\label{sssec:stable_filtration_Ptheta}
The dominated decomposition constructed in \autoref{thm:domination_in_log_anosov_representations} only needs to $\rho$-equivariant map $\xi^{(2)}$ defined on $\partial \wtilde{X}^{(2)}$.
This, in turn, is constructed from the boundary map
\[
	\xi \colon \partial \wtilde{X}\to \cF_{\theta}=G/P_{\theta}
\]
from \autoref{thm:existence_of_boundary_maps}.
Over $\cF_{\theta}$, we have $G$-equivariant bundles corresponding to $P_{\theta}$-representations.
Therefore, for any $G$-representation $V$, we can consider the family of $P_{\theta}$-subrepresentations of $V$.
Pulling back to $T^1\wtilde{X}$ via the boundary map $\xi$, and passing to the finite volume quotient, gives subbundles of $\bV$ which are invariant not only under the geodesic flow $g_t$, but also the unipotent flow $u_s=\begin{bmatrix}
	1 & s\\
	0 & 1
\end{bmatrix}$ since $T^1\wtilde{X}\isom \PSL_2(\bR)$, and the quotient by the parabolic group generated by $g_t,u_s$ is $\partial \wtilde{X}$.

Note that any $P_{\theta}$-invariant representation is a direct sum of irreducible $L_{\theta}$-subrepresentations, so the new bundles are direct sums of the bundles arising in the dominated decomposition.
Note that in general, the resulting bundles, which we shall call the ``stable filtration'', do not constitute a filtration in the sense that given two distinct ones, it need not be the case that one is strictly contained in the other (see for instance \autoref{eg:adjoint_representation_of_sp_4}).

\subsubsection{Dangerous turn on terminology}
	\label{sssec:dangerous_turn_on_terminology}
We would like to warn the reader about the following inevitable clash of terminology from dynamics and GIT.
In dynamics, it is typical to call a vector, subbundle, etc. ``stable'' if it decays exponentially fast into the future (or a bit more generally, if the future behavior of the object is atypical).
In GIT (see \autoref{ssec:stability_and_the_numerical_criterion} below) the term ``stable'' is by contrast used for vectors that grow (exponentially) into the ``future'', for all possibly futures (so the behavior of vectors is typical).
We hope that this clash of terminology will not confuse the reader and at any point in the arguments, it will be apparent which context we're in.



\subsection{Stability and the numerical criterion}
	\label{ssec:stability_and_the_numerical_criterion}

The key notion introduced here is that of a ``stable point'' for a closed group $\Gamma\subset \SL(V)$ acting on $\bP(V)$.
It allows us to develop the standard proper discontinuity criteria in an economical way.
To streamline the arguments, we consider the limit set of $\Gamma$ using the KAK decomposition and view it \emph{inside} the maximal compact $K$.
This has the advantage that the limit set ``acts'' on various spaces on which $G$ or $K$ act.
This is readily translated to the usual limit sets via the equalities $K/M=KAN/MAN=G/P$.

\subsubsection{Setup}
	\label{sssec:setup_stability_and_the_numerical_criterion}
Let $\Gamma\subset G$ be a closed subgroup.
We introduce a notion of limit set for $\Gamma$ which is convenient for our later purposes.
Many variations exist, see e.g. \cite{Benoist1997_Proprietes-asymptotiques-des-groupes-lineaires}.
We then relate this notion of limit set to a criterion for proper discontinuity of the $\Gamma$-action.

Recall that $\mu\colon G\to \fraka^{+}$ denotes the Cartan projection.
Denote by $\bP\fraka^+$ the set of rays in the Weyl chamber, its elements will be denoted by $[\mu]$ and a nonzero lift to $\fraka^+$ will be denoted $\mu$.
Recall also that any $g\in G$ has a KAK decomposition $g=k_-(g)e^{\mu(g)}k_{+}(g)$.

\begin{definition}[Limit data]
	\label{def:limit_data}
	The limit data $\cL(\Gamma)$ of $\Gamma$ is the set of accumulation points in $K\times K\times \bP\fraka^+$ of sequences
	\[
		(k_-(\gamma_i),k_{+}(\gamma_i),[\mu(\gamma_i)]) \text{ where }\gamma_i\in \Gamma_i \text{ is s.t. }\norm{\mu(\gamma_i)}\to+\infty.
	\]
	Denote also by $\cL_+(\Gamma)$ the projection to $K\times \fraka^+$ along the first factor of $\cL(\Gamma)$.
	A \emph{limit ray} is a one-parameter semigroup of the form $k_+^{-1}e^{\mu t} k_+$ for $t\geq 0$, where $(k_+,[\mu])\in \cL_+(\Gamma)$.
\end{definition}

\begin{remark}[Basics on limit data]
	\label{rmk:basics_on_limit_data}
	It is clear from the definition that $\cL(\Gamma)$ is a closed set, hence compact because the ambient space is compact (recall that $\fraka^+$ is the closed Weyl chamber).
	The KAK decomposition is not unique in general, and we allow in the definition all possible representations.
	In particular, the limit data is invariant under the action of $M$ by $(k_-,k_+,[\mu])\mapsto (k_- m^{-1},mk_+,[\mu])$.

	Since $\Gamma$ is a group, we can take inverses and this provides an extra symmetry of the limit data.
	Namely, let $w_0\in K$ be an element whose adjoint action gives on $\fraka$ the action of the longest element of the Weyl group, so that
	\begin{align*}
		\left[k_-(\gamma)e^{\mu(\gamma)}k_+(\gamma)\right]^{-1} & = 
		k_+(\gamma)^{-1} e^{-\mu(\gamma)} k_-(\gamma)^{-1}\\
		& = \left[k_+(\gamma)^{-1} w_0^{-1} \right]\cdot
		\left[e^{\mu(\gamma)^{op}}\right] 
		\cdot
		\left[w_0k_-(\gamma)^{-1}\right]\\
	\end{align*}
	where $\mu^{op}:=Ad_{w_0}(-\mu)$.
	It follows that the limit data is symmetric under
	\begin{align}
		\label{eqn:taking_inverses_symmetry}
		(k_-,k_+,[\mu])\mapsto \left(k_+^{-1}w_0^{-1}, w_0k_-^{-1},[\mu^{op}]\right).
	\end{align}
\end{remark}

We now reverse the Hilbert--Mumford criterion to provide a definition of stability:
\begin{definition}[Stable point]
	\label{def:stable_point}
	Let $V$ be a $G$-representation equipped with a $K$-invariant norm.
	A point $[v]\in \bP(V)$ is $\Gamma$-stable, or simply \emph{stable} if $\Gamma$ is clear from the context, if for any $(k_+,[\mu])\in \cL_{+}(\Gamma)$, and for any lift $v\in V$, we have along the corresponding limit ray:
	\[
		\norm{k_+^{-1}\cdot e^{\mu\cdot t}\cdot k_+ v}\xrightarrow{t\to +\infty}+\infty.
	\]
\end{definition}
The element $k_+^{-1}$ in the above definition does not affect the norm of the vector.
We explain below in \autoref{sssec:intrinsic_aspects_of_stability} that the property of point being stable is independent of the choice of $K$ and $\fraka$.

\subsubsection{Stable filtration}
	\label{sssec:stable_filtration}
Fix $[\mu]\in \bP\fraka^+$ and a $G$-representation $V$.
The weight space decomposition of $V$ for the action of $\fraka$ induces a coarse decomposition
\[
	V = 
	V^{<0}([\mu])\oplus
	V^{0}([\mu])\oplus
	V^{>0}([\mu])
\]
where the superscript indicates the result of pairing the subspace with any lift of $\mu$ to $\fraka^+$.
We will also write $V^{\leq 0}([\mu])$ for the sum of the first two, and $V^{\geq 0}([\mu])$ for the sum of the last two.
It is clear that
\begin{align}
	\label{eqn:stability_criterion_filtration}
	v\notin V^{\leq 0}([\mu]) \iff \norm{e^{\mu\cdot t}v} \xrightarrow{t\to +\infty}+\infty.
\end{align}

We also have an upper semicontinuity property of $V^{\leq 0}([\mu])$, namely if $[\mu_i]\to [\mu]$ in $\bP\fraka^+$ and $[v_i]\in \bP \left(V^{\leq 0}([\mu_i])\right)$, with $[v_i]\to [v]$, then it holds that $[v]\in \bP\left(V^{\leq 0}([\mu])\right)$.
Note incidentally that the spaces $V^{\bullet}([\mu])$ can take only finitely many values as $[\mu]$ ranges in $\bP\fraka^+$.

Finally, let $w_0\in K$ be again the longest element in the Weyl group.
Because of the calculation (where $\alpha$ is a weight):
\[
	\ip{\alpha,\mu} = \ip{w_0\alpha,w_0 \mu} = -\ip{w_0\alpha,-w_0\mu}
	=-\ip{w_0\alpha,\mu^{op}}
\]
it follows that
\begin{align}
	\label{eqn:longest_element_decomposition_action}
		w_0\cdot  V^{<0}\left([\mu]\right) = V^{>0}\left([\mu^{op}]\right) \text{ and }
	w_0\cdot  V^{0}\left([\mu]\right)=
	V^{0}\left([\mu^{op}]\right).
\end{align}

\subsubsection{Dynamically related points}
	\label{sssec:dynamically_related_points}
To establish proper discontinuity of actions, the following notion due to Frances \cite[\S3.2]{Frances2005_Lorentzian-Kleinian-groups} is convenient.
Let $S$ be a topological space and $\Gamma\subset \Homeo(S)$ a group.
Two points $x,y\in S$ are \emph{dynamically related} by a sequence $\left\lbrace \gamma_i \right\rbrace\subset \Gamma$ if there exists a sequence $x_i\to x$ such that $\gamma_i(x_i)\to y$.
We will say that two points are $\Gamma$-dynamically related (for $\Gamma\subset \Homeo(S)$ closed) if they are related by a sequence contained in $\Gamma$, and which leaves every compact set in $\Gamma$.
Note that being dynamically related is a reflexive relation, since setting $y_i:=\gamma_i x_i \to y$ one sees that $\gamma_i^{-1}y_i=x_i\to x$ holds.

The main application of the definition of stability is to establish:
\begin{theorem}[Proper discontinuity criterion]
	\label{thm:proper_discontinuity_criterion}
	Keep the notation from above.
	\begin{enumerate}
		\item A point $[v]\in \bP(V)$ is stable if and only if for any $(k_+,[\mu])\in \cL_+(\Gamma)$ we have that
		\[
			k_+\cdot [v]\notin V^{\leq 0}([\mu]).
		\]
		Therefore, the set of stable points in $\bP(V)$ is open (but possibly empty).
		\item Two distinct stable points cannot be dynamically related.
		\item If $M\subset \bP(V)$ is a locally closed $\Gamma$-invariant set, and $M^s\subset M$ denotes its intersection with the stable points, then $\Gamma$ acts properly discontinuously on $M^s$ when it is equipped with the subspace topology.
	\end{enumerate}
\end{theorem}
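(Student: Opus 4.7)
The plan is to handle the three parts in order, the first two by direct unpacking of the definitions and a subsequence/compactness argument, and the third as a formal consequence of the second via the dynamical-relation characterization of proper actions.

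For part (i), I would observe that $k_+^{-1}$ preserves the norm, so the stability condition reduces to $\|e^{\mu t}(k_+ v)\|\to\infty$ as $t\to+\infty$. By \autoref{eqn:stability_criterion_filtration} this is literally the statement $k_+[v]\notin V^{\leq 0}([\mu])$, giving the equivalence. For openness, I would show the complement is closed: if $[v_n]\to [v]$ are all unstable, pick witnesses $(k_{+,n},[\mu_n])\in\cL_+$ with $k_{+,n}[v_n]\in V^{\leq 0}([\mu_n])$, pass to a subsequence so that $(k_{+,n},[\mu_n])\to (k_+,[\mu])$ in the compact space $K\times \bP\fraka^+$ (and hence in the closed subset $\cL_+$), and invoke the upper semi-continuity of $[\mu]\mapsto V^{\leq 0}([\mu])$ (which takes only finitely many values) to conclude $k_+[v]\in V^{\leq 0}([\mu])$.

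For part (ii), suppose toward contradiction $[v]\neq[w]$ are stable and dynamically related via $\gamma_i\to\infty$ with $[v_i]\to[v]$, $\gamma_i[v_i]\to[w]$. Write $\gamma_i=k_{-,i}e^{\mu_i}k_{+,i}$, pass to a subsequence so $(k_{-,i},k_{+,i},[\mu_i])\to(k_-,k_+,[\mu])\in\cL$, and lift to $v_i\to v$ with $u_i:=k_{+,i}v_i\to k_+v=:u$. Stability of $[v]$ at $(k_+,[\mu])$ forces the projection $u_{>0}$ of $u$ onto $V^{>0}([\mu])$ (along $V^{\leq 0}([\mu])$) to be nonzero. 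The heart of the argument is comparing the growth of the two pieces of $e^{\mu_i}u_i$: on $V^{>0}([\mu])$ every weight $\alpha$ satisfies $\alpha(\hat\mu)>0$, so the minimal such weight gives $\|e^{\mu_i}u_{i,>0}\|\geq c\,e^{c'\|\mu_i\|}$ for some $c,c'>0$ and $i$ large; while on $V^{\leq 0}([\mu])$ every weight has $\alpha(\hat\mu)\leq 0$, so on the $V^{<0}$ part the norm decays at rate $e^{-\Theta(\|\mu_i\|)}$ and on $V^0([\mu])$ it grows at rate $e^{o(\|\mu_i\|)}$ (since $\alpha(\mu_i)=\alpha(\hat\mu_i)\|\mu_i\|$ with $\alpha(\hat\mu_i)\to 0$). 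Hence the ratio $\|e^{\mu_i}u_{i,\leq 0}\|/\|e^{\mu_i}u_{i,>0}\|\to 0$, so $[e^{\mu_i}u_i]$ accumulates in $\bP V^{>0}([\mu])\subseteq \bP V^{\geq 0}([\mu])$. Applying $k_-$ gives $[w]\in k_-\bP V^{\geq 0}([\mu])$, i.e. $k_-^{-1}[w]\in V^{\geq 0}([\mu])$. Now invoke the inversion symmetry of $\cL$ from \autoref{eqn:taking_inverses_symmetry}: the triple $(k_+^{-1}w_0^{-1},w_0k_-^{-1},[\mu^{op}])$ also lies in $\cL$, so stability of $[w]$ at $(w_0 k_-^{-1},[\mu^{op}])$ demands $w_0 k_-^{-1}[w]\notin V^{\leq 0}([\mu^{op}])$; but by \autoref{eqn:longest_element_decomposition_action}, $V^{\leq 0}([\mu^{op}])=w_0 V^{\geq 0}([\mu])$, contradicting what we just derived. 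The main technical obstacle here is the clean comparison of growth rates between the $V^{>0}$ and $V^0$ parts of $V^{\leq 0}$; this is exactly where the strict positivity $\alpha_{\min}^{>0}(\hat\mu)>0$ beats the sublinear growth $\alpha(\mu_i)=o(\|\mu_i\|)$ on $V^0([\mu])$.

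For part (iii), I would first note that the argument of (ii) in fact rules out dynamical relations $[v]\sim[w]$ between any pair of stable points, including $[v]=[w]$, since nothing in the derivation used $[v]\neq[w]$; the contradiction comes from the symmetric stability condition at $[w]$ (which equals $[v]$ in the self-related case). Thus for any $[v]\in M^s$, $[v]$ is not dynamically related to any other point of $\bP(V)$ by sequences in $\Gamma$ going to infinity; a fortiori this holds for pairs in $M^s$. By Frances's characterization (\autoref{sssec:dynamically_related_points}), the absence of dynamical relations is equivalent to the properness of the action map $\Gamma\times M^s\to M^s\times M^s$: for compacts $K_1,K_2\subset M^s$ any sequence $\gamma_i\in\{\gamma:\gamma K_1\cap K_2\neq\emptyset\}$ yields, by compactness, $x_i\to x\in K_1$ with $\gamma_i x_i\to y\in K_2$, and if $\gamma_i\to\infty$ in $\Gamma$ this would give $x\sim y$, impossible; so the set has compact closure in $\Gamma$. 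This is the standard meaning of properly discontinuous for actions of closed (not necessarily discrete) subgroups, and $M^s$ being locally closed and $\Gamma$-invariant ensures the subspace topology and induced action are well-defined.
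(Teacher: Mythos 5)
Your proposal is correct and follows essentially the same route as the paper: show the complement of the stable set is closed via compactness of $\cL_+(\Gamma)$ and upper semicontinuity of $[\mu]\mapsto V^{\leq 0}([\mu])$; rule out dynamical relations between stable points by passing to a limit in $\cL(\Gamma)$, using the inversion symmetry of the limit data and the action of $w_0$ to reformulate stability of the second point as $k_-^{-1}[w]\notin V^{\geq 0}([\mu])$, and showing $e^{\mu_i}k_{+,i}[v_i]$ is forced into $V^{\geq 0}([\mu])$; then deduce proper discontinuity. The one place your presentation diverges is the middle step of (ii). The paper works with the changing decompositions $V^{\leq 0}([\mu_i])$, notes that the uniform lower bound on $d(k_{+,i}[v_i], V^{\leq 0}([\mu_i]))$ transfers under $e^{\mu_i}$ to a vanishing distance from $V^{>0}([\mu_i])$, and then invokes upper semicontinuity once more to land in $V^{\geq 0}([\mu])$. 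You instead fix the limiting decomposition $V^{>0}([\mu])\oplus V^0([\mu])\oplus V^{<0}([\mu])$ (which $e^{\mu_i}$ still preserves, being a direct sum of $\fraka$-weight spaces) and make the growth-rate comparison explicit: $\alpha(\hat\mu_i)\geq c'>0$ on the weights of $V^{>0}([\mu])$ versus $\alpha(\hat\mu_i)=o(1)$ on those of $V^0([\mu])$, so the ratio dies as $\|\mu_i\|\to\infty$. Both are correct; yours is slightly more quantitative and has to treat the $V^0([\mu])$ term separately, which the paper's convergence-to-$V^{>0}([\mu_i])$ phrasing avoids.

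Your handling of (iii) is also a useful expansion: the paper merely says "(iii) follows from (ii)", but the literal statement of (ii) excludes $[v]=[w]$, while properness also requires ruling out a stable point being dynamically related to itself by a divergent sequence. You correctly observe that the argument in (ii) never uses distinctness, so the self-related case is already covered; this is exactly the right way to close the gap between the statement and what the proof delivers.
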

\begin{proof}
	The criterion of stability in part (i) is just a restatement of \autoref{eqn:stability_criterion_filtration} with the inclusion of a $k_+$-term.
	To check that the set of stable points is open is equivalent to establishing that its complement is closed.
	Inside $K\times \bP\fraka^+\times \bP(V)$, consider the set of points of the form $(k,[\mu],[v])$ such that $(k,[\mu])\in \cL_+(\Gamma)$ and $k\cdot [v]\in V^{\leq 0}([\mu])$.
	This set it closed (and compact) because $\cL_+(\Gamma)$ is, and because of the upper semicontinuity property of the stable filtration discussed in \autoref{sssec:stable_filtration}.
	The complement of the set of stable points is precisely the image of the constructed set under projection to $\bP(V)$.

	Clearly part (iii), proper discontinuity, follows from (ii).
	So assume, by contradiction, that two dynamically related points exist (see \autoref{sssec:dynamically_related_points}), take corresponding sequences, and denote by $x_i,x,y$ a choice of lifts to $V$.
	Write the KAK decomposition of $\gamma_i=k_-(\gamma_i)e^{\mu(\gamma_i)}k_+(\gamma_i)$ and pass to a subsequence such that there is a limit $(k_-,k_+,[\mu])\in \cL(\Gamma)$.
	From the assumed limit properties, it follows that
	\[
		k_+(\gamma_i)x_i \xrightarrow{i\to \infty} k_+x
		\quad\text{ and }\quad
		k_{-}^{-1}(\gamma_i)y \xrightarrow{i\to \infty} k_-^{-1}y.
	\]
	Because $x$ is stable by assumption, and using the criterion from \autoref{eqn:stability_criterion_filtration}, it follows that $k_+ x\notin V^{\leq 0}([\mu])$, and so there exists $\ve>0$ such that $d(k_+[x],\bP(V^{\leq 0}[\mu]))>\ve$ for the induced distance on projective space.
	It follows that for sufficiently large $i$, we have that $d(k_+(\gamma_i)x_i,V^{\leq 0}([\mu_i]))>\ve/2$, otherwise we could pass along a subsequence to a limit and contradict the lower bound of $\ve$ on $k_+[x]$ just obtained (again, combined with the upper semicontinuity of $V^{\leq 0}$).

	Because $y$ is stable, and using the symmetry of $\cD(\Gamma)$ under taking inverses from \autoref{eqn:taking_inverses_symmetry}, it follows that $w_0 k_-^{-1}y\notin V^{\leq 0}([\mu^{op}])$.
	Now applying $w_0$ to both sides and recalling that it is an involution, combined with the formula for its action from \autoref{eqn:longest_element_decomposition_action}, it follows that $k_-^{-1}y\notin V^{\geq 0}([\mu])$.

	From the assumption that $[x],[y]$ are dynamically related, and using the KAK decomposition of $\gamma_i$, we know that
	\[
		[z_i]:=e^{\mu(\gamma_i)}k_+(\gamma_i)[x_i] \to k_-^{-1} [y]\notin V^{\geq 0}([\mu]).
	\]
	However, we established that there is a uniform $\ve/2>0$ lower bound distance between $k_+(\gamma_i)[x_i]$ and $V^{\leq 0}([\mu_i])$.
	It follows that the distance between $[z_i]$ and $V^{>0}([\mu_i])$ goes to zero, and by the upper semicontinuity of $V^{\geq 0}$, it follows that any of the accumulation points of $[z_i]$ must be in $V^{\geq 0}([\mu])$.
	This however contradicts the fact that $k_{-}^{-1}[y]\notin V^{\geq 0}([\mu])$.
\end{proof}

\subsubsection{Semistable points}
	\label{sssec:semistable_points}
It is natural to define $[v]\in \bP(V)$ to be \emph{semistable} if the condition in \autoref{def:stable_point} is replaced by:
\[
	\liminf \norm{k_+^{-1}\cdot e^{\mu\cdot t}\cdot k_+ v}>0 \text{ as }{t\to +\infty}.
\]
Then \autoref{eqn:stability_criterion_filtration} gets replaced by
\[
	v\notin V^{< 0}([\mu]) \iff \norm{e^{\mu\cdot t}v} \text{ stays bounded as }{t\to +\infty}.
\]
The proof of \autoref{thm:proper_discontinuity_criterion} then goes through verbatim, only changing some of the signs $\leq$ by $<$ to yield that a semistable point $[x]$ and a stable point $[y]$ cannot be dynamically related (recall that being dynamically related is also a reflexive relation).
Indeed the last part in the proof changes to the assertion that the distance between $[z_i]$ and $V^{\geq 0}([\mu_i])$ goes to zero, and so their accumulation points must still be in $V^{\geq 0}[\mu]$, which contradicts the assumption on $[y]$.
Note that strictly semistable points can be dynamically related, even in the example of a single element of $\SL_3(\bR)$ with eigenvalues $e^\mu,1,e^{-\mu}$ acting on $\bP(\bR^3)$.

For how to take these notions further in the context of actions on flag manifolds, see \cite[\S7.4]{KapovichLeebPorti2018_Dynamics-on-flag-manifolds:-domains-of-proper-discontinuity-and-cocompactness}.

\begin{remark}[On the numerical criterion]
	\label{rmk:on_the_numerical_criterion}
	The Hilbert--Mumford numerical criterion \cite[Thm~2.1]{MumfordFogartyKirwan_Geometric-invariant-theory} is the basis for our definition of stability and semistability.
	Let us remark that a crucial role in the proofs is played by the $KAK$ (or polar, or Iwasawa) decomposition for a group $G$, where $K=\bbG(k\doublebracket{t})$ and $G=\bbG(k\doubleparen{t})$ for a field $k$, formal variable $t$, and reductive $k$-algebraic group $\bbG$.
	One could replace further $k\doublebracket{t}$ by a discrete valuation ring and $k\doubleparen{t}$ by its field of fractions (so, a local field would work).	
\end{remark}

\subsubsection{Intrinsic aspects of stability}
	\label{sssec:intrinsic_aspects_of_stability}
The notion of stable point in \autoref{def:stable_point}, and semistable point in \autoref{sssec:semistable_points}, appears to depend on a choice of maximal compact $K$ and Cartan subalgebra $\fraka$.
We now clarify that this is not the case, and any other choices would lead to the same notion.

Let $\crX$ denote the space of maximal compact subgroups of $G$ -- when we fix one such, say $K$, we get an identification $\crX\isom \leftquot{K}{G}$ (we will take this left quotient convention for this paragraph only).
The space $\crX$ carries a unique up to scale $G$-invariant Riemannian metric, which makes it a $\CAT(0)$ space.
As such it has a natural visual boundary $\partial_{vis}\crX$, and $\bP\fraka^+$ is identified with the space of $K$-orbits on $\partial_{vis}\crX$, for any choice of maximal compact $K$.

Next, the $1$-parameter semigroups used in \autoref{def:stable_point} determine geodesic rays starting at a fixed basepoint of $\crX$, obtained as limits of geodesic rays connecting the basepoint with its images under $\Gamma$.
In other words, the semigroups used in \autoref{def:stable_point} are those determined by the closure of $\Gamma$ in the visual compactification of $\crX$.
By the $\CAT(0)$ property of $\crX$, any other basepoint would lead to geodesic rays which are a bounded distance away from those based at the original basepoint.
This would affect the norms by bounded multiplicative constants, so would not affect the definitions of stability (or semistability).

\subsubsection{Control of the limit data}
	\label{sssec:control_of_the_limit_data}
We now collect some consequences of Anosov-type conditions on the limit data of $\Gamma:=\rho(\pi_1(X))$, where $\rho$ is $\theta$-(log)-Anosov.
First, it is immediate from the definition \autoref{def:log_anosov_representation} that the projection of $\cL_+(\Gamma)$ to $\fraka^+$ is contained in the \emph{complement} of the faces $\ker\alpha,\forall \alpha\in \theta$.

Control on the $K$-component comes from \autoref{thm:existence_of_boundary_maps} and its consequence, \autoref{thm:domination_in_log_anosov_representations}.
Freeze $K,\fraka^+$ for the rest of this discussion.
Then recall that we can re-express the flag manifold $\cF_\theta:=G/P_{\theta}=K/M_{\theta}$ and we have the boundary map
\begin{align}
	\label{eqn:boundary_map_KM_flag_manifold}
	\begin{split}
	\xi &\colon \partial\wtilde{X}\to \cF_{\theta} =\rightquot{K}{M_{\theta}}
	\end{split}
\end{align}
Let us also denote by $\pi_{K,\theta}$ the composition
\[
	\pi_{K,\theta}\colon K \to \leftquot{M_\theta}{K} \xrightarrow{g\mapsto g^{-1}}\rightquot{K}{M_{\theta}} .
\]
The key consequence of the Anosov property is then:
\begin{theorem}[Control of $K$ component]
	\label{thm:control_of_k_component}
	The projection of $\cL_+(\Gamma)$ to $K$ is contained in $\pi_{K,\theta}^{-1}(\xi(\partial \wtilde{X}))$.
\end{theorem}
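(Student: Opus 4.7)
My plan is to realize an arbitrary element $(k_+, [\mu]) \in \cL_+(\Gamma)$ by a sequence $\gamma_i \in \Gamma$ (with $\|\mu(\gamma_i)\| \to \infty$, $[\mu(\gamma_i)] \to [\mu]$, and $k_+(\gamma_i) \to k_+$) and to exhibit a boundary point $p \in \partial \wtilde{X}$ whose image under $\xi$ is $\pi_{K,\theta}(k_+)$. After passing to a subsequence I may also arrange $k_-(\gamma_i) \to k_-$. The $\theta$-log-Anosov inequality of \autoref{def:log_anosov_representation} forces $\log \|\gamma_i\| \to \infty$, equivalently $d(x_0, \gamma_i x_0) \to \infty$ in the hyperbolic metric, so a further subsequence achieves both $\gamma_i x_0 \to q$ and $\gamma_i^{-1} x_0 \to p$ for some $p, q \in \partial \wtilde{X}$. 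This $p$ is the candidate.

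The core step is the standard contraction computation on the flag manifold $\cF_\theta = K/M_\theta$. Inverting $\gamma_i = k_-(\gamma_i) e^{\mu(\gamma_i)} k_+(\gamma_i)$ and rewriting $e^{-\mu(\gamma_i)}$ with the help of the longest Weyl element $w_0$ (so as to return the diagonal part to $\fraka^+$) produces a $KAK$ decomposition of $\gamma_i^{-1}$ whose ``$k_-$''-component is determined by $k_+(\gamma_i)^{-1}$ and $w_0$. The attracting flag of $\gamma_i^{-1}$ acting on $\cF_\theta$ is therefore controlled entirely by $k_+(\gamma_i)$, and in the limit $i \to \infty$ becomes exactly the coset $\pi_{K,\theta}(k_+) = k_+^{-1}M_\theta \in K/M_\theta$. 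Here the ambiguity in the $KAK$ decomposition, which is a priori only by the centralizer $Z_K(e^{\mu(\gamma_i)})$, is absorbed by the passage from $M$ to $M_\theta$: the $\theta$-log-Anosov condition ensures that the $\theta$-simple roots are bounded away from the walls, so the relevant centralizer at the limit $[\mu]$ lies in $M_\theta$ and produces a well-defined $K/M_\theta$-coset.

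To conclude, I invoke the dynamics-preserving and transversality properties of $\xi$ provided by \autoref{thm:existence_of_boundary_maps}. Picking any $q' \in \partial \wtilde{X}\setminus \{q\}$, basic negatively-curved geometry gives $\gamma_i^{-1} q' \to p$, and continuity together with $\rho$-equivariance of $\xi$ yields
\[
\rho(\gamma_i^{-1})\,\xi(q') = \xi(\gamma_i^{-1} q') \longrightarrow \xi(p).
\]
The transversality of $\xi$ lets me choose $q'$ so that $\xi(q')$ is transverse to the limiting repelling flag of $\gamma_i^{-1}$ (equivalently, to the limit attracting flag of $\gamma_i$, which is determined by $k_-$); with this genericity, the contraction computation of the previous paragraph forces $\rho(\gamma_i^{-1})\xi(q') \to \pi_{K,\theta}(k_+)$. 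Combining the two limits gives $\xi(p) = \pi_{K,\theta}(k_+)$, completing the proof.

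The main obstacle is the bookkeeping in the second paragraph: matching the conventions for $KAK$ decomposition, the identification $K/M_\theta \cong \cF_\theta$, and the placement of the Weyl element $w_0$ so that the limiting attracting flag of $\gamma_i^{-1}$ on $\cF_\theta$ is precisely $\pi_{K,\theta}(k_+)$. Once this identification is verified, the dynamical argument is routine, and the abundance of transverse auxiliary points $q'$ needed in the contraction step is automatic from the transversality of $\xi$ combined with the compactness of $\partial\wtilde{X}$.
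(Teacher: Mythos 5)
Your approach is genuinely different from the paper's (which simply cites \cite[Thm.~5.3]{GGKW_AnosovReps_GT} and adjusts conventions), and the overall strategy --- realize $(k_+,[\mu])$ by a sequence $\gamma_i$, extract boundary limits $p=\lim\gamma_i^{-1}x_0$ and $q=\lim\gamma_i x_0$, and squeeze $\xi(p)$ between the equivariance limit $\rho(\gamma_i^{-1})\xi(q')\to\xi(p)$ and the contraction limit toward the attracting flag of $\gamma_i^{-1}$ --- is the standard contraction argument underlying GGKW. However, there is a gap in the transversality step that is more than bookkeeping.

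The contraction argument requires $\xi(q')$ to be transverse to the \emph{limit repelling flag} of $\gamma_i^{-1}$. Writing $\gamma_i^{-1}=k_+(\gamma_i)^{-1}w_0^{-1}e^{\mu^{op}(\gamma_i)}w_0 k_-(\gamma_i)^{-1}$, that repelling flag converges to a coset determined by $k_-:=\lim k_-(\gamma_i)$, namely the opposite flag of $k_-[P_\theta^+]$. Your argument asserts that the transversality of $\xi$ guarantees an abundance of $q'$ with $\xi(q')$ transverse to this flag. But transversality of $\xi$ is a statement about \emph{pairs of points in the image of $\xi$}: it says $\xi(q_1)\pitchfork\xi(q_2)$ for $q_1\neq q_2$. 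It does not by itself guarantee that $\xi(q')$ is transverse to the \emph{a priori unrelated flag} $k_-[P_\theta^+]$. The only obvious way to deduce this is to already know $k_-[P_\theta^+]=\xi(q)$, which is the ``forward'' version of the very statement you are trying to prove (i.e.\ the $k_-$-analogue of control of the $K$-component), so the argument as written is circular. GGKW escape this by working directly with quantitative domination estimates --- the boundary map is constructed so that convergence of the attracting $K$-part to $\xi$ is essentially built in --- rather than by chasing the topological limit through equivariance. Alternatively, one can break the circularity by first showing that $\xi(\partial\wtilde X)$ is Zariski-dense in $\cF_\theta$ (which holds when the monodromy is Zariski-dense and gives an abundance of $q'$ outside the repelling subvariety), but that is an additional step you would need to supply.

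Separately, the $w_0$ bookkeeping you flag as the ``main obstacle'' is indeed not innocuous: the attracting flag of $\gamma_i^{-1}$ is $k_-(\gamma_i^{-1})[P_\theta^+]=k_+(\gamma_i)^{-1}w_0^{-1}[P_\theta^+]$, which as a coset of $K/M_\theta$ is $k_+^{-1}w_0^{-1}M_\theta$ rather than $k_+^{-1}M_\theta=\pi_{K,\theta}(k_+)$, and $w_0\notin M_\theta$ in general (already false in $\SL_2$). You should verify carefully how the paper's identification $\cF_\theta\cong K/M_\theta$ interacts with $w_0$ and with the opposite parabolic before asserting that your limit lands exactly on $\pi_{K,\theta}(k_+)$; this will likely require tracking whether $K/M_\theta$ is being used as a model for $\cF_\theta^+$ or $\cF_\theta^-$ at each stage.
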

Denote from now on by $\cL K_+$ the projection of $\cL_+$ to the $K$-component.
\begin{proof}
	The assertion is a direct consequence of \cite[Thm.~5.3]{GGKW_AnosovReps_GT} and the discussion in \S5.1 of loc.cit. preceding it.
	The only changes are that $\Xi_{\theta}$ there refers to $k_-(g)$ in our notation for the $KAK$ decomposition of $g$, so there is no need in loc.cit. to apply the inverse map on $G$.
	The flag manifold $\cF_{\theta}$ is identified with $G/P_{\theta}$, but $K$ acts transitively so it is also $K/M_{\theta}$.
	Note also that the ``gap summation property'' in loc.cit. needs to be replaced by the quantitative divergence of \autoref{def:log_anosov_representation} and $\log\norm{\gamma}$ from \autoref{sssec:norm_on_fundamental_group} instead of the word length in the group.
\end{proof}

\fxnote{Include explicit statement about stability and bundles in representations, for the Anosov case}



\subsection{Integral vectors in the limit set}
	\label{ssec:integral_vectors_in_the_limit_set}

\subsubsection{Setup}
	\label{sssec:setup_integral_vectors_in_the_limit_set}
Let $\rho\colon \pi_1(X)\to G$ be a $\theta$-log-Anosov representation.
Recall that $G$ is assumed to be a real semisimple algebraic group, and for this section we assume that $G$ has a $\bQ$-structure as well, with $\Lambda\subset G$ an arithmetic lattice.
Suppose that the image of $\rho$ lands in $\Lambda$, and consider a linear representation $G\xrightarrow{\phi} \GL(V)$ for which $\Lambda$ preserves a lattice $V_{\bZ}\subset V_{\bR}$.

\subsubsection{Decreasing subbundle}
	\label{sssec:decreasing_subbundle}
Let $\bV_{i,\theta}\subset \bV$ be the $g_t$-invariant subbundles provided by \autoref{thm:domination_in_log_anosov_representations}.
We shall consider
\[
	\bV^{-}:=\bigoplus_{V_{\theta,i}\prec_{\theta}0} \bV_{\theta,i}
\]
i.e. all the subbundles for which the weights appearing in them are strictly $\theta$-dominated by the zero weight.
It follows from \autoref{cor:definitely_contracting_vectors} that for any $v\in \bV^{-}$, we have that $\norm{vg_t}\to 0$ as $t\to +\infty$.
In fact $\norm{v g_t}\leq Ce^{-\ve t}\norm{v}$ for some uniform $C,\ve>0$ (assuming that $v$ starts in the compact part and we evaluate the norm at times when the flow recurred to the compact part).

We shall consider the associated map
\[
	\xi^{-}_{V}\colon \partial\wtilde{X}\to \Gr\left(\rk \bV^{-},V\right)
\]
into the Grassmannian parametrizing $\bV^{-}$.

\begin{theorem}[Integral vectors must come from cusps]
	\label{thm:integral_vectors_must_come_from_cusps}
	Suppose that $v\in V_{\bZ}\setminus \{0\}$ is such that there exists $p\in \partial \wtilde{X}$ such that $v\in \xi^{-}_{V}(p)$.

	Then $p$ is a parabolic boundary point.
	Furthermore, let $T$ be the corresponding quasi-unipotent monodromy transformation, and let $W_{\bullet}V$ be the weight filtration induced by (the logarithm of) $T$, as per \autoref{sssec:weight_filtration}.
	Then $v\in W_{-1}V$.
\end{theorem}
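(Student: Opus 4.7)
The plan splits the statement into two independent claims: the parabolic-ness of $p$, and the refinement $v\in W_{-1}V$. I would handle them in that order.

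First, to rule out that $p$ is a non-parabolic (conical) boundary point, I would exploit the integrality of $v$ together with recurrence. Let $\wtilde{\gamma}\colon [0,\infty)\to \wtilde{X}$ be the geodesic ray from $x_0$ to $p$. If $p$ is not parabolic, then the image of $\wtilde{\gamma}$ in $X$ recurs to the compact part $X_K$ at an unbounded sequence of times $t_n\to \infty$; at such times one can write $\wtilde{\gamma}(t_n) = \gamma_n y_n$ with $\gamma_n \in \pi_1(X)$ and $y_n$ staying in a fixed compact fundamental domain of $X_K$. Using an adapted metric on $\bV$ (from \autoref{sec:unipotent_dynamics}), which on the compact part is uniformly comparable to any fixed background metric $h_0$ on $V$, the norm formula \autoref{eqn:norm_of_vector_flat_bundle} gives $\norm{v}_{\wtilde{\gamma}(t_n)} = \norm{\rho(\gamma_n)^{-1} v}_{y_n}$, which is comparable up to uniform multiplicative constants to $\norm{\rho(\gamma_n)^{-1} v}_{h_0}$. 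Since $\rho(\gamma_n)\in \Lambda$ preserves $V_\bZ$ and $v\in V_\bZ\setminus\{0\}$, the vector $\rho(\gamma_n)^{-1}v$ is a nonzero integer vector, so its norm is bounded below by $\delta := \inf\{\norm{w}_{h_0} : w \in V_\bZ\setminus\{0\}\}>0$. This uniform lower bound contradicts the exponential decay $\norm{v\cdot g_t}\to 0$ coming from \autoref{cor:definitely_contracting_vectors} applied to $v\in \xi^-_V(p)$.

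Second, once $p$ is known to be parabolic, I would use the dynamics near the cusp to pin down the weight-filtration position. Let $T=T^eT^u$ be the quasi-unipotent generator of (a finite-index cyclic subgroup of) the peripheral subgroup of $p$, and let $N=\log T^u$. Work in a standard cusp neighborhood parametrized by $\tau\in\bH$, so that the geodesic ray satisfies $\Im\tau(t)\asymp e^t$. The adapted metric provided by \autoref{sec:unipotent_dynamics} is compatible with the nilpotent-orbit approximation in the sense that, in the Deligne-extended flat frame, a vector $w\in W_kV\setminus W_{k-1}V$ has norm behaving asymptotically as $\norm{w}_{\wtilde{\gamma}(t)}\asymp e^{-kt/2}$ up to polylogarithmic factors (these estimates are essentially Schmid's $\SL_2$-orbit theorem translated into the adapted-metric language; the exponent $k/2$ reflects that $N$ shifts $W_\bullet$ by $2$). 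Exponential decay of $\norm{v}_{\wtilde{\gamma}(t)}$ then forces every weight-graded piece of $v$ to carry strictly negative weight, which is exactly $v\in W_{-1}V$.

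The main obstacle is the compatibility between two a priori different structures at the cusp: the dynamically defined decreasing bundle $\bV^-$ coming from the log-Anosov property, and the algebraically defined weight filtration $W_\bullet V$ attached to the nilpotent $N$. This compatibility is the essential content of the adapted-metric formalism of \autoref{sec:unipotent_dynamics}. Concretely, the boundary map $\xi$ at the parabolic point lands in a flag stabilized by $T$, and one must know that this flag is subordinate to the monodromy weight filtration; conversely, one needs the quantitative growth rates in the adapted metric to translate exponential decay of $\norm{v\cdot g_t}$ into the algebraic statement $v\in W_{-1}V$. Given this matching (stated in general terms in \autoref{sec:unipotent_dynamics} and specialized here to the peripheral $T$), the argument is a direct combination of the recurrence step above and the Schmid-type growth estimate.
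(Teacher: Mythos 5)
Your proposal follows essentially the same two-step route as the paper: first rule out conical boundary points by combining recurrence with the integrality lower bound, then use the adapted-metric / Schmid growth rates (via \autoref{prop:estimates_for_strictly_adapted_metrics}) to translate the decay of $\norm{v}_{ug_t}$ into $v\in W_{-1}V$. One small slip: with the paper's conventions a unit vector in $V_k$ has $\norm{v}_{h(\tau)}\asymp(\Im\tau)^{k/2}\asymp e^{kt/2}$ (not $e^{-kt/2}$), so decay forces $k<0$ — your final conclusion is stated correctly, but the displayed asymptotic has a sign error.
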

\begin{proof}
	Consider the unit vector $u\in T^1\wtilde{X}$ over the basepoint $x_0$ and such that the associated hyperbolic geodesic in $\wtilde{X}$ lands at $p$ on the boundary.
	Denote by $ug_t$ the geodesic flow at time $t$ for $u$.

	Suppose that $p$ is not a parabolic point, then there exist infinitely many times $t_1<t_2<\cdots$ such that the geodesic, projected to $X$, returns to a fixed compact set.
	By the construction of $\bV^{-}$, the norm $\norm{v}_{ug_t}$ goes to $0$ as $t\to +\infty$, in fact exponentially fast.
	On the other hand, given any compact set $K$ in $X$, there exists a uniform lower bound $\ve(K)>0$ on the norm of any integral vector in $\bV_\bZ$ on $K$.
	Since the image of $v$ under the monodromy transformations remains integral, this provides a contradiction.

	It follows that $p$ is indeed a parabolic point, and $\norm{v}_{ug_t}$ goes to zero as $t\to +\infty$.
	By \autoref{prop:estimates_for_strictly_adapted_metrics}, this is equivalent to $v$ belonging to the negative part of the weight filtration induced by the monodromy (the exponent $k$ in the lemma must be strictly negative, and $\Re \tau$ stays bounded).
\end{proof}



\subsection{Minimality of the action on a limit set}
	\label{ssec:minimality_of_the_action_on_the_limit_set}

\subsubsection{Setup}
	\label{sssec:setup_minimality_of_the_action_on_the_limit_set}
Let $\rho\colon \pi_1(X)\to \Sp(V_{\bR})$ be an $\alpha_1$-Anosov representation, where $V_{\bR}$ is a $4$-dimensional symplectic space.
Set $\theta=\{\alpha_1\}$.
By \autoref{thm:existence_of_boundary_maps} there exists a continuous, equivariant, transverse and dynamics-preserving map
\[
	\xi \colon \partial\wtilde{X}\to \bP(V_{\bR}) = \cF_{\theta}
\]
Let us call the image curve $\Lambda_P:=\xi(\partial\wtilde{X})$, and the image of the representation $\Gamma:=\rho(\pi_1(X))$.
Because the action of $\pi_1(X)$ on $\partial\wtilde{X}$ is minimal, it follows that the action of $\Gamma$ on $\Lambda_P$ is minimal.

Recall also that we have a ``limit set'' $\Lambda_L\subset \LGr(V_{\bR})$, defined as the set of Lagrangians which contain a line from $\Lambda_P$.
Equivalently, for every $\xi(p)\in \Lambda_P$ we have the photon $\phi(p)\subset \LGr(V_{\bR})$, see \autoref{sssec:photons}, and $\Lambda_L=\cup_{p\in \wtilde{X}}\phi(p)$.

\subsubsection{Assumption on unipotent}
	\label{sssec:assumption_on_unipotent}
We make the following \emph{crucial standing assumption}: there exists at least one cusp in $X$ such that the monodromy around it is generated (up to finite index) by a rank $1$ unipotent matrix fixing a line and its symplectic orthogonal.
This assumption is satisfied for all hypergeometric examples from \autoref{sec:hypergeometric_local_systems}, since the monodromy around $1\in \bP^1(\bC)$ is always a unipotent satisfying the requirement.

We will call any parabolic boundary point with the above property ``good''.
By minimality of the action of $\pi_1(X)$ on $\partial\wtilde{X}$, the set of good parabolic points is dense on the boundary.

The goal of this section is to prove:
\begin{theorem}[Minimality of the action]
	\label{thm:minimality_of_the_action}
	Under the above assumptions, the action of $\Gamma$ on $\Lambda_L$ is minimal, i.e. every orbit is dense.
	Equivalently, any closed $\Gamma$-invariant set is either empty or all of $\Lambda_L$.
\end{theorem}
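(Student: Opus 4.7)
The strategy is to reduce to showing that the orbit closure of an arbitrary $L_*\in\Lambda_L$ is all of $\Lambda_L$, i.e.\ to exhibit every photon $\phi(\xi(p))\subset\overline{\Gamma L_*}$. By $\Gamma$-invariance, density of $\Gamma p_0$ in $\partial\wtilde{X}$ (minimality of the $\pi_1(X)$-action), and closedness, it suffices to produce a single photon; the natural candidate is $\phi(l_0)$, where $l_0:=\xi(p_0)$ and $p_0$ is the good parabolic provided by the standing assumption.

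The first reduction exploits transversality. Because $\xi$ is transversal, distinct photons are disjoint, so there is a well-defined continuous $\Gamma$-equivariant projection $\pi\colon\Lambda_L\to\Lambda_P$ taking a Lagrangian to its unique line in $\Lambda_P$. For any nonempty closed $\Gamma$-invariant $Z\subset\Lambda_L$ the image $\pi(Z)$ is closed (by compactness of $\Lambda_L$), $\Gamma$-invariant, and nonempty, hence equal to $\Lambda_P$ by minimality of the action on $\Lambda_P$. Therefore $Z$ meets every photon.

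Next I would pin down the dynamics of the good unipotent $\gamma_0$. Writing $\gamma_0(w)=w+I(v_0,w)v_0$ for $v_0$ spanning $l_0$, a direct calculation identifies the fixed-point set of $\gamma_0$ in $\LGr(V_\bR)$ as exactly $\phi(l_0)$, and shows that for $L\in\LGr(V_\bR)\setminus\phi(l_0)$ the iterates $\gamma_0^n L$ converge to the continuous map $\pi_{l_0}(L):=l_0+(L\cap l_0^\perp)\in\phi(l_0)$. Consequently $\pi_{l_0}$ sends $Z\setminus\phi(l_0)$ into $Z\cap\phi(l_0)$. Conjugating $\gamma_0$ by elements of $\Gamma$ produces a good unipotent fixing $\rho(\gamma)l_0$ for every $\gamma\in\Gamma$; combined with continuity of $\pi_l$ in the parameter $l$ and density of $\rho(\Gamma)l_0$ in $\Lambda_P$, this yields the stronger statement that $\pi_l$ sends $Z\setminus\phi(l)$ into $Z\cap\phi(l)$ for every $l\in\Lambda_P$.

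The main step is to upgrade ``$Z\cap\phi(l_0)$ is nonempty'' to ``$Z\cap\phi(l_0)=\phi(l_0)$''. For $l\in\Lambda_P\setminus\{l_0\}$, transversality of $\xi$ implies $\phi(l)\cap\phi(l_0)=\emptyset$, so $\pi_l$ restricts to a map $\phi(l_0)\to\phi(l)$. Writing $L=l_0+n$ with $n\in l_0^\perp$, the explicit formula
\[
\pi_l(l_0+n)=l+\langle -I(l,n)\,l_0+I(l,l_0)\,n\rangle
\]
identifies this restriction as a projective isomorphism. Hence $T_l:=\pi_{l_0}\circ\pi_l$ is a projective self-map of $\phi(l_0)\cong\bP^1$ preserving the nonempty closed set $Z\cap\phi(l_0)$. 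As $l$ varies continuously in the one-dimensional set $\Lambda_P$, the family $\{T_l\}$ traces a continuous curve in $\operatorname{PGL}(\phi(l_0))\cong\operatorname{PGL}_2(\bR)$, and the task reduces to showing that this curve does not lie in a proper closed subgroup of $\operatorname{PGL}_2(\bR)$, so that the group it generates acts minimally on $\bP^1$. This is the main obstacle I foresee, and where I expect the argument to require the most care: it should be handled by a direct computation using the above explicit formula, showing that the family $\{T_l\}$ has no common fixed point on $\phi(l_0)$ and no common invariant finite subset. The relevant nondegeneracy should follow from the rank $1$ character of $\gamma_0$ together with the fact that $\Lambda_P$, and hence the $\rho(\Gamma)$-orbit of $l_0$, is not contained in any proper linear subspace of $V_\bR$ (which is a consequence of $\alpha_1$-log-Anosov plus the presence of the unipotent). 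Once this is established, $Z\cap\phi(l_0)=\phi(l_0)$, and the argument concludes as in the opening paragraph.
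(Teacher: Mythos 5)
There is a genuine gap, and it is exactly at the "direct computation" you defer. The two-photon round trip $T_l := \pi_{l_0}\circ\pi_l$ is \emph{always} the identity on $\phi(l_0)$, so the family $\{T_l\}$ carries no dynamical information whatsoever. This is forced by symplectic linear algebra: write $V_{\bR}=l_0\oplus V_0\oplus l$ with $V_0:=l_0^\perp\cap l^\perp$, and identify both $\phi(l_0)$ and $\phi(l)$ with $\bP(V_0)$ by sending a Lagrangian to its intersection with $V_0$. Then $\pi_l\colon\phi(l_0)\to\phi(l)$ is the identity on $\bP(V_0)$ in these coordinates, and so is $\pi_{l_0}\colon\phi(l)\to\phi(l_0)$; in other words the two maps are mutually inverse. (You can also see this from your own explicit formula: $\pi_l(l_0\oplus\bR n)=l\oplus\bR n'$ with $n'\equiv I(l,l_0)\,n\pmod{l_0}$, and applying $\pi_{l_0}$ brings you back to $l_0\oplus\bR n$.) This is precisely what the paper records in Lemma~\ref{lem:limits_under_unipotent_iterates}(ii), and it is why the paper composes contractions around a \emph{triangle} of three photons $\phi(p_1)\to\phi(p_2)\to\phi(p_3)\to\phi(p_1)$ in Proposition~\ref{prop:fiberwise_unipotents}: the genericity assumption $l_{p_2}\not\subset l_{p_1}\oplus l_{p_3}$ is exactly what makes the holonomy of the triangle a \emph{nontrivial} unipotent (with fixed point the projection of $l_{p_2}$). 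With only two photons no such nondegeneracy is available, and your proposed reduction collapses before the minimality question on $\bP^1$ is ever reached.

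A secondary issue, less consequential once the main gap is repaired: "not contained in a proper closed subgroup of $\PGL_2(\bR)$" is not the right criterion for minimality of a subgroup action on $\bP^1(\bR)$ (a cocompact Fuchsian group is a proper closed subgroup whose action is nonetheless minimal, while a dense subgroup of the affine group is not closed but has a fixed point). The paper's mechanism avoids this altogether: because the triangle holonomies are unipotent, their fixed points are automatically in the closed invariant set $S\cap\phi(p_1)$ (iterate and use closedness), these fixed points are dense in an open set as $p_2$ varies over good parabolics, and a single nontrivial unipotent then expands that open set to cover $\bP^1$. Your earlier steps — the well-defined $\Gamma$-equivariant projection $\Lambda_L\to\Lambda_P$, the consequence that a nonempty closed invariant set meets every photon, the identification of $\pi_{l}$ with the limit of unipotent iterates — are all correct and align with the paper's Lemma~\ref{lem:limits_under_unipotent_iterates} and Proposition~\ref{prop:pushing_and_pulling_fibers}.
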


\subsubsection{Correspondence Principle}
	\label{sssec:correspondence_principle}
Let $P_{\theta^c}$ be the parabolic such that $\LGr(V_{\bR})=G/P_{\theta^c}$ (here $\theta^c:=\Delta\setminus \theta =\{\alpha_2\}$ is the complement of $\theta$ in the set of roots).
It is well-known, and immediate to verify, that there are natural bijections between closed invariant sets, and orbit closures, for the following group actions:
\begin{enumerate}
	\item $\Gamma$ acting on $\rightquot{G}{P_{\theta^c}}$.
	\item $\Gamma\times P_{\theta^c}$ acting on $G$ by a left-right action of the corresponding factors.
	\item $P_{\theta^c}$ acting on the right on $\leftquot{\Gamma}{G}$.
\end{enumerate}
The natural bijections are obtained by passing through the middle action and taking images or preimages.

Recall that the action of $\Gamma$ on $\LGr(V_\bR)=G/P_{\theta^c}$ has a closed invariant set $\Lambda_L$, on which the action is minimal by \autoref{thm:minimality_of_the_action}, and the complementary open set $\Omega_L$ on which the action is proper.
In particular, every orbit of $\Gamma$ on $\Omega_L$ is closed.
We conclude:

\begin{corollary}[Orbit dichotomy in homogeneous space]
	\label{cor:orbit_dichotomy_in_homogeneous_space}
	There exists a closed $P_{\theta^c}$-invariant set $\cR\subset \leftquot{\Gamma}{G}$, such that the $P_{\theta^c}$-orbit of any point in $\cR$ is dense in $\cR$, and the $P_{\theta^c}$-orbit of any point outside $\cR$ is closed in the complement of $\cR$.
\end{corollary}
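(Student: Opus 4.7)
The plan is to transport the dichotomy $\LGr(V_\bR)=\Lambda_L\sqcup \Omega_L$ for the $\Gamma$-action across the correspondence principle of~\ref{sssec:correspondence_principle}. Concretely, consider the natural quotient map
\[
\pi\colon \leftquot{\Gamma}{G}\longrightarrow \leftquot{\Gamma}{G/P_{\theta^c}}=\leftquot{\Gamma}{\LGr(V_\bR)}
\]
whose fibers are precisely the right $P_{\theta^c}$-orbits on $\leftquot{\Gamma}{G}$. Define $\cR:=\pi^{-1}\bigl(\leftquot{\Gamma}{\Lambda_L}\bigr)$. Since $\Lambda_L\subset \LGr(V_\bR)$ is closed and $\Gamma$-invariant and $\pi$ is continuous, $\cR$ is closed in $\leftquot{\Gamma}{G}$, and it is tautologically right $P_{\theta^c}$-invariant.

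For the density assertion, pick $\Gamma g\in \cR$, i.e. $gP_{\theta^c}\in \Lambda_L$. The closure of the $P_{\theta^c}$-orbit $\Gamma g P_{\theta^c}$ inside $\leftquot{\Gamma}{G}$ is a closed right $P_{\theta^c}$-invariant set, hence by the three-fold bijection in~\ref{sssec:correspondence_principle} corresponds to a closed $\Gamma$-invariant subset of $\LGr(V_\bR)$ that contains $gP_{\theta^c}\in\Lambda_L$. By \autoref{thm:minimality_of_the_action}, the smallest such set is $\Lambda_L$ itself, hence the closure of $\Gamma g P_{\theta^c}$ in $\leftquot{\Gamma}{G}$ equals $\pi^{-1}(\leftquot{\Gamma}{\Lambda_L})=\cR$. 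This proves that every $P_{\theta^c}$-orbit in $\cR$ is dense in $\cR$.

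For the closedness assertion, let $\Gamma g\notin\cR$, so $gP_{\theta^c}\in \Omega_L$. By \autoref{corintro:domains_of_discontinuity}(i) the action of $\Gamma$ on $\Omega_L$ is properly discontinuous, and in particular every $\Gamma$-orbit in $\Omega_L$ is closed in $\Omega_L$. Transporting back through the correspondence, the $P_{\theta^c}$-orbit $\Gamma g P_{\theta^c}$ is closed in $\pi^{-1}(\leftquot{\Gamma}{\Omega_L})=\leftquot{\Gamma}{G}\setminus \cR$ equipped with the subspace topology. Putting the two cases together yields the stated dichotomy.

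The only substantive ingredient beyond the correspondence principle is the minimality of \autoref{thm:minimality_of_the_action}; the rest is a formal translation, so there is no serious obstacle to overcome here. One minor point to handle carefully is that the $P_{\theta^c}$-orbit outside $\cR$ need not be closed in the full quotient $\leftquot{\Gamma}{G}$ — it can accumulate onto $\cR$ — which is why the statement only asserts closedness in the complement.
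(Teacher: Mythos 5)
Your proof is correct and follows exactly the route the paper takes: the paper provides its argument for this corollary in the discussion preceding it (\ref{sssec:correspondence_principle}), namely transporting the $\Lambda_L\sqcup\Omega_L$ dichotomy — minimality on $\Lambda_L$ from \autoref{thm:minimality_of_the_action} and closed orbits on $\Omega_L$ from proper discontinuity — across the three-fold bijection between $\Gamma$-invariant sets in $G/P_{\theta^c}$, $\Gamma\times P_{\theta^c}$-invariant sets in $G$, and $P_{\theta^c}$-invariant sets in $\leftquot{\Gamma}{G}$. You spell out the translation more explicitly than the paper does, but there is no difference in the underlying argument.
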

We can think of $\cR$ as a ``recurrent set'' for the $P_{\theta^c}$-dynamics.

\subsubsection{Oseledets Lagrangians are dense}
	\label{sssec:oseledets_lagrangians_dense}
We return for a moment to the setting of \autoref{sssec:setup_minimality_of_the_action_on_the_limit_set} with a finite-volume hyperbolic Riemann surface $X$.
Recall that by the Oseledets Multiplicative Ergodic Theorem (see e.g. \cite[2.2.6]{sfilip_MET_lectures}) there exists a \emph{measurable} $\rho$-equivariant map $\xi_2\colon \partial\wtilde{X}\to \LGr(V_\bR)$ which is given by the second term of the Lyapunov filtration (implicit in this is that the Lyapunov spectrum is simple, which can be established by a proof identical to \cite[Thm.~4]{EskinMatheus2015_A-coding-free-simplicity-criterion-for-the-Lyapunov-exponents-of-Teichmuller-curves}).
It follows from \autoref{thm:minimality_of_the_action} that the image of this measurable map is \emph{dense} in $\LGr(V_{\bR})$.
One can also view the image as a measurable $\Gamma$-equivariant section of the projection $\Lambda_L\to \Lambda_P$, which again is dense.
This is reminiscent of Furstenberg's example of a minimal, but not uniquely ergodic skew product on $\bR^2/\bZ^2$.

\subsubsection{Unipotent dynamics and the fibration}
	\label{sssec:unipotent_dynamics_and_the_fibration}
To prove \autoref{thm:minimality_of_the_action}, we need some preliminaries on the dynamics of a rank $1$ unipotent as in the standing assumption.

Suppose $l\subset V_{\bR}$ is a line, and $l^{\perp}$ is its symplectic orthogonal, so we have a flag $l\subset l^{\perp}\subset V_{\bR}$.
Let $T\colon V_{\bR}\to V_{\bR}$ be a nontrivial rank $1$ unipotent transformation fixing $l^{\perp}$ pointwise and such that $T-\id$ maps $V/l^\perp$ isomorphically onto $l$.
We will identify the photon $\phi(l)$ of Lagrangians $L$ such that $l\subset L$ with $\bP(l^\perp/l)$, by assigning to $L$ the direction $l^\perp\cap L$.

Define, more generally, the following ``contraction map''
\begin{align}
	\label{eqn:contracting_Lagrangians}
	\begin{split}
	c_l\colon \LGr(V_{\bR})\setminus \phi(l)&\to \phi(l)\\
	L'&\mapsto (l^{\perp}\cap L) \mod l
	\end{split}
\end{align}

\begin{lemma}[Limits under unipotent iterates]
	\label{lem:limits_under_unipotent_iterates}
	\leavevmode
	\begin{enumerate}
		\item Suppose that $L'\in \LGr(V_{\bR})\setminus \phi(l)$ is any Lagrangian.
		Then as $n\to +\infty$ we have
		\[
			T^n L' \to c_l(L')\in \phi(l).
		\]
		\item Suppose that $l'\subset V_{\bR}$ is another line, such that $\phi(l')$ and $\phi(l)$ are disjoint.
		Then the maps
		\[
			T^{n}\vert_{\phi(l')}\text{ converge to the map }c_l\vert_{\phi(l')}\colon \phi(l')\to \phi(l).
		\]
		Furthermore, if we write $V_{\bR}=l\oplus \left(l^{\perp}\cap l^{',\perp}\right)\oplus l'$ and identify $\phi(l)$ and $\phi(l')$ with $\bP\left(l^{\perp}\cap l^{',\perp}\right)$ then $c_l$ as restricted above is the identity map.
	\end{enumerate}
\end{lemma}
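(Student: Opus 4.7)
The plan is to work in an explicit symplectic basis adapted to the flag $l \subset l^\perp \subset V_{\bR}$, so that the iteration $T^n$ becomes a one-variable affine shift. The only real content of the lemma is the observation that $T$ acts trivially on $l^\perp$, while $T^n$ shifts any vector with nonzero $V_{\bR}/l^\perp$-component by $n$ times a generator of $l$; after projective rescaling by $1/n$, this $l$-direction dominates and picks out the $l^\perp$-component of $L'$. Convergence in $\LGr(V_{\bR})$ will be tested via the Plücker embedding, or equivalently via convergence of bases up to the $\GL_2$-action.

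For part (i), I would choose a symplectic basis $e_1, e_2, f_1, f_2$ with $e_1 \in l$ and $l^\perp = \langle e_1, e_2, f_2\rangle$, rescaled so that $T$ fixes $e_1, e_2, f_2$ pointwise and $T f_1 = f_1 + e_1$. For $L' \notin \phi(l)$ with $L' \not\subset l^\perp$, the intersection $L' \cap l^\perp$ is one-dimensional and spanned by some $w$, necessarily not in $l$ (else $l \subset L'$), and $L'$ has a complementary generator of the form $f_1 + \alpha$ with $\alpha \in l^\perp$. Applying $T^n$ yields $T^n L' = \langle w,\ f_1 + \alpha + n e_1\rangle$, and dividing the second generator by $n$ gives the limit $\langle w, e_1\rangle$. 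This is the preimage in $l^\perp$ of $w \bmod l$, which is precisely $c_l(L')$ under the identification $\phi(l) \cong \bP(l^\perp/l)$ from the setup.

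For part (ii), I would first record that the hypothesis $\phi(l) \cap \phi(l') = \emptyset$ is equivalent to $\omega(l, l') \neq 0$: a common Lagrangian of $l$ and $l'$ would have to equal $l + l'$, which is isotropic exactly when $\omega(l, l') = 0$. One may then complete the basis above so that $l' = \langle f_1 \rangle$, giving $l^\perp \cap l'^\perp = \langle e_2, f_2\rangle$ and the direct sum decomposition of the statement. Any $L \in \phi(l')$ has the form $\langle f_1,\ \alpha e_2 + \beta f_2\rangle$ for some $[\alpha : \beta]$, and specializing part (i) yields $T^n L \to \langle e_1,\ \alpha e_2 + \beta f_2\rangle$ with the same $[\alpha : \beta]$. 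Under the common identification of $\phi(l)$ and $\phi(l')$ with $\bP(l^\perp \cap l'^\perp)$, this reads as the identity, as claimed. Continuity of the formula in $[\alpha : \beta]$ together with compactness of $\phi(l') \cong \bP^1$ upgrades pointwise to uniform convergence of maps.

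I do not expect any real obstacle: the argument is a two-line linear algebra computation in the right basis, and the key identification of the limit with $c_l$ follows from unwinding the definition. The only mildly delicate bookkeeping is to check that $\langle w, e_1\rangle$ is genuinely a Lagrangian containing $l$ (both factors lie in $l^\perp$, $e_1 \in l$, and $w \notin l$), and to note that the degenerate case $L' \subset l^\perp$ in part (i)—where $T^n L' = L'$ and $c_l$ is not defined—does not arise in part (ii) or in the subsequent application to \autoref{thm:minimality_of_the_action}, since every $L \in \phi(l')$ contains $l'$ and $l' \not\subset l^\perp$.
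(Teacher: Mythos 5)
Your proof is correct and takes essentially the same route as the paper's: compute in a symplectic basis adapted to the flag $l\subset l^\perp\subset V_{\bR}$, use that $T$ fixes $l^\perp$ pointwise while translating $f_1$ by $e_1$, and read off the projective limit by rescaling the generator transverse to $l^\perp$ by $1/n$. Incidentally, the paper's proof claims one may take $l'$ spanned by $f_2$, but $f_2\in l^\perp$ would make $\phi(l)$ and $\phi(l')$ share the Lagrangian $l\oplus l'$, contradicting the hypothesis of part (ii); your normalization $l'=\langle f_1\rangle$ is the correct one, and your remark that the statement of part (i) tacitly excludes $L'\subset l^\perp$ (where $T^nL'=L'$ and $c_l$ is undefined, a case that does not arise in part (ii) or in the application) is likewise sound.
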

\begin{proof}
	The verification is immediate, by computation in a symplectic basis in which $l$ is spanned by $e_1$, $l^{\perp}$ is spanned by $e_1,e_2,f_2$, and $f_1$ is such that $T(f_1)=f_1 + e_1$.
	Given $l'$ as in the second part of the statement, we can choose our basis such that $l'$ is spanned by $f_2$.

	Note that since $T$ acts as the identity on $l^{\perp}$, under iteration the intersection $T^{n}L'\cap l^{\perp}$ does not change, while the remaining vector in $L'$ converges to $L$.
\end{proof}

\subsubsection{Notation}
	\label{sssec:notation_unipotent_minimality}
For economy of notation, for a point $p\in \partial\wtilde{X}$, we will write $\phi(p)$ for the photon $\phi(\xi(p))$.
Similarly, we will write $c_p$, instead of $c_{\xi(p)}$, for the associated map defined in \autoref{eqn:contracting_Lagrangians}.
When $p$ is a good parabolic boundary point, we will write $T_p$ for the associated unipotent transformation.

Let from now on $S\subset \Lambda_L$ be a nonempty, closed, $\Gamma$-invariant set.
Our goal is to show that $S=\Lambda_L$, and we will do so by showing that for any good parabolic boundary point $p$, we have that $S\cap \phi(p)=\phi(p)$.
By density of such $p$ in $\partial\wtilde{X}$, the result will follow.

\begin{proposition}[Pushing and pulling fibers]
	\label{prop:pushing_and_pulling_fibers}
	Let $p$ be a good parabolic boundary point, and $p'\in \partial\wtilde{X}\setminus p$ a different point.
	\begin{enumerate}
		\item We have that
		\[
		 	c_p\left(S\cap \phi(p')\right) \subset S\cap \phi(p).
	 	\] 
	 	\item Suppose that $p'$ is a good parabolic point as well.
	 	Then
	 	\[
	 		c_{p}\left(S\cap \phi(p')\right) = S\cap \phi(p).
	 	\]
	 	\item Suppose again that $p'$ is arbitrary.
	 	Then
	 	\[
	 		c_{p}\left(S\cap \phi(p')\right) = S\cap \phi(p).
	 	\]
	\end{enumerate}
\end{proposition}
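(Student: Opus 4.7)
The plan is to handle the three parts in sequence, with (i) as the foundational step, (ii) following by a short symmetry argument, and (iii) obtained by approximation using density of good parabolic boundary points.

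For part (i), the approach is to iterate a rank $1$ unipotent $T_p\in \Gamma$ fixing $\xi(p)$ (which exists by the ``good'' assumption, up to passing to a power of the monodromy that lies in $\Gamma$) and invoke \autoref{lem:limits_under_unipotent_iterates}(i). Given $L'\in S\cap \phi(p')$, transversality of the boundary map $\xi$ at the distinct points $p\neq p'$ yields $\omega(\xi(p),\xi(p'))\neq 0$, hence $\phi(p)\cap \phi(p')=\emptyset$; in particular $L'\notin \phi(p)$ and $c_p(L')$ is well-defined. The lemma then gives $T_p^n L'\to c_p(L')$, and closedness together with $\Gamma$-invariance of $S$ place the limit in $S\cap \phi(p)$.

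For part (ii), apply (i) with the roles of $p$ and $p'$ swapped, which is permissible because both points are now good. Part (ii) of \autoref{lem:limits_under_unipotent_iterates}, applied to the decomposition $V_{\bR}=\xi(p)\oplus (\xi(p)^\perp\cap \xi(p')^\perp)\oplus \xi(p')$, identifies both $\phi(p)$ and $\phi(p')$ with $\bP(\xi(p)^\perp\cap \xi(p')^\perp)$ in such a way that both $c_p|_{\phi(p')}$ and $c_{p'}|_{\phi(p)}$ become the identity. Hence, for any $L\in S\cap \phi(p)$, setting $L':=c_{p'}(L)\in S\cap \phi(p')$ yields $c_p(L')=L$, proving surjectivity.

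For part (iii), exploit density of good parabolic points in $\partial\wtilde{X}$ (a consequence of minimality of the $\pi_1(X)$-action on the boundary together with the standing assumption that at least one good parabolic point exists) to approximate $p'$ by a sequence $p'_n\to p'$ of good parabolic points, each distinct from $p$. Fix $L\in S\cap \phi(p)$; by (ii) there is a unique $L'_n\in S\cap \phi(p'_n)$ with $c_p(L'_n)=L$. The main obstacle is to show that $L'_n$ converges to the unique $L'\in \phi(p')$ satisfying $c_p(L')=L$. This is a continuity statement about the photon fibration: continuity of $\xi$ implies that the fibers $\phi(q)$ vary continuously with $q$, and the algebraic map $(q,L)\mapsto c_p^{-1}|_{\phi(q)}(L)$ is continuous on its domain $(\partial\wtilde{X}\setminus\{p\})\times \phi(p)$, so $L'_n\to L'$. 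Closedness of $S$ then forces $L'\in S$, completing the argument.
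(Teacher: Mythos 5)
Your proof is correct and follows the paper's approach in all three parts: iterate $T_p$ and use closedness plus $\Gamma$-invariance for (i), swap roles and use the photon identification from \autoref{lem:limits_under_unipotent_iterates}(ii) for (ii), and approximate $p'$ by good parabolic points for (iii). In part (iii) you spell out explicitly the continuity argument (that $L'_n\to L'$ and $c_p(L')=L$) which the paper compresses into ``applying $c_p$ to the containment $\limsup(S\cap\phi(p_i))\subset S\cap\phi(p')$ gives the result by (ii)''; the content is the same.
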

Clearly the last part implies the previous ones, but we will prove the statements in this order.
\begin{proof}
	For (i), because $S$ is closed and $T_p$-invariant, it follows from \autoref{lem:limits_under_unipotent_iterates} that any accumulation points of $T_p^n(S\cap \phi(p'))$ are contained in $S\cap \phi(p)$, so the result follows.

	Part (ii) follows by applying (i) to both parabolic points, going back-and-forth, and using the identification of the photons from \autoref{lem:limits_under_unipotent_iterates}.

	Finally for (iii) we need to prove the reverse inclusion from that in (i).
	But take a sequence of good parabolic points $p_i$ converging to $p'$.
	Then we have that $\limsup (S\cap \phi(p_i))\subset S\cap \phi(p')$, where $\limsup$ of sets refers to all the accumulation points.
	Applying $c_p$ to the above containment gives the result by (ii).
\end{proof}

The following is the key calculation which will give the result.
\begin{proposition}[Fiberwise unipotents]
	\label{prop:fiberwise_unipotents}
	Suppose that $p_1,p_2,p_3\in \partial\wtilde{X}$ are three distinct good parabolic boundary points.
	Set $V_{13}:=l_{p_1}^{\perp}\cap l_{p_3}^{\perp}$, so that we have a decomposition $V=l_1 \oplus V_{13}\oplus l_3$.
	Suppose that $l_{p_2}$ is \emph{not} contained in $l_{p_1}\oplus l_{p_3}$, and let $l_{123}\subset V_{13}$ be the projection of $l_{p_2}$ to $V_{13}$, nontrivial by assumption. 

	Then the composition
	\[
		\phi(p_1)\xrightarrow{c_{p_2}}\phi(p_2)
		\xrightarrow{c_{p_3}}\phi(p_3)
		\xrightarrow{c_{p_1}}\phi(p_1)
	\]
	is (the projectivization) of a nontrivial unipotent transformation, with fixed point $l_{123}$ when identifying $\phi(p_1)\isom \bP(V_{13})$.
\end{proposition}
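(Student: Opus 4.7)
The plan is to reduce the claim to an explicit linear-algebraic computation in a symplectic basis adapted to $l_{p_1}$ and $l_{p_3}$, exploiting the transversality of the boundary map (\autoref{thm:existence_of_boundary_maps}) to guarantee that every intermediate intersection is one-dimensional.

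First, transversality of $\xi$ gives $\omega(l_{p_i},l_{p_j})\neq 0$ for $i\neq j$, so in particular $l_{p_1}+l_{p_3}$ is a symplectic plane. Choose a basis $e_1,f_1$ of that plane with $l_{p_1}=\bR e_1$, $l_{p_3}=\bR f_1$, $\omega(e_1,f_1)=1$; then $V_{13}$ is a two-dimensional symplectic subspace and $V=\bR e_1\oplus V_{13}\oplus\bR f_1$. Under the natural identifications $\phi(p_1)\cong\bP(l_{p_1}^\perp/l_{p_1})\cong\bP(V_{13})$ and $\phi(p_3)\cong\bP(V_{13})$, a Lagrangian $L\in\phi(p_1)$ is uniquely of the form $L=\bR e_1\oplus\bR w$ with $[w]\in\bP(V_{13})$. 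Write $l_{p_2}=\bR(\alpha e_1+\beta f_1+v)$ with $v\in V_{13}$: the hypothesis $l_{p_2}\not\subset l_{p_1}\oplus l_{p_3}$ is exactly $v\neq 0$, so $l_{123}=\bR v$, and the transversalities $\omega(l_{p_2},l_{p_1})\neq 0$ and $\omega(l_{p_2},l_{p_3})\neq 0$ translate to $\beta\neq 0$ and $\alpha\neq 0$ respectively.

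The heart of the proof is then a direct three-step calculation of $c_{p_1}\circ c_{p_3}\circ c_{p_2}$. At each stage, to evaluate $c_{p_i}$ one intersects the current Lagrangian with $l_{p_i}^\perp$; the transversality arranged above ensures the intersection is one-dimensional, and taking the quotient by $l_{p_i}$ produces the next photon point. Tracking the $V_{13}$-component through each step, the composition descends on $V_{13}$ to the linear endomorphism
\[
\Phi\colon w\longmapsto \omega(w,v)\,v+\alpha\beta\,w.
\]
In a symplectic basis $v,v'$ of $V_{13}$ with $\omega(v',v)=1$, the matrix of $\Phi$ is upper triangular with both diagonal entries $\alpha\beta$ and off-diagonal entry $1$. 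Since $\alpha\beta\neq 0$, the projectivization of $\Phi$ is a nontrivial unipotent on $\bP(V_{13})$ whose unique fixed direction is $[v]=l_{123}$, as claimed.

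The only real obstacle is bookkeeping: confirming that each of the three intersections with $l_{p_i}^\perp$ is indeed one-dimensional (equivalently, the pairwise transversality of photons), and that the off-diagonal entry of $\Phi$ is nonzero. The latter is precisely the content of the hypothesis $l_{p_2}\not\subset l_{p_1}\oplus l_{p_3}$: without it one would have $v=0$ and $\Phi=\alpha\beta\cdot\mathrm{id}$, so the projectivized composition would be the identity and the proposition would fail. This also makes the geometric role of the hypothesis transparent.
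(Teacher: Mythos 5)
Your setup and approach match the paper's (an explicit computation in a symplectic basis adapted to $l_{p_1}$ and $l_{p_3}$), and the formula you claim for the composition,
\[
\Phi(w)=\omega(w,v)\,v+\alpha\beta\,w,
\]
is correct: with the paper's normalization $v=e_2$, $\omega(e_2,f_2)=1$, its projectivization reproduces the paper's matrix $\left[\begin{smallmatrix}1 & -1/(\alpha\beta)\\ 0 & 1\end{smallmatrix}\right]$ in the basis $(e_2,f_2)$. Your version, keeping $v$ un-normalized, is in fact a modest improvement over what the paper writes down: splitting $\Phi$ into $\alpha\beta\cdot\mathrm{id}$ plus the rank-one nilpotent $w\mapsto\omega(w,v)v$, whose kernel and image both equal $\bR v$ since $V_{13}$ is a two-dimensional symplectic space, makes the unipotency and the fixed point $[v]=l_{123}$ immediate rather than a matrix observation.

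That said, the ``direct three-step calculation'' you invoke \emph{is} the proof, and you have not carried it out. Each $c_{l}$ is not a naive projection: one must represent $L'\cap l^\perp$ modulo $l$, which in practice means expanding the generator in the ambient basis and subtracting off the correct multiple of the $l_2$-generator, and it is precisely that elimination that mixes the $V_{13}$-component with $e_1,f_1$ and produces the nilpotent term. The paper spends about a page of explicit matrix algebra on this. Characterizing it as ``the only real obstacle is bookkeeping'' undersells the situation; the bookkeeping is the content. To have a complete proof you should either execute the three contractions explicitly (one of them worked in full would make the pattern clear) or supply a genuinely calculation-free derivation of $\Phi$, which you have not done.
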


\subsubsection{Finishing the proof}
	\label{sssec:finishing_the_proof}
The proof of \autoref{prop:fiberwise_unipotents} is contained below and is a calculation.
Here we explain why that establishes \autoref{thm:minimality_of_the_action}.
Fix $p_1,p_3$ two good, distinct parabolic boundary points.
A nonempty open set $U_{1,3}\subset \partial\wtilde{X}$ of $p_2$'s will satisfy the assumption of \autoref{prop:fiberwise_unipotents}, i.e. $l_{p_2}$ will not be contained in $l_{p_1}\oplus l_{p_3}$, otherwise this would contradict the Zariski-density of $\Gamma$.
Note also that the image of the projection of $\xi(U_{1,3})$ in $\bP(V_{1,3})$ will contain an open set, for the same reason.

In the open set $U_{1,3}$, a dense collection of good parabolic boundary points is available.
\autoref{prop:fiberwise_unipotents} gives then a family of nontrivial unipotent transformations of $\bP(V_{1,3})$, and the closure of their fixed points contains an open set.
Furthermore, by \autoref{prop:pushing_and_pulling_fibers}, the intersection $S\cap \phi(p_1)$ (after the identification of $\phi(p_1)\isom \bP(V_{1,3})$), is invariant under these transformations.
It follows that $S\cap \phi(p_1)=\phi(p_1)$, since all the fixed points of the unipotent transformations are clearly in $S$, and $S$ contains some open set with fixed points of unipotent transformations in the interior of that open set.
Large iterates of a single unipotent then expand the open set to cover all of $\phi(p_1)$.

\subsubsection{Proof of \autoref{prop:fiberwise_unipotents}}
	\label{sssec:proof_of_prop:fiberwise_unipotents}
To simplify notation, we will write $l_{i}$ instead of $l_{p_i}$.
Choose a symplectic basis such that $l_{1}$ is spanned by $e_1$ and $l_{3}$ is spanned by $f_1$, so that $V_{13}$ is spanned by $e_2,f_2$.
By acting with an internal symplectic automorphism of $V_{13}$, we can assume that
\[
	l_{2} \text{ is spanned by }\alpha \cdot e_1 + \beta \cdot f_1 + e_2\text{ with }\alpha\cdot \beta\neq 0.
\]
Indeed the $e_1$ and $f_1$ components cannot vanish, otherwise the transversality of $l_{2}$ with either of $l_{1},l_{3}$ would be contradicted, and the $V_{13}$ component is nontrivial and we can normalize it to be $e_2$.

It follows that
\[
	l_{2}^{\perp} \text{ is spanned by }e_2,e_1+\beta f_2, f_1-\alpha f_2
\]
We will take as a basis of $l_2^{\perp}/l_2$ the vectors
\[
	v_1 := e_1 + \beta f_2 \quad v_2 = f_1 - \alpha f_2
\]
Note also that
\[
	e_2 = (\alpha e_1 + \beta f_1 + e_2) - \alpha(e_1 + \beta f_2) - \beta(f_1-\alpha f_2)
\]
which will be useful for passing to representatives in $l_2^\perp/l_2$ in the $v_1,v_2$-basis.

We will also use for both $l_1^\perp/l_1$ and $l_3^{\perp}/l_3$ the basis $e_2,f_2$.
With these normalizations, we will compute the matrices of the maps $c_{p_2}$ and $c_{p_3}^{-1}$, and check the needed assertion.
Note that the matrix of $c_{p_1}$ is the identity in the selected bases.

To compute the image of $(x e_2 + y f_2)$ under $c_{p_2}$, we need to find its representative $\mod e_1$ such that it is symplectically orthogonal to $l_2$, and then pick out the $v_1,v_2$-coefficients modulo $l_2$.
We have
\begin{align*}
	x e_2 + y f_2 & \equiv x e_2 + \frac{y}{\beta}\left(e_1 + \beta f_2\right) \mod e_1\\
	& = x\big[\alpha e_1 + \beta f_1 + e_2 - \alpha(e_1+\beta f_2)-\beta(f_1-\alpha f_2)\big]\\
	& + \frac{y}{\beta}(e_1 + \beta f_2)\\
	& \equiv \left(\frac{y}{\beta}-\alpha x\right)(e_1 + \beta f_2)
	- \beta x (f_1 -\alpha f_2) \mod l_2
\end{align*}
In conclusion, the matrix of $c_{p_1}$ in the selected basis is
\[	
	c_{p_2}=\begin{bmatrix}
		-\alpha & \frac{1}{\beta}\\
		- \beta & 0
	\end{bmatrix}.
\]

We use the same type of calculation to find $c_{p_3}^{-1}$, applied to a vector $x e_2 + yf_2$:
\begin{align*}
	x e_2 + y f_2 & \equiv x e_2 - \frac{y}{\alpha}\left(f_1 -\alpha f_2\right) \mod f_1\\
	& = x\big[\alpha e_1 + \beta f_1 + e_2 - \alpha(e_1+\beta f_2)-\beta(f_1-\alpha f_2)\big]\\
	& - \frac{y}{\alpha}\left(f_1 -\alpha f_2\right)\\
	& \equiv -\alpha x(e_1 + \beta f_2)
	- \left(\beta x  + \frac{y}{\alpha}\right) (f_1 -\alpha f_2) \mod l_2
\end{align*}
and so
\[
	c_{p_3}^{-1} = \begin{bmatrix}
		-\alpha & 0\\
		- \beta & -\frac{1}{\alpha}
	\end{bmatrix}
	\quad\text{so}\quad
	c_{p_3}=
	\begin{bmatrix}
		-\frac{1}{\alpha} & 0 \\
		\beta & -\alpha
	\end{bmatrix}.
\]
Multiplying out yields
\[
	c_{p_3}\cdot c_{p_2} = \begin{bmatrix}
		1 & -\frac{1}{\alpha\beta}\\
		0 & 1
	\end{bmatrix}
\]
which is precisely the needed assertion. \hfill \qed

\subsubsection{Geometric interpretation}
	\label{sssec:geometric_interpretation}
We end by remarking that it is geometrically clear that the composed transformations will fix a point.
The whole calculation is to make sure that the composed map is not the identity.
Indeed, in $\bP(V_{\bR})$ we have three hyperplanes (corresponding to $l_{i}^{\perp}$) and points on each of them (corresponding to $l_i$).
The transformations $c_{\bullet}$, say going from $i$ to $j$, identify the pencils of lines in $l_i^{\perp}$ passing through $l_i$, with those passing through $l_j$ in $l_j^{\perp}$, by identifying them with the common line at which $l_i^{\perp}$ and $l_j^{\perp}$ intersect.
Now it is clear that the ``vertex'' where all three hyperplanes meet will be preserved by any of these mappings.




\section{Hodge theory and Anosov representations}
	\label{sec:hodge_theory_and_anosov_representations}

\paragraph{Outline}
In \autoref{ssec:establishing_the_anosov_property} we use the analytic results on assumption A variations of Hodge structure from \autoref{sec:hodge_theory_and_growth} to conclude that their monodromy representations are log-Anosov.
Using the tools on log-Anosov representations developed in \autoref{sec:elements_of_lie_theory_and_anosov_representations}, we construct in \autoref{ssec:complex_domains_of_discontinuity} some further domains of discontinuity in complex flag manifolds.


\subsection{Establishing the Anosov property}
	\label{ssec:establishing_the_anosov_property}

\subsubsection{Setup}
	\label{sssec:setup_establishing_the_anosov_property}
We use the notation and conventions of \autoref{sec:hodge_theory_and_growth} for Hodge theory and \autoref{sec:elements_of_lie_theory_and_anosov_representations} for Lie theory.
Fix for the rest of this section a local system $\bV\to X$ supporting a variation of Hodge structure satisfying assumption A of \autoref{def:assumption_a}.
For simplicity of exposition, we assume from the start that there is a symplectic version of the local system, i.e. a lift of the representation from $\SO_{2,3}(W_{\bR})$ to $\Sp_4(V_{\bR})$ (this is the case for all hypergeometric examples).

\subsubsection{Limit sets in flag manifolds}
	\label{sssec:limit_sets_in_flag_manifolds}
We will establish in \autoref{thm:anosov_property_from_assumption_A} below that the monodromy representation is $\alpha_1$-log-Anosov and so we set $\theta=\{\alpha_1\}$.
\autoref{thm:existence_of_boundary_maps} then gives a boundary map
\[
	\xi \colon \partial \wtilde{X}\to \cF_{\theta}\isom \rightquot{G}{P_{\theta}}
	\isom \rightquot{K}{M_{\theta}}
	\isom \bP(V_{\bR}).
\]
By \autoref{thm:control_of_k_component}, we have that $\cL K_+(\Gamma)$ is contained in $\pi_{K,\theta}^{-1}(\xi(\partial \wtilde{X}))$, and we will identify for convenience of notation the two sets:
\[
	\cL K_+(\Gamma):= \pi_{K,\theta}^{-1}(\xi(\partial \wtilde{X}))
\]
Note that $\cL K_+(\Gamma)\subset K$ is left $M_\theta$-invariant.
Project now this set to the other flag manifold $\cF_{\alpha_2}=K/M_{\alpha_2}=\LGr(V_\bR)$ via the map $K\xrightarrow{g\mapsto g^{-1}} K/M_{\alpha_2}=\LGr(V_{\bR})$.
Denote the image by $\Lambda_L(\Gamma)$.

Note that an alternative definition is to observe that the manifold of full flags $\cF_{\alpha_1,\alpha_2}$ fibers over each of $\cF_{\alpha_i}$.
The construction of $\Lambda_L(\Gamma)$ is then to take the preimage of $\xi(\partial \wtilde{X})\subset \cF_{\alpha_1}$ and project it to $\cF_{\alpha_2}$.

\subsubsection{Dominated splittings of cocycles}
	\label{sssec:dominated_splittings_of_cocycles}
Continuing with the assumption that $\rho$ is $\alpha_1$-log-Anosov, by \autoref{thm:domination_in_log_anosov_representations} we have domination structures associated to the cocycles induced by the local systems $\bV,\bW$.
It will also be useful to consider $\bU:=\Lambda^2 \bW\isom \Sym^2 \bV$, which is the cocycle associated to the adjoint representation of $\fraksp_{4}$.
Its domination structure is described by \autoref{fig:adjoint_rep_poset}.

It is straightforward to also work out the domination structure of $\bV$ and $\bW$.
On $\bV$ we have over $T^1X$ a $g_t$-invariant decomposition
\begin{align*}
	\bV& =\bV_{\mu_1}\succ_{\theta} \bV_{-\mu_2,\mu_2}\succ_{\theta} \bV_{-\mu_1}
	\intertext{where we have indicated the domination structure, and the dimension of each subbundle is given by the number of weights occuring in its index.
	For $\bW$ we similarly have:}
	\bW& =\bW_{\mu_1+\mu_2,\mu_1-\mu_2}\succ_{\theta} \bW_{0}\succ_{\theta}
	\bW_{\mu_2-\mu_1,-\mu_1-\mu_2}
\end{align*}
Note that the two extreme subspaces of $\bW$ are isotropic and $2$-dimensional, giving photons (see \autoref{sssec:photons}).
Their union over all points on the boundary of $\wtilde{X}$ constitutes by its definition the limit set $\Lambda_{L}(\Gamma)\subset \LGr(V_{\bR})\subset \bP(W_{\bR})$ (recall also that $\LGr(V_{\bR})$ is the set of isotropic vectors in $\bP(W_{\bR})$).


\begin{theorem}[Anosov property]
	\label{thm:anosov_property_from_assumption_A}
	Let $\bV$ be a variation of Hodge structure satisfying assumption A.
	\leavevmode
	\begin{enumerate}
		\item The monodromy representation $\rho\colon \pi_1(X)\to \Sp_{4}(\bR)$ is $\alpha_1$-log-Anosov.
		\item The complement of the open set $\Omega_L\subset \LGr(V_{\bR})$ from \autoref{thm:real_uniformization_lagrangian} coincides with the limit set $\Lambda_L(\Gamma)\subset \LGr(V_{\bR})$.
	\end{enumerate}
\end{theorem}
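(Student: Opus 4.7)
The plan for (i) is to translate the exponential growth of the function $f_w$ established in \autoref{lem:exponential_growth_pos_def} into a singular value gap, working on the five-dimensional representation $W$ rather than on $V$. Recall from the symplectic--orthogonal dictionary that the singular values of $\rho(\gamma)$ acting on $W=\Lambda^2_{\circ}V$ are $e^{\pm(\mu_1+\mu_2)}, e^{\pm(\mu_1-\mu_2)}, 1$, so the second largest singular value equals $e^{\alpha_1(\mu(\rho(\gamma)))}$. Because the opposition involution of $\Sp_4$ is trivial, this coincides with the second singular value of $\rho(\gamma^{-1})$.

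First I would fix at the basepoint the real positive plane
\[
P:=\big(\cW^{3,1}(x_0)\oplus\cW^{1,3}(x_0)\big)\cap W_{\bR},
\]
which is two-dimensional, positive definite for $\ip{-,-}_I$, and on which the Hodge and indefinite norms are comparable by uniform constants. Any unit vector $w\in P$ with $\ip{w,w}_I=1$ has $w^{0,4}(x_0)=0$, so $f_w$ achieves its unique minimum at $x_0$ by \autoref{thm:index_estimates}(iii). Invoking \autoref{lem:exponential_growth_pos_def} with uniform constants over the unit circle of $P$, one gets $f_w(\gamma x_0)\geq C^{-1}e^{\ve\,\dist(x_0,\gamma x_0)}-C$ for every such $w$. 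Since $\norm{w}^2_{H,\gamma x_0}\geq 2 f_w(\gamma x_0)$ and $\norm{\rho(\gamma^{-1})w}^2_{H,x_0}=\norm{w}^2_{H,\gamma x_0}$ by monodromy invariance, every unit vector in $P$ is stretched by $\rho(\gamma^{-1})$ by at least $e^{\ve\,d/2}/\sqrt{C}$, where $d:=\dist(x_0,\gamma x_0)\asymp\log\norm{\gamma}$.

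The min--max characterization of singular values then gives $\sigma_2(\rho(\gamma^{-1})|_W)\gtrsim e^{\ve d/2}$; using $\sigma_2(\rho(\gamma)|_W)=\sigma_2(\rho(\gamma^{-1})|_W)$ one concludes $\mu_1(\rho(\gamma))-\mu_2(\rho(\gamma))\geq \tfrac{\ve}{2}\log\norm{\gamma}-C'$, establishing the $\alpha_1$-log-Anosov condition of \autoref{def:log_anosov_representation}. The boundary map $\xi\colon\partial\wtilde X\to\bP(V_\bR)$ is then provided by \autoref{thm:existence_of_boundary_maps}.

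For (ii), I would argue that $\LGr(V_\bR)$ is the disjoint union of $\Omega_L$ and $\Lambda_L(\Gamma)$ by a direct dichotomy on Lagrangians. Given any $L\in\LGr(V_\bR)$ with \Plucker vector $w_L\in W_\bR$ isotropic, \autoref{thm:index_estimates}(ii) says that either $f_{w_L}$ has a (unique) zero, in which case $L\in\Omega_L$ by the definition of the bad bundle, or $f_{w_L}>0$ everywhere but $\inf f_{w_L}=0$. In the second case choose a sequence $x_n\in\wtilde X$ with $f_{w_L}(x_n)\to 0$; by the isotropic exponential growth \autoref{lem:exponential_growth_isotropic_version} applied along geodesics, such a sequence cannot stay in a compact set, so after passing to a subsequence $x_n\to p\in\partial\wtilde X$. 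The main remaining step, and the point I expect to require the most care, is to identify the limit: one must show that if $w_L$ has vanishing $\cW^{0,4}$-component to first order along $x_n\to p$ then $w_L$ lies in the isotropic plane $\xi(p)\wedge\xi(p)^\perp\subset W_\bR$, i.e.\ the photon associated to $\xi(p)$, so that $\xi(p)\subset L$ and $L\in\Lambda_L(\Gamma)$. This identification uses that the boundary map $\xi$ obtained from (i) encodes precisely the direction of most rapid decay of the Hodge norm along geodesics, which is the same information read off by the degeneration of $\cW^{0,4}(x_n)$ as $x_n\to p$; this follows from the fact that $\xi$ is dynamics-preserving together with the domination structure on $W$ recorded in the $2+1+2$ entry of the dictionary table of \autoref{sssec:symplectic_orthogonal_dictionary} and \autoref{thm:domination_in_log_anosov_representations}. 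Conversely, if $L\supset\xi(p)$ then $w_L$ lies in this photon and one checks directly that $f_{w_L}(x_n)\to 0$ along any $x_n\to p$; combining with the growth estimate \autoref{lem:exponential_growth_isotropic_version}, the infimum $0$ of $f_{w_L}$ cannot be attained, so $L\notin\Omega_L$. This gives both inclusions and completes the identification $\Lambda_L(\Gamma)=\LGr(V_\bR)\setminus\Omega_L$.
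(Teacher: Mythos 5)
Your argument for part (i) is essentially the paper's: both apply \autoref{lem:exponential_growth_pos_def}, with constants uniform over the circle of unit vectors in the positive-definite plane $P=P^2(x_0)\subset W_{\bR}$, and convert the resulting lower bound on $f_w(\gamma x_0)$ into a lower bound on the second singular value of $\rho(\gamma^{-1})\vert_W=e^{\mu_1-\mu_2}$. The paper unwinds this through the explicit $KAK$ decomposition, choosing a specific unit vector $w'\in P$ with $k'w'=w_2$, while you pass through the min--max characterization of singular values; these are equivalent and your version is if anything a touch cleaner. (The equality $\sigma_2(\rho(\gamma)\vert_W)=\sigma_2(\rho(\gamma^{-1})\vert_W)$ that you invoke is indeed a consequence of the spectral symmetry for $\SO(W_{\bR})$, so that step is fine.)

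Your part (ii) takes a genuinely different and, as you yourself flag, incomplete route. You try to show directly that if $f_{w_L}>0$ with $\inf f_{w_L}=0$, then any escaping minimizing sequence $x_n\to p$ forces $w_L\in\xi(p)\wedge\xi(p)^{\perp}$. This ``identification of the limit'' step is not a formality: (a) a minimizing sequence $x_n\to p$ need not approach $p$ along a geodesic, so the $\xi^{(3)}$-controlled metric comparison and the domination estimates are not directly applicable at the $x_n$; (b) even along a geodesic to $p$, smallness of the $\cW^{0,4}$-component of $w_L$ at $x_n$ is not the same as smallness of the fast dominated component $\bW^{>0}(p)$ of $w_L$ --- the Hodge decomposition at $x_n$ and the dominated decomposition at $p$ only match asymptotically, and making that precise is exactly the content of \autoref{thm:boundary_values_in_adjoint_representation} (for the plane $N^e(x)$), not something one gets from ``$\xi$ is dynamics-preserving''; and (c) if $p$ is a parabolic point, even the geodesic domination estimates require the adapted-metric results of \autoref{sec:unipotent_dynamics}, which you would have to invoke explicitly. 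Similarly, your converse claim that $f_{w_L}(x_n)\to 0$ ``along any $x_n\to p$'' when $\xi(p)\subset L$ is stronger than what is true or needed; boundedness along the geodesic to $p$ suffices. Your appeal to \autoref{lem:exponential_growth_isotropic_version} to conclude that the minimizing sequence escapes every compact set is also superfluous: that follows already from $f_{w_L}$ being positive and continuous.

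The paper sidesteps the identification step entirely by a contradiction argument. If $[w]\notin\Omega_L$ and $[w]\notin\Lambda_L(\Gamma)$, then on the one hand $f_w$ has no zero (so its infimum $0$ is not attained), and on the other hand $w$ has a nonzero component in the fast subspace $\bW^{>0}(p)$ for \emph{every} $p\in\partial\wtilde X$, so $\norm{w}^2$ grows along every geodesic ray (including into the cusps, using the adapted-metric estimates) and is therefore proper; a proper nonnegative function attains its minimum, giving the contradiction. The converse inclusion $\Lambda_L\cap\Omega_L=\emptyset$ is exactly as you sketch, via \autoref{lem:exponential_growth_isotropic_version}. I would rewrite your part (ii) along those lines; the direct approach can likely be made to work, but only after importing the machinery of \autoref{thm:boundary_values_in_adjoint_representation} and \autoref{sec:unipotent_dynamics}, which defeats the purpose of a short proof.
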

\begin{proof}
	To establish the $\alpha_1$-log-Anosov property, in the sense of \autoref{def:log_anosov_representation}, we need to establish lower bounds on the growth of singular values.
	For this, we will use the second exterior power representation to $G:=\SO_{2,3}(W_\bR)$.
	Let $K\subset G$ be a maximal compact subgroup, corresponding to the Hodge decomposition at the basepoint $x_0\in \wtilde{X}$.
	Take the corresponding decomposition
	\[
		W_{\bR} = P^2(x_0) \oplus N^3(x_0)
	\]
	which is orthogonal for both the definite, and indefinite, inner products on $W_{\bR}$, where the definite inner product comes from the Hodge structure at $x_0$, and $\dim P^2=2, \dim N^3=3$, and $P^2$ is positive-definite while $N^3$ is negative-definite for the indefinite inner product.

	Let us choose a Cartan subalgebra and Weyl chamber compatible with the maximal compact $K$.
	This yields an orthonormal bases $w_1,w_2\in P^2(x_0)$ and $n_0,n_1,n_2\in N^3(x_0)$, such that the root spaces for $\fraka$, using the notation of \autoref{fig:sp4roots}, are spanned by:
	\begin{align*}
		\begin{split}
			W_{\mu_1+\mu_2} & = w_1 + n_1\\
			W_{-\mu_1-\mu_2} & = w_1 - n_1
		\end{split}
		\begin{split}
			W_{\mu_1-\mu_2} & = w_2 + n_2\\
			W_{\mu_2-\mu_1} & = w_2 - n_2
		\end{split}
		\quad\quad
		\begin{split}
			W_0 = n_0
		\end{split}
	\end{align*}
	In particular, note that
	\[
		2e^{\overrightarrow{\mu}}w_2 = e^{\mu_1-\mu_2}(w_2 + n_2) + e^{-(\mu_1-\mu_2)}(w_2-n_2)
	\]
	and for $\overrightarrow{\mu}\in \fraka^+$, i.e. $\mu_1\geq \mu_2\geq0$ it is the first term that dominates (we have denoted by $\overrightarrow{\mu}$ the element of the Lie algebra to distinguish in the expression above the Lie group exponential from the scalar exponential function on the right-hand side).
	
	Take now $\rho(\gamma)\in \Gamma\subset G$, and its KAK decomposition
	\[
		\rho(\gamma) = k_- e^{\mu} k_+ \quad k_\pm \in K
	\]
	where $K$ is the maximal compact corresponding to $x_0$.
	Our task is to estimate $\alpha_1(\mu)=\mu_1-\mu_2$ from below.

	We have by \autoref{eqn:norm_of_vector_flat_bundle}:
	\[
		\norm{w}_{\gamma x_0} = \norm{\rho(\gamma)^{-1}w}_{x_0} = 
		\norm{e^{\mu}k' w}_{x_0}
	\]
	for a certain $k'=k_0^{-1}k_-^{-1}\in K$, where $k_0$ is an element of $K$ inducing the longest Weyl group element action on $\fraka$, and where we used that the opposition involution in the symplectic group is trivial.

	Now there exists a unit vector $w'\in P^2(x_0)$ such that $k' w' = w_2$.
	\autoref{lem:exponential_growth_pos_def} applies to any unit vector $w\in P^2(x_0)$, with uniform constants $C_1,C_2,\ve>0$.
	It follows that
	\begin{align*}
		\norm{w'}^2_{\gamma x_0}  & \geq \frac{1}{C_1}e^{\ve\cdot \dist(x_0,\gamma x_0)} - C_2 && \text{ by \autoref{lem:exponential_growth_pos_def}}\\
		\norm{w'}^2_{\gamma x_0}  & \leq \frac{1}{4}e^{2(\mu_1-\mu_2)}+C_3 && \text{ by the choice of roots and }w'
	\end{align*}
	from which the desired lower bound
	\[
		\mu_1 - \mu_2 \geq \frac{1}{C_4}\dist(x_0,\gamma x_0) - C_5 \text{ follows.}
	\]
	To identify the complement of the domain of discontinuity $\Omega_L\subset \LGr(V_\bR)\subset \bP(W_{\bR})$, suppose $[w]\notin \Omega_L$ is isotropic.
	Then it follows from the proof of \autoref{thm:real_uniformization_lagrangian} that $\norm{w}^2$ does not achieve its minimum on $\wtilde{X}$.
	But if $[w]$ were also disjoint from $\Lambda_L(\Gamma)$, then $\norm{w}^2$ would grow in every direction of the geodesic flow, hence would be a proper function, so it would achieve its minimum, yielding a contradiction.

	Conversely, if $[w]\in \Lambda_L(\Gamma)$, then there is a direction of the geodesic flow on which $\norm{w}^2$ stays bounded.
	This implies that this function can't be proper, so $[w]$ can't be in $\Omega_L$ as it would contradict \autoref{lem:exponential_growth_isotropic_version}.
\end{proof}



\subsection{Complex domains of discontinuity}
	\label{ssec:complex_domains_of_discontinuity}

\subsubsection{Setup}
	\label{sssec:setup_complex_domains_of_discontinuity}
We continue with the same setup as in the previous section, but now investigate the action on complex flag manifolds.
Recall that $\Gamma:=\rho(\pi_1(X))\subset G= \Sp_4(V_\bR)$.

\begin{theorem}[Proper discontinuity on indefinite Lagrangians]
	\label{thm:proper_discontinuity_on_indefinite_lagrangians}
	The monodromy $\Gamma$ acts properly discontinuously on $\LGr^{1,1}(V_{\bC})$.
\end{theorem}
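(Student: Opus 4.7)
The plan is to deduce proper discontinuity on $\LGr^{1,1}(V_\bC)$ from the stability criterion of Theorem \ref{thm:proper_discontinuity_criterion}, applied through the Pl\"ucker embedding $\LGr^{1,1}(V_\bC) \hookrightarrow \bP(W_\bC)$, which realizes this space as a locally closed $\Gamma$-invariant subset. It is then enough to show that every complex Lagrangian $F^2 \in \LGr^{1,1}(V_\bC)$ is $\Gamma$-stable in the sense of Definition \ref{def:stable_point}.

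By Theorem \ref{thm:anosov_property_from_assumption_A}, the monodromy representation is $\alpha_1$-log-Anosov, so every limit direction $[\mu] \in \cL_+(\Gamma)$ lies in the open $\alpha_1$-face $\{\mu_1 > \mu_2 \geq 0\}$. Fixing a symplectic basis $(e_1,e_2,f_1,f_2)$ adapted to the chosen Cartan, the weight decomposition of $W_\bC = \Lambda^2_\circ V_\bC$ recalled in \autoref{sssec:dominated_splittings_of_cocycles} identifies
\[
W^{\leq 0}([\mu])_\bC = \operatorname{span}_\bC\bigl(f_1\wedge e_2,\ f_1\wedge f_2,\ e_1\wedge f_1 - e_2\wedge f_2\bigr),
\]
independently of whether the wall $\mu_2=0$ is reached. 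A short computation of $w\wedge w$ shows that a vector $w$ in this three-dimensional subspace is decomposable (i.e. the Pl\"ucker coordinate of a complex Lagrangian) if and only if the coefficient of $e_1\wedge f_1 - e_2\wedge f_2$ vanishes; in that case $w = f_1\wedge(a e_2 + b f_2)$ represents a complex Lagrangian containing the real vector $f_1$.

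The key Hodge-theoretic input is the characterization $F^2 \in \LGr^{1,1}(V_\bC) \iff F^2 \cap V_\bR = \{0\}$. Indeed, a nonzero real vector $v_0 \in F^2$ would lie in $F^2 \cap \bar{F^2}$, and for any $u \in F^2$ one has $\omega(v_0, \bar u) = 0$ because both $v_0$ and $\bar u$ belong to the Lagrangian $\bar{F^2}$; hence $v_0$ would lie in the radical of the indefinite hermitian form, contradicting nondegeneracy. Since $k_+ \in K$ preserves both $V_\bR$ and the hermitian form, the assumption $k_+ F^2 \in W^{\leq 0}([\mu])_\bC$ would force $k_+ F^2$, and therefore $F^2$, to contain a nonzero real vector, contradicting $F^2 \in \LGr^{1,1}(V_\bC)$.

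Thus every $F^2 \in \LGr^{1,1}(V_\bC)$ is $\Gamma$-stable, and Theorem \ref{thm:proper_discontinuity_criterion}(iii) yields the desired proper discontinuity. The main obstacle is the explicit identification of the decomposable locus inside the stable filtration $W^{\leq 0}([\mu])_\bC$; once this algebraic fact and the linear-algebraic description of $\LGr^{1,1}$ in terms of $F^2 \cap V_\bR$ are in hand, the argument is mechanical. The symmetry of $\cL_+(\Gamma)$ under taking inverses introduces no new cases, since the opposition involution for $\Sp_4$ is trivial and both $V_\bR$ and $\LGr^{1,1}$ are $K$-invariant.
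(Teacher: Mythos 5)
Your proof is correct, but it takes a genuinely different route from the paper's. Both reduce to the stability criterion of \autoref{thm:proper_discontinuity_criterion} via the Pl\"ucker embedding, and both need the $\alpha_1$-divergence to pin down $W^{\le 0}([\mu]) = W_0 \oplus W_{\mu_2-\mu_1}\oplus W_{-\mu_1-\mu_2}$; after that the two arguments diverge. The paper's proof is a pure signature count: since $w,\ov{w}$ span (the complexification of) the negative-definite real $2$-plane $N(F^2)$ recalled in the symplectic--orthogonal dictionary (\autoref{sssec:electrons_via_second_exterior_power}), they cannot both lie in a real translate of $W^{\le 0}([\mu])$, which has signature $(0,1,2)$ -- only one negative direction. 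Your proof instead computes the decomposable locus inside $W^{\le 0}([\mu])_\bC$ directly: $w\wedge w$ is a multiple of $c^2$, so a decomposable vector there has $c=0$ and is of the form $f_1\wedge(ae_2+bf_2)$, i.e.\ represents a Lagrangian containing the real vector $f_1$; combining with the Hodge-theoretic fact that a signature-$(1,1)$ Lagrangian cannot meet $V_\bR$ (the radical of the restricted hermitian form is $(F^2\cap V_\bR)_\bC$), and the $V_\bR$-invariance of $k_+\in K$, gives the contradiction. The paper's approach is more economical because it reuses the linear-algebraic setup of the dictionary and does not require computing $w\wedge w$; yours is more explicit and, as a byproduct, identifies precisely which Lagrangians in $W^{\le 0}([\mu])$ are being excluded. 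Both work. One small point of precision: you write ``$F^2 \in \LGr^{1,1}(V_\bC) \iff F^2 \cap V_\bR = \{0\}$'', but only the forward implication holds -- the condition $F^2\cap V_\bR = 0$ is satisfied also by the two Siegel-space components (signatures $(2,0)$ and $(0,2)$). This does not affect your argument, since only the forward direction is used, but the ``iff'' should be an ``implies''. (It does, incidentally, explain why your computation simultaneously proves proper discontinuity on the Siegel components -- consistent with the stratification of $\LGr(V_\bC)$ described in \autoref{sssec:a_space_with_many_pieces}.)
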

Under the assumption of $\alpha_1$-divergence on the group, such results are standard, see e.g. \cite[Thm.~7.4]{GuichardWienhard2012_Anosov-representations:-domains-of-discontinuity-and-applications}, see also \cite[\S1.5]{Benoist_Actions-propres-sur-les-espaces-homogenes-reductifs}.
\begin{proof}
	This follows from the $\alpha_1$-divergence of the group $\Gamma$ (see \autoref{def:theta_divergence}), which is implied the the $\alpha_1$-log-Anosov property.
	Indeed, in order to apply \autoref{thm:proper_discontinuity_criterion}, we need to check that no vector $[w]\in \LGr^{1,1}(V_{\bC})\subset\bP(W_{\bC})$ can lie in $k^{-1}W^{\leq 0}([\mu])$.
	But the $\alpha_1$-divergence property implies that $W^{\leq 0}_{\bC}([\mu])$ is contained in the span of $W_{-\mu_1-\mu_2}\oplus W_{\mu_2-\mu_1}\oplus W_{0}$.
	Suppose that $[w]$ did belong to (the complexification of) such a space, or to any real translate of it.
	Then so would its complex conjugate $[\ov{w}]$, and together $w,\ov{w}$ would span a negative-definite real $2$-dimensional subspace.
	But this contradicts the signature of $W^{\leq 0}([\mu])$, which is $(0,1,2)$, i.e. one negative direction and two totally degenerate directions.
\end{proof}

\begin{remark}[Proper discontinuity on $\bS^{1,3}(W_\bR)$]
	\label{rmk:proper_discontinuity_on_bs_1_w_br}
	It is also true that an $\alpha_1$-divergent group acts properly discontinuously on $\bS^{1,3}(W_\bR)$, the sphere of positive-definite unit vectors.
	The proof is very similar to that of \autoref{thm:proper_discontinuity_on_indefinite_lagrangians} given above, except that the linear-algebraic observation is that a positive-definite vector $w\in W_{\bR}$ cannot belong to $V^{\leq 0}([\mu])$ for the same signature reasons.
\end{remark}

We now consider the situation in $\bP(V_\bC)$, with the standing assumptions on $\bV$.

\begin{theorem}[Proper discontinuity on complex projective space]
	\label{thm:proper_discontinuity_on_complex_projective_space}
	Let $\Omega_{P}(\Gamma)\subset \bP(V_{\bC})$ be the complement of the closed set
	\[
		\Lambda_P(\Gamma):=\bigcup_{p\in \partial\wtilde{X}}\xi(p)_{\bC}^{\perp}
	\]
	where $\xi(p)^{\perp}_{\bC}$ is the complexified symplectic orthogonal, and $\xi\colon \partial\wtilde{X}\to \bP(V_\bR)$ the boundary map.
	
	Then $\Omega_{P}(\Gamma)$ is nonempty and the monodromy $\Gamma$ acts properly discontinuously on it.
\end{theorem}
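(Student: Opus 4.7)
The plan is to deduce proper discontinuity from \autoref{thm:proper_discontinuity_criterion} applied to $V_{\bC}$ viewed as a real $8$-dimensional representation of $G = \Sp_4(\bR)$, by verifying that every $[v] \in \Omega_P$ is $\Gamma$-stable in the sense of \autoref{def:stable_point}.

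For non-emptiness, a dimension count suffices. The assignment $p \mapsto \xi(p)^\perp_\bC$ is continuous from the compact circle $\partial\wtilde{X}$ to the Grassmannian of complex hyperplanes in $\bP(V_\bC)\cong \bP^3(\bC)$. Each such hyperplane has real dimension $4$ and the parameter space has real dimension $1$, so $\Lambda_P$ is compact of real dimension at most $5$, leaving $\Omega_P$ as a nonempty open subset of the real $6$-dimensional $\bP(V_\bC)$.

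For proper discontinuity the key step is linear-algebraic. By the $\alpha_1$-log-Anosov property, any $[\mu] \in \bP\fraka^+$ arising in $\cL_+(\Gamma)$ satisfies $\mu_1 > \mu_2 \geq 0$. Inspecting the weight decomposition of $V$ one checks directly that
\[
V^{\leq 0}([\mu]) \;\subseteq\; (V^{-\mu_1})^{\perp}
\]
for the symplectic orthogonal: equality holds in the boundary case $\mu_2(\mu)=0$ (both sides are the $3$-dimensional hyperplane $V^{-\mu_1}\oplus V^{\mu_2}\oplus V^{-\mu_2}$), while in the interior case $\mu_2(\mu)>0$ the left side is the $2$-dimensional Lagrangian $V^{-\mu_1}\oplus V^{-\mu_2}$, strictly inside this hyperplane. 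Since $k_+$ is symplectic this yields the crucial inclusion
\[
k_+^{-1}\, V^{\leq 0}([\mu])_{\bC} \;\subseteq\; \bigl(k_+^{-1}\, V^{-\mu_1}\bigr)^{\perp}_{\bC}.
\]
The Anosov input is then that $k_+^{-1}V^{-\mu_1}$ is always a point of $\xi(\partial\wtilde{X})$: dynamically, this is the attracting direction of the inverse sequence $\gamma_i^{-1}$ whose limit data defines $(k_-,k_+,[\mu])$, and attracting directions of such sequences lie in the image of $\xi$ by the dynamics-preserving property of \autoref{thm:existence_of_boundary_maps} combined with density of loxodromic elements in $\pi_1(X)$ and continuity of $\xi$ on the compact boundary. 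Choosing $q \in \partial\wtilde{X}$ with $\xi(q) = k_+^{-1}V^{-\mu_1}$ gives $(k_+^{-1}V^{-\mu_1})^{\perp}_{\bC} = \xi(q)^{\perp}_{\bC} \subseteq \Lambda_P$; since $[v] \in \Omega_P$ avoids every $\xi(q)^{\perp}_{\bC}$, it avoids $k_+^{-1}V^{\leq 0}([\mu])_{\bC}$, which is the stability condition. An appeal to \autoref{thm:proper_discontinuity_criterion}(iii) completes the proof.

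The main obstacle is precisely the identification $k_+^{-1}V^{-\mu_1} \in \xi(\partial\wtilde{X})$. \autoref{thm:control_of_k_component} as stated gives only $k_+^{-1}V^{\mu_1} \in \xi$ (the \emph{opposite} weight), and applying the inverse symmetry \autoref{eqn:taking_inverses_symmetry} naively yields $k_-V^{-\mu_1} \in \xi$, which is not quite what is needed because $k_-$ and $k_+^{-1}$ generally differ modulo $M_\theta$ for non-proximal limit data. The fix requires carefully tracking the $M_\theta$-ambiguity in the KAK decomposition together with the Weyl element $w_0$ (which for $\Sp_4$ acts by $-1$ on $\fraka$ and swaps $V^{\mu_1}$ with $V^{-\mu_1}$), and supplementing with the density argument using loxodromic elements from $\pi_1(X)$, whose repelling fixed directions are automatically in $\xi$ by the dynamics-preserving clause of \autoref{thm:existence_of_boundary_maps}.
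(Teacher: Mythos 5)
Your non-emptiness argument has a genuine gap. The dimension count presupposes that a continuous image of a compact parameter space of real dimension $5$ cannot fill the real $6$-dimensional $\bP(V_\bC)$, but that is false for bare continuity (Peano-type maps). Here the boundary map $\xi$ is only H\"older continuous, with an exponent that is not controlled: the introduction explicitly records that the limit curve is non-rectifiable in the hypergeometric examples and that its Hausdorff dimension is an open problem. A H\"older map with small exponent can raise Hausdorff dimension past $6$, and the covering-dimension version of your count would require the fibers of the evaluation map $\partial\wtilde X\times\bP^2(\bC)\to\bP(V_\bC)$ to be $0$-dimensional, which is not established. The paper sidesteps all of this by exhibiting an explicit witness in $\Omega_P$, and this is precisely where the Hodge theory bites: pick $x\in\wtilde X$ and, using that the hermitian form on $F^2(x)=\cV^{3,0}\oplus\cV^{2,1}$ has signature $(1,1)$, choose $v\in F^2(x)$ with $I(v,\ov v)=0$. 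If $v\in\xi(p)^\perp_\bC$ for some $p$, then so is $\ov v$ since $\xi(p)$ is real, hence $L:=\operatorname{span}(v,\ov v)$ is a real Lagrangian with $\xi(p)\subset L$, so its Pl\"ucker point $[w_L]$ lies in $\Lambda_L$; but $L_\bC\cap F^2(x)\neq0$ also puts $[w_L]$ in the electron $\beta(F^2(x))$, which lies in $\Omega_L$ by \autoref{thm:real_uniformization_lagrangian} and \autoref{thm:anosov_property_from_assumption_A}. That contradiction shows $[v]\in\Omega_P$. Without some such input, your route does not go through.

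Your proper-discontinuity argument, on the other hand, is essentially correct and fills in what the paper's one-sentence proof elides. The inclusion $V^{\leq 0}([\mu])\subseteq (V^{-\mu_1})^\perp$ (equality exactly on the wall $\mu_2(\mu)=0$) is right, as is the reduction of stability to the assertion $k_+^{-1}V^{-\mu_1}\in\xi(\partial\wtilde X)$. The bookkeeping subtlety you flag with \autoref{thm:control_of_k_component} is real — as stated it controls $k_+^{-1}V^{\mu_1}$, not $k_+^{-1}V^{-\mu_1}$ — and the fix you sketch is the correct one: from the KAK decomposition $\gamma_i^{-1}=\bigl(k_+(\gamma_i)^{-1}w_0^{-1}\bigr)e^{\mu^{op}}\bigl(w_0k_-(\gamma_i)^{-1}\bigr)$, the line $k_+(\gamma_i)^{-1}V^{-\mu_1}=k_-(\gamma_i^{-1})V^{\mu_1}$ is the attracting flag of $\gamma_i^{-1}$, and it converges to $\xi(q)$ for $q=\lim\gamma_i^{-1}x_0$; this is the dynamics-preserving content behind \autoref{thm:existence_of_boundary_maps}, and it gives $k_+^{-1}V^{-\mu_1}\in\xi(\partial\wtilde X)$ after passing to the limit.
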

\begin{proof}
	A combination of the criterion for proper discontinuity \autoref{thm:proper_discontinuity_criterion}, and the control on the limit set provided by \autoref{thm:control_of_k_component}, implies that $\Omega_P(\Gamma)$ is contained in the set of stable points so the action is properly discontinuous.

	It remains to check that the set is nonempty.
	In fact, let $F^2(x)\subset V$ be an element of the Hodge filtration for any $x\in \wtilde{X}$.
	Recall that $F^2(x)=\cV^{3,0}(x)\oplus \cV^{2,1}(x)$.
	We claim that any $v\in F^2(x)$ satisfying $I(v,\ov{v})=0$ (for the complexified symplectic pairing) will be in $\Omega_P(\Gamma)$.
	Note that such $v$ exist because the hermitian form on $F^2$ has signature $(1,1)$ and we are just taking null vectors for the hermitian form.

	Indeed, set $L:=span(v,\ov{v})$.
	Because $F^2$ is a complex Lagrangian, so $I(v,v)=0$ for any $v\in F^2$, and by the assumption on $v$, it follows that $L$ is a Lagrangian subspace, which is moreover defined over $\bR$.
	Suppose, by contradiction, that $v\in \xi(p)^{\perp}_{\bC}$ for some $p\in \partial \wtilde{X}$, or equivalently $I(v,\xi(p))=0$.
	Because $\xi(p)$ is real, it follows that $I(\ov{v},\xi(p))=0$.
	So the real Lagrangian $L$ satisfies $L_{\bR}\subset \xi(p)^{\perp}_{\bR}$ and $L_{\bC}\cap F^2(x)\neq 0$.

	Let $w_{L}=\Lambda^2 L\in W_{\bR}$ be the coordinates of $L$ in $W_{\bR}$.
	The property $L_{\bR}\subset \xi(p)^{\perp}_{\bR}$, which is equivalent to $\xi(p)\subset L_{\bR}$, implies that $[w_L]\in \Lambda_L(\Gamma)\subset \LGr(V_{\bR})$.
	However, the property $L_{\bC}\cap F^2(x)\neq 0$ implies that $[w_L]$ belongs to the electron $\beta(F^2(x))\subset \LGr(V_{\bR})$.
	By \autoref{thm:real_uniformization_lagrangian} the electron $\beta(F^2(x))$ is entirely contained in $\Omega_L$, thus giving a contradiction.
\end{proof}

Let us record the following consequence of the proof:
\begin{corollary}[One-sided containment of limit set]
	\label{cor:one_sided_containment_of_limit_set}
	Suppose $F^2(x)\subset V_{\bC}$ is an element of the Hodge filtration for $x\in \wtilde{X}$.
	Then the intersection of $\Lambda_P(\Gamma)$ and $\bP(F^2(x))$ is disjoint from the set of hermitian null vectors of $F^2(x)$.
\end{corollary}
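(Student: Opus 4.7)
The plan is to essentially reuse the argument that established nonemptiness of $\Omega_P(\Gamma)$ in the proof of \autoref{thm:proper_discontinuity_on_complex_projective_space}, repackaged as a contrapositive. Let $v \in F^2(x)$ be a hermitian null vector, i.e.\ $I(v,\ov{v}) = 0$, and suppose for contradiction that $[v] \in \Lambda_P(\Gamma)$. By definition of $\Lambda_P(\Gamma)$, there exists $p \in \partial \wtilde{X}$ such that $v \in \xi(p)_{\bC}^{\perp}$. Because $\xi(p) \subset V_{\bR}$, the symplectic-orthogonal complement $\xi(p)_{\bC}^{\perp}$ is the complexification of a real subspace, hence invariant under complex conjugation, and therefore $\ov{v} \in \xi(p)_{\bC}^{\perp}$ as well.

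Now consider $L := \operatorname{span}(v,\ov{v}) \subset V_{\bC}$. The two defining properties $I(v,v) = 0$ (since $F^2$ is a complex Lagrangian) and $I(v,\ov{v}) = 0$ (the hermitian null condition) together with their conjugates show that $L$ is isotropic, hence a complex Lagrangian of $V_{\bC}$; since $L$ is conjugation-invariant, it arises from a real Lagrangian $L_{\bR} \subset V_{\bR}$. The fact that $\xi(p)$ is symplectic-orthogonal to both $v$ and $\ov{v}$ translates, using $L_{\bR}^{\perp} = L_{\bR}$, into the containment $\xi(p) \subset L_{\bR}$.

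This gives two simultaneous features for the Pl\"ucker point $[w_L] = [\Lambda^2 L_{\bR}] \in \LGr(V_{\bR})$. First, since $L_{\bR}$ contains the line $\xi(p) \in \xi(\partial \wtilde{X})$, by the very definition of the limit set $\Lambda_L(\Gamma)$ recalled in \autoref{sssec:dominated_splittings_of_cocycles} we have $[w_L] \in \Lambda_L(\Gamma)$. Second, since $v \in L_{\bC} \cap F^2(x)$ is nonzero, we have $[w_L] \in \beta(F^2(x))$, the electron of $F^2(x)$, by \autoref{def:electrons}.

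The contradiction now comes from combining \autoref{thm:real_uniformization_lagrangian}, which asserts that every electron $\beta(F^2(x))$ lies entirely inside the open set $\Omega_L$, with \autoref{thm:anosov_property_from_assumption_A}(ii), which identifies $\Lambda_L(\Gamma)$ as the complement $\LGr(V_{\bR}) \setminus \Omega_L$. No step here is genuinely new; the only thing to be careful about is the bookkeeping confirming that $L_{\bR}$ is indeed a real Lagrangian (rather than merely isotropic) and that the electron/limit-set dichotomy has already been set up by this point in the paper, both of which have been handled in the preceding theorems.
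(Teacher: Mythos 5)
Your argument is exactly the paper's: the paper states \autoref{cor:one_sided_containment_of_limit_set} as a direct consequence of the nonemptiness argument inside the proof of \autoref{thm:proper_discontinuity_on_complex_projective_space}, which constructs $L = \operatorname{span}(v,\ov{v})$, derives $[w_L] \in \Lambda_L(\Gamma) \cap \beta(F^2(x))$, and contradicts \autoref{thm:real_uniformization_lagrangian} together with \autoref{thm:anosov_property_from_assumption_A}(ii). Your repackaging as a contrapositive is correct and matches the intended reading.
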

Note that $\bP(F^2(x))\isom \bP^1(\bC)$ and the hermitian null vectors correspond to an ``equator''.
The two components of the complement correspond to the hermitian-positive vectors, containing $\cV^{3,0}(x)$, and hermitian-negative vectors, containing $\cV^{2,1}(x)$.
It is natural to ask: in which of the two components does the intersect with the limit set live?
It turns out that it lies entirely in the component of hermitian-positive vectors, although a general proof of this fact will be presented elsewhere.

In the case of the hypergeometric examples treated in \autoref{sec:hypergeometric_local_systems}, this assertion can be verified by considering the degeneration of the Hodge decomposition in a neighborhood of a singular point.
At least one point in the limit set is known (the invariant vector of the monodromy) and it can be checked from the asymptotics of the period map that indeed its symplectic orthogonal will intersect $F^2$ near $\cV^{3,0}$.

The proof in the general case is indirect.
Namely, if $\Lambda_P(\Gamma)$ always intersected $\bP(F^2(x))$ in the component containing the hermitian negative vectors, then in particular it would never intersect $\cV^{3,0}$ (which is positive-definite).
This would imply then that one can establish a formula for the \emph{top} Lyapunov exponent of $\bV$ (in the usual manner, see e.g. \cite{EskinKontsevichMoller2018_Lower-bounds-for-Lyapunov-exponents-of-flat-bundles-on-curves}).
This can then be used to show that the monodromy representation is maximal, in the sense of \autoref{ssec:maximal_hypergeometric_monodromy}, which combined with assumption A would imply that $\bV$ is in fact the third symmetric power of a weight $1$, rank $2$, uniformizing representation of the base Riemann surface $X$.
In this case, the limit set is completely explicit and a direct calculation verifies the assertion (see e.g. \cite[Proof of Lemma~5.8]{CollierTholozanToulisse2019_The-geometry-of-maximal-representations-of-surface-groups-into} for analogous calculations).


\subsubsection{Real algebraic curves, recollections}
	\label{sssec:real_algebraic_curves_recollections}
To motivate the discussion in the subsequent paragraph \autoref{sssec:a_space_with_many_pieces}, we recall some classical facts on Riemann surfaces.
Let $C$ be a proper algebraic curve over $\bR$.
Two possibilities arise: either the real points $C(\bR)$ disconnect the complex points $C(\bC)$, in which case the real curve $C$ is called ``separating'', or not.
It turns out that $C$ is separating if and only if it admits a map $C\xrightarrow{f} \bP^1$ such that $f^{-1}(\bP^1(\bR))\subset C(\bR)$ (i.e the only real values of $f$ come from real points on $C$).
One direction is straightforward, the converse existence result was established by Ahlfors \cite{Ahlfors1950_Open-Riemann-surfaces-and-extremal-problems-on-compact-subregions}, but see Gabard \cite{Gabard2006_Sur-la-representation-conforme-des-surfaces-de-Riemann-a-bord-et-une-caracterisation-des-courbes} for a more recent proof, in the spirit of the discussion below.

More importantly, when $C$ is separating we can write $C(\bC)=C^- \coprod C(\bR)\coprod C^+$, say choosing one of the complementary components to be labeled with $+$ (the discussion can avoid choices, including of $\sqrt{-1}$, if preferred).
The noncompact Riemann surface $C^+$ has a uniformization by the upper half-plane $\bH^+$, and there is a subgroup $\Gamma\subset \PSL_2(\bR)$ and an isomorphism $\Gamma\backslash \bH^+\isom C^+$.
The limit set of $\Gamma$ in $\bP^1(\bR)$ is a Cantor set $\Lambda$, and removing it from $\bP^1(\bC)$ gives an isomorphism $\Gamma\backslash (\bP^1(\bC)\setminus \Lambda)\isom C(\bC)$.
The stratification of $C(\bC)$ into the different components is now mimicked by the uniformization via $\bP^1(\bC)\setminus \Lambda$.

Note that $\Gamma$ is \emph{not} a lattice in $\PSL_2(\bR)$ and the quotient $\Gamma\backslash\bH^+$ has infinite hyperbolic volume.
Note also that the construction yields a uniformization of the real locus as $C(\bR)\isom \bP^1(\bR)\setminus \Lambda$.

\subsubsection{A space with many pieces}
	\label{sssec:a_space_with_many_pieces}
The uniformization discussion of real algebraic curves above has the following counterpart in our context.
Let $\LGr(V)\subset \bP(W)$ be the Lagrangian Grassmannian, which will be considered over $\bR$ and over $\bC$.
\autoref{thm:anosov_property_from_assumption_A} provides a limit set $\Lambda_{L,\bR}\subset \LGr(V_{\bR})$, and we can analogously define $\Lambda_{L,\bC}\subset \LGr(V_{\bC})$, where we now include, for every $p\in \partial\wtilde{X}$, the complexified photon $\phi_{\bC}(\xi(p))$ of all Lagrangians containing $\xi(p)$.

Taking complements provides domains of discontinuity $\Omega_{L,\bR}\subset \Omega_{L,\bC}$.
We can then form the quotients
\[
	S(\bR) = \leftquot{\Gamma}{\Omega_{L,\bR}} \quad
	S(\bC) = \leftquot{\Gamma}{\Omega_{L,\bC}}
\]
Each of $S(k)$ is a $3$-dimensional $k$-analytic manifold.
Furthermore, the natural complex conjugation will preserve the domains of discontinuity, commute with the action of $\Gamma$, and the fixed locus is precisely the real domain of discontinuity.

Recall also that one can stratify $\LGr(V_{\bC})$ according to the orbits of $\Sp(V_\bR)$, and this stratification will descend to $S(\bC)$.
The most convenient way to classify the strata is to start by setting
\[
	\cR_{i}:=\{L\in \LGr(V_{\bC})\colon \dim_{\bR}(L\cap \ov{L})=i\}
\]
with $i=0,1,2$.
Clearly each $\cR_i$ is $\Sp(V_\bR)$-invariant.
Next, each $\cR_i$ fibers over $\Gr(i;V_{\bR})$, by assigning to $L$ the intersection $L\cap \ov{L}$.
The fibers are disjoint union of pseudo-Siegel spaces $\cS^{p,q}$ with $p+q=2-i$, where $\cS^{p,q}$ parametrizes complex Lagrangians in $\bC^{2(p+q)}$ of signature $(p,q)$ for the pseudo-hermitian metric.
Let us call this decomposition $\cR_{i}=\coprod_{p,q}\cR_{i,p,q}$.
It is immediate to check that the action of $\Sp(V_\bR)$ on each $\cR_{i,p,q}$ is transitive and complex conjugates takes $\cR_{i,p,q}$ to $\cR_{i,q,p}$.

For instance, the open strata are $\cR_{0,2,0},\cR_{0,0,2}$ and $\cR_{0,1,1}$.
The first two correspond to classical Siegel space and its complex conjugate, while the last one is $\LGr^{1,1}(V_{\bC})$ from \autoref{thm:proper_discontinuity_on_indefinite_lagrangians}.
The ``seams'' are given by the lower dimensional strata, with the deepest one being $\cR_{2,0,0}=\LGr(V_{\bR})$, in the quotient leading to $S(\bR)$.

\begin{question}[Properties of the quotient]
	\label{question:properties_of_the_quotient_Frankenstein}
	Here are some natural questions about this construction.
	\begin{enumerate}
		\item Are there compatible compactifications of $S(k)$ for $k=\bR,\bC$, that turn them into manifolds?
		This is likely, and \autoref{thmintro:rational_directions_on_limit_curve} on the rational directions in the limit set is probably going to be useful.
		\item There are natural volume forms on $S(k)$.
		Is the volume of $S(\bC)$ finite?
		Note that the finiteness of volume of $S(\bR)$ can be seen from the uniformization in \autoref{thmintro:uniformization_of_real_lagrangians} and the identification with the circle bundle over the Riemann surface.
		\item Is there a fibration by complex curves of $S(\bC)$ to a complex surface?
		Note that inside the stratum coming from $\LGr^{1,1}(V_{\bC})$ there are plenty of $\bP^1(\bC)$'s (projected from the Griffiths period domain) but there can't be any $\bP^1(\bC)$'s fully contained in a Siegel component.
		\item Are there any nonconstant meromorphic functions on $S(\bC)$?
		\item There is a tautological rank $2$ holomorphic bundle of Lagrangians on $S(\bC)$.
		Does this bundle, or its determinant line bundle, admit any holomorphic sections?
		\item Instead of $\LGr(V_{\bC})$ consider $\Gr^+(2;W_{\bR})$, the (compact) Grassmannian of all oriented $2$-planes in $W_{\bR}$.
		The highest-dimensional open strata are naturally identified, but the lower-dimensional ones are in general different.
		The closed orbit, i.e. lowest dimensional stratum, is $\LGr(V_{\bR})$ in the original case, and $\bP(V_{\bR})$ in $\Gr(2;W_{\bR})$.
		What are the topological differences between the quotients of the domains of discontinuity?
	\end{enumerate}
\end{question}
An investigation of domains of discontinuity of Anosov representations in complex flag manifolds has been pursued by Dumas--Sanders \cite{DumasSanders_Uniformization-of-compact-complex-manifolds-by-Anosov-representations}, though the situation is somewhat different.



\subsection{The adjoint representation}
	\label{ssec:the_adjoint_representation}

\subsubsection{Setup}
	\label{sssec:setup_the_adjoint_representation}
In this section we will study the properties of the monodromy, and the VHS, in the adjoint representation, still under assumption A.
Specifically, set $\bU:=\Lambda^2 \bW$, or equivalently $\bU\isom \Sym^2 \bV$.
The local system $\bU$ is the one obtained by taking the adjoint representation of $\SO_{2,3}$ or $\Sp_4$.
It will be most convenient to work with the interpretation $\bU:=\Lambda^2\bW$.
Then $\bU$ admits a rank $10$ VHS with Hodge numbers $(1,1,2,2,2,1,1)$, which we will view as having weight $8$.

The middle Hodge bundle $\cU^{4,4}$ decomposes into two line bundles $\cW^{3,1}\wedge \cW^{1,3}$ and the one that will be relevant to us:
\[
	\cL:= \cW^{4,0}\wedge \cW^{0,4}.
\]
Set $U_{\bR}:=\Lambda^{2}W_{\bR}$ to be the corresponding vector space.
Then $\cL$ gives a map from the universal cover $\wtilde{X}$ to $\bP(U_{\bR})$, whose image is in the Grassmannian of $2$-planes in $W_{\bR}$.

\subsubsection{Domination structure in adjoint representation}
	\label{sssec:domination_structure_in_adjoint_representation}
By \autoref{thm:domination_in_log_anosov_representations}, since the monodromy representation is $\alpha_1$-log-Anosov, we have a log-dominated decomposition of $\bU$ over $T^1X$, with a poset as described in \autoref{fig:adjoint_rep_poset}.
To introduce notation, denote the dominated decomposition of the original $\bW$ as
\[
	\bW = \bW^{<0} \oplus \bW^{0} \oplus \bW^{>0}
\]
where each of $\bW^{>0},\bW^{<0}$ are isotropic $2$-dimensional subspaces, $\bW^{0}$ is negative-definite $1$-dimensional, and the decomposition is orthogonal for the indefinite metric on $\bW$.
We will also use the notation $\cW^{\leq 0}:=\bW^{<0}\oplus \bW^{0}$.
This notation is slightly different from that in \autoref{sssec:dominated_splittings_of_cocycles}.
On the universal cover, in the standard flat trivialization, the bundle $\bW^{\leq 0}$ gives a subspace $\bW^{\leq 0}(p)\subset W_{\bR}$ which depends only on the boundary point $p\in \partial \wtilde{X}$ corresponding to the projection $T^1\wtilde{X}\to \partial \wtilde{X}$ obtained by following the geodesic to the boundary.

Inside the dominated decomposition of $\bU$, we will need two subbundles:
\begin{align*}
	\bU^{\ll 0}&:= \Lambda^2 \bW^{<0} && \text{ which is a line}\\
	\bU^{\ll top}&:= \left(\bW^{<0}\right)\wedge \bW && \text{ which is codimension-one in }\bU\\
\end{align*}
In terms of \autoref{fig:adjoint_rep_poset}, the first bundle refer to the leftmost, least growing subbundle, while the second refers to the direct sum of all but the rightmost, i.e. fastest growing, subbundle.
On the universal cover this gives a family of subspaces
\begin{align}
	\label{eqn:xi_1_xi_n_adjoint}
	\begin{split}
	\xi_1\colon \partial\wtilde{X}\to \bP(U_{\bR}) \text{ and }
	\xi_{9}\colon \partial\wtilde{X}\to \bP(U_{\bR}^{\dual})
	\end{split}
\end{align}
given by taking the corresponding line and hyperplane.
Note that $\xi_1(p)\subset \xi_9(p)$ and the hyperplane is the orthogonal complement of the (isotropic) line.

\subsubsection{Relation to Hodge decomposition}
	\label{sssec:relation_to_hodge_decomposition}
Recall that $\bW=\oplus \cW^{p,q}(x)$ has a Hodge decomposition, depending on $x\in X$.
Lift it to the universal cover $\wtilde{X}$ to obtain a family of subspaces $\cW^{p,q}(x)\subset W_{\bR}$ for $x\in \wtilde{X}$.
Denote by
\[
	N^e(x):=\left(\cW^{4,0}(x)\oplus \cW^{0,4}(x)\right)\cap W_{\bR}
\]
the real $2$-dimensional negative-definite subspace spanned by the two extreme Hodge bundles.
Let also
\[
	P^e(x):= \left(\cW^{3,1}(x)\oplus\cW^{2,2} \oplus \cW^{1,3}(x)\right)\cap W_{\bR}
\]
be the orthogonal complement of $N^e$ for the indefinite metric.
It is $3$-dimensional of signature $(1,2)$.

A crucial consequence of \autoref{thm:real_uniformization_lagrangian} and \autoref{thm:anosov_property_from_assumption_A} is that $N^e(x)$ is disjoint from $\bW^{\leq 0}(p)$, for any $x\in \wtilde{X}$ and $p\in \partial\wtilde{X}$.
Indeed, suppose that $N^e(x)\cap \bW^{\leq 0}(p)\neq 0$.
Then taking orthogonal complements for the indefinite metric, the same should hold, i.e. $P^e(x)\cap \bW^{<0}(p)\neq 0$.
Take $w\in W_{\bR}$ a nonzero vector in the intersection.
Then the function $f_w:=\norm{w^{0,4}}^2$ would vanish at $x$ by construction, but this would contradict \autoref{thm:real_uniformization_lagrangian} and \autoref{thm:anosov_property_from_assumption_A} since $[w]\in \Lambda_L$	also by construction.

\begin{theorem}[Boundary values in adjoint representation]
	\label{thm:boundary_values_in_adjoint_representation}
	Consider the bundle $\cL$ from \autoref{sssec:setup_the_adjoint_representation}, viewed as a map
	\[
		\cL\colon \wtilde{X}\to \Gr_{0,2}(W_{\bR})\subset \Gr(2;W_{\bR})\subset \bP(U_{\bR})
	\]
	into the Grassmannian of negative-definite $2$-planes in $W_{\bR}$, sitting inside the larger Grassmannian of all $2$-planes in $W_{\bR}$, embedded in $\bP(U_{\bR})$.

	Suppose now that $x_i\in \wtilde{X}$ is any sequence with $x_i\to p\in \partial\wtilde{X}$, with $x_i$ at a bounded above distance from a geodesic connecting $p$ to the basepoint $x_0$, and such that $x_i$ project to the compact part of $X$.
	Then
	\[
		\cL(x_i)\to \xi_1(p) \in \Gr_{(0,0,2)}(W_{\bR})\isom \bP(V_\bR)
	\]
	where $\xi_1(p)$ is defined in \autoref{eqn:xi_1_xi_n_adjoint} and $\Gr_{(0,0,2)}(W_{\bR})$ denotes the Grassmannian of isotropic $2$-planes in $W_{\bR}$.
\end{theorem}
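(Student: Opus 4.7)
The plan is to combine a compactness argument in $\Gr(2;W_{\bR})$ with the dominated-cocycle structure on $\bU=\Lambda^{2}\bW$ and the Hodge-theoretic origin of $\cL$. First, observe that $\cL(x)=\cW^{4,0}(x)\wedge \cW^{0,4}(x)$ defines a canonical line in $\bP(U_{\bR})$ (the wedge is purely imaginary at the complex level, hence projectively real), and under the \Plucker embedding $\Gr(2;W_{\bR})\hookrightarrow \bP(U_{\bR})$ this line agrees with $[N^{e}(x)]$. By compactness of $\Gr(2;W_{\bR})$, it suffices to show that any subsequential limit of $\cL(x_{i})$ coincides with $\xi_{1}(p)$, so throughout one may extract a subsequence with $\cL(x_{i})\to L_{\infty}$ in $\Gr(2;W_{\bR})$.

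For the convergence step, the main input is the domination provided by \autoref{thm:domination_in_log_anosov_representations} applied to the adjoint representation. Let $u_{i}\in T^{1}\wtilde{X}$ be the unit tangent at $x_{i}$ along the geodesic toward $p$; the hypothesis on the $x_{i}$ ensures that the $u_{i}$ project to a uniformly bounded region of $T^{1}X$ and share the forward endpoint $p$. The line bundle $\bU^{\ll 0}=\Lambda^{2}\bW^{<0}$, being at the bottom of the poset of \autoref{fig:adjoint_rep_poset} and arising from a stable $P_{\theta}$-subrepresentation, depends continuously only on $p$ via $\bU^{\ll 0}(p)=\xi_{1}(p)$. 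Any fixed vector in $U_{\bR}$ with a nonzero component in a subbundle strictly $\theta$-dominating $\bU^{\ll 0}$ has Hodge norm at $x_{i}$ growing exponentially in $\dist(x_{0},x_{i})$, thanks to the $\alpha_{1}$-singular-value gap of \autoref{thm:anosov_property_from_assumption_A} combined with the bounded distortion between the Hodge metric and the equivariant metric of \autoref{sssec:metrics_on_cocycles} on the compact part of $T^{1}X$. Since $\cL(x_{i})$ is Hodge-unit by construction (as a product of unit-norm Hodge lines), a \Plucker-unit representative of $[\cL(x_{i})]\in \bP(U_{\bR})$ is bounded in ambient norm, so its accumulation directions must lie in the limit of the least-growing subbundle, namely $\bU^{\ll 0}(p)=\xi_{1}(p)$.

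The principal obstacle is to make rigorous the passage from the Hodge-boundedness of $\cL(x_{i})$ to the projective alignment with $\bU^{\ll 0}$. Since $\cL(x_{i})$ is not a single parallel-transported vector but varies with $i$, the argument must pick an ambient-norm-one representative, extract a limit $w_{\infty}\in U_{\bR}$, and rule out $w_{\infty}\notin \xi_{1}(p)$ by showing that any such transverse component would force the Hodge norm of the approximating sequence to blow up, contradicting the factorization $\cL(x_{i})=\cW^{4,0}(x_{i})\wedge \cW^{0,4}(x_{i})$ with each factor of controlled Hodge norm at $x_{i}$. An equivalent and perhaps cleaner route goes through the orthogonal complement $P^{e}(x)=N^{e}(x)^{\perp}$: the transversality $N^{e}(x)\cap \bW^{\leq 0}(p)=\{0\}$ established in \autoref{sssec:relation_to_hodge_decomposition}, combined with the exponential growth estimates of \autoref{lem:exponential_growth_pos_def} and \autoref{lem:exponential_growth_isotropic_version} applied to vectors in the dominant subspaces, propagates to the limit and forces $L_{\infty}\subset \xi_{1}(p)$, hence by dimension equality $L_{\infty}=\xi_{1}(p)$.
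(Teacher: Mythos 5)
You correctly locate the two ingredients the argument requires: that $\cL(x)=\Lambda^2 N^e(x)$ never meets $\xi_9(q)=\bW^{\leq 0}(q)\wedge\bW$ for any $x\in\wtilde X$ and $q\in\partial\wtilde X$, and the dominated splitting of the adjoint cocycle $\bU$. The overall strategy (show any subsequential limit of $\cL(x_i)$ equals $\xi_1(p)$) is also what the paper does. But the step you rest the convergence on does not work, and the "cleaner alternative" is logically reversed.

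The claim "any fixed vector in $U_{\bR}$ with a nonzero component in a subbundle strictly $\theta$-dominating $\bU^{\ll 0}$ has Hodge norm at $x_i$ growing exponentially in $\dist(x_0,x_i)$" is not a consequence of the $\alpha_1$-singular-value gap. That gap, and the dominated splitting it produces via \autoref{thm:domination_in_log_anosov_representations}, only control \emph{relative} rates between the $L_\theta$-pieces, never absolute growth. Concretely, the $L_\theta$-piece just above $\bU^{\ll 0}$ in the poset has $\alpha_1$-coefficient $-1$ and restricted weights $-\mu_1\pm\mu_2$; since $\mu_1\geq\mu_2\geq 0$ always, a fixed vector supported there has Hodge norm along the ray that \emph{shrinks}, merely more slowly than $\bU^{\ll 0}$ does. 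So "Plucker-unit boundedness" does not force accumulation onto the least-growing piece: the ambient-unit representative is bounded by fiat and says nothing, and the Hodge-unit representative at $x_i$ is a different vector altogether. Your "alternative route" is reversed in logic: $N^e(x)\cap\bW^{\leq 0}(p)=\{0\}$ is an open condition, so it cannot "propagate to the limit" --- and indeed the conclusion you want is exactly that the limit plane $\bW^{<0}(p)$ does lie inside $\bW^{\leq 0}(p)$, so transversality must \emph{degenerate} at the boundary, not persist.

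What is actually needed, and what the paper's own proof does, is to use the disjointness of $\cL$ from $\xi_9$ quantitatively \emph{at the moving point} and then propagate with relative domination. For $l\in\cL(x_i)\setminus\{0\}$ write $l=l_1+l_9$ with $l_1\in\xi_1(p)$, $l_9\in\xi_9(p')$, $p'=op_{x_0}(p)$. Because $\cL$ never meets $\xi_9(\cdot)$ and the $x_i$ project to a fixed compact part, the angle between them is uniformly bounded below, giving $\norm{l_1}_{x_i}\geq\ve\,\norm{l_9}_{x_i}$ with $\ve>0$ independent of $i$. Now flow from $x_i$ to $x_0$: on this trajectory $\xi_1(p)$ is the totally dominating piece and $\xi_9(p')$ its complement, so the ratio $\norm{l_1}/\norm{l_9}$ improves by at least a factor $\frac{1}{C}e^{\ve'\dist(x_0,x_i)}$, hence $\norm{l_1}_{x_0}\geq\frac{\ve}{C}e^{\ve'\dist(x_0,x_i)}\norm{l_9}_{x_0}$, which is the required convergence $\cL(x_i)\to\xi_1(p)$ with an explicit exponential rate. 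No claim about absolute growth of Hodge norms is used anywhere, and none is available.
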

\begin{remark}[On the proof]
	\label{rmk:on_the_proof}
	\leavevmode
	\begin{enumerate}
		\item In fact the result also holds for sequences $x_i$ approaching $p$ not necessarily staying in the compact part (but still a bounded distance from a geodesic).
		To establish this more general result, the lower bound on the (intrinsic) distance between $\cL(x)$ and the limit set needs to be verified for $x$ in the cusp.
		This can be done using the estimates in \autoref{sec:unipotent_dynamics} (which are purely topological) and the asymptotics of the VHS coming from Schmid's $\SL_2$-orbit theorem.
		\item The Grassmannian of isotropic $2$-planes in $W_{\bR}$ is the closed $\SO(W_{\bR})$-orbit inside the Grassmannian of all $2$-planes in $W_{\bR}$.
		It is also isomorphic to $\bP(V_{\bR})$, which is the target for the limit curve from \autoref{thm:anosov_property_from_assumption_A}.
		It is immediate to verify that the notation $\xi_1$ refers in both situations to the same thing.
		The entire discussion could be lifted to the double cover where all real subspaces are oriented.
		\item We could also use the identification of the Grassmannian of negative-definite $2$-planes in $W_{\bR}$ with $\LGr^{1,1}(V_{\bC})$, and then the map $\cL(x)$ is identified with the holomorphic map coming from the term $F^2$ of the Hodge filtration.
		I do not expect that there are limit boundary values in this case, because the minimal orbit in $\LGr(V_{\bC})$ is $\LGr(V_{\bR})$, and the limit set there is $2$-dimensional.
	\end{enumerate}
\end{remark}
\begin{proof}[Proof of \autoref{thm:boundary_values_in_adjoint_representation}]
	We work in the universal cover $\wtilde{X}$, where all bundles are viewed as subspaces of a fixed vector space.
	Recall that we introduced in \autoref{sssec:relation_to_hodge_decomposition} the bundle $N^e(x)=\cW^{4,0}\oplus \cW^{0,4}$, which is defined over $\bR$, and it follows from the constructions that $\cL(x)=\Lambda^2 (N^e(x))$.
	We also established that the $2$-dimensional bundle $N^e(x)$ never intersects the $3$-dimensional one $\bW^{\leq 0}(p)$, for any $x\in \wtilde{X}$ and $p\in \partial\wtilde{X}$.
	It follows that $\cL(x)$ then never intersects $\xi_{9}(p)=\bW^{\leq 0}(p)\wedge \bW$, for any $x\in \wtilde{X}$ and $p\in \partial\wtilde{X}$.

	Now, we can descend this property from the universal cover to $T^1X$ and it follows that for the intrinsic Hodge metric on $\bU$, for $u\in T^1 X$ in a compact set $K$, there exists $\ve(K)>0$ such that $\dist(\cL(u),\xi_9(u))\geq \ve(K)$.
	Note that $\cL(u)$ depends only on the projection of $u$ to $X$, while $\xi_9(u)$ changes as $u$ changes in the fiber over a fixed $x\in X$.

	We can now apply the domination estimates on the bundles.
	Pick $x\in \wtilde{X}$ projecting to a fixed compact part of $X$, and $u_x\in T^1\wtilde{X}$ projecting to $x$, such that $u_x g_T=u_0$ where $u_0 \in T^1_{x_0}\wtilde{X}$, i.e. flowing from $x$, in the direction $u_x$, for time $T>0$, brings us to the basepoint $x_0$.
	Let also $p\in \partial{\wtilde{X}}$ be the boundary point obtained by flowing from $u_x$ to time $-\infty$, and similarly $p'\in \partial\wtilde{X}$ obtained by flowing to $+\infty$ (so $x_0$ is on the geodesic between $x$ and $p'$).
	For the statement of the theorem, it suffices to show that
	\[
		\dist_{x_0}(\cL(x),\xi_1(p)) \leq C e^{-\ve'\cdot dist(x_0,x)}= C e^{-\ve'\cdot T}
	\]
	for some constants $C,\ve'>0$, where $\dist_{x_0}$ refers to the distance in projective space using the Hodge metric at $x_0$.
	However, this follows from the domination estimates, applied to a nonzero vector $l\in \cL(x)$, decomposed as $l=l_1+l_9$ with $l_1\in \xi_1(p)$ and $l_9\in \xi_9(p')$.
	By uniform transversality of $\cL$ and $\xi_9$ we have that $\norm{l_1}_x\geq \ve \norm{l_9}_x$.
	Now apply the geodesic flow, which acts as the identity in the trivialization on the universal cover using the Gauss--Manin connection.
	The domination property gives $\norm{l_1}_{u_x g_T}\geq \frac{\ve}{C}e^{-\ve' \cdot T}\norm{l_9}_{u_x g_T}$.
	Recalling that $u_xg_T = u_0$ and projects to $x_0$, the stated distance estimate follows.
\end{proof}



\subsection{Maximal hypergeometric monodromy}
	\label{ssec:maximal_hypergeometric_monodromy}

This section is devoted to a brief discussion of a natural variant of ``assumption A'', which leads to maximal representations in the sense of \cite{BurgerIozziWienhard2010_Surface-group-representations-with-maximal-Toledo-invariant}.
We will be brief and only illustrate the salient points, and a detailed study (including the higher rank situation) will appear in a separate text.
Additionally, we classify the rank $5$ hypergeometric equations with Hodge numbers $(1,1,1,1,1)$ and maximal monodromy.

Consider the following analogue of assumption A from \autoref{def:assumption_a}, which we shall call ``assumption B''.
Let $\bW$ be the rank $5$ variation of Hodge structure with Hodge numbers $(1,1,1,1,1)$.
Suppose also that there exists a rank $4$ VHS $\bV$, such that $\bW=\Lambda^2_{\circ}\bW$ as before.
We keep the notation of \autoref{def:assumption_a}.

\begin{definition}[Assumption B]
	\label{def:assumption_b}
	The VHS $\bW$ satisfies \emph{assumption B} if the second fundamental form
	\[
		\sigma_{2,2}\colon \cW^{2,2}_{ext}\to \cW^{1,3}_{ext}\otimes \cK_{\ov{X}}(\log D)
	\]
	is an isomorphism.
	Analogously $\bV$ satisfies assumption B if the second fundamental form
	\[
		\sigma_{3,0}\colon \cV^{3,0}_{ext}\to \cV^{2,1}_{ext}\otimes \cK_{\ov{X}}(\log D)
	\]
	is an isomorphism.
\end{definition}

\begin{remark}[On assumption B]
	\label{rmk:on_assumption_b}
	\leavevmode
	\begin{enumerate}
		\item It is immediate that if $\bV$ satisfies both assumption A and assumption B, then it is the third symmetric power of the uniformizing representation of the base Riemann surface.
		Indeed, under these assumptions we find $\cV^{p,q}\isom \cK_{\ov{X}}(\log D)^{\otimes (p-q)/2}$ and all the second fundamental forms are isomorphisms.
		Therefore, by the Simpson correspondence, the claim follows.
		\item Since $\cW^{2,2}$ is a trivial holomorphic line bundle, assumption B is equivalent to the isomorphism $\cW^{3,1}_{ext}\isom \cK_{\ov{X}}(\log D)$.
		The Toledo number of the monodromy representation is the same as the degree of $\cW^{3,1}$, so assumption $B$ is equivalent to the maximality of the monodromy (see \cite[Prop.~2.4]{CollierTholozanToulisse2019_The-geometry-of-maximal-representations-of-surface-groups-into} for the justifications in the more general setting of Higgs bundles).
	\end{enumerate}
\end{remark}

\subsubsection{Boundary maps, proper discontinuity}
	\label{sssec:boundary_maps_proper_discontinuity_maximal_case}
Let $\rho_V\colon \pi_1(X)\to \Sp(V_{\bR})$ the monodromy representation of a VHS satisfying assumption B, and let $\rho_W$ be the corresponding representation to $\SO(W_{\bR})$.
Since the Lagrangian Grassmannian $\LGr(V_{\bR})$ is contained in $\bP\left(W_{\bR}\right)$, only $\rho_W$ is needed for the discussion below, which we can write also as $\Ein^{1,2}(W_{\bR})$ to denote the quadric of null vectors.

It follows from \cite[Thm.~6.1]{BurgerIozziLabourie2005_Maximal-representations-of-surface-groups:-symplectic-Anosov-structures} that there exists a dynamics-preserving, $\rho_W$-equivariant map
\[
	\xi\colon \partial\wtilde{X}\to \Ein^{1,2}(W_{\bR})\isom \LGr(V_{\bR})
\]
in the sense of \autoref{thm:existence_of_boundary_maps}.
Furthermore, there is a domain of discontinuity $\Omega_P\subset \bP\left(V_{\bR}\right)$ (where in the absence of a symplectic lift of $\rho_W$, one can use the flag manifold of isotropic $2$-planes for $\SO(W_{\bR})$ in place of $\bP(V_{\bR})$).

One of the main results of Collier, Tholozan, and Toulisse \cite[Thm.~1]{CollierTholozanToulisse2019_The-geometry-of-maximal-representations-of-surface-groups-into} provides an identification of the quotient $\leftquot{\Gamma}{\Omega_P}$ with the circle bundle associated to $\cW^{1,3}$ (i.e. the canonical bundle).
This is analogous to \autoref{thm:real_uniformization_lagrangian}.

\subsubsection{Lyapunov exponent}
	\label{sssec:lyapunov_exponent_maximal_case}
In the case of maximal representations, using the aforementioned uniformization result from \cite{CollierTholozanToulisse2019_The-geometry-of-maximal-representations-of-surface-groups-into}, one can again obtain an identity involving Lyapunov exponents.
However, in this case the ``bad locus'' is empty for vectors in $\bP(V_{\bR})$, so we find:
\begin{align*}
	\lambda_1(\bV) &= \frac{\deg \cV^{0,3}}{\chi(X)}\\
	\lambda_1(\bW) + \lambda_2(\bW) &= \frac{\deg \cW^{1,3}+\deg \cW^{0,4}}{\chi(X)}.
\end{align*}
Note that the numbers in the second line are equal to twice those in the first.

\subsubsection{Classification of hypergeometric examples}
	\label{sssec:classification_of_hypergeometric_examples}
In the case of assumption A the hypergeometric system was $\bV$, whereas now we shall look for hypergeometric systems $\bW$ satisfying assumption B.
Following the same analysis as in \autoref{prop:assumption_a_in_the_local_case}, it is possible to classify the local exponents at a singularity of a hypergeometric equation which ensure that assumption B is satisfied.
Crucially, note that the singularity at $1\in \bP^1(\bC)$, always satisfies assumption B for rank 5 hypergeometrics, and never satisfies assumption B for rank 4 hypergeometrics.

It is straightforward to list the diagrams as in \autoref{fig:Hodge_numbers} corresponding to the possible degenerations in the rank 5 case, and we remark that they are in fact in bijection with the diagrams in the rank 4 case (by the correspondence between $\bV$ and $\bW$).
Keeping the terminology of case 1, case 2, and case MUM, assumption B is satisfied if we are in case 2 or case MUM (whereas assumption A needed case 1 or case MUM).
Note also that in the $\bW$-representation, case 1 corresponds to a rank $1$ unipotent with an invariant isotropic $2$-plane, while case 2 corresponds to a rank $1$ unipotent with an invariant isotropic line.

Here is now the list of local exponents which satisfy assumption B:
\begin{align*}
	\left(
	\mu,
	\frac{1}{2},
	\frac{1}{2},
	\frac{1}{2},
	1-\mu
	\right)
	&& \mu\in\left(0,\frac{1}{2}\right]\\
	\left(
	\frac{N-k}{2N},\frac{N-1}{2N},
	\frac{1}{2},
	\frac{N+1}{2N},\frac{N+k}{2N}
	\right) && 1 < k < N, \text{ orbifold order }2N\\
	\left(
	0,0,0,
	\frac{N}{2N+1},
	\frac{N+1}{2N+1}
	\right)
	&& \text{ orbifold order }2N+1\\
	\left(
	0, \frac{k}{N},
	\frac{k+1}{N},
	\frac{N-(k+1)}{N},
	\frac{N-k}{N}	
	\right)
	&& 1< 2(k+1) < N \text{ orbifold order }N
\end{align*}
In the first row $\mu\in \bR$ and in the last three rows $k,N\in \bN$.
Let us note that in the course of doing the above classification, following the analysis analogous to that in \autoref{prop:assumption_a_in_the_local_case}, one must exclude local exponents such as $(0,0,0,0,\tfrac 12)$ (and more generally those that have a $0$ and a $\frac{1}{2}$) since this does not allow for the correct interlacing pattern with the exponents at the other singularity to lead to Hodge numbers $(1,1,1,1,1)$ (recall also that the exponents must be symmetric under $x\mapsto 1-x \mod 1$).

A table of allowed possibilities, and required restrictions, on pairings between the above local choices is included in \autoref{table:maximal_hypgeom}.
Note that it is not allowed to have two singularities with one exponent coinciding (the hypergeometric monodromy would be reducible), so the only allowed pairings are to select one from the first two possibilities above, and another from the subsequent two.



\appendix


\section{Unipotent dynamics}
	\label{sec:unipotent_dynamics}

We develop a criterion in \autoref{thm:log_domination_in_the_cusps} showing that when the metric on the cocycle is adapted, the expected exponential growth occurs even when the geodesic goes into the cusp.
Adapted metrics are introduced in \autoref{def:adapted_metric}.
While no Hodge theory is needed for this section, the basic tools, such as the monodromy weight filtration, as well as adapted metrics, originate in Schmid's nilpotent and $\SL_2$-orbit theorems.
A useful notion is that of ``log-proximal'' unipotent transformation, generalizing that of proximal semisimple element.


\subsection{Setup}
	\label{ssec:setup_unipotent_matrices}

Assume that $\rho\colon \pi_1(X)\to G$ is a $\theta$-log-dominated representation and $\phi\colon G\to \GL(V)$ is a representation of $G$.
Let $\bV\to T^1X$ be the associated local system, which is $\Theta$-dominated in the compact part (in the sense of \autoref{sssec:domination_in_the_compact_part}) by the construction in \autoref{thm:domination_in_log_anosov_representations}.
In this section we will prove that the domination condition holds at all positive times, when the starting point is in a compact set, and if the metric is adapted in the sense of \autoref{def:adapted_metric} below.

\begin{theorem}[log-domination in the cusps]
	\label{thm:log_domination_in_the_cusps}
	If the cocycle $\bV\to T^1X$ is equipped with an adapted metric, then the conclusion of \autoref{thm:domination_in_log_anosov_representations} holds for starting points in a compact set, and all times $t\geq 0$.
\end{theorem}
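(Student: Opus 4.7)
The plan is to bootstrap from \autoref{thm:domination_in_log_anosov_representations}, which already gives the domination inequality whenever both endpoints of a geodesic arc lie in the compact part. The additional work is to control the ratio $\norm{v\, g_t}/\norm{v}$ during cusp excursions, so that a trajectory which dips into a standard cusp neighborhood and returns does not destroy the exponential gap accumulated in the compact part. I would fix a decomposition of each trajectory $[0,t]\mapsto mg_{[0,t]}$ into an alternating sequence of maximal arcs contained in $X_K$ and maximal arcs contained in a single standard cusp neighborhood $X_O^c$, and treat the two kinds of arcs separately.

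On a compact-part arc, the cocycle inequality \autoref{eqn:domination_condition} with uniform constants $C(K),\ve(K)>0$ is already available from \autoref{thm:domination_in_log_anosov_representations}. What must be shown on a cusp-excursion arc is the weaker statement: for $V^{i,\theta}\succ_\theta^V V^{j,\theta}$ and unit vectors $v_i\in \bV^{i,\theta}$, $v_j\in \bV^{j,\theta}$, the ratio $(\norm{v_i g_t}/\norm{v_i})\cdot(\norm{v_j}/\norm{v_j g_t})$ is bounded \emph{below} by a polynomially decaying factor in the depth of the excursion, or better. Combined with an exponentially large gain obtained in the adjacent $X_K$-arcs (whose lengths grow at least linearly with the excursion depth because the geodesic must enter and exit the cusp), this is enough to absorb any polynomial loss and yield a genuine exponential gap valid for all $t\geq 0$. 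The key quantitative input here is the adapted metric: by \autoref{def:adapted_metric} (and the strictly adapted refinement \autoref{def:strictly_adapted_metric}), it matches on the cusp a standard $\SL_2$-model in which the growth of a vector $v$ is controlled precisely by its position in the monodromy weight filtration $W_\bullet V$ associated to the unipotent $T_c$, as in \autoref{prop:estimates_for_strictly_adapted_metrics}.

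The decisive step is to identify the stable filtration of the log-Anosov representation with the monodromy weight filtration at each parabolic point, in a way which respects the order $\succ_\theta^V$. More precisely, I would show: if $p\in\partial\wtilde X$ is the parabolic point corresponding to the cusp, and $\xi^{(3)}$ is the three-point boundary map of \autoref{eqn:three_point_boundary_map}, then along a geodesic ray into $p$ the $L_\theta$-subrepresentations $\bV^{i,\theta}$ are refined by the weight filtration of $T_c$, and moreover any weight appearing in $\bV^{i,\theta}$ is $\succeq$ any weight appearing in $\bV^{j,\theta}$ whenever $V^{i,\theta}\succ_\theta^V V^{j,\theta}$. This compatibility should be extracted from \autoref{prop:stable_spaces_near_the_cusp} together with the dynamics-preserving property in \autoref{sssec:transversality_and_dynamics}: the flag $\xi(p)$ is a subflag of the weight filtration, and equivariance under the peripheral $T_c$ forces the $L_\theta$-grading to sit inside the weight grading compatibly. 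Once this compatibility is established, the ratio of norms during the excursion is, by the adapted-metric estimates, bounded below by a power of $\cosh(\dist_{hyp})$ with a \emph{nonnegative} exponent, which only helps.

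The main obstacle I expect is precisely this matching of the dominated decomposition with the monodromy weight filtration, because the dominated decomposition is defined dynamically (via $\xi^{(2)}$ on $\partial\wtilde X^{(2)}$) while the weight filtration is defined algebraically from $\log T_c$; in between lies the issue of whether the opposite flag $\xi(\mathrm{op}_x(p))$ provided by any backwards direction from the cusp splits the weight filtration. For a log-Anosov representation this follows from transversality of the boundary map and the rigidity of unipotent fixed flags, but writing it down cleanly requires a little care with the lift $\xi^{(3)}$ chosen in the cusp (which, as remarked after \autoref{eqn:three_point_boundary_map}, we are free to take compatible with the $\SL_2$-homomorphism furnished by the strictly adapted metric). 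After this identification the rest of the proof is bookkeeping: multiply the ratios along the partition, use \autoref{thm:domination_in_log_anosov_representations} on the compact arcs and the weight-filtration bound on the cusp arcs, and observe that the total hyperbolic length of compact arcs is $t$ minus the total cusp excursion length, with the latter contributing only a polynomial factor that is dominated by any positive exponential in the former.
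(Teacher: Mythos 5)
You correctly identify the crucial technical ingredients (quantitative matching of the stable hyperplane with the monodromy weight filtration of the peripheral unipotent, and the adapted-metric estimates of \autoref{prop:estimates_for_strictly_adapted_metrics}); indeed the paper's \autoref{prop:stable_spaces_near_the_cusp} proves exactly the bound $\dist(\xi_{n-1}(p),W_{d-1}(N))\leqapprox \dist(p_0,p)^{\ve}$ that you sketch, using the dynamics-preserving property and \autoref{prop:control_of_planes_under_unipotent_domination}. However, the final bookkeeping step has a genuine gap. First, the theorem requires the domination inequality at \emph{all} times $t\geq 0$, in particular when $m g_t$ is still deep inside the cusp; an argument that works by waiting for the geodesic to return to $X_K$ and then absorbing the accumulated cusp ``loss'' by compact-part gains simply does not address these intermediate times. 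Second, the claim that the adjacent $X_K$-arcs ``grow at least linearly with the excursion depth'' is false: a geodesic aimed into the cusp spends a bounded amount of time in $X_K$ before crossing the horocycle, regardless of how deep the excursion goes; it is the \emph{cusp} arc whose length grows with depth.

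The paper resolves this differently: it does not split into arcs, but proves the domination inequality directly along the cusp excursion. The point is that during the excursion the norm ratio changes by a factor that is polynomial in the depth $\Im\tau$, and by \autoref{eqn:hyperbolic_distance_tau_approximation} a point at depth $y$ with $|\Re\tau|$ bounded has hyperbolic distance $\approx\log y$ from the basepoint, so a polynomial in depth is an \emph{exponential} in hyperbolic distance. Thus the issue is not to absorb a polynomial loss but to show that the exponent has the \emph{correct sign}, and this is exactly what the $r^{-\ve}$ closeness of $\xi_{n-1}$ to $W_{d-1}(N)$ buys (see the proof of \autoref{thm:adapted_metrics_give_domination_in_the_cusp}). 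Your proposal, as written, never makes this sign determination, and the compensation mechanism you propose instead both fails quantitatively and does not reach the intermediate times. I would also flag that the paper reduces to the log-proximal case (a line dominating a hyperplane) via \autoref{sssec:reduction_to_proximal_case} rather than working with the full poset of $L_\theta$-pieces; that simplification is not essential but streamlines the norm estimates.
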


\subsubsection{Reduction to proximal case}
	\label{sssec:reduction_to_proximal_case}
In fact, the result above is a consequence of \autoref{thm:adapted_metrics_give_domination_in_the_cusp} below, which treats a particular case, when domination is established for a rank $n$ bundle $V$ with a $1$-dimensional bundle dominating an $(n-1)$-dimensional bundle.
Let us explain why this suffices.

First, \autoref{thm:log_domination_in_the_cusps} clearly reduces to the case of irreducible $V$.
Next, it suffices to establish the singular value gap intrinsically in the group, as per \autoref{def:log_anosov_representation}, but using points in the cusp along a geodesic, and the adapted metric in the cusp.
Finally, this singular value gap can be established by considering, for each $\alpha\in \theta$, a corresponding irreducible representation as in \cite[Prop.~8.3]{BochiPotrieSambarino2019_Anosov-representations-and-dominated-splittings} on which there will be a totally dominating $1$-dimensional bundle.

\subsubsection{Notation for log-proximal case}
	\label{sssec:notation_for_log_proximal_case}
We assume from now on that $\rho\colon \pi_1(X)\to \GL(V)$ is a log-dominated representation with a decomposition into bundles of dimension $1,n-2$ and $1$, dominating each other in the order they are written.
Taking the direct sum of the last two, and the last one, gives subbundles that only depend on the future trajectory, i.e. are invariant under the stable foliation, and so give maps
\begin{align*}
	\xi_{n-1}&\colon \partial\wtilde{X}\to \bP\left(V^{\dual}\right)\\
	\xi_{1}&\colon \partial\wtilde{X}\to \bP\left(V\right)\\
\end{align*}
where $\bP(V^{\dual})$ parametrizes hyperplanes in $V$.
The transversality properties are that $\xi_1(p)\subset \xi_{n-1}(p)$ and for $p'\neq p$ we have $V = \xi_1(p)\oplus \left[\xi_{n-1}(p)\cap\xi_{n-1}(p')\right]\oplus \xi_1(p')$.
We will use this notation starting with \autoref{ssec:adapted_metrics_and_domination} below.

\subsubsection{Unipotent preliminaries}
	\label{sssec:unipotent_preliminaries}
We now start to collect some useful facts about the dynamics of unipotent matrices.
Fix for the rest of this paragraph a unipotent matrix $T\in G\subset \GL(V)$ where $G$ denotes the real points of a semisimple real-algebraic group, and $N:=\log T \in \frakg \subset \frakgl(V)$.

\subsubsection{$\fraksl_2$-triples and Jacobson--Morozov}
	\label{sssec:sl_2_triples_Jacobson_Morozov}
For proofs of the following facts, see \cite[\S9.2]{nilpotent_book}.
First, there exist $Y,N_+\in \frakg$ such that, denoting $N_-:=N$, we have that $Y,N_-,N_+$ form an $\fraksl_2$-triple:
\begin{align}
	\label{eqn:sl2_relations}
	[Y,N_\pm] = (\pm 2)N_\pm \qquad [N_+,N_-] = Y
\end{align}
Furthermore, for any other $Y',N_+'$ which extend $N$ to an $\fraksl_2$-triple, there exists $g\in Z_{G}(N)$, i.e. an element of $G$ which commutes with $N$, which conjugates the old triple to the new one.
For example, in $\fraksl_2$ we could take
\[
	Y =
	\begin{bmatrix}
		-1 & 0\\
		0 & 1
	\end{bmatrix}
	N_- =
	\begin{bmatrix}
		0 & 1\\
		0 & 0
	\end{bmatrix}
	N_+ =
	\begin{bmatrix}
		0 & 0\\
		1 & 0
	\end{bmatrix}
\]
Note the signs of the entries of $Y$, which are relevant for the choices below.

\subsubsection{Weight filtration}
	\label{sssec:weight_filtration}
For the proofs of the following results, see \cite[Lemma 6.4]{Schmid_VHS}.
Associated to $N$ there is a canonical \emph{weight filtration}:
\[
	\left\lbrace 0 \right\rbrace \subsetneq W_{-d}(N) \subset W_{-(d-1)}(N) \subset \cdots \subset W_{d-1} (N) \subsetneq W_d (N) = V
\]
where $d$ is determined from the condition that $N^d\neq 0 $ but $N^{d+1}=0$.
The integer $d$ will be called \emph{the order} of $N$ (or $T$).
The defining properties of the weight filtration are that $N(W_i)\subset W_{i-2}$ and the induced map $N^i:\rightquot{W_i }{W_{i-1}}\to \rightquot{ W_{-i}}{W_{-i-1}}$ is an isomorphism for all $i=1\ldots d$.
When $N$ is clear from the context, we write $W_i$ or $W_i V$ instead of $W_i(N)$.

To relate with the discussion in \autoref{sssec:sl_2_triples_Jacobson_Morozov}, any element $Y$ in an $\fraksl_2$-triple provides a splitting of the weight filtration, using the eigenspaces of $Y$.

\begin{definition}[Log-proximal unipotent transformation]
	\label{def:log_proximal_unipotent_transformation}
	The map $T$ is \emph{log-proximal} if in the weight filtration associated to $N:=\log T$, we have $\codim W_{d-1}(N)=1$, or equivalently $\dim W_{-(d-1)}(N)=1$.
	The pair $(W_{-(d-1)},W_{d-1})$ will be called the log-proximal filtration of $T$.
\end{definition}
This concept is analogous to proximality for semisimple transformations, i.e. having the largest eigenvalue of multiplicity one.
We will call $W_{-(d-1)}$ the \emph{attracting line} of $T$, and $W_{d-1}$ its \emph{repelling hyperplane}.
Note that $T$ and $T^{-1}$ have the same log-proximal filtration.

Assume from now on that $T$ is log-proximal and fix an $\fraksl_2$-triple $Y,N_+,N$.
This gives a direct sum decomposition
\[
	V = \bigoplus_{i=-d}^d V_i \quad
	\text{ with }Y\vert_{V_i}=i\id \text{ and }N(V_i)\subset V_{i-2}.
\]
Moreover we know that $N^d\colon V_d \toisom V_{-d}$ is an isomorphism.

Fix a metric on $V$ such that the above direct sum decomposition is orthogonal, and $N$ is an isometric embedding $V_i/\ker(N\vert_{V_i})\to V_{i-2}$.
In particular $N^d$ is an isometry.

\subsubsection{Hyperbolic space calculations}
	\label{sssec:hyperbolic_space_calculations}
We will need some elementary estimates for distances in the hyperbolic plane.
The identification of the half-plane model with the unit disk, for coordinates $z$ in the unit disk and $\tau$ in the upper half-plane, is via the transformations:
\begin{align}
	\label{eqn:half_plane_to_disk_and_back}
	\tau = \frac{1}{\sqrt{-1}}\frac{z+1}{z-1} \quad
	z = \frac{\tau - \sqrt{-1}}{\tau+\sqrt{-1}}
\end{align}
which are inverses of each other, identify $z=0$ with $\tau=\sqrt{-1}$, $z=1$ with $\tau=\infty$, and the axes $\Im z = 0$ with $\Re \tau = 0$.
It will be useful to note that on the boundary:
\begin{align}
	\label{eqn:disk_half_plane_mapping_boundary}
	z=e^{\sqrt{-1}\theta} \mapsto \tau= \frac{-\sin \theta}{1-\cos \theta} = -\frac{1+O(\theta^2)}{\theta + O(\theta^3)} = -\frac{1}{\theta}(1+O(\theta^2))
\end{align}
for $\theta$ in a small but fixed neighborhood of $0$.

Recall that in the disk model, the hyperbolic distance between the origin and $z$ is given by
\[
	\dist(0,z) = \log \left( \frac{1+|z|}{1-|z|} \right) = -\log (1-|z|^2) + O(1)
\]
Computing now the same quantities in terms of $\tau$ gives:
\begin{align*}
	z\cdot \ov{z} & = \frac{|\tau|^2+1 - 2\Im \tau}{|\tau|^2+1 + 2\Im \tau}\\
	-\log \left( 1-|z|^2 \right) & = \log (|\tau|^2+1 + 2\Im \tau) - \log (4\Im \tau)
\end{align*}
from which we will use the approximation
\begin{align}
	\label{eqn:hyperbolic_distance_tau_approximation}
	\dist(\sqrt{-1},\tau) = 2\log(\max(|\Re \tau|, \Im \tau)) - \log (\Im \tau) + O(1)
\end{align}
valid in the upper half-plane when $\Im \tau \geq 1$.
To deduce this last approximation, use that $\log(a+b)=\log \max(a,b) + O(1)$ for positive reals (which is clear by exponentiation).



\subsection{Adapted metrics}
	\label{ssec:adapted_metrics}

We keep the notation on Riemann surfaces and representations introduced in \autoref{ssec:setup_notation_and_conventions_for_surfaces} and onwards.

\subsubsection{Conventions for the metric on the vector bundle}
	\label{sssec:conventions_for_the_metric_on_the_vector_bundle}
To give a metric on the vector bundle $\bV_\rho$ is the same as to give a family of metrics $h(x)$ on the vector space $V$, depending on $x\in \widetilde{X}$, with the equivariance property
\[
	\norm{v}_{h(x)} = \norm{\rho(\gamma) v}_{h(\gamma x)} \text{ or equivalently }\norm{v}_{h(\gamma x)} = \norm{\rho(\gamma)^{-1}v}_{h(x)}.
\]
When the representation $\rho$ takes values in $G\subset \GL(V)$, the metric on $V_\rho$ should be adapted to the $G$-action, so it is required to be of the form:
\begin{align}
	\label{eqn:G_metric_universal_cover}
	h\colon \widetilde{X}\to \rightquot{G}{K(x_0)} \quad h(\gamma x) = \rho(\gamma) h(x) \quad \forall \gamma \in \pi_1(X,x_0)
\end{align}
where $K(x_0)\subset G$ denotes the maximal compact subgroup of $G$ determined by the metric at $x_0$.
With the above notation, the norm of a vector is given by
\begin{align}
	\label{eqn:norm_of_vector_hx}
	\norm{v}_{h(x)} := \norm{h(x)^{-1}\cdot v}_{h(x_0)}
\end{align}

For the next discussion, let $\bH$ denote the upper half-plane, with coordinate $\tau$ and basepoint $\sqrt{-1}\in \bH$.
Fix also a reference metric at the basepoint and let $K$ denote the corresponding maximal compact.

\begin{definition}[Strictly adapted metric]
	\label{def:strictly_adapted_metric}
	Let $N\in \frakg \subset \frakgl(V)$ be a nilpotent transformation.
	A family of metrics $h\colon \bH \to G/K$ on $V$ is called \emph{strictly adapted to $N$} if there exists $Y\in \frakg$ such that $(Y,N)$ can be extended to an $\fraksl_2$-triple, and $h$ is defined by
	\[
		h(\tau) = e^{(\Re \tau) N} \cdot e^{-\frac{1}{2}\log(\Im \tau)\cdot Y}\cdot h(\sqrt{-1})
	\]
\end{definition}
\begin{remark}
	\leavevmode
	\begin{enumerate}
		\item A strictly adapted metric satisfies $h(\tau+1)=e^{N}h(\tau)$ so it descends to a metric on the vector bundle over the punctured disk $\rightquot{\bH}{(\tau\mapsto \tau+1)}$, with monodromy $e^{N}$ around the puncture.
		\item An equivalent definition of strictly adapted metrics is that there exists a homomorphism $\SL_2(\bR)\to G$, such that at the Lie algebra level we have $\left[\begin{smallmatrix}
			0 & 1 \\
			0 & 0
		\end{smallmatrix}\right]\mapsto N$, and such that the metric $h\colon \SL_2(\bR)/\SO_2(\bR)\to G/K$ is equivariant for the group homomorphism.
		Indeed we can use the $NAK$ decomposition of $\SL_{2}(\bR)$ to write any element modulo $\SO_2(\bR)$ as
		\[
			g = \begin{bmatrix}
				y^{1/2} & y^{-1/2}x \\
				0 & y^{-1/2}
			\end{bmatrix}
			=
			\begin{bmatrix}
				1 & x \\
				0 & 1
			\end{bmatrix}\cdot 
			\begin{bmatrix}
				y^{1/2} & 0 \\
				0 & y^{-1/2}
			\end{bmatrix}
		\]
		leading to the formula in \autoref{def:strictly_adapted_metric}.
		Note the sign of $Y$, compatible with the conventions in \autoref{sssec:sl_2_triples_Jacobson_Morozov}, and that with $g$ as in the last displayed equation, we have $g(\sqrt{-1})=x+\sqrt{-1}y$.
		\item A notion equivalent to adapted metrics was introduced independently by Canary--Zhang--Zimmer \cite{CanaryZhangZimmer2021_Cusped-Hitchin-representations-and-Anosov-representations-of-geometrically-finite} under the name ``canonical family of norms''.
	\end{enumerate}
\end{remark}

\begin{definition}[Adapted metric]
	\label{def:adapted_metric}
	A metric $h$ on $\bV_\rho\to X$, when $\bV_\rho$ has unipotent monodromy around the cusps, is \emph{adapted} if the following holds around each cusp.
	Fix a standard cusp neighborhood as in \autoref{sssec:standard_cusp_neighborhood} and lift the metric $h$ to a map $h\colon \left\lbrace \Im \tau \geq A \right\rbrace \to G/K $.
	Let $T$ denote the monodromy of $\bV_\rho$ around the cusp and $N:=\log T\in \frakg$.
	Then there exists metric $h'$ on $\bH$ which is strictly adapted to $N$, and a constant $C>0$ such that $\dist(h(\tau),h'(\tau))\leq C$ whenever $\Im \tau \geq A$, with distance measured in $G/K$.
	Equivalently, the metrics $h,h'$ are uniformly comparable, so up to uniform constants any inequality valid for $h$ will also hold for $h'$.
\end{definition}

\begin{theorem}[Hodge metrics are adapted]
	\label{thm:hodge_metrics_are_adapted}
	Suppose that the vector bundle $\bV_\rho$ admits a variation of Hodge structures and is equipped with the corresponding Hodge metric.
	Then this metric is adapted in the sense of \autoref{def:adapted_metric}.
\end{theorem}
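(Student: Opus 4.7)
The plan is to reduce the statement to Schmid's nilpotent orbit and $\SL_2$-orbit theorems, together with the comparison of distances between the period domain $D = G/V$ (where $V\subset K$ is the compact isotropy of a polarized Hodge structure) and the symmetric space $G/K$ that houses our metrics. The assertion is local at each puncture, so we work in one standard cusp neighborhood and lift to the half-plane $\{\Im\tau > A\}$, on which the Hodge metric is encoded by a map $h : \{\Im\tau>A\} \to G/K$ equivariant with respect to $\tau\mapsto\tau+1$ and $v\mapsto Tv=e^{N}v$.

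First, Schmid's nilpotent orbit theorem (\cite{Schmid_VHS}) shows that the actual Hodge filtration $F^\bullet(\tau)$ differs from the nilpotent orbit $\widetilde{F}^\bullet(\tau) := e^{\tau N}F^\bullet_{\mathrm{lim}}$ by an error term of size $O(e^{-c\,\Im\tau})$ in $D$, uniformly for $\Re\tau$ in a bounded set; since the projection $D\to G/K$ is Lipschitz (it is the quotient by a compact group $K/V$ in each fiber), the two resulting metrics on $V$ are at bounded distance in $G/K$, in fact at distance going to $0$ as $\Im\tau\to\infty$. Second, Schmid's $\SL_2$-orbit theorem produces an $\fraksl_2$-triple $(Y,N_+,N_-)$ in $\frakg$, with $N_-=N$, coming from the Deligne splitting of the limit mixed Hodge structure and the weight filtration $W_\bullet(N)$ introduced in \autoref{sssec:weight_filtration}. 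It further asserts that the nilpotent orbit is uniformly close in $D$, hence also in $G/K$, to the $\SL_2$-orbit given by applying the homomorphism $\SL_2(\bR)\to G$ induced by $(Y,N_+,N_-)$ to the standard upper half-plane map.

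Concretely, with the conventions of \autoref{def:strictly_adapted_metric}, the $\SL_2$-orbit is precisely
\[
h'(\tau) \;=\; e^{(\Re\tau)\,N}\cdot e^{-\tfrac{1}{2}\log(\Im\tau)\,Y}\cdot h'(\sqrt{-1}),
\]
once a basepoint $h'(\sqrt{-1})\in G/K$ compatible with the triple is chosen; this is a strictly adapted metric for $N$. Concatenating the two comparisons, $h(\tau)$ lies within bounded distance of $h'(\tau)$ in $G/K$ on the cusp region $\{\Im\tau\geq A\}$, which is exactly the condition in \autoref{def:adapted_metric}.

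The main obstacle is bookkeeping: one must verify that Schmid's theorems, which are traditionally phrased as estimates between Hodge filtrations in the classifying space $D$, translate cleanly into distance estimates in the symmetric space $G/K$ used throughout this paper, and that the sign and normalization of the grading element $Y$ match the conventions of \autoref{sssec:sl_2_triples_Jacobson_Morozov} so that $h'$ is literally of the form prescribed in \autoref{def:strictly_adapted_metric}. The Lipschitz property of $D\to G/K$ handles the first point, and the choice of Deligne splitting of the limit mixed Hodge structure, together with the requirement that the Hodge metric be positive on $\cV^{p,q}$ with appropriate sign per \autoref{sssec:variations_of_hodge_structure}, pins down $Y$ so that the second point goes through.
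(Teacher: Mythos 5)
Your proposal is correct and takes essentially the same route as the paper: both proofs reduce the statement to Schmid's asymptotic theorems at the cusp. The paper's proof is terser, citing only the $\SL_2$-orbit theorem \cite[5.13]{Schmid_VHS} and stating that it directly yields a strictly adapted metric $h'$ with $\dist(h(\tau),h'(\tau)) \leq C (\Im\tau)^{-1/2}$; your version spells out the intermediate step (nilpotent orbit theorem to pass from $F^\bullet(\tau)$ to $e^{\tau N}F^\bullet_{\lim}$, then $\SL_2$-orbit theorem to pass to the equivariant $\SL_2$-orbit), and also notes the translation from distance estimates in the period domain $D = G/V$ to the symmetric space $G/K$ via the Lipschitz projection — a bookkeeping point the paper leaves implicit. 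The only concrete difference is that you claim $O(1)$ boundedness rather than the sharper $O((\Im\tau)^{-1/2})$ decay the paper records; the definition of adapted metric only requires the former, so your conclusion is sufficient.
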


\begin{proof}
	This follows from Schmid's $\SL_2$-orbit theorem \cite[5.13]{Schmid_VHS}, which in fact gives a stronger statement.

	In fact, he shows that there exists a strictly adapted metric $h'$ (\autoref{def:strictly_adapted_metric}) such that if $h$ denotes the Hodge metric in the horoball in the upper half-space model, we have
	\[
		\dist(h(\tau),h'(\tau))\leq \frac{C}{(\Im \tau)^{1/2}}
	\]
	for some $C>0$ and $\Im\tau>C$, so the metrics are in fact asymptotic.
\end{proof}

\begin{remark}[Topological nature of growth]
	\label{rmk:topological_nature_of_growth}
	It is interesting to remark that the nature of growth of sections in the cusp depends only on the conjugacy class of the unipotent matrix, and not on the particular Hodge numbers.
	In particular, it is always possible to take a direct sum of symmetric powers of the standard degeneration for $\SL_2$, and possibly also some trivial nondegenerating factors, to model the metric behavior.

	Indeed, suppose that $\bV_1,\bV_2$ are two variations of Hodge structure over the punctured unit disc $\Delta^{\times}$, of the same rank and ``with the same monodromy'', i.e. there exists an isomorphism of local systems $\iota\colon \bV_1\isom \bV_2$.
	As a section of $\Hom(\bV_1,\bV_2)$, $\iota$ is invariant by the monodromy and hence has uniformly bounded Hodge norm (on a smaller punctured disc), as a consequence of Schmid's $\SL_2$-orbit theorem (see e.g. \cite[Cor.~6.7]{Schmid_VHS}).
	Therefore, given a vector $v\in\bV_1$, the Hodge norms of $v$ and $\iota(v)\in \bV_2$ are comparable with uniform constants, and all coarse-geometric properties near the cusp can be investigated in either of the two variations.
\end{remark}

For future use, we collect some inequalities regarding the growth rate of vectors relative to strictly adapted metrics.
Because adapted and strictly adapted metrics are comparable in the cusp, the inequalities below are also valid for adapted metrics, after perhaps changing the constants.

Recall that the elements $(Y,N)$ which are part of an $\fraksl_2$-triple are fixed, and we have a decomposition $V= V_{-d} \oplus \cdots \oplus V_{d}$, and strictly adapted metrics $h(\tau)$ for $\tau\in \bH$.
We only consider parameters with $\Im \tau \geq A$, for some fixed uniform $A$.

\begin{proposition}[Estimates for strictly adapted metrics]
	\label{prop:estimates_for_strictly_adapted_metrics}
	\leavevmode
	\begin{enumerate}
		\item Suppose that $v\in V_k$ is a unit vector and set $l=\floor{\frac{k+d}{2}}$.
		Then we have the estimate
		\[
			1 \leqapprox
			\frac{\norm{v}_{h(\tau)}}{(\Im\tau)^{\frac k2}
			\left[1+ \left(\frac{|\Re \tau|}{(\Im \tau)^{1/2}}\right)^{l}\right] }
			\leqapprox 1
		\]
		\item In particular, if $w\in W_{d-1}$ us a unit vector then
		\begin{align}
			\label{eqn:W_d_1_growth_estimate_new}
			\norm{w}_{h(\tau)} & \leqapprox \Im (\tau)^{\frac{d-1}{2}}
			\left[1 + 
			\left(
			\frac{|\Re\tau|}{\left(\Im \tau\right)^{1/2}}
			\right)^{d-1}\right]
		\end{align}
	\end{enumerate}
\end{proposition}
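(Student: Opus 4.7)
The plan is a direct matrix-coefficient computation using the $\fraksl_2$-triple $(Y, N_+, N)$. Since an adapted metric is by \autoref{def:adapted_metric} uniformly comparable to a strictly adapted one, it is enough to prove both estimates for a strictly adapted $h(\tau) = e^{(\Re\tau) N}\, e^{-\frac{1}{2}\log(\Im\tau)\,Y}\, h(\sqrt{-1})$. Combined with \autoref{eqn:norm_of_vector_hx}, this gives the starting identity
\[
    \norm{v}_{h(\tau)} = \norm{e^{\frac{1}{2}\log(\Im\tau)\,Y}\,e^{-(\Re\tau) N}\,v}_{h(\sqrt{-1})},
\]
which is what I will estimate.

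For $v \in V_k$, the inclusion $N(V_i) \subset V_{i-2}$ makes the exponential series terminate:
\[
    e^{-(\Re\tau) N} v = \sum_{j=0}^{L} \frac{(-\Re\tau)^j}{j!}\, N^j v, \qquad L \leq l = \floor{\tfrac{k+d}{2}},
\]
with $N^j v \in V_{k-2j}$. Since $Y$ acts on $V_{k-2j}$ by the scalar $k-2j$, applying $e^{\frac{1}{2}\log(\Im\tau) Y}$ multiplies the $j$-th summand by $(\Im\tau)^{(k-2j)/2}$. The decomposition $V = \bigoplus V_i$ being orthogonal for the reference metric then yields
\[
    \norm{v}^2_{h(\tau)} = (\Im\tau)^k \sum_{j=0}^{L} \frac{\norm{N^j v}^2_{h(\sqrt{-1})}}{(j!)^2}\, t^{2j}, \qquad t := \frac{|\Re\tau|}{(\Im\tau)^{1/2}}.
\]
The coefficients are bounded above by a universal constant, and bounded away from zero whenever $N^j v \neq 0$ because $N$ acts as an isometric embedding on each quotient $V_i/\ker(N|_{V_i})$. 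Hence the right-hand side is comparable to $1+t^{2L}$, and taking square roots gives $\norm{v}_{h(\tau)} \approx (\Im\tau)^{k/2}(1+t^L)$, which is the bound of (i) (two-sided when $L=l$, the generic case; the upper bound always).

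Part (ii) is then immediate: decompose $w \in W_{d-1}$ as $w = \sum_{k \leq d-1} w_k$ with $w_k \in V_k$ using the $Y$-eigenspace splitting of $W_{d-1}$, apply (i) to each component, and note that both factors $(\Im\tau)^{k/2}$ and $1+t^{\floor{(k+d)/2}}$ are maximized at $k = d-1$ for $\Im\tau$ bounded below, with exponent exactly $d-1$. The triangle inequality then produces \autoref{eqn:W_d_1_growth_estimate_new}. The only step needing genuine care is the uniform lower bound on the nonzero coefficients of the displayed polynomial; since $V$ decomposes into finitely many irreducible $\fraksl_2$-summands with an orthogonal direct-sum structure, this reduces to a finite-dimensional linear-algebra check and is not a serious obstacle.
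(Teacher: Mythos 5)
Your approach matches the paper's: compute $h(\tau)^{-1}v=e^{\frac12\log(\Im\tau)Y}e^{-(\Re\tau)N}v$ by expanding both exponentials and using orthogonality of the $Y$-eigenspaces. But there is an arithmetic slip between your correct intermediate observation and your displayed formula. You correctly state that $e^{\frac12\log(\Im\tau)Y}$ multiplies the $j$-th summand, which lies in $V_{k-2j}$, by $(\Im\tau)^{(k-2j)/2}$. Squaring, the $j$-th term contributes $(\Im\tau)^{k-2j}\,\frac{|\Re\tau|^{2j}}{(j!)^2}\,\norm{N^jv}^2$; factoring out $(\Im\tau)^k$ leaves a $j$-dependent factor of $\bigl(|\Re\tau|/\Im\tau\bigr)^{2j}$, \emph{not} $\bigl(|\Re\tau|/(\Im\tau)^{1/2}\bigr)^{2j}$. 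So the substitution your own calculation produces is $s=|\Re\tau|/\Im\tau$, giving $\norm{v}_{h(\tau)}\approx(\Im\tau)^{k/2}(1+s^{L})$, a genuinely different function of $\tau$ from what you wrote down. To see the difference concretely, take the standard $2$-dimensional $\fraksl_2$-module and $\tau=k^3+\sqrt{-1}\,k^4$: a direct matrix computation gives the non-kernel unit vector norm $\approx k^2$, agreeing with $s=|\Re\tau|/\Im\tau$ but contradicting your formula, which predicts $\approx k^3$.

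Your slip lands on the expression in the stated proposition, but not in a way that closes the argument — I believe the paper's own displayed expansion carries a parallel typo (it writes $(\Im\tau)^{(k-j)/2}$ where $(\Im\tau)^{(k-2j)/2}$ is correct), which the statement then inherits, so agreement with the statement is no corroboration. Separately, the uniform lower bound you invoke — that $\norm{N^jv}$ is bounded away from zero whenever $N^jv\neq 0$ — is false for general unit $v\in V_k$: a unit vector close to $\ker N$ has $\norm{Nv}$ nonzero but arbitrarily small, and moreover the terminal power $L(v)$ may be strictly less than $l=\lfloor\tfrac{k+d}{2}\rfloor$ (e.g.\ $v$ in a trivial $\fraksl_2$-summand embedded alongside a larger irreducible has $L(v)=0<l$), so the two-sided comparison with the fixed exponent $l$ fails. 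The lower bound does hold for $v\in V_d$, where $N^d$ is an isometry and $L(v)=d=l$; if that is all the application needs, the argument should say so rather than assert the bound for arbitrary $V_k$.
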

\noindent We will also make use of the case $v\in V_d$, in which case $k=l=d$ (these are the vectors which grow fastest).



\begin{proof}
	Recall that by equivariance:
	\[
		\norm{u}_{h(\tau)}=\norm{h(\tau)^{-1}u}_{h(\sqrt{-1})}=
		\norm{e^{\frac{1}{2}\log \Im(\tau)Y}\cdot  e^{-\Re(\tau)N} \cdot u}_{h(\sqrt{-1})} 
	\]
	The second estimate follows from the first by bounding each of the coordinates of $w$ in the decomposition $\oplus V_i$.
	We now compute.
	First
	\begin{align*}
		e^{-\Re \tau N}v & = v + (-\Re \tau)(Nv)+\cdots + \frac{(-\Re \tau)^l}{l!}\left(N^l v\right)
		\intertext{and next}
		e^{\tfrac 12 \left(\log \Im \tau\right) Y}e^{-\Re \tau N}v & = 
		(\Im \tau)^{\tfrac k2}v + \left(\Im\tau\right)^{\tfrac {k-1}{2}} (-\Re \tau)(Nv)+\cdots \\
		& \cdots + 
		(\Im \tau)^{\tfrac{k-l}{2}}
		\cdot
		\frac{(-\Re \tau)^l}{l!}
		\cdot \left(N^l v\right)
	\end{align*}
	Now the summation above is over orthogonal vectors, each of norm $O(1)$.
	Furthermore the scaling factors are in geometric progression, so we can estimate the whole sum, up to a constant factor, by the extreme two terms.
\end{proof}

\subsubsection{Growth of vectors under unipotent iterates}
	\label{sssec:growth_of_vectors_under_unipotent_iterates}
To analyze the behavior of $T^k$ for large $k$, fix a unit vector $v$ with components $v=\oplus v_i$, and compute directly:
\begin{align}
	\label{eqn:power_of_T_explicit}
	\begin{split}
	T^k & = e^{kN} = 1 + kN + \cdots + \frac{k^d }{d!}N^d\\
	\norm{T^kv} & = \frac{|k|^d}{d!}\norm{v_d} + O(|k|^{d-1})
	\end{split}
\end{align}
where the implied constants depend only on $d$, i.e. there exists $c(d)$ such that $\left\vert \norm{T^kv}-\frac{|k|^d}{d!}\norm{v_d} \right\vert\leq c(d)|k|^{d-1}$.
The estimate is valid for all $k\in \bZ\setminus \left\lbrace 0 \right\rbrace$, and could have been alternatively deduced from \autoref{prop:estimates_for_strictly_adapted_metrics}, taking $\tau = \sqrt{-1}+k$.

\begin{proposition}[Control of planes under unipotent domination]
	\label{prop:control_of_planes_under_unipotent_domination}
	Suppose that $\xi_1 \oplus \xi_{n-1}=V$ is a direct sum decomposition of $V$ into a line and hyperplane.
	Suppose that there exist constants $t,k>0$ such that
	\[
		\log \norm{T^kv} - \log \norm {T^kw} \geq t
	\]
	for all unit vectors $v\in \xi_1,w\in \xi_{n-1}$.
	Then we have
	\[
		\dist(\xi_{n-1}, W_{d-1}(N)) \leqapprox \max \left( e^{-\log |k|}, e^{-t} \right)
	\]
\end{proposition}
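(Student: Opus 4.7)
The plan is to prove the estimate by a direct computation based on the asymptotic formula \autoref{eqn:power_of_T_explicit} for $T^k$, combined with the description of the weight filtration via the chosen $\mathfrak{sl}_2$-triple. The geometric picture driving everything is that $W_{d-1}(N)$ is characterized as the unique codimension-one subspace on which $T^k$ fails to attain the maximal growth rate $|k|^d$ (it grows only like $|k|^{d-1}$). So any hyperplane $\xi_{n-1}$ all of whose unit vectors grow strictly slower than the line $\xi_1$ (by a factor $e^{-t}$) is forced to be close to $W_{d-1}(N)$, quantitatively.

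Concretely, I will fix the orthogonal weight decomposition $V = \bigoplus_{i=-d}^d V_i$ from \autoref{sssec:weight_filtration}, split $V = V_d \oplus W_{d-1}$ orthogonally (using that log-proximality makes $V_d$ one-dimensional), and measure the Grassmannian distance by
\[
\dist(\xi_{n-1}, W_{d-1}) = \sup_{w \in \xi_{n-1},\ \norm{w}=1} \norm{w_d},
\]
where $w = w_d + w'$ with $w_d \in V_d$, $w' \in W_{d-1}$. The main technical input is a refined version of \autoref{eqn:power_of_T_explicit}: for any $u \in V$ with $V_d$-component $u_d$,
\[
\bigl| \norm{T^k u} - \tfrac{|k|^d}{d!}\norm{u_d} \bigr| \leq C\, |k|^{d-1}\, \norm{u},
\]
valid for all $k \neq 0$, with $C$ depending only on $N$ and the metric. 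This is immediate from $T^k = \sum_{j=0}^d \tfrac{k^j}{j!} N^j$: the top term preserves norms on $V_d$ because $N^d\colon V_d \to V_{-d}$ is an isometry by the metric conventions of \autoref{sssec:weight_filtration}, and all lower-order terms contribute $O(|k|^{d-1})$.

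Applying this to a unit vector $v_1 \in \xi_1$ gives $\norm{T^k v_1} \leq |k|^d/d! + C|k|^{d-1}$, while choosing a unit $w \in \xi_{n-1}$ that realizes $\norm{w_d} = \delta := \dist(\xi_{n-1},W_{d-1})$ gives $\norm{T^k w} \geq (|k|^d/d!)\,\delta - C|k|^{d-1}$. Rewriting the hypothesis as $\norm{T^k w} \leq e^{-t}\norm{T^k v_1}$ and dividing numerator and denominator by $|k|^d/d!$ then yields
\[
\delta \;\leq\; e^{-t}\bigl(1 + C'/|k|\bigr) + C'/|k|
\;\leqapprox\; \max\bigl(e^{-t},\, 1/|k|\bigr),
\]
which is the claimed inequality after rewriting $1/|k| = e^{-\log|k|}$. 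Small values of $|k|$ require no separate treatment: in that regime $1/|k| \gtrsim 1$ while $\delta \leq 1$ automatically. I do not anticipate a substantive obstacle; the argument is purely linear-algebraic, and the only delicate point is using the $\sup$-characterization of the Grassmannian distance, so that the $w$ realizing the distance is also the one that extracts the top-order growth rate of $T^k$ on $\xi_{n-1}$.
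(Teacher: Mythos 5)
Your proof is correct and follows essentially the same route as the paper's: both rely on the same expansion $T^k = \sum_{j=0}^d \tfrac{k^j}{j!}N^j$ (recorded as \autoref{eqn:power_of_T_explicit}), bound $\norm{T^k v}$ from above and $\norm{T^k w}$ from below via the $V_d$-component of $w$, and divide through by $|k|^d$ to isolate $\norm{w_d}$. The only differences are cosmetic — you make explicit the supremum characterization of the Grassmannian distance, the two-sided error bound $|\norm{T^k u} - \tfrac{|k|^d}{d!}\norm{u_d}| \leq C|k|^{d-1}\norm{u}$, and the trivial small-$|k|$ regime, all of which the paper leaves implicit.
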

\begin{proof}
	We will write the assumption in the form
	\[
		\norm{T^k v} \geq e^{t} \norm{T^k w}
	\]
	assume that $k>0$ for simplicity of notation, and use \autoref{eqn:power_of_T_explicit} to estimate:
	\begin{align*}
		\norm{T^k v}& \leq \frac{k^d}{d!} + O(k^{d-1})\\
		\norm{T^k w}& \geq \frac{k^d}{d!}\norm{w_d} + O(k^{d-1})\\
	\end{align*}
	where $w = \oplus w_i$ is the decomposition with respect to $V=\oplus V_i$.
	Putting the assumptions together with the estimates gives the chain of manipulations:
	\begin{align*}
		\frac{k^d}{d!} + O(k^{d-1})& \geq e^t\left( \frac{k^d}{d!}\norm{w_d} + O(k^{d-1}) \right) \\
		\frac{k}{d!} + O(1)& \geq e^t\left( \frac{k}{d!}\norm{w_d} + O(1) \right) \\
		k + O(1)& \geq e^t\left( k\norm{w_d} + O(1) \right) \\
		ke^{-t} + O(e^{-t})& \geq k\norm{w_d} + O(1) \\
		k\norm{w_d} & \leq ke^{-t} + O(e^{-t}) + O(1) \\
		 \norm{w_d} & \leq e^{-t} + O(e^{-t}e^{-\log k}) + O(e^{-\log k}) \\
		 \norm{w_d} & \leq c(d)\max(e^{-\log k}, e^{-t})
	\end{align*}
	This last estimate is valid for any unit vector $w\in \xi_{n-1}$, hence the desired estimate for $\dist(\xi_{n-1},W_{d-1})$ follows.
\end{proof}
Note that in the proof we did not use anything about $\xi_1$, so in fact the assumption in \autoref{prop:control_of_planes_under_unipotent_domination} could have been replaced by
\[
	\norm{T^k w} \leq e^{-t}\left( \frac{k^d}{d!} + O(k^{d-1}) \right)
\]
for all unit vectors $w\in \xi_{n-1}$.



\subsection{Adapted metrics and domination}
	\label{ssec:adapted_metrics_and_domination}

We will now use the above results to control the dynamics of the bundle in the cusp.

\begin{proposition}[Stable spaces near the cusp]
	\label{prop:stable_spaces_near_the_cusp}
	Let $p_0\in \partial\widetilde{X}$ be a parabolic boundary point, let $T\in G \subset \GL(V)$ be the corresponding monodromy, and $N:=\log T$ with weight filtration $W_\bullet(N)$.
	Then $p_0$ has a neighborhood $U_0\subset \partial \widetilde{X}$ and constants $C,\ve>0$ such that
	\[
		\dist(W_{d-1}(N), \xi_{n-1}(p)) \leq C\dist(p_0,p)^{\ve} \quad \forall p\in U_0
	\]
	where the distance between subspaces is measured relative to the metric on $V$ at $x_0$, and between points on the boundary using the visual metric from $x_0$.

	In particular $\xi_{n-1}(p_0)=W_{d-1}(N)$.
\end{proposition}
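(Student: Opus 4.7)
The plan is to reduce the quantitative statement to a purely linear-algebraic estimate on the action of $T^k$ on the Grassmannian of hyperplanes in $V$, using the parabolic dynamics on $\partial\wtilde{X}$ to transport a boundary point $p$ near $p_0$ into a fixed compact region. Work in the upper half-plane model with $p_0 = \infty$ and $x_0 = \sqrt{-1}$, and choose the (primitive, unipotent) parabolic generator $\gamma_0 \in \pi_1(X)$ fixing $p_0$ so that $\gamma_0$ acts on $\wtilde{X}$ as $\tau \mapsto \tau + 1$ and $T = \rho(\gamma_0)$. The equality $\xi_{n-1}(p_0) = W_{d-1}(N)$ will then follow from the dynamics-preserving property of $\xi$ (\autoref{sssec:transversality_and_dynamics}): $\xi_{n-1}(p_0)$ is the unique attracting hyperplane for the $T$-action on the Grassmannian of hyperplanes in $V$, and by log-proximality this is $W_{d-1}(N)$; dually $\xi_1(p_0) = W_{-(d-1)}(N)$.

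For $p \in U_0 \setminus \{p_0\}$, identify $p$ with a real number of large absolute value and pick $k = k(p) \in \bZ$ with $|p - k| \leq 1$, so that $q := T^{-k} p$ lies in the fixed compact set $K_0 := [0,1] \subset \partial \wtilde{X} \setminus \{p_0\}$. By $\rho$-equivariance of the boundary map, $\xi_{n-1}(p) = T^k \xi_{n-1}(q)$. Transversality of $\xi$ (\autoref{thm:existence_of_boundary_maps}) gives $\xi_1(p_0) \cap \xi_{n-1}(q') = \{0\}$ for every $q' \neq p_0$, so continuity of $\xi_{n-1}$ and compactness of $K_0$ supply a uniform constant $c_0 > 0$ such that $\dist(\xi_1(p_0), \xi_{n-1}(q')) \geq c_0$ for all $q' \in K_0$. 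In particular each hyperplane $H := \xi_{n-1}(q)$ that arises is uniformly transverse to the attracting line $W_{-(d-1)}(N)$.

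The core step is the linear-algebra estimate: for any hyperplane $H = \ker \alpha \subset V$ with $\dist(H, W_{-(d-1)}(N)) \geq c_0$, one has $\dist(T^k H, W_{d-1}(N)) \leq C/|k|$ with $C = C(c_0, T)$. The plan to prove this is to expand the defining covector of $T^k H$ as a polynomial in $k$ of degree $d$:
\[
(T^{-k})^* \alpha \,=\, \exp(-k N^*) \alpha \,=\, \sum_{j=0}^{d} \frac{(-k)^j}{j!} (N^*)^j \alpha.
\]
The top-degree term $\tfrac{(-k)^d}{d!} (N^*)^d \alpha$ is a covector whose kernel is $\ker N^d = W_{d-1}(N)$, and it is nonzero with norm bounded below by a constant multiple of $c_0$ since $\alpha$ is uniformly transverse to $W_{-(d-1)}(N) = \operatorname{im}(N^d)$; the lower-degree terms contribute only $O(|k|^{d-1})$. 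Normalizing and passing to the Grassmannian gives the claimed $1/|k|$ bound. (One could alternatively invoke \autoref{prop:control_of_planes_under_unipotent_domination} after reinterpreting the gap, but the direct expansion is the shortest route.)

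Combining the previous two paragraphs yields $\dist(\xi_{n-1}(p), W_{d-1}(N)) \leq C/|k|$. The coordinate change \autoref{eqn:disk_half_plane_mapping_boundary} shows that in the visual metric from $x_0$ one has $\dist(p_0, p) \asymp 1/|p| \asymp 1/|k|$ for $|p|$ large, so the final bound is $\dist(W_{d-1}(N), \xi_{n-1}(p)) \leq C \cdot \dist(p_0, p)$, which proves the proposition with $\ve = 1$. The main obstacle is making sure that the transversality constant $c_0$ does not degenerate as $p \to p_0$; this is precisely why the argument transports the problem to the fixed compact set $K_0$ via the parabolic action \emph{before} invoking continuity of $\xi_{n-1}$, rather than trying to control $\xi_{n-1}(p)$ directly as $p$ varies.
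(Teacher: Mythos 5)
Your proof is correct and takes a genuinely different route from the paper's. The paper transports along the \emph{geodesic} from $x_0$ into the cusp and invokes the log-Anosov singular value gap together with \autoref{prop:control_of_planes_under_unipotent_domination}; this yields the estimate with exponent $\ve$ equal to the log-domination constant, which may be less than $1$. You instead transport $p$ along the \emph{horocycle}, i.e.\ via the parabolic boundary action, into a fixed compact set $K_0$, apply $\rho$-equivariance $\xi_{n-1}(p)=T^k\xi_{n-1}(q)$, and then reduce everything to a direct polynomial expansion of $(T^{-k})^*\alpha$. This gives the sharper exponent $\ve=1$ and avoids the domination machinery entirely. The one caveat worth flagging: your argument \emph{uses} $\xi_1(p_0)=W_{-(d-1)}(N)$ (to guarantee that each $\xi_{n-1}(q)$, $q\in K_0$, is uniformly transverse to the attracting line, without which the leading coefficient $(N^*)^d\alpha$ could vanish and the $1/|k|$ rate would fail), and this is equivalent to the ``In particular'' clause of the proposition itself. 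You correctly source it from the parabolic dynamics-preserving property discussed in \autoref{sssec:transversality_and_dynamics} (citing Zhu), so there is no logical circularity --- but it is worth being aware that the paper's own proof does \emph{not} presuppose this fact and instead derives it as a byproduct, which is precisely why the paper remarks that ``the assertions in the parabolic case also follow from'' the proposition. So your proof is shorter and quantitatively sharper at the cost of an external input; the paper's is longer but self-contained.
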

\begin{proof}
	We fixed a horoball neighborhood of the projection of $p_0$ to $X$, which we lift to $\widetilde{X}$ as a horoball neighborhood of $p_0$ and denote by $V_0$.
	Any geodesic on $\widetilde{X}$ starting at $x_0$ either enters $V_0$ or not, and let $t_0>0$ denote the upper bound on time such that every geodesic entering the horoball does so before time $t_0$.
	Further shrinking $V_0$ to a smaller horoball $V_1$ to be determined below, we take $U_0$ to be the neighborhood of $p_0$ such that if $p\in U_0$, the geodesic from $x_0$ to $p$ must enter $V_1$ (see also \autoref{fig:figures_looking_into_cusp}).
	The requirements on the smaller horoball $V_1$ are the following:
	\begin{itemize}
		\item A geodesic starting at $x_0$ and entering $V_1$ must spend at least time $t_1$ in $V_0$, where $t_1$ is such that $\exp({\ve t_1})$ dominates all constants appearing below, which only depend on $V_0$.
		\item There exist uniform constants such that for any $p\in U_0$ and its opposite $p':=op_{x_0}(p)$, the angle between $\xi_1(p')$ and $\xi_{n-1}(p)$ is uniformly bounded below when measured for the metric at $x_0$.
	\end{itemize}
	Consider now a geodesic starting at $x_0$ and going to $p\in U_0$ with $p\neq p_0$.
	Let $x',x''\in V_0$ be the points of entry, resp. exit, of the geodesic from the horoball.
	The size of vectors at $x_0$ and $x'$ are comparable, up to constants depending on the time $t_0$ which it takes to enter the horoball $V_0$ starting at $x_0$.
	Further, let $t$ be the distance between $x'$ and $x''$, which by assumption satisfies $t>t_1$.
	We can work in the upper half-plane model and assume $x'=\sqrt{-1}$ and $x''=\sqrt{-1}+(k + O(1))$ with $\log k = t + O(1)$ for some integer $k$.
	We then have that $\log \norm{v}_{h(x'')} = \log \norm{v}_{h(\sqrt{-1}+k)} + O(1)$, so we can do all calculations assuming $x''=\sqrt{-1}+k$.

	Because we have the domination property for the geodesic connecting $x_0$ and $x''$, we deduce that for $h(x_0)$-unit vectors $w\in \xi_1(p'),v\in \xi_{n-1}(p)$ (with $p'=op_{x_0}(p)$)
	\[
		\log \norm{w}_{h(x'')} - \log \norm{v}_{h(x'')} \geq \ve \cdot t -C
	\]
	for constants determined by the domination property.
	Next, replacing the metric at $x_0$ by that at $x'$ only gives a $O(1)$ loss, and we know that $h(x'')=T^kh(x')$, so we conclude that
	\[
		\log \norm{T^{-k}w}_{h(x')} - \log \norm{T^{-k}v}_{h(x')} \geq \ve \cdot t -C'
	\]
	We conclude using \autoref{prop:control_of_planes_under_unipotent_domination} that
	\begin{align}
		\label{eqn:distance_cusp_using_Rex}
		\dist(\xi_{n-1}(p),W_{d-1}(N)) \leq c''\frac{1}{|k|^{\ve}}		
	\end{align}
	which combined with the estimate in the hyperbolic plane in \autoref{eqn:disk_half_plane_mapping_boundary} shows that
	\[
		\dist(\xi_{n-1}(p),W_{d-1}(N)) \leq c''' \theta^{\ve}
	\]
	where $\theta$ is the angle between $p_0$ and $p$ measured from $x_0$.
\end{proof}

We can now show that adapted metrics reflect the dynamics at times when the geodesic flow visits the cusp.

\begin{theorem}[Adapted metrics give domination in the cusp]
	\label{thm:adapted_metrics_give_domination_in_the_cusp}
	Suppose that $\rho$ is a log-dominated representation and $\bV_\rho$ is equipped with an adapted metric.
	Then the domination condition in \autoref{def:log_anosov_representation} holds for starting points in the compact part, and all times.
\end{theorem}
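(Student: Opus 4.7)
The plan is to establish the domination condition along a forward geodesic starting in the compact part by decomposing the orbit into alternating visits to $X_K$ and excursions through standard cusp neighborhoods, handling each regime with the appropriate tool: the hypothesis of domination in the compact part on the $X_K$ pieces, and the explicit growth formulas of \autoref{prop:estimates_for_strictly_adapted_metrics} combined with \autoref{prop:stable_spaces_near_the_cusp} on the cusp excursions.

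First I would reduce, as indicated in \autoref{sssec:reduction_to_proximal_case}, to the log-proximal case of \autoref{sssec:notation_for_log_proximal_case}, so that we need only compare one totally dominating line $\xi_1^{top}$ against the complementary hyperplane $\xi_{n-1}$, using irreducible representations on which each $\alpha\in\theta$ produces such a gap. For a single cusp excursion, I would lift the geodesic to the upper half-plane around the relevant parabolic point $p_0$, normalize the horoball to $\{\Im\tau\geq A\}$, and write the entry and exit points as $\tau_1=x_1+\sqrt{-1}A$ and $\tau_2=x_2+\sqrt{-1}A$, so that the hyperbolic time spent in the cusp satisfies $t_c\approx 2\log((x_2-x_1)/A)$ via \autoref{eqn:hyperbolic_distance_tau_approximation}, and the group element realized along the excursion is $T^k$ up to $O(1)$, where $k:=x_2-x_1\approx Ae^{t_c/2}$.

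The main estimate then proceeds through the $\fraksl_2$-triple splitting $V=\bigoplus_{\alpha}V_\alpha$ of \autoref{sssec:weight_filtration}. By \autoref{prop:stable_spaces_near_the_cusp}, the hyperplane $\xi_{n-1}(p)$ at the forward endpoint lies within $\dist(p,p_0)^{\varepsilon}$ of $W_{d-1}(N)=\bigoplus_{\alpha\leq d-2}V_\alpha$, while a dual application (to the reversed flow, or to $V^\dual$) yields that the dominating line $\xi_1^{top}(p')$ at the backward endpoint is comparably close to $W_{-(d-1)}(N)^{op}=V_{d}$. Applying \autoref{prop:estimates_for_strictly_adapted_metrics} at $\tau_1$ and $\tau_2$ to pure-weight representatives, one computes
\[
\frac{\norm{v}_{h(\tau_2)}}{\norm{v}_{h(\tau_1)}}\approx \Big(\tfrac{|x_2-x_1|}{\sqrt{A}}\Big)^{\lfloor(\alpha+d)/2\rfloor},
\]
so that the ratio $\norm{v}/\norm{w}$ for $v\in V_d$ and $w$ of weight $\leq d-2$ gains a factor of at least $(k/\sqrt{A})\approx e^{t_c/2}$, independently of the excursion depth. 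The approximation errors from \autoref{prop:stable_spaces_near_the_cusp} are controlled by $\dist(p,p_0)^\varepsilon$, and since a geodesic whose cusp excursion has horizontal displacement $k$ has endpoint at visual distance $\approx 1/k$ from $p_0$, this error is dominated by $e^{-\varepsilon t_c/2}$ and absorbed into the multiplicative constant.

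The final step is to concatenate: each visit to $X_K$ contributes an exponential gap by hypothesis, each cusp excursion contributes the gap just computed, and the uniform comparability of adapted and strictly adapted metrics in \autoref{def:adapted_metric} makes the transitions cost only bounded multiplicative factors. The resulting product is the domination estimate of \autoref{def:dominated_cocycle} with uniform constants depending only on the chosen compact set. The main technical delicacy will be keeping careful track of the matching between the intrinsic boundary-distance estimate of \autoref{prop:stable_spaces_near_the_cusp} and the upper-half-plane horizontal coordinate $k$ when the geodesic endpoint $p$ lies close to, but not at, the parabolic point $p_0$; this is essentially the same calculation as in \autoref{eqn:distance_cusp_using_Rex}, adapted now to the forward rather than the reverse direction.
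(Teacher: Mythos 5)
Your overall plan — reduce to the log-proximal case, use \autoref{prop:stable_spaces_near_the_cusp} to approximate $\xi_{n-1}(p)$ by $W_{d-1}(N)$, and apply the strictly-adapted-metric growth estimates of \autoref{prop:estimates_for_strictly_adapted_metrics} — is the right one and matches the paper's. However, there are two genuine gaps.

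First, your identification $W_{d-1}(N)=\bigoplus_{\alpha\leq d-2}V_\alpha$ is off by one: the Jacobson--Morozov convention used in the paper has $W_j=\bigoplus_{\alpha\leq j}V_\alpha$, so $W_{d-1}=\bigoplus_{\alpha\leq d-1}V_\alpha$, which can contain a nonzero $V_{d-1}$-piece. This matters concretely: a rank-one unipotent on $V=\bR^4$ gives $d=1$ and $W_0=V_{-1}\oplus V_0$, whose top weight is $0=d-1$, not $d-2$. The gap in the $\Im\tau$-exponent between $V_d$ and $V_{d-1}$ is only $1/2$, not $1$; your version overestimates the slack and makes the final inequality look easier than it is. The actual estimate is tight.

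Second — and this is the more serious gap — you compare norms only at the entry point $\tau_1$ and the exit point $\tau_2$, both at height $\Im\tau=A$, and then conclude a gain of $\approx e^{t_c/2}$ over the full excursion, to be concatenated with the compact-part estimate. But the theorem requires the domination inequality at \emph{all} forward times, including times when the geodesic is deep inside the cusp. You cannot interpolate from domination at $t_1$ and $t_2$ to domination at $t\in(t_1,t_2)$: a crude Lipschitz bound on the cocycle only gives a loss of $e^{-2L(t-t_1)}$ from the gain at $t_1$, which does not recover the desired $e^{\ve t}$ when $t-t_1$ is large. The paper instead proves the inequality directly for an arbitrary point $x$ in the cusp lying on the geodesic from $x_0$, which forces the case analysis in the variables $\Im x$ and $u:=|\Re x|/(\Im x)^{1/2}$ appearing at the end of the proof. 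The critical regime is the apex of the excursion, where $u\approx 0$ and $\Im x$ is maximal: there the ratio of growth rates is exactly $(\Im x)^{1/2}$ and $e^{\dist(x_0,x)}\approx\Im x$, so the domination holds with $\ve'=1/2$ and no more. Your argument, as written, never touches this regime and so cannot produce the claimed conclusion for all times.

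The remaining parts — the reduction to irreducible proximal representations as in \autoref{sssec:reduction_to_proximal_case}, the use of \autoref{eqn:hyperbolic_distance_tau_approximation} and \autoref{eqn:distance_cusp_using_Rex} to translate boundary distances to horizontal displacements, and the observation that the adapted/strictly-adapted comparison costs only $O(1)$ — are all in line with the paper's proof.
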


\begin{proof}
	Recall that $X_K\subset X$ denotes a fixed choice of compact part.
	Any geodesic $\gamma$ from $x'\in X_{K}$ to $x\in X\setminus X_{K}$ can be approximated by the concatenation of two geodesics: $\gamma'$ from $x'$ to the basepoint $x_0$, and $\gamma''$ from $x_0$ to $x$, such that $length(\gamma)=length(\gamma')+length(\gamma'')+O(1)$ where the implied constant only depends on the surface, and furthermore $\gamma$ and $\gamma'\cup\gamma''$ are homotopic.
	Moreover, we can assume that for a uniform constant $t_0>0$ depending on the choice of compact part $X_{K}$, the geodesic $\gamma''$ spends in the beginning at most time $t_0$ in the compact part, and then the rest in a fixed neighborhood of a single cusp.
	By modifying the constants in the domination condition, it suffices to prove the claim only for such geodesics $\gamma''$ which start at $x_0$ and aim directly into the cusp.

	For any $t_1>t_0$, by modifying again the constants in the domination condition, it suffices to prove the claim under the further assumption that the geodesic $\gamma''$, when continued indefinitely into the future, is in the same cusp for at least the time interval $[t_0,t_1]$ (with its position at time $0$ being $x_0$).

\begin{figure}[!htbp]
	\centering
	\includegraphics[width=0.3\linewidth]{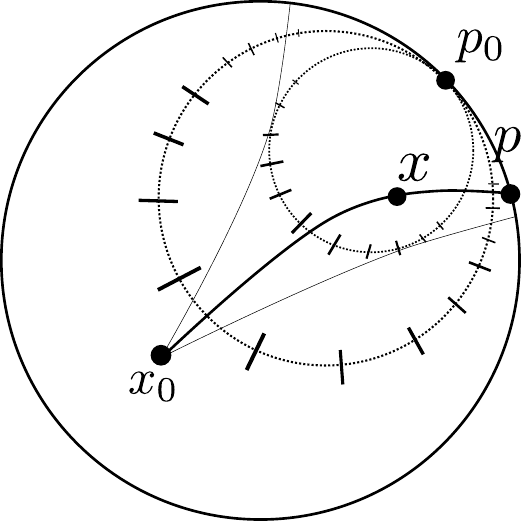}
	\hskip 1em
	\includegraphics[width=0.6\linewidth]{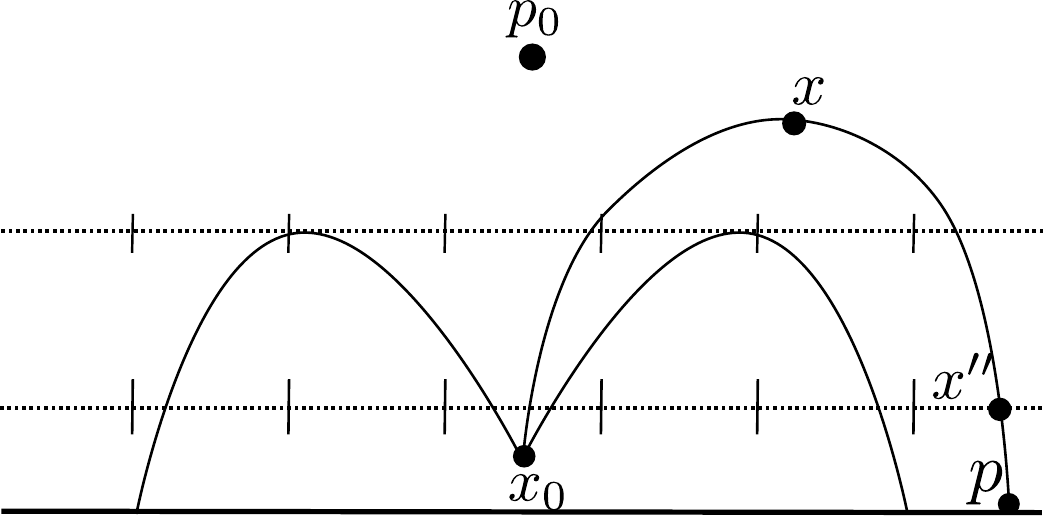}
	\caption{Looking into the cusp from $x_0$.
	Two horocycles depicted, we are only interested in geodesics that cross the second one, i.e. go sufficiently deep into the cusp.
	Left: Disc model; Right: Upper half-space model.}
	\label{fig:figures_looking_into_cusp}
\end{figure}
Let $x''$ be the point where the geodesic from $x_0$ to $p$ exits the horoball.
Setting $r:=|\Re x''|$, we know that
\[
		\Im x \leq r + O(1) \quad\text{ and }\quad  |\Re x| \leq r
\]
for any $x$ which is in the cusp, and on the geodesic between $x_0$ and $p$.
From \autoref{prop:stable_spaces_near_the_cusp} we know that
\begin{align}
	\label{eqn:distance_xi_W_r_proof}
	\dist(\xi_{n-1}(p),W_{d-1}(N)) \leqapprox r^{-\ve}	
\end{align}
using \autoref{eqn:distance_cusp_using_Rex}.

For $p'=op_{x_0}(p)$, let $w\in \xi_1(p')$ be a unit vector for the $h(x_0)$ metric.
By taking a sufficiently small neighborhood of $p$ as in the proof of \autoref{prop:stable_spaces_near_the_cusp}, we can assume that the angle between $\xi_1(p')$ and $W_{d-1}(N)$ is uniformly bounded below (for the $h(x_0)$ metric) and hence using the estimate for strictly adapted metrics in \autoref{prop:estimates_for_strictly_adapted_metrics}, for vectors in $V_d$, we find
\[
	\norm{v}_{h(x)} \geqapprox 
	\Im (x)^{\frac{d}{2}} 
	\left[1 + 
	\left(\frac{|\Re x|}{(\Im x)^{1/2}}\right)^{d}\right].
\]
Suppose now that $w\in \xi_{n-1}(p)$ is a unit vector for $h(x_0)$; decompose it as $w=w_d + w_{d-1}$ with $w_d\in V_d, w_{d-1}\in W_{d-1}(N)$, both of norm bounded by $1$ for $h(x_0)$.
Using \autoref{eqn:distance_xi_W_r_proof} we find
\begin{align*}
	\norm{w}_{h(x)}& \leq \norm{w_d}_{h(x)} + \norm{w_{d-1}}_{h(x)}\\
	& \leqapprox r^{-\ve}\norm{v}_{h(x)} + \norm{w_{d-1}}_{h(x)}
\end{align*}
The inequality we want to show is (in exponentiated form)
\[
	\norm{v}_{h(x)} \geqapprox \norm{w}_{h(x)}\cdot e^{\ve' \dist(x_0,x)}
\]
Recall from \autoref{eqn:hyperbolic_distance_tau_approximation} that $\dist(x_0,x) =  2\log(\max(|\Re x|, \Im x)) - \log(\Im x) + O(1)$, or in more symmetric form
\[
	e^{\dist(x_0,x)} \leqapprox \max\left(\frac{|\Re x|}{(\Im x)^{1/2}}, \left(\Im x\right)^{1/2}\right)^{2}.
\]
The quantity $\norm{w}_{h(x)}$ was estimated above using two terms, and it is clear that the term $r^{-\ve}\norm{v}_{h(x)}$ satisfies the desired inequality since $r +O(1) \geq \max(\Re |x|,\Im x)$, and we can work in the regime $|\Re x|\geq 1$ to avoid degenerate situations.

It suffices therefore to show, estimating $\norm{w_{d-1}}_{h(x)}$ using \autoref{prop:estimates_for_strictly_adapted_metrics}:
\begin{multline*}
	\Im (x)^{\frac{d}{2}} 
	\left[1 + 
			\left(\frac{|\Re x|}{(\Im x)^{1/2}}\right)^{d}\right]
	\geqapprox \\
	\geqapprox \max\left(\frac{|\Re x|}{(\Im x)^{1/2}}, \left(\Im x\right)^{1/2}\right)^{\ve}
	\cdot
	\Im (x)^{\frac{d-1}{2}} 
	\cdot
	\left[
	1 + 
	\left(\frac{|\Re x|}{(\Im x)^{1/2}}\right)^{d-1}\right]
\end{multline*}
for some $\ve>0$.
Canceling the $(\Im x)^{d/2}$ factor and moving the $\Im(x)^{-1/2}$ to the left side reduces the inequality to
\begin{multline*}
	\Im (x)^{\frac{1}{2}} 
	\cdot 
	\left[1 + 
			\left(\frac{|\Re x|}{(\Im x)^{1/2}}\right)^{d}\right]
	\geqapprox \\
	\geqapprox \max\left(\frac{|\Re x|}{(\Im x)^{1/2}}, \left(\Im x\right)^{1/2}\right)^{\ve}
	\cdot
	\left[
	1 + 
	\left(\frac{|\Re x|}{(\Im x)^{\frac{1}{2}}}\right)^{d-1}\right]
\end{multline*}
It is now clear that $\ve=1$ works for this inequality, by considering the two cases depending on which of the two quantities inside the $\max$ is maximal.
Note that $\Im x\geq A\geq 1$ by assumption.
\end{proof}






\bibliographystyle{sfilip}
\bibliography{14_families_3CY}
\end{document}